\documentclass[11pt,reqno]{article}

\usepackage[margin=1in]{geometry}
\usepackage{amsmath,amsthm,amssymb}


\usepackage[colorlinks=true, pdfstartview=FitV, linkcolor=blue,citecolor=blue, urlcolor=blue]{hyperref}

\usepackage[abbrev,lite,nobysame]{amsrefs}
\usepackage{times}
\usepackage[usenames,dvipsnames]{color}

\usepackage{esint}

\usepackage{marginnote}

\usepackage{mathtools}
\usepackage{enumitem}
\mathtoolsset{showonlyrefs=true}

\usepackage[compact]{titlesec}

\usepackage[title]{appendix}

\usepackage{comment}

    


\newcommand{\ep}{\epsilon}

\newcommand{\leqc}{\lesssim}

\newcommand{\geqc}{\gtrsim}

\newcommand{\grad}{\nabla}
\newcommand{\MalD}{\cD}
\newcommand{\Wbf}{\mathbf{W}}
\newcommand{\fM}{\mathfrak{M}}

\newcommand{\norm}[1]{\left|\left| #1 \right|\right|}
\newcommand{\abs}[1]{\left| #1 \right|}
\newcommand{\set}[1]{\left\{ #1 \right\}}

\newcommand{\R}{\mathbb{R}}
\newcommand{\Real}{\R}

\newcommand{\Z}{\mathbb{Z}}
\newcommand{\T}{\mathbb{T}}
\newcommand{\K}{\mathbb{K}}
\renewcommand{\S}{\mathbb{S}}

\newcommand{\tensor}{\otimes}

\newcommand{\SL}{\mathrm{SL}}
\renewcommand{\sl}{\mathrm{sl}}

\newcommand{\cL}{\mathcal{L}}
\newcommand{\cC}{\mathcal{C}}
\newcommand{\cH}{\mathcal{H}}
\newcommand{\cM}{\mathcal{M}}
\newcommand{\cW}{\mathcal{W}}
\newcommand{\cY}{\mathcal{Y}}
\newcommand{\cD}{\mathcal{D}}
\newcommand{\cK}{\mathcal K}


\newcommand{\Hbf}{{\bf H}}

\newcommand{\dee}{\mathrm{d}}
\newcommand{\ds}{\dee s}
\newcommand{\dt}{\dee t}

\newcommand{\dx}{\dee x}

\newcommand{\dy}{\dee y}

\newcommand{\dz}{\dee z}

\newcommand{\dr}{\dee r}

\DeclareMathOperator{\supp}{\mathrm{supp}}

\DeclareMathOperator{\Id}{\mathrm{Id}}

\DeclareMathOperator{\Span}{\mathrm{span}}
\DeclareMathOperator{\Range}{\mathrm{Ran}}

\usepackage{bbm}
\newcommand{\1}{\mathbbm{1}}
\newcommand{\F}{\mathcal{F}}
\renewcommand{\P}{\mathbf{P}}

\newcommand{\E}{\mathbf{E}}
\newcommand{\EE}{\mathbf E}
\newcommand{\PP}{\mathbf P}
\newcommand{\Leb}{\operatorname{Leb}}

\newtheorem{theorem}{Theorem}[section]
\newtheorem{proposition}[theorem]{Proposition}
\newtheorem{corollary}[theorem]{Corollary}
\newtheorem{lemma}[theorem]{Lemma}
\newtheorem*{lemma*}{Lemma}
\newtheorem{claim}[theorem]{Claim}

\newtheorem{assumption}{Assumption}
\newtheorem{system}{System}

\theoremstyle{definition}
\newtheorem{definition}[theorem]{Definition}
\newtheorem{remark}[theorem]{Remark}
\newtheorem{example}[theorem]{Example}


\setcounter{secnumdepth}{3}
\numberwithin{equation}{section}
    
\begin{document}

\title{Lagrangian chaos and scalar advection in stochastic fluid mechanics}
\author{Jacob Bedrossian\thanks{\footnotesize Department of Mathematics, University of Maryland, College Park, MD 20742, USA \href{mailto:jacob@math.umd.edu}{\texttt{jacob@math.umd.edu}}. J.B. was supported by NSF CAREER grant DMS-1552826 and NSF RNMS \#1107444 (Ki-Net)} \and Alex Blumenthal\thanks{\footnotesize Department of Mathematics, University of Maryland, College Park, MD 20742, USA \href{mailto:alex123@math.umd.edu}{\texttt{alex123@math.umd.edu}}. This material is based upon work supported by the National Science Foundation under Award No. DMS-1604805.} \and Sam Punshon-Smith\thanks{\footnotesize Division of Applied Mathematics,  Brown University, Providence, RI 02906, USA \href{mailto:punshs@brown.edu}{\texttt{punshs@brown.edu}}. This material is based upon work supported by the National Science Foundation under Award No. DMS-1803481.}}

\maketitle

\begin{abstract}
We study the Lagrangian flow associated to velocity fields arising from various models of fluid mechanics subject to white-in-time, $H^s$-in-space stochastic forcing in a periodic box. 
We prove that in many circumstances, these flows are chaotic, that is, the top Lyapunov exponent is strictly positive. 
Our main results are for the Navier-Stokes equations on $\T^2$ and the hyper-viscous regularized Navier-Stokes equations on $\T^3$ (at arbitrary Reynolds number and hyper-viscosity parameters), subject to forcing which is non-degenerate at high frequencies. 
As an application, we study statistically stationary solutions to the passive scalar advection-diffusion equation driven by these velocities and subjected to random sources. 
The chaotic Lagrangian dynamics are used to prove a version of anomalous dissipation in the limit of vanishing diffusivity, which in turn, implies that the scalar satisfies Yaglom's law of scalar turbulence -- the analogue of the Kolmogorov 4/5 law.  Key features of our study are the use of tools from ergodic theory and random dynamical systems, namely the Multiplicative Ergodic Theorem and a version of Furstenberg's Criterion, combined with hypoellipticity via Malliavin calculus and approximate control arguments. 
\end{abstract}

\setcounter{tocdepth}{1}
{\small\tableofcontents}

\section{Introduction and outline}\label{sec:Intro}

\newcommand{\eps}{\varepsilon}

In this paper, we study the stochastic flow of diffeomorphisms $\phi^t : \T^d \to \T^d, t \geq 0$ defined by the random ODE
\begin{align}
\frac{\dee}{\dt} \phi^t (x) & = u_t(\phi^t (x)), \quad \quad \phi^0 (x) = x \label{eq:xtintro} \, .
\end{align}
Here, the random velocity field $u_t : \T^d \to \R^d$ at time $t > 0$ evolves according to one of several stochastically-forced fluid mechanics models, for example, the 2D Navier-Stokes at fixed (but arbitrary) inverse Reynolds number $\nu > 0$ on $\mathbb T^2$: 
\begin{align}
\partial_t u_t + u_t \cdot \grad u_t = -\grad p_t + \nu \Delta u_t + Q \dot W_t, \quad\quad \grad \cdot u_t = 0, 
\end{align}
where $p_t$ denotes the pressure at time $t$ and $Q \dot W_t$ is a white-in-time, colored-in-space Gaussian process described more precisely below (Section \ref{subsubsec:noiseProcess}).

It is expected \cite{CrisantiEtAl1991,FalkovickEtAl01} that when $u_t$ evolves according to either the Stokes equations (i.e., zero Reynolds number) 
or Navier-Stokes at arbitrary Reynolds number, the corresponding Lagrangian flows will generically be chaotic in terms of sensitivity with respect to initial conditions. 
This phenomenon is sometimes referred to as \emph{Lagrangian chaos}.
The primary objective of the present paper is to verify this by proving that the dynamical system defined via \eqref{eq:xtintro} 
possesses a strictly positive Lyapunov exponent: that is, there exists a constant $\lambda > 0$, depending on the parameters
of the relevant Stokes or Navier-Stokes equation, such that for every $x \in \T^d$ and any initial vector field in 
the support of $\mu$, the stationary measure of the stochastic fluid equation, we have that
\begin{align}
\lim_{t \rightarrow \infty} \frac{1}{t}\log \abs{D_x \phi^t} = \lambda > 0 \qquad \text{holds with probability 1}.
\end{align}
Here, $D_x \phi^t$ refers to the Jacobian matrix of $\phi^t : \T^d \to \T^d$ taken at $x$. This implies that almost everywhere in $\T^d$ and with probability 1, nearby particles are separated at an exponentially fast rate by the Lagrangian flow $\phi^t$. 

We further apply our Lagrangian chaos results to the `scalar turbulence' problem in the Batchelor regime (see e.g. \cite{Batchelor59,ShraimanSiggia00,FalkovickEtAl01} and the references therein for physics literature).
In particular, we prove that statistically stationary solutions of the passive scalar advection-diffusion equation (with random velocity fields given by the stochastic fluid models) obey the fundamental scaling law predicted by Yaglom in 1949 \cite{Yaglom49} in the vanishing diffusivity limit. Yaglom's law is the passive scalar analogue of the Kolmogorov 4/5 law -- or perhaps more accurately, the closely related 4/3 law; see \cite{Frisch1995} and the references therein.  
To our knowledge, this is the first rigorous proof of any scaling laws of this type for velocities arising from the Stokes or Navier-Stokes equations. 
See Section \ref{sec:Results} below for rigorous statements. 

\subsection{Setup and assumptions}\label{subsection:setupAssumptions}

\subsubsection{Probabilistic framework}\label{subsubsec:noiseProcess} 
Let $\T^d = [0,2\pi]^d$ denote the period box. Following the convention used in \cite{E2001-lg}, we define the following real Fourier basis for functions on $\T^d$ by
 \[\label{eq:Fourier-Basis}
 e_k(x) = \begin{cases}
 \sin(k\cdot x), \quad& k \in \Z^d_+\\
 \cos(k\cdot x),\quad& k\in \Z^d_-,
 \end{cases}
 \]
where $\Z_+^d = \{(k_1,k_2,\ldots k_d)\in \Z^d : k_d >0\}\cup\{(k_1,k_2,\ldots k_d)\in \Z^d \,:\, k_1>0, k_d=0\}$ and $\Z_-^d = - \Z_+^d$. 
We set $\Z^d_0 := \Z^d \setminus \set{0,\ldots, 0}$ and define $\{\gamma_k\}_{k\in \Z^d_0}$ a collection of full rank $d\times (d-1)$ matrices satisfying $\gamma^\top_k k = 0$, $\gamma_k^\top\gamma_k=\Id$, and $\gamma_{-k} = - \gamma_k$. Note that in dimension $d=2$, $\gamma_k$ is just a vector in $\R^2$ and is therefore given by $\gamma_k = \pm k^{\perp}/|k|$. In dimension $3$, the matrix $\gamma_k$ defines a pair of orthogonal vectors $\gamma_k^{1},\gamma_k^2$ that span the space perpendicular to $k$. 

Define
\[
\Wbf = \set{u \in L^2(\T^d, \R^d) : \int u \, \dx = 0, \grad \cdot u = 0}
\]
to be the Hilbert space of square integrable, mean-zero, divergence-free vector fields on $\T^d$ and let $W_t$ be a cylindrical Wiener process on $\Hbf$ defined by
\[
	W_t = \sum_{k\in\Z^d_0} e_k\gamma_k W^k_t,
\]
where $\{W^k_t\}_{k\in \Z^d_0}$ are a family of independent $(d-1)$-dimensional Wiener processes on a common canonical filtered probability space $(\Omega,\mathcal{F}, (\mathcal{F}_t),\P)$. Note that $W_t$ is divergence free by the fact that $\gamma^\top_k k = 0$.

Let $Q$ be a Hilbert-Schmidt operator on $\Wbf$ with singular values $\{q_k\}_{k\in \Z^d_0}$ satisfying the coloring assumption
\begin{equation}\label{eq:coloring-qk}
	q_k \lesssim |k|^{-\alpha}
\end{equation}
for an arbitrary, fixed $\alpha > \frac{5d}{2}$. Additionally, fix an arbitrary $\sigma>0$ satisfying 
\begin{align}\label{eqn:sigmaConstraint}
\frac{d}{2} + 2 < \alpha - 2(d-1) < \sigma < \alpha-\frac{d}{2}
\end{align}
and define the Hilbert space
\begin{align}
\Hbf = \set{u \in H^\sigma(\T^d, \R^d) : \int u \, \dx = 0, \grad \cdot u = 0},
\end{align}
where $H^\sigma(\T^d,\R^d)$ denotes the space of Sobolev regular vector-fields on $\T^d$ (see Section \ref{subsec:notation} for a precise meaning when $\sigma$ is not an integer).
For the entirety of this paper, we will consider a stochastic forcing $Q\dot{W}_t$, which takes the form for each $t>0$ and $x\in\T^d$
\[
 Q\dot{W}_t(x) = \sum_{k\in \Z^d_0}q_ke_{k}(x)\gamma_k\dot{W}^k_t.
 \]

\begin{remark}
The coloring assumption \eqref{eq:coloring-qk} and the upper bound on $\sigma$ in \eqref{eqn:sigmaConstraint} ensures that $\{|k|^{\sigma} q_k\}$ is square summable over $\Z^d_0$ and therefore $QW_t$ belongs to $\Hbf$ almost surely.
See Remark \ref{rem:sigmaStrongFeller} for a discussion of the lower bound on $\sigma$ specified in \eqref{eqn:sigmaConstraint}.
\end{remark}

We will also consider the following non-degeneracy condition on the low modes of the forcing. Define $\mathcal{K}$ to be the set of $k \in \Z^d_0$ such that $q_k \neq 0$. 
\begin{assumption}[Low mode non-degeneracy] \label{a:lowms}
Assume $k \in \mathcal{K}$ if $\abs{k}_{\infty} = 1$.
\end{assumption}
Above, for $k = (k_i)_{i = 1}^d \in \Z^d$ we write $|k|_{\infty} = \max_{i} |k_i|$. For several of the finite-dimensional models discussed in this paper, Assumption \ref{a:lowms} is actually stronger than needed, i.e., the results we obtain hold with forcing on fewer modes. Sharper sufficient conditions will be specified as we go along. 

For the infinite-dimensional models, we will in addition invoke the following nondegeneracy condition on all sufficiently high modes past some arbitrary finite cutoff.
\begin{assumption}[High mode non-degeneracy] \label{a:Highs}
There exists an $L > 0$ and an $\alpha \in (\frac{5d}{2},\infty)$ such that
\begin{align}
q_k \geqc \abs{k}^{-\alpha} \quad \textup{for} \quad \abs{k}_\infty \geq L. 
\end{align}
\end{assumption}
See Remark \ref{rmk:WhyHypo} for more discussion on Assumption \ref{a:Highs}.

\subsubsection{Fluid mechanics models}

Below, we write $\Hbf_{\mathcal K} \subset \Hbf$ for the subspace spanned by the Fourier modes $k \in \mathcal K$.

\begin{system} \label{sys:2DStokes} 
We refer to the \emph{Stokes system} in $\mathbb T^d$ ($d = 2,3$) as the following stochastic PDE for initial $u_0 \in \Hbf_{\mathcal K}$: 
\begin{equation} \label{eq:stokes-2d}
\begin{cases}
\,\partial_t u_t =- \grad p_t + \Delta u_t + Q \dot W_t \\ 
\,\grad \cdot u_t = 0 
\end{cases},
\end{equation}
where $Q$ satisfies Assumption \ref{a:lowms} and $\mathcal K$ is finite.
\end{system}

The assumption that $\mathcal K$ be finite is both natural (since only a few modes are required by Assumption \ref{a:lowms}), and expedient, since
System \ref{sys:2DStokes} is effectively a finite-dimensional 
Ornstein-Uhlenbeck process. However, the methods of this paper applied to 
Systems \ref{sys:NSE}, \ref{sys:3DNSE} easily extend to cover 
 System \ref{sys:2DStokes} when $\mathcal{K}$ is infinite and $Q$ satisfies
 Assumption \ref{a:Highs}. For more details, see Remark \ref{rmk:2Dstokes}.

\begin{system} \label{sys:Galerkin}
We refer to the \emph{Galerkin-Navier-Stokes system} in $\mathbb T^d$ ($d = 2,3$) as the following stochastic ODE for $u_0 \in \Hbf_N$:
\begin{equation}
\begin{cases}
\,\partial_t u_t + \Pi_N\left(u_t\cdot \grad u_t + \grad p_t \right) = \nu\Delta u_t + \Pi_N Q \dot W_t \\ 
\,\grad \cdot u_t = 0
\end{cases}
\end{equation}
where $Q$ satisfies Assumption \ref{a:lowms}; $N \geq 3$ is an integer; $\Pi_N$ denotes the projection to 
Fourier modes with $| \cdot|_{\infty}$ norm $\leq N$; $\Hbf_N$ denotes the span of the first $N$ Fourier modes; and $\nu > 0$ is fixed and arbitrary.
\end{system}

\begin{system} \label{sys:NSE}
We refer to the \emph{2D Navier-Stokes system} as the following stochastic PDE for $u_0 \in \Hbf$ on $\T^2$: 
\begin{equation}
\begin{cases}
\,\partial_t u_t + u_t \cdot \grad u_t =- \grad p_t + \nu \Delta u_t + Q \dot W_t \\ 
\,\grad \cdot u_t = 0
\end{cases},
\end{equation}
where $Q$ satisfies Assumptions \ref{a:lowms} and \ref{a:Highs}. Here $\nu > 0$ is arbitrary and fixed.   
\end{system}

\begin{system} \label{sys:3DNSE}
We refer to the \emph{3D hyper-viscous Navier-Stokes system} as the following stochastic PDE for $u_0 \in \Hbf$ on $\T^3$: 
\begin{equation}
\begin{cases}
\,\partial_t u_t + u_t \cdot \grad u_t =- \grad p_t + \nu \Delta u_t - \eta \Delta^{2} u_t + Q \dot W_t \\ 
\,\grad \cdot u_t = 0
\end{cases},
\end{equation}
where $Q$ satisfies Assumptions \ref{a:lowms} and \ref{a:Highs}. Here $\nu,\eta > 0$ are arbitrary and fixed. 
\end{system}
We emphasize that for System \ref{sys:Galerkin} there is no relationship between the viscosity $\nu$ 
 the Galerkin cutoff $N$, or the parameters $\{ q_k \}$ determining the noise process $Q \dot W_t$. Similarly, for 
 System \ref{sys:NSE}, the parameter $\nu > 0$ is independent of the parameters $L, \alpha, \{ q_k\}$ 
in Assumption \ref{a:Highs} specifying the noise process $Q \dot W_t$ (and similarly for the arbitrary parameters $\eta,\nu > 0$ for System \ref{sys:3DNSE}).

\subsubsection{Well-posedness and stationary measures for Systems \ref{sys:2DStokes} -- \ref{sys:3DNSE}}
Recall the following well-posedness theorem for the systems we consider. For 2D Navier-Stokes as in System \ref{sys:NSE}, see, e.g., \cite{DPZ96,KS}; the hyper-viscous case follows similarly. 
For uniqueness of the stationary measure for 2D Navier-Stokes, see, e.g., \cite{HM06}, although under Assumption \ref{a:Highs} uniqueness follows from other methods (see Remark \ref{rmk:WhyHypo} below).
We are unaware of a work specifically proving uniqueness of the stationary measure for System \ref{sys:3DNSE}, however, under Assumption \ref{a:Highs} our work proves that this is the case (see also the work of \cite{RomitoXu11}). 
For the finite-dimensional Systems \ref{sys:2DStokes} and \ref{sys:Galerkin}, well-posedness follows from classical SDE theory (see e.g. \cite{Oksendal03,DaPrato14}). Uniqueness of the stationary measure for System \ref{sys:2DStokes} is likewise classical (it being effectively a finite-dimensional Ornstein-Uhlenbeck process), while uniqueness of the stationary measure for System \ref{sys:Galerkin} follows from \cite{E2001-lg,Romito2004-rc}.
For a more precise well-posedness statement, see Section \ref{subsec:wellPosedApp}.

\begin{proposition}[See e.g. \cite{KS}] \label{prop:WP}
For each of Systems \ref{sys:2DStokes}--\ref{sys:3DNSE} and all sufficiently regular initial data $u$,
there exists a global-in-time, $\PP$-a.s. unique, $\mathcal{F}_t$-adapted mild solution $(u_t)$ satisfying $u_0 = u$. 
Moreover, $(u_t)$ defines a Feller Markov process in the usual way.
In each case, the corresponding Markov semigroup has a unique (and hence ergodic; see Definition \ref{defn:PtmuInvariant}) stationary probability measure on $\Hbf$ which we denote $\mu$ (in all cases, as a slight abuse of notation).
\end{proposition}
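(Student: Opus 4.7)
The plan is to handle each of Systems \ref{sys:2DStokes}--\ref{sys:3DNSE} separately for well-posedness, then deduce the Markov/Feller properties and uniqueness of the stationary measure by largely standard machinery. For Systems \ref{sys:2DStokes} and \ref{sys:Galerkin}, both are finite-dimensional Itô SDEs with smooth coefficients: System \ref{sys:2DStokes} decouples in Fourier into a direct sum of independent stable Ornstein-Uhlenbeck processes indexed by $k \in \mathcal{K}$, so well-posedness and the Gaussian invariant measure are classical; for System \ref{sys:Galerkin} global existence follows from applying Itô's formula to $\norm{u_t}_{L^2}^2$ and exploiting the cancellation $(u_t \cdot \grad u_t, u_t)_{L^2} = 0$ together with dissipation from $\nu \Delta$. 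For Systems \ref{sys:NSE} and \ref{sys:3DNSE} I would construct mild $\Hbf$-valued solutions by Galerkin approximation and compactness in $H^\sigma$, as in \cite{DPZ96,KS}; the hyper-viscous term in System \ref{sys:3DNSE} makes the 3D problem effectively subcritical, so the same energy bounds close at any $\sigma$ satisfying \eqref{eqn:sigmaConstraint}.

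The Markov property then follows from pathwise uniqueness in the usual way, and Feller regularity on $\Hbf$ follows from a continuity estimate of the form $\norm{u_t^{(1)} - u_t^{(2)}}_\Hbf \leq C(t,\norm{u_0^{(1)}}_\Hbf, \norm{u_0^{(2)}}_\Hbf) \norm{u_0^{(1)} - u_0^{(2)}}_\Hbf$, derived from a Gronwall argument applied to the equation for the difference of two solutions driven by the same noise realization. Existence of a stationary probability measure is Krylov-Bogolyubov: the uniform moment bound $\sup_{t \geq 0} \EE \norm{u_t}_\Hbf^2 \leq C$ produced by the same energy estimate, combined with a compact Sobolev embedding, yields tightness of the Cesàro-averaged laws in $\Hbf$, and any weak-$*$ limit point is invariant.

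The main obstacle is uniqueness of $\mu$. For System \ref{sys:2DStokes} it is immediate from the Gaussian structure, as the invariant measure is the unique Gaussian whose covariance solves the corresponding Lyapunov equation. For System \ref{sys:Galerkin} I would cite the parabolic-Hörmander plus irreducibility argument of \cite{E2001-lg,Romito2004-rc}. For System \ref{sys:NSE} the classical reference is \cite{HM06}, but Assumption \ref{a:Highs} permits the simpler Doob-Khasminskii route: the strong Feller property follows from Malliavin-calculus hypoellipticity estimates of the type developed later in this paper, with the high-mode non-degeneracy providing ample noise to control every mode contributing to $\Hbf$, while irreducibility follows from standard approximate controllability arguments exploiting Assumption \ref{a:lowms}. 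The same strategy transfers to System \ref{sys:3DNSE} with essentially no modification, since the hyper-viscous dissipation only enhances the smoothing and does not obstruct either the Malliavin calculus or the control argument; this is what underlies the claim of uniqueness for System \ref{sys:3DNSE} despite the absence of a prior reference. Combined with the Markov semigroup structure, uniqueness yields ergodicity of $\mu$ in the sense of Definition \ref{defn:PtmuInvariant}.
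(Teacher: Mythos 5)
Your proposal is correct and takes essentially the same route as the paper, which deduces Proposition \ref{prop:WP} by citing standard references case by case (finite-dimensional SDE theory for Systems \ref{sys:2DStokes} and \ref{sys:Galerkin}; Da Prato--Zabczyk and Kuksin--Shirikyan for the mild formulation of Systems \ref{sys:NSE} and \ref{sys:3DNSE}, spelled out further in the appendix) and then noting that under Assumption \ref{a:Highs} uniqueness of $\mu$ for the infinite-dimensional systems follows from the strong Feller property proved later in the paper (Proposition \ref{prop:Feller}) combined with weak irreducibility and Doob--Khasminskii. Two small imprecisions, neither of which affects the conclusion: the paper obtains weak irreducibility of $\mu$ for the $(u_t)$ process from dissipation alone (giving $0 \in \supp\mu$ for any stationary measure) rather than from a controllability argument keyed to Assumption \ref{a:lowms}, and the Lipschitz constant in your Feller estimate should be understood as depending on the (random) trajectory norms $\sup_{s\le t}\norm{u_s^{(i)}}_{\Hbf}$ rather than only on the initial data.
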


With the $(u_t)$ process on $\Hbf$ as in Proposition \ref{prop:WP}, we write $\phi^t$ for the stochastic
flow of diffeomorphisms solving \eqref{eq:xtintro}.  This gives rise to an $\mathcal{F}_t$-adapted, Feller Markov process $(u_t, x_t)$ on 
$\Hbf \times \T^d$ defined by $x_t = \phi^t (x_0)$, where $x_0 = x$ for fixed initial $x \in \T^d$. 
We refer to $(u_t, x_t)$ as the \emph{Lagrangian flow process} or \emph{Lagrangian process}. 
A simple check verifies that $\mu \times \Leb$ is a stationary measure for the Lagrangian process, where $\Leb$ stands
for Lebesgue measure on $\T^d$. 
Note that ergodicity of $\mu$ does \emph{not} imply ergodicity of $\mu \times \Leb$. 
Indeed, consider the example $\mathcal{K} = \set{(1,0)}$ with the 2D Stokes equations \eqref{eq:stokes-2d}: in that case, one can directly check that $\mu \times \Leb$ is not ergodic. 
One of the purposes of Assumption \ref{a:lowms} is to rule out such degeneracies.     

\begin{remark} \label{rmk:WhyHypo}
Our methods currently require some regularity properties that we do not know how to verify without the strong 
Feller property of the Markov semigroup associated to the $(u_t, x_t)$ process (see definition \ref{defn:strongFeller}). 
In particular, the asymptotically strong Feller property \cite{HM06,HM11} is not enough for our purposes. 
It is for this reason that when treating Systems \ref{sys:NSE} and \ref{sys:3DNSE}, we must assume nondegeneracy of the forcing in the high modes as in Assumption \ref{a:Highs}. 
As in \cite{FM95, EH01}, a straightforward modification of the methods in 
this paper can be made to prove the strong Feller property when, in Assumption \ref{a:Highs}, 
the power laws in the lower and upper bound on $|q_k|$ differ by a small constant $< 1$. 
\end{remark}

\begin{remark}
Note that the forcing on the $(u_t,x_t)$ process is necessarily degenerate, even if we had completely non-degenerate noise acting on the velocity. This is the main technical challenge in proving the strong Feller property.
\end{remark}
    
\subsection{Statement and discussion of results} \label{sec:Results}

With the preliminaries now taken care of, we are situated to state
our main results on Lagrangian chaos. 
See Section \ref{sec:Outline} for a detailed outline of the proof. 

Below, $d = 2$ or $3$, and
 the vector field $u_t : \T^d \to \R^d, t > 0$ evolves according
to one of Systems \ref{sys:2DStokes} -- \ref{sys:3DNSE}, while
the Lagrangian flow $\phi^t : \T^d \to \T^d, t >0$ is as in \eqref{eq:xtintro}.
Throughout, $\hat \Hbf$ denotes the relevant vector field space for the 
system in question, e.g., $\hat \Hbf = \Hbf_{\mathcal K}$ when working
with System \ref{sys:2DStokes}. As in Proposition \ref{prop:WP}, $\mu$
denotes the stationary measure for the $(u_t)$ process on $\hat \Hbf$ for
each of Systems \ref{sys:2DStokes}, \ref{sys:Galerkin}, \ref{sys:NSE} or \ref{sys:3DNSE}.

\begin{theorem}[Positive Lyapunov exponent] \label{thm:Lyap}
Let $(u_t)$ be governed by any of Systems \ref{sys:2DStokes}--\ref{sys:3DNSE}. Then, there exists a deterministic constant $\lambda^+ > 0$ such that for \emph{every} initial vector field $u_0 \in \supp \mu$ and 
$x \in \T^d$, the following limit exists with probability one: 
\[
\lambda^+ = \lim_{t \to \infty} \frac{1}{t} \log | D_x \phi^t | > 0. 
\]
\end{theorem}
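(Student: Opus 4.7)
My approach follows the standard strategy for proving positivity of the top Lyapunov exponent in random dynamical systems: use the Multiplicative Ergodic Theorem (MET) to produce a deterministic spectrum, apply a Furstenberg-type dichotomy to rule out zero, and feed in hypoellipticity plus controllability as the probabilistic inputs that make everything work.

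First I would apply the MET to the linear cocycle $D_x \phi^t$ over the Lagrangian process $(u_t, x_t)$ on $\hat\Hbf \times \T^d$, which carries the stationary measure $\mu \times \Leb$. The required integrability $\E \log^+ \sup_{s \leq 1} |D_x \phi^s| < \infty$ follows from moment bounds on $\nabla u$, available because \eqref{eqn:sigmaConstraint} forces $\sigma > d/2 + 1$ and hence places $\mu$-a.e.\ vector field in $C^1(\T^d)$. The MET then produces $(\mu \times \Leb)$-a.e.\ deterministic exponents $\lambda_1 \geq \cdots \geq \lambda_d$, and incompressibility $\grad \cdot u = 0$ gives $\sum_i \lambda_i = 0$, so $\lambda^+ := \lambda_1 \geq 0$ with equality iff every $\lambda_i = 0$. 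To detect positivity, I would work on the projective bundle $\hat\Hbf \times \T^d \times \mathbb{RP}^{d-1}$ via the lifted process $(u_t, x_t, v_t)$ with $v_t := D_x \phi^t v_0 / |D_x\phi^t v_0|$, and use the identity
\[
\frac{1}{t} \log |D_x \phi^t v_0| = \frac{1}{t} \int_0^t Q(u_s, x_s, v_s) \, \dee s, \qquad Q(u,x,v) := \brak{\nabla u(x)\, v,\, v},
\]
so that $\lambda^+$ equals the expectation of $Q$ against any stationary measure for the lifted Markov process. A Baxendale-type dichotomy then says that $\lambda^+ = 0$ would force the existence of a measurable $D_x\phi^t$-invariant line field on $T\T^d$ — equivalently, the projective stationary measure sitting on a graph over $\hat \Hbf \times \T^d$.

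The central step is therefore to show that the lifted process admits a \emph{unique} stationary measure and that this measure has full support on $\supp \mu \times \T^d \times \mathbb{RP}^{d-1}$. Uniqueness and full support together rule out any invariant line field and additionally let me upgrade the a.e.\ MET convergence to convergence for every $u_0 \in \supp \mu$ and every $x \in \T^d$, via a Doob–Khasminskii style argument applied on the projective bundle. The two ingredients I would need are: (a) the strong Feller property for the semigroup of $(u_t, x_t, v_t)$, which I would establish through Malliavin calculus by verifying a Hörmander bracket condition that generates all $(x,v)$-directions from iterated commutators of the fluid drift with the noise directions, leveraging Assumption \ref{a:lowms} (together with Assumption \ref{a:Highs} for Systems \ref{sys:NSE} and \ref{sys:3DNSE}); and (b) topological irreducibility, obtained by an approximate control argument that steers $(u_t, x_t, v_t)$ arbitrarily near any prescribed target through a suitable choice of noise path, using the approximate controllability of the fluid model.

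The main obstacle I expect is (a). The noise enters only in the $u$-slot, so every direction in the fiber over $(u, x, v)$ must be produced by successive Lie brackets of the transport and projective-rotation drifts against the low-mode forcing directions, and the resulting Malliavin covariance of the lifted process must then be shown to be invertible with inverse moments of all orders. This must be done while simultaneously handling the infinite-dimensional fluid dynamics in Systems \ref{sys:NSE} and \ref{sys:3DNSE}, for which the asymptotic strong Feller property of Hairer–Mattingly is insufficient (cf.\ Remark \ref{rmk:WhyHypo}) and the full strong Feller property must be proved from scratch on the extended phase space. Carrying this out will require a careful extended bracket and control computation that couples the fluid system's hypoellipticity to the Lagrangian and projective dynamics in a quantitatively uniform way.
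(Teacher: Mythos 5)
Your proposal follows the same broad strategy as the paper (MET, a Furstenberg/Baxendale-type dichotomy, strong Feller, approximate controllability), but there is a genuine gap in the way you state and apply the dichotomy.

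You assert that $\lambda^+ = 0$ would force a measurable $D_x\phi^t$-invariant \emph{line field}, and that uniqueness plus full support of the projective stationary measure rules this out. But the correct dichotomy (Baxendale's Theorem 6.8, generalized in the paper's Theorem \ref{thm:classification}) has \emph{two} alternatives, not one: either (a) a continuously-varying invariant inner product (so that $D_x\phi^t$ acts isometrically fiberwise), or (b) a finite union of proper invariant subspaces. Your argument handles case (b) in the subcase where the subspaces are lines — there the fiber measures are Dirac masses and the induced projective stationary measure lives on a graph, contradicting full support. But in case (a) the invariant fiber measures are the \emph{absolutely continuous} conformal volumes on $P^{d-1}$, and the induced projective stationary measure can perfectly well have full support over $\supp\mu \times \T^d$. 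So uniqueness and full support of the projective stationary measure do \emph{not} exclude $\lambda^+ = 0$: they are compatible with the isometric alternative. To rule out (a) the paper needs a separate input you never produce, namely that the derivative cocycle can be made arbitrarily large in norm while approximately conditioning endpoints (condition (C')(a) in Definition \ref{defn:condCprimeRDS}, established by controlling the matrix process $(u_t,x_t,A_t)$ in Proposition \ref{prop:Actrl} using a cellular flow centered at a hyperbolic point). Your proposal contains no mechanism that contradicts a fiberwise isometry — you work only with the projective process, which is blind to the modulus $|A_t|$.

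A secondary issue, which the paper is at pains to address (Remark \ref{rmk:badFiberMeasures}), is that even granting the correct dichotomy, the objects furnished by Furstenberg's criterion are only \emph{measurably} varying in the base point; they must be upgraded to continuously varying before approximate controllability can produce a contradiction, and the original invariant fiber measures may be singular or atomic and so do not directly yield a continuous invariant structure. The paper resolves this via an averaging trick (Proposition \ref{prop:contVaryingMeasure} and the Haar-measure construction in Theorem \ref{thm:classification}), which is precisely where the strong Feller hypothesis enters and cannot be replaced by asymptotic strong Feller. Your proposal invokes strong Feller only for uniqueness of the stationary measure and for the a.e.-to-everywhere upgrade, and does not address this regularity gap in the Furstenberg data itself.
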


\noindent Indeed, 
as the following Corollary states, 
with probability 1 the Lagrangian flow map $\phi^t$ expands all vectors at the constant exponential rate $\lambda^+ > 0$ with probability 1.

\begin{corollary}[Norm growth of the flow map]\label{cor:expandAllDirections}
Let $\lambda^+ > 0$ be as in Theorem \ref{thm:Lyap}. 
For any $\eta > 0, \eta \ll \lambda^+$, 
$(u_0, x) \in \supp \mu \times \T^d$, and any unit vector $v \in \R^d$, there is a (random) constant $\delta = \delta(u_0, x, v, \eta)$ such that $\delta > 0$ almost-surely and for all $t>0$,
\[
| D_x \phi^t v| \geq \delta e^{t (\lambda^+ - \eta)} \qquad \text{with probability 1.}
\]
\end{corollary}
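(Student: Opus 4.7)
The plan is to deduce Corollary \ref{cor:expandAllDirections} from Theorem \ref{thm:Lyap} via the Multiplicative Ergodic Theorem (MET), applied to the linear cocycle $(u_0, x, \omega, t) \mapsto D_x \phi^t$ over the Markov process $(u_t, x_t)$ with respect to the ergodic stationary measure $\mu \times \Leb$. The MET provides a deterministic Lyapunov spectrum $\lambda^+ = \lambda_1 \geq \lambda_2 \geq \cdots \geq \lambda_d$ together with a measurable Oseledets splitting
\[
\R^d = E^+(u_0, x, \omega) \oplus E^-(u_0, x, \omega),
\]
where $E^+$ is the top Oseledets subspace with exponent $\lambda^+$ and $E^-$ is the sum of the lower-exponent subspaces. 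Crucially, $\lim_{t \to \infty} t^{-1} \log |D_x \phi^t v| = \lambda^+$ for every $v \notin E^-(u_0,x,\omega)$, and since $\dim E^+ \geq 1$, the random $E^-(u_0, x, \omega)$ is a proper subspace of $\R^d$.

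Fix a unit vector $v \in \R^d$ and $(u_0, x) \in \supp \mu \times \T^d$. The central step is to show
\[
\PP\bigl( v \in E^-(u_0, x, \omega) \bigr) = 0.
\]
To accomplish this, I would work with the projective Markov process $(u_t, x_t, \pi_t)$ on $\hat \Hbf \times \T^d \times \mathbb{P}^{d-1}$, where $\pi_t$ tracks the projectivization of $D_x \phi^t v$; this is precisely the process underpinning the Furstenberg-criterion approach to $\lambda^+ > 0$ invoked in Theorem \ref{thm:Lyap}. Under the strong Feller property (motivated in Remark \ref{rmk:WhyHypo}) and the irreducibility of the projective cocycle arising from hypoellipticity and the Malliavin/approximate-control arguments advertised in the abstract, this extended process admits a unique stationary measure whose conditional disintegration on $\mathbb{P}^{d-1}$ (given $u_0, x$) is non-atomic. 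By a duality/time-reversal argument relating the stable subspace $E^-$ to the forward projective dynamics of the adjoint/inverse cocycle, the distribution of $E^-(u_0, x, \omega)$ inherits this non-atomicity on the relevant Grassmannian, so in particular the codimension-one event $\{v \in E^-(u_0, x, \omega)\}$ has probability zero.

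Once $v \notin E^-(u_0, x, \omega)$ almost surely, the MET asserts that $t^{-1}\log |D_x \phi^t v| \to \lambda^+$ almost surely. For any $\eta \in (0, \lambda^+)$, I would then define
\[
\delta := \inf_{t \geq 0} |D_x \phi^t v|\, e^{-t(\lambda^+ - \eta)},
\]
and observe that $\delta > 0$ almost surely: indeed, $|D_x \phi^t v|$ is a positive continuous function of $t$ (since $D_x\phi^t$ is invertible on $\R^d$), and $|D_x \phi^t v|\, e^{-t(\lambda^+ - \eta)} \to +\infty$ on the full-probability event on which the MET limit holds. This immediately yields $|D_x \phi^t v| \geq \delta\, e^{t(\lambda^+ - \eta)}$ for all $t \geq 0$, as required.

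The principal obstacle is the non-atomicity step: ruling out that the random stable subspace $E^-(u_0, x, \omega)$ contains the deterministic vector $v$ with positive probability. This is not a formal consequence of MET and requires genuine input from the projective-irreducibility plus strong-Feller framework that drives the proof of $\lambda^+ > 0$; once that is granted, the rest of the argument is a clean unpacking of the Oseledets conclusion.
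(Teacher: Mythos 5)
Your overall strategy---derive the uniform growth of all vectors from the uniqueness of the stationary measure for the projective process $(u_t, x_t, v_t)$---is the same one the paper uses (see Proposition~\ref{prop:expandAllDirectionsOutline} and its precise form Proposition~\ref{prop:refineMET}). However, your proposed mechanism for ruling out $v \in E^-(u_0, x, \omega)$ diverges from the paper's and contains gaps. The paper invokes the Kifer--Carverhill Random Multiplicative Ergodic Theorem directly: this theorem establishes a correspondence between the ergodic stationary measures of the projective process and the asymptotic growth rates $\lim_t t^{-1}\log|\mathcal{A}^t_{\omega, z} v|$ realized with probability~1 as $v$ ranges over $\R^d \setminus \{0\}$, and uniqueness of $\nu$ forces the single realizable rate to equal $\lambda_1 = \lambda^+$. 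This bypasses the distribution of $E^-$ entirely.

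Your route through the random subspace $E^-$ has three concrete problems. First, you assert that the conditional disintegration of the unique projective stationary measure on $P^{d-1}$ is non-atomic; uniqueness alone does not give this (indeed, in the Furstenberg-degenerate regime the fiber measures can even be atomic), so this requires a separate argument. Second, the duality step from the $(-\top)$-cocycle to the distribution of $E^-$ is not straightforward: $E^-(u_0, x, \omega) = F_2(\omega, z)$ is \emph{future}-measurable (it is determined by the asymptotics of $D_x\phi^t$ as $t \to \infty$), whereas forward projective stationary measures describe the statistics of $D_x\phi^t v$ along typical paths, and the base dynamics here is a non-invertible semiflow, so the classical two-sided correspondence between Oseledets filtrations and projective stationary measures does not apply without further work. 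Third, non-atomicity on the Grassmannian is insufficient in $d=3$: when $\dim E^- = 2$ the event $\{v \in E^-\}$ corresponds to a codimension-one (but positive-dimensional) submanifold of $\mathrm{Gr}(2,3)$, so atomlessness does not rule it out. Finally, you do not address the passage from ``$\mu \times \Leb$-almost every $(u_0, x)$'' (which is what Proposition~\ref{prop:refineMET} gives) to ``every $(u_0, x) \in \supp\mu \times \T^d$'' as stated in the Corollary; the paper handles this via the strong Feller property of the Lagrangian process (Remark~\ref{rmk:everyAEOutline} and Lemma~\ref{lem:ultraFeller}(b)). Your concluding step, defining $\delta := \inf_{t \geq 0}|D_x\phi^t v|\,e^{-t(\lambda^+ - \eta)}$ and using continuity in $t$ plus the a.s.\ divergence of the normalized quantity, is correct and matches what the paper calls a straightforward argument.
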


\begin{remark} \label{remark:deterministicHard}
Theorem \ref{thm:Lyap} and Corollary \ref{cor:expandAllDirections} (and the results on scalar advection below) make \emph{fundamental} use of the probabilistic framework. 
Such results seem hopelessly out of reach for 
deterministic models of fluid flows commonly observed in nature and many other systems of interest. For a general discussion of the difficulties involved, see, e.g., \cite{young2013mathematical, pesin2010open}.

A reasonable model for understanding the difficulties involved
 is the Chirikov Standard map \cite{chirikov1979universal},
a one-parameter family of deterministic, discrete-time,
volume-preserving mappings $\T^2 \to \T^2$ exhibiting the same stretching and folding expected to 
underly the mixing mechanism of the Lagrangian flow \cite{CrisantiEtAl1991}. Although anticipated to be true, it is a decades-old open problem to rigorously verify, for any parameter value, that the standard map is chaotic in the sense of a positive Lyapunov exponent on a positive-volume subset of phase space. Partly explaining the difficulties involved is the fact that very different asymptotic dynamical regimes coexist in phase space: for a topologically `large' subset of parameters, the Standard map has (1) an abundance of elliptic islands throughout phase space (inhibiting chaos) \cite{duarte1994plenty}, and (2) a positive Lyapunov exponent on a set of Hausdorff dimension 2 \cite{gorodetski2012stochastic}.
The situation is vastly different in the presence of even a small amount of noise: 
see \cite{blumenthal2017lyapunov} for positive results confirming chaos for the Standard map
subjected to small-amplitude noise.

In this paper, we will apply a principle known as Furstenberg's criterion
 from random dynamical systems theory: this says, 
roughly speaking, that $\lambda^+ > 0$ as in Theorem \ref{thm:Lyap} if the probabilistic law of the gradient $D_x \phi^t$
is sufficiently nondegenerate. See Section \ref{sec:Outline} and Section \ref{sec:RDS} for more discussion. 
\end{remark} 

\begin{remark}\label{rmk:supportMuIntro}
For Systems \ref{sys:2DStokes} -- \ref{sys:NSE}, Theorem \ref{thm:Lyap} and Corollary \ref{cor:expandAllDirections} hold for all initial $u_0 \in \hat \Hbf$. For the finite-dimensional System \ref{sys:2DStokes} and \ref{sys:Galerkin}, it follows from hypoellipticity, see \cite{E2001-lg,Romito2004-rc} $\supp \mu = \Hbf_{\mathcal K}$. For 2D Navier-Stokes as in System \ref{sys:NSE}, that $\supp \mu = \Hbf$ follows from \cite{agrachev2005navier}. It is likely that the same is true for 3D hyper-viscous Navier-Stokes as in System \ref{sys:3DNSE}, but as far as the authors are aware the appropriate controllability theorems do not appear in the literature.
\end{remark}

\begin{remark} 
The techniques we use currently require well-posed SPDEs, hence the hyper-viscous regularization in System \ref{sys:3DNSE}. 
We have included this case to emphasize that our infinite dimensional methods are not restricted to two dimensional flow -- the treatment of the 3D case (System \ref{sys:3DNSE}) is only slightly harder than 2D (System \ref{sys:NSE}).   
In fact, the methods could extend to many settings in which one has an infinite dimensional model coupled to finitely-many degrees of freedom on a Riemannian manifold.  
\end{remark}

\begin{remark} \label{rmk:StokesDegn} 
For 2D Stokes as in System \ref{sys:2DStokes}, we can prove all our results (above and below) using only the weaker noise condition (see Remark \ref{rmk:2Dstokes}) 
$\set{(1,0),(0,1),(-1,0),(0,-1)} \subset \mathcal{K}$. 
If these are the only modes, the velocity field is given by the very simple formula
\begin{align}
u(t,x) = Z_1(t) \begin{pmatrix} \sin y \\ 0 \end{pmatrix} + Z_2(t) \begin{pmatrix} \cos y \\ 0 \end{pmatrix} + Z_3(t) \begin{pmatrix} 0 \\ \sin x \end{pmatrix} + Z_4(t) \begin{pmatrix} 0 \\ \cos x \end{pmatrix},
\end{align} 
where $Z_j, 1 \leq j \leq 4$ are independent Ornstein-Uhlenbeck processes (they do not need to be i.i.d., though in that case the flow is statistically homogeneous in space).
\end{remark}

We note that Theorem \ref{thm:Lyap} and Corollary \ref{cor:expandAllDirections} for the finite-dimensional models in Systems \ref{sys:2DStokes} and \ref{sys:Galerkin}
follow from adaptations of previously known criteria \cite{carverhill1987furstenberg, baxendale1989lyapunov} (see also \cite{furstenberg1963noncommuting} and other
citations given in Section \ref{subsubsec:FurstenbergOutline}) 
for positive exponents for random dynamical 
systems generated by SDE combined with by-now standard hypoellipticity arguments for Galerkin truncations of Navier-Stokes \cite{E2001-lg,Romito2004-rc}. 
Nevertheless, we include them for the following reasons: these results are physically interesting and absent from the literature
(to the best of our knowledge); they emphasize that Assumption \ref{a:Highs} is not fundamental for Lagrangian chaos; 
all the ingredients needed for their proof are already required for our results on the infinite-dimensional model in System \ref{sys:NSE}; 
and, although simpler to work with, they are instructive for the proof in the infinite-dimensional case.

On the contrary, our results for the infinite-dimensional model in Systems \ref{sys:NSE}--\ref{sys:3DNSE} 
do not follow from previously existing results, and require a considerable amount of additional work. See Section \ref{sec:Outline} for an outline. 

\subsubsection{Scalar advection}
Consider first the problem of scalar advection without diffusivity
\begin{equation} \label{eq:SclNokap}
\partial_t f_t + u_t \cdot \grad f_t  = 0, 
\end{equation}
with $(u_t)$ given by one of System \ref{sys:2DStokes}--\ref{sys:3DNSE}. 
Here the initial datum $f_0 : \T^d \to \R$ is in $H^1$ with $\int f_0\, \dx = 0$.
By the same methods as in Proposition \ref{prop:WP}, the coupled system of $(u_t,f_t)$ has a $\PP$-a.s. unique, $\mathcal{F}_t$-adapted mild solution that defines a Feller Markov process on  $\Hbf \times H^1$. 
At times we will call $(u_t, f_t)$ the \emph{scalar process}. Using Theorem \ref{thm:Lyap} and some additional work, for the $(u_t, f_t)$ process
we prove the following exponential growth of gradients with probability 1: 

\begin{theorem}[Exponential gradient growth without diffusivity] \label{thm:ExpGrwth}
Consider \eqref{eq:SclNokap} with $(u_t)$ given by any of Systems \ref{sys:2DStokes}--\ref{sys:3DNSE}. Then, there exists a constant $\lambda > 0$, depending on the system, with the following property. For any $\eta > 0, \eta \ll \lambda$; any fixed initial $f_0 \in H^1 \setminus \{ 0 \}$ with $\int f_0 dx = 0$; and for every fixed initial $u_0 \in \supp \mu$,
there exists an almost-surely strictly positive random constant $\delta = \delta(u_0, f_0, \eta) > 0$ such that for all $t \geq 0$ and $p \in [1,\infty]$, 
\begin{align}
\norm{\grad f_t}_{L^p} \geq \delta e^{(\lambda- \eta)t} \qquad \text{with probability 1}.
\end{align}
When $d = 2$, $\lambda := \lambda^+$ as in Theorem \ref{thm:Lyap}.
\end{theorem}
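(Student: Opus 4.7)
The plan is to pass from $\nabla f_t$ to the Jacobian $D\phi^t$ via the method of characteristics, then apply Corollary~\ref{cor:expandAllDirections}. Since $(u_t)$ takes values in $H^\sigma$ with $\sigma > d/2 + 2$, the velocity is at least $C^1$ in space, so the transport equation is solved explicitly by $f_t(\phi^t(y)) = f_0(y)$ for all $y \in \T^d$ and $t \geq 0$. Differentiating in $y$ gives the pointwise identity
\begin{equation}
\nabla f_t(\phi^t(y)) = (D_y\phi^t)^{-T}\nabla f_0(y).
\end{equation}

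In 2D, $\grad \cdot u = 0$ forces $\det D_y\phi^t = 1$, and for any $A \in \SL(2,\R)$ one has the elementary identity $A^{-T} = JAJ^{-1}$ with $J = \bigl(\begin{smallmatrix}0 & -1 \\ 1 & 0\end{smallmatrix}\bigr)$ the $90^\circ$ rotation. Because $J$ is an isometry, this gives
\begin{equation}
|\nabla f_t(\phi^t(y))| = |D_y\phi^t\, v(y)|, \qquad v(y) := J^{-1}\nabla f_0(y).
\end{equation}
In this way the potentially contracting action of $(D\phi^t)^{-T}$ on $\nabla f_0$ is converted into the \emph{expanding} action of $D\phi^t$ on the rotated gradient $v$, placing us in exactly the setting of Corollary~\ref{cor:expandAllDirections}.

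For the $L^\infty$ bound, fix any $y_0 \in \T^d$ with $\nabla f_0(y_0) \neq 0$ (such a $y_0$ exists since $f_0 \in H^1$ is nonzero with zero mean), and apply Corollary~\ref{cor:expandAllDirections} to $v(y_0)/|v(y_0)|$ to obtain an a.s.-positive random $\delta$ with
\begin{equation}
\|\nabla f_t\|_{L^\infty} \geq |\nabla f_t(\phi^t(y_0))| \geq \delta\,|\nabla f_0(y_0)|\, e^{(\lambda^+-\eta)t}.
\end{equation}
For $1 \leq p < \infty$, volume preservation and change of variables give $\|\nabla f_t\|_{L^p}^p = \int_{\T^d} |D_y\phi^t v(y)|^p \, \dy$. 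Invoking Corollary~\ref{cor:expandAllDirections} at each fixed $y$, Fubini promotes the pointwise a.s.\ bound to a single full-probability event on which $|D_y\phi^t v(y)| \geq \delta(y,\omega)|v(y)|e^{(\lambda^+-\eta)t}$ for a.e.\ $y$; Fatou's lemma then yields
\begin{equation}
\liminf_{t\to\infty} e^{-p(\lambda^+-\eta)t}\|\nabla f_t\|_{L^p}^p \geq \int_{\T^d} \delta(y,\omega)^p |\nabla f_0(y)|^p\, \dy > 0 \qquad \text{a.s.},
\end{equation}
the positivity being guaranteed by $\{\nabla f_0 \neq 0\}$ having positive Lebesgue measure. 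To extend this asymptotic bound to all $t \geq 0$ and to obtain a $p$-independent constant, one works at $p=1$ (adding a lower bound on $[0,T^*]$ from continuity and strict positivity of $t \mapsto \|\nabla f_t\|_{L^1}$, which holds since $\phi^t$ is a diffeomorphism) and then invokes the elementary inequality $\|\cdot\|_{L^p(\T^d)} \geq (2\pi)^{-d}\|\cdot\|_{L^1(\T^d)}$ valid for all $p \in [1,\infty]$.

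The main obstacle lies in the 3D case, where $A^{-T} = JAJ^{-1}$ has no analog in $\SL(3,\R)$, so exponential growth of $\nabla f_t$ cannot be extracted from Corollary~\ref{cor:expandAllDirections} alone. Here one must obtain a strictly positive top Lyapunov exponent for the \emph{inverse cocycle} $(D\phi^t)^{-T}$ directly: by the multiplicative ergodic theorem this exponent equals $-\lambda_d$, where $\lambda_d$ is the smallest Lyapunov exponent of $D\phi^t$, and the volume-preservation identity $\lambda_1 + \cdots + \lambda_d = 0$ together with $\lambda_1 = \lambda^+ > 0$ forces $-\lambda_d \geq \lambda^+/(d-1) > 0$. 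Proving the analog of Corollary~\ref{cor:expandAllDirections} for $(D\phi^t)^{-T}$ would then amount to re-running a Furstenberg-type argument on the projective process associated with the inverse cocycle, using the hypoellipticity and approximate-control machinery already developed for $D\phi^t$ in the proof of Theorem~\ref{thm:Lyap}.
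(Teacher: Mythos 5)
Your 2D argument is correct and takes a genuinely cleaner route than the paper's. The paper works directly with the $(-\top)$-cocycle $\check{\mathcal A}^t = (D_x\phi^t)^{-\top}$: it invokes the general MET relation (Proposition \ref{prop:METforCheckAc}) to identify $\check\lambda^+ = -\lambda_r$, and to get expansion in \emph{all} directions it must establish uniqueness of the stationary measure for the $(-\top)$-projective process $(u_t,x_t,\check v_t)$ (Propositions \ref{prop:Feller}, \ref{prop:UniErg} for the infinite-dimensional systems; H\"ormander for the finite-dimensional ones). Your $\SL_2$-conjugation identity $A^{-\top}=JAJ^{-1}$ collapses all of this in dimension 2: it turns $(D\phi^t)^{-\top}\nabla f_0$ pointwise into $D\phi^t$ applied to the rotated vector $J^{-1}\nabla f_0$, so Corollary \ref{cor:expandAllDirections} applies directly and no separate analysis of the $(-\top)$ machinery is needed. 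The Fubini/Fatou step to pass from pointwise to $L^p$, followed by the reduction to $p=1$ via $\|\cdot\|_{L^p(\T^d)}\gtrsim\|\cdot\|_{L^1(\T^d)}$, matches the paper's ``integrate over $\{\nabla f_0\neq 0\}$, then use $\|\nabla f_t\|_{L^1}\lesssim\|\nabla f_t\|_{L^p}$'' step and is sound. (One small wrinkle: the $L^\infty$ argument as first phrased evaluates $\nabla f_0$ at a fixed point $y_0$, which is not meaningful for $f_0\in H^1$; but this is harmless because your final argument derives every $p$, including $p=\infty$, from the $L^1$ bound.)

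The 3D case is where the proposal falls short of a proof. You correctly observe that the $J$-conjugation has no $\SL_3$ analogue, and you correctly note that $\lambda_1+\cdots+\lambda_d=0$ with $\lambda_1>0$ forces $\check\lambda^+=-\lambda_d>0$. But the theorem needs more than positivity of the top exponent of the $(-\top)$-cocycle: it needs expansion at rate $\check\lambda^+$ for \emph{every} initial direction, which via Proposition \ref{prop:refineMET} requires uniqueness of the stationary measure for the $(-\top)$-projective process. Your last paragraph correctly names this as the missing ingredient but only gestures at ``re-running a Furstenberg-type argument''; the paper actually carries this out. (Also, a Furstenberg argument is not the point here — positivity of $\check\lambda^+$ is already settled by the pigeonhole on $\sum\lambda_i$; what is genuinely needed is strong Feller + weak irreducibility, or H\"ormander, for the $\check v_t$ process.) So as written, the 3D branch is a correct diagnosis rather than a proof, and the 2D branch is a nice self-contained improvement.
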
 
Recently the question of mixing of scalars, i.e. decay rates in $H^{-1}$ or mixing defined by Bressan in \cite{Bressan03}, has generated a lot of interest: see, e.g., \cite{LDT11,Seis2013,AlbertiEtAl14,IyerEtAl14} and the references therein.
This refinement will be addressed in future work. 

\subsubsection{Scalar turbulence in the Batchelor regime} \label{sec:Turbs}
Next, we are interested in studying vanishing diffusivity limits of the stationary measures associated to the following problem: 
\begin{equation}
\partial_t g_t + u_t \cdot \grad g_t = \kappa \Delta g_t + \widetilde{Q} \dot{\widetilde{W}}_t,  \label{eq:Sclkap} 
\end{equation}
with $u_t$ given by one of System \ref{sys:2DStokes}--\ref{sys:3DNSE}. Here, the initial datum is $g_0 \in H^1$ and has zero mean. 
The (mean-zero in space) random source $\widetilde{Q} \dot{\widetilde{W}}_t$ is of the form
\begin{equation}
\widetilde{Q} \dot{\widetilde{W}}_t = \sum_{k\in \Z^d_0} \widetilde{q}_k e_k(x)\dot{\widetilde{W}}_k(t),
\end{equation}
where $\{\widetilde{W}_k\}$ are an additional family of independent one-dimensional canonical Wiener processes also taken on the same filtered probability space $(\Omega,\mathcal{F},(\F_t),\P)$ and assumed independent of $\set{W_k}$. Define 
\begin{equation}
\bar{\eps} := \frac{1}{2}\sum_{k \in \Z^d_0} \abs{\tilde{q}_{k}}^2 \in (0,\infty). 
\end{equation}
For simplicity we additionally require at least $\sum_{k \in \Z^d_0} \abs{k}^2\abs{\tilde{q}_k}^2 < \infty$ (though it is likely this condition could be dropped). 
Note that the random source can be very smooth and degenerate, e.g. compactly supported in frequency. 
Under these conditions, as in Proposition \ref{prop:WP}, there is a $\PP$-a.s. unique, global-in-time, $\mathcal{F}_t$-adapted solution $(u_t,g_t)$  which defines a Feller Markov process on $\Hbf \times H^1$. 
Moreover, the Krylov-Bogoliubov procedure proves the existence of stationary measures $\set{\bar{\mu}^\kappa}_{\kappa > 0}$ supported on $\Hbf \times H^1$ (note that all such measures satisfy $\bar{\mu}^\kappa(A \times H^1) = \mu(A)$; see Section \ref{sec:turbulence} for more detail). 
By It\^o's lemma, one verifies that statistically stationary solutions $g^\kappa$ to \eqref{eq:Sclkap} satisfy the balance relation  
\begin{equation}
\kappa\EE \norm{\grad g^\kappa}_{L^2}^2 = \bar{\eps}. \label{eq:SclBal}
\end{equation}
As above, we are only considering $g$ which satisfy $\int g \,\dx = 0$ (which is conserved due to the mean-zero assumption on $\widetilde{Q}$).

The problem \eqref{eq:Sclkap} is an idealized model for `scalar turbulence' in the Batchelor regime (see e.g. \cite{Batchelor59,BalkovskyFouxon99,ShraimanSiggia00,CrisantiEtAl1991,FalkovickEtAl01}), which corresponds to the case when the velocity $u$ is much smoother (in space) than the scalar.
Passive scalar turbulence has been the subject of much research in the physics community both because of its intrinsic importance to physical applications and its potential to provide a place to develop analytic methods for understanding other turbulent systems \cite{ShraimanSiggia00}.
In Batchelor's original paper \cite{Batchelor59}, he considered a random straining flow as an idealized model for the small scale behavior of a passive scalar. Batchelor used this model to predict the power spectrum of the scalar, now known as \emph{Batchelor's law}. Later, the \emph{Kraichnan model} was introduced in \cite{Kraichnan68},  wherein the velocity field is taken to be a white-in-time Gaussian field with a prescribed correlation function in space. Hence, the random ODE \eqref{eq:xtintro} is replaced by an SDE with multiplicative noise and the scalar equation \eqref{eq:SclNokap} is replaced with a stochastic transport equation in Stratonovich form. There is an extensive literature on this model in physics; see e.g. \cite{ShraimanSiggia00, CrisantiEtAl1991, crisanti2012products} and the references therein. For the Kraichnan model, Theorem \ref{thm:Lyap} is proved in \cite{baxendale1993kinematic} using random dynamical systems theory developed in \cite{baxendale1989lyapunov}. 

The questions one is often interested in answering about systems such as \eqref{eq:Sclkap} are (A) can we develop analytical theories for predicting statistical properties of small scales in the limit $\kappa \rightarrow 0$? and (B) to what extent are these statistics universal, that is, which properties are independent of detailed information of the system? 
The predictions for (A) often come in the form of quantities such as \emph{structure functions}, for example
\begin{gather}
\EE (\delta_\ell g^\kappa)^p  \sim C_p \abs{\ell}^{\zeta_p}, \quad\quad \ell_D \lesssim \abs{\ell} \lesssim \ell_I \, , \\
\text{where } \quad \delta_\ell g(x) := g(x+\ell) - g(x) \, , 
\end{gather}
(where the meaning of $\sim$ is left informal for now) for a range of scales $\ell_D,\ell_I$ (for \emph{dissipative} and \emph{integral} respectively) assumed to satisfy $\lim_{\kappa \rightarrow 0} \ell_D(\kappa) = 0$ and $\ell_I$ much smaller than the length-scales of the large scale forcing in the system (but independent of $\kappa$).
For (B), the corresponding question is then to answer for which $p$ are the quantities $\zeta_p,C_p$ and/or $\ell_D$ are universal.
The first predictions of this general type were due to Kolmogorov \cite{K41a,K41b,K41c} in 1941, who studied the 3D Navier-Stokes equations as $\nu \rightarrow 0$. Some of his original predictions are now known to be inaccurate (though still good approximations for many statistics of interest); see e.g. \cite{K62,AnselmetEtAl1984,Frisch1995} and the references therein. 
One of his predictions, the 4/5 law, is very well matched by experiments (indeed, it is considered one of the few `exact' laws of turbulence \cite{Frisch1995}) and is universal\footnote{Both the constant and the exponent are universal; it is not clear whether $\ell_D$ is universal.}. In 1949, Yaglom \cite{Yaglom49} made the analogous prediction\footnote{Of course, this is more like the 4/3 law than the 4/5 law, but the distinction for Navier-Stokes is due to the vector-valued nature.}
\begin{align}
\EE \left( \abs{\delta_\ell g^\kappa}^2 \delta_{\ell}u \cdot \frac{\ell}{\abs{\ell}} \right) \sim -\frac{4}{3} \bar{\eps} \abs{\ell} \, .
\end{align}
This is the law we confirm for \eqref{eq:Sclkap} (in a spherically averaged sense); see Theorem \ref{thm:turb} below for the rigorous meaning of $\sim$ in this statement. 

Yaglom's law, like the Kolmogorov $4/5$ for 3D Navier-Stokes, is an expected consequence of the statistical stationarity and ``anomalous dissipation'', that is, when the dissipation rate of a quantity is non-vanishing (or at least vanishing at an anomalously slow rate) in the limit of vanishing dissipative effects (see \cite{Yaglom49,Frisch1995,BCZPSW18}). 
In \cite{BCZPSW18}, it is proved that the Kolmogorov 4/5 law follows for statistically stationary solutions of the 3D Navier-Stokes using that $\lim_{\nu \rightarrow 0} \nu \EE\norm{u^\nu}_{L^2}^2 = 0$. This property is referred to therein as ``weak anomalous dissipation''\footnote{We remark that this property is equivalent to the assertion that the Taylor microscale goes to zero as Reynolds number goes to infinity; see \cite{BCZPSW18} for details.}, and is a natural form of anomalous dissipation for statistically stationary solutions (see \cite{BCZPSW18} for more discussion).  

In this work, we use Theorem \ref{thm:ExpGrwth} to prove the analogous statement here (\eqref{eq:WAD} below) by adapting arguments from \cite{BCZGH}; see Section \ref{sec:turbulence} for details.
Then Yaglom's law, as stated in \eqref{eq:Yaglom}, follows from a straightforward variation of the argument in \cite{BCZPSW18}. 
Inequality \eqref{eq:WAD} cannot hold if solutions to \eqref{eq:Sclkap} remain concentrated in low frequencies in the limit $\kappa \rightarrow 0$; indeed in this case it is easy to check that $\kappa \E \norm{g^\kappa}_{L^2}^2 \gtrsim 1$ (see also Remark \ref{rmk:nrmExplode} below). 
For \eqref{eq:WAD} to hold, the fluid needs to transfer `most' of the $g$ to successively smaller scales where it is more efficiently dissipated by the $\kappa \Delta g^\kappa$ term, resulting in a much-enhanced dissipation rate. It is Theorem \ref{thm:ExpGrwth} that ultimately implies the Lagrangian flow-map creates small scales everywhere in the domain with probability 1. See also the earlier work using norm growth in the inviscid passive scalar problem to obtain `enhanced dissipation' effects for $\kappa >0$ models \cite{CKRZ,Zlatos10} and the recent related work \cite{CZDE18}.   

The idea that Lagrangian chaos and scalar turbulence scaling laws should be intimately related has long been expected by the physics community; see, e.g., \cite{AntonsenOtt1991,AntonsenEtAl96,YuanEtAl00,ShraimanSiggia00} and the references therein for more information. 
    
\begin{theorem}[Scalar turbulence in the Batchelor regime] \label{thm:turb}
Let $\set{u,g^\kappa}_{\kappa > 0}$ be a sequence of statistically stationary solutions to \eqref{eq:Sclkap} with $(u_t)$ given by any of Systems \ref{sys:2DStokes}--\ref{sys:3DNSE}.  Then,
\begin{itemize} 
\item[(i)] the Weak Anomalous Dissipation property holds: 
\begin{equation}
\lim_{\kappa \rightarrow 0} \kappa \EE \norm{g^\kappa}_{L^2}^2 = 0; \label{eq:WAD}
\end{equation}
\item[(ii)] Yaglom's law holds over a suitable inertial range: that is, $\forall \kappa > 0$ small, there exists an $\ell_D(\kappa) > 0$ with $\lim_{\kappa \rightarrow 0} \ell_D(\kappa) = 0$ such that 
\begin{equation}
\lim_{\ell_I \rightarrow 0} \limsup_{\kappa \rightarrow 0}  \sup_{\ell \in [\ell_D, \ell_I]}  \abs{ \frac{1}{\ell}\EE \fint_{\T^d} \fint_{\S^{d-1}} |\delta_{\ell n} g^\kappa|^2 \delta_{\ell n} u\cdot n \, \dee S(n) \dx + \frac{4}{3}\bar{\eps} } = 0. \label{eq:Yaglom}
\end{equation}
\end{itemize} 
\end{theorem}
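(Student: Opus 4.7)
The two parts of Theorem \ref{thm:turb} are quite different in character. Once the weak anomalous dissipation property (i) is in hand, Yaglom's law (ii) can be extracted by a fairly standard structure-function / energy-flux budget calculation in the spirit of \cite{BCZPSW18}. The heart of the argument is therefore to derive (i) from the pathwise exponential gradient growth of Theorem \ref{thm:ExpGrwth}.

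\textbf{Strategy for the weak anomalous dissipation bound (i).} The plan is to combine the stationary balance $\kappa\,\EE\norm{\grad g^\kappa}_{L^2}^2=\bar{\eps}$ with the inviscid gradient growth of Theorem \ref{thm:ExpGrwth} via a Duhamel representation. In stationarity one may write
\[
g^\kappa_t \;=\; P^\kappa_{t,0}\, g^\kappa_0 \;+\; \int_0^t P^\kappa_{t,s}\, \widetilde Q\,\dee\widetilde W_s,
\]
where $P^\kappa_{t,s}$ is the (random) advection--diffusion propagator for \eqref{eq:Sclkap}. Taking $\EE\norm{\cdot}_{L^2}^2$ and using the It\^o isometry reduces matters to a uniform-in-$\kappa$ upper bound on $\sum_k \tilde q_k^2 \EE\norm{P^\kappa_{t,s} e_k}_{L^2}^2$. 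On a time window of length $T_\kappa\ll|\log\kappa|/\lambda$, the operator $P^\kappa$ can be compared to the inviscid pullback $P^0_{t,s}h = h\circ\phi^s\circ(\phi^t)^{-1}$ with error small in $\kappa$; in this window Theorem \ref{thm:ExpGrwth} forces $\norm{\grad P^0_{t,s} e_k}_{L^2}\gtrsim e^{(\lambda-\eta)(t-s)}$ with high probability (using that each $e_k$ is smooth and nonzero), and a heat-kernel smoothing estimate then shows that a subsequent $\kappa\Delta$ step dissipates $L^2$ mass at a rate proportional to the enhanced squared gradient. Iterating this scheme over windows and choosing $T_\kappa=c|\log\kappa|$ produces $\EE\norm{g^\kappa}_{L^2}^2=o(\kappa^{-1})$, which is \eqref{eq:WAD}. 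This is exactly the schematic carried out in \cite{BCZGH} for analogous settings, with Theorem \ref{thm:ExpGrwth} serving as the chaos input.

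\textbf{Deriving Yaglom's law (ii) from (i).} With WAD established, one computes the evolution of $\EE[g^\kappa(x+\ell n) g^\kappa(x)]$ using It\^o's formula, exploits stationarity to discard the time derivative, and performs a spherical average in $n$. Integration by parts and the isotropic identity $\fint_{\S^{d-1}} n_i n_j \,\dee S(n)=\tfrac{1}{d}\delta_{ij}$ yield an exact Monin--Yaglom style identity of the form
\[
\frac{1}{\ell}\EE \fint_{\T^d}\fint_{\S^{d-1}} |\delta_{\ell n}g^\kappa|^2 \,\delta_{\ell n}u\cdot n\,\dee S(n)\dx
\;=\; -\tfrac{4}{3}\bar{\eps}\;+\; E^{\mathrm{diss}}_\kappa(\ell)\;+\;E^{\mathrm{force}}(\ell),
\]
where $E^{\mathrm{force}}(\ell)=o(1)$ as $\ell\to 0$ uniformly in $\kappa$ thanks to the spatial regularity of $\widetilde Q$, and $E^{\mathrm{diss}}_\kappa(\ell)$ is controlled by a local-in-space version of $\kappa\,\EE\norm{\grad g^\kappa}_{L^2}^2$ which, modulo a boundary correction, is of order $\kappa\,\EE\norm{g^\kappa}_{L^2}^2/\ell^2$. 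Part (i) then gives a quantitative rate at which the dissipation length $\ell_D(\kappa)\to 0$ can be chosen so that $\sup_{\ell\in[\ell_D,\ell_I]}|E^{\mathrm{diss}}_\kappa(\ell)|\to 0$. Sending $\kappa\to 0$ first and $\ell_I\to 0$ afterwards delivers \eqref{eq:Yaglom}.

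\textbf{Main obstacle.} The delicate technical step is upgrading the purely qualitative almost-sure lower bound $\norm{\grad f_t}_{L^p}\geq \delta\, e^{(\lambda-\eta)t}$ from Theorem \ref{thm:ExpGrwth} into a \emph{quantitative} probabilistic tail bound that is uniform over a dense family of initial data $f_0$ (in particular, over the Fourier modes $e_k$ appearing in $\widetilde Q$). Without such uniform tail control on the random constant $\delta(u_0,f_0,\eta)$, the Duhamel iteration above breaks down, since $\delta$ could in principle be so small for many $e_k$ as to destroy the gain per window. This is where the bulk of the work lies: producing uniform-in-$f_0$ integrability estimates for $\delta$ from the ergodic / Furstenberg-type machinery that underlies Theorem \ref{thm:Lyap}, and doing so in a way that is compatible with the infinite-dimensional velocity state of Systems \ref{sys:NSE}--\ref{sys:3DNSE}.
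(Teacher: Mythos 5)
Your outline for part (ii) is broadly in line with the paper's actual argument: the paper derives a scalar K\'arm\'an--Howarth--Monin relation (Proposition~\ref{prop:KHM}), spherically averages to produce the ODE~\eqref{eq:ODE}, integrates it, and uses the weak anomalous dissipation~\eqref{eq:WAD} to remove the diffusive term over an inertial range $[\ell_D(\kappa),\ell_I]$, as in \cite{BCZPSW18}. But your proposed proof of part~(i) departs substantially from the paper's, and the obstacle you flag at the end is not a technicality to be overcome -- it is precisely why the paper does \emph{not} take your route.

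The paper does not run a Duhamel/window iteration. Instead it rescales $f^\kappa := \sqrt{\kappa}\,g^\kappa$ so that the stationary balance~\eqref{eq:SclBal} becomes $\kappa$-independent, $\EE\|\nabla f^\kappa\|_{L^2}^2 = \bar\eps$ (Lemma~\ref{lem:KryBog}). This gives tightness of the stationary measures $\bar\mu^\kappa$ on $\Hbf\times L^2$ and lets one extract a weak limit $\bar\mu^0$, shown to be a stationary measure for the \emph{inviscid} transport problem (Lemma~\ref{lem:compact}). The punchline (Corollary~\ref{cor:ufblow}) is then entirely qualitative: if $\bar\mu^0$ charged any $(u,f)$ with $f\neq 0$, Theorem~\ref{thm:ExpGrwth} would force $\EE\int_{\T^d}|\nabla f_{t,u,f}|^2\,\dx\to\infty$ on a positive-$\bar\mu^0$-measure set, contradicting the time-stationarity of $\int\|\nabla f\|_{L^2}^2\,\dee\bar\mu^0$. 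Hence $\bar\mu^0 = \mu\times\delta_0$, and~\eqref{eq:WAD} follows together with the $L^p$ moment bounds~\eqref{ineq:Mompbd}. No tail estimate on the random constant $\delta$, no uniformity over $f_0$, and no rate of convergence enter anywhere.

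Your approach, by contrast, requires upgrading Theorem~\ref{thm:ExpGrwth} to a quantitative statement: uniform-in-$f_0$ integrability or tail control on the random prefactor $\delta(u_0,f_0,\eta)$, applied to every Fourier mode $e_k$ seen by $\widetilde Q$. You correctly identify this as the crux, but it is a genuine missing ingredient, not merely ``the bulk of the work'': the paper never provides it, and the ergodic/Furstenberg machinery behind Theorem~\ref{thm:Lyap} produces only a.s.\ positivity of $\delta$, not moment bounds. (Incidentally, \cite{BCZGH} also uses a compactness argument of this flavor, not the window iteration you describe.) Without that quantitative input, your scheme does not close, and the compactness--contradiction reduction is exactly the device that renders it unnecessary.
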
 

\begin{remark}
Note that by time stationarity, \eqref{eq:Yaglom} is the same as asserting the expected value of arbitrary length time averages follow Yaglom's law. Further, as in \cite{BCZPSW18}, if one assumes $Q$ and $\widetilde{Q}$ are spatially homogeneous, then there exists spatially homogeneous statistically stationary solutions to the system $(u_t,g_t)$ and one can remove the $x$ average from \eqref{eq:Yaglom}, that is, \eqref{eq:Yaglom} holds a.e. in $x$. 
\end{remark}

\begin{remark} \label{rmk:nrmExplode}
Note that by the balance \eqref{eq:SclBal}, the weak anomalous dissipation property \eqref{eq:WAD}, and Sobolev interpolation, there holds $\lim_{\kappa \rightarrow 0}\kappa \EE \norm{g^\kappa}_{H^\gamma}^2 = 0$ for all $\gamma \in (0,1)$ and $\lim_{\kappa \rightarrow 0} \kappa \EE \norm{g^\kappa}_{H^\gamma}^2 = +\infty$ for all $\gamma > 1$. 
\end{remark}

\section{Outline of the proofs} \label{sec:Outline}

Let us now give a somewhat detailed outline for the proofs of the main results of this paper, starting with Theorem \ref{thm:Lyap}. 

The basic structure of the proof can be summarized in two main points:
\begin{itemize} 
\item[(1)] The Multiplicative Ergodic Theorem and a variant of Furstenberg's criterion shows that, given suitable ergodic properties of the dynamics, the Lyapunov exponent is strictly positive unless there is a certain \emph{almost surely} invariant structure in the motion of $x_t = \phi^t(x_0)$ and the gradient $D_{x_0} \phi^t$; 
\item[(2)] hypoellipticity and approximate controllability arguments show that (A) the dynamics satisfy suitable ergodic properties and that (B) a rich range of motions of $x_t$ and $D_{x_0} \phi^t$ are realized. This will rule out the invariant structure and allow us to deduce a positive Lyapunov exponent as in Theorem \ref{thm:Lyap}.
\end{itemize} 

\noindent As we will see below, both are significantly harder in the infinite dimensional case (Systems \ref{sys:NSE}--\ref{sys:3DNSE}). 

\subsection{The RDS framework and the Multiplicative Ergodic Theorem}\label{subsubsec:RDSframeworkOutline}

Theorem \ref{thm:Lyap} makes two assertions: 
 (i) that the limit defining the Lyapunov exponent $\lambda^+$ exists and is 
 constant almost surely, and (ii) that this exponent satisfies $\lambda^+ > 0$.
Let us first outline how to prove assertion (i) using tools from random dynamical systems theory.

To start, we must formulate the Lagrangian process $(u_t, x_t)$ as a stochastic flow or \emph{random dynamical system} (RDS) on ${\hat{\Hbf}} \times \T^d$ (here, $\hat \Hbf$ is as in the beginning of Section \ref{sec:Results}). 
That is, given a random noise path $\omega \in \Omega$
and a fixed initial $(u_0, x_0) \in {\hat{\Hbf}} \times \T^d$, the 
assignment $(u_0, x_0) \mapsto (u_t, x_t)$ is realized as 
$(u_t, x_t) = \Theta_\omega^t(u_0, x_0)$,
where $\Theta_\omega^t : {\hat{\Hbf}} \times \T^d \to {\hat{\Hbf}} \times \T^d$ is a 
continuous mapping depending measurably on the noise parameter $\omega$ (see Section \ref{subsubsec:basicSetupRDS} for details).
In our setting, $\Theta^t_\omega$ is of the form $\Theta^t_\omega(u, x) = (\mathcal U_\omega^t (u), \phi^t_{\omega, u} (x))$, where
$\mathcal U_\omega^t : {\hat{\Hbf}} \to {\hat{\Hbf}}$ is the time-$t$ mapping associated to the equation governing $(u_t)$ (any of Systems \ref{sys:2DStokes}--\ref{sys:3DNSE}), i.e., 
the map sending $u_0 \mapsto u_t$,
and $\phi^t_{\omega, u} = \phi^t : \T^d \to \T^d$ is the time-$t$ Lagrangian flow map associated to the noise parameter $\omega$ and the initial vector field $u \in {\hat{\Hbf}}$
as in \eqref{eq:xtintro}, i.e., the diffeomorphism on $\T^d$ sending $x_0 \mapsto x_t$.
In the context of RDS, the matrix-valued mapping $\Omega \times {\hat{\Hbf}} \times \T^d \to M_{d \times d}(\R)$ 
sending $(\omega, u, x) \mapsto  D_x \phi^t_{\omega, u}$ for fixed $t > 0$ is an object known as a
\emph{linear cocycle} over the RDS $\Theta^t_\omega$. 

For more background on random dynamics and a precise enumeration of the assumptions involved, see
Sections \ref{subsec:RDSelements} -- \ref{subsec:LinearCocycles}, where the relevant theory and assumptions
are spelled out for an abstract RDS $\mathcal T$ acting on a metric space $Z$ and a linear cocycle $\mathcal A$
over $\mathcal T$. Throughout Section \ref{sec:RDS} we intend to apply this with $\mathcal T$ replaced by the Lagrangian
flow $\Theta$ acting on $Z = \hat \Hbf \times \T^d$ with $\mathcal A$ replaced by the gradient cocycle $D_x \phi^t$.
It is straightforward to verify the assumptions made in Sections \ref{subsec:RDSelements} -- \ref{subsec:LinearCocycles} for
$\Theta$ and $D_x \phi^t$; this is carried out in the Appendix (Section \ref{subsec:wellPosedApp}).

A fundamental result pertaining to linear cocycles is the Multiplicative Ergodic Theorem, stated in
full in Section \ref{subsubsec:METRDS} as Theorem \ref{thm:MET}. For the purposes of this discussion, we state
below the following consequence, often referred to as the Furstenberg-Kesten Theorem \cite{furstenberg1960products}.

\begin{proposition}\label{prop:lyapOutline}
The limit
\[
\lambda^+(\omega, u, x) := \lim_{t \to \infty} \frac{1}{t} \log |  D_x \phi^t_{\omega, u}|
\]
exists for $\P$-a.e. $\omega$ and $\mu \times \Leb$-a.e. $(u, x) \in {\hat{\Hbf}} \times \T^d$, where
$\mu$ is the stationary measure for the $(u_t)$ process as in Proposition \ref{prop:WP}.

Moreover, if $\mu \times \Leb$ is an ergodic stationary measure (Definition \ref{defn:PtmuInvariant}) for the Lagrangian process $(u_t, x_t)$,
then the limiting value $\lambda^+$ does not depend on $(\omega, u, x)$.
\end{proposition}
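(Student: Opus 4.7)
The plan is to deduce Proposition \ref{prop:lyapOutline} from Kingman's subadditive ergodic theorem applied to the stationary skew-product flow built from the Wiener shift and the Lagrangian RDS. Let $\theta^t : \Omega \to \Omega$ denote the canonical time-shift on noise space, and consider $S^t : \Omega \times \hat\Hbf \times \T^d \to \Omega \times \hat\Hbf \times \T^d$ defined by $S^t(\omega, u, x) = (\theta^t \omega, \Theta^t_\omega(u,x))$. The stationarity of the Lagrangian Markov process translates into the fact that $\PP \times \mu \times \Leb$ is $S^t$-invariant. The flow identity $\phi^{t+s}_\omega = \phi^s_{\theta^t\omega} \circ \phi^t_\omega$, differentiated via the chain rule, gives the multiplicative cocycle relation
\[
D_x \phi^{t+s}_{\omega, u} \;=\; D_{\phi^t_{\omega, u}(x)} \phi^s_{\theta^t \omega,\, \mathcal U^t_\omega u}\,\cdot\, D_x \phi^t_{\omega, u},
\]
so, writing $a_t(\omega, u, x) := \log |D_x \phi^t_{\omega, u}|$ in operator norm, one obtains the subadditive relation $a_{t+s} \leq a_t + a_s \circ S^t$.

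Next I would verify the integrability condition $\int a_1^+ \, d(\PP \times \mu \times \Leb) < \infty$ that Kingman's theorem requires. Differentiating the ODE \eqref{eq:xtintro} in $x$ yields the variational equation $\frac{\dee}{\dt} D_x \phi^t = (\nabla u_t)(\phi^t(x))\, D_x \phi^t$, and Gronwall's inequality gives the pointwise bound
\[
a_1(\omega, u, x) \;\leq\; \int_0^1 \norm{\nabla u_s}_{L^\infty}\, \ds.
\]
The right side is independent of $x$, so the $\Leb$ integration is trivial; by stationarity and Fubini, the remaining integral reduces to $\int \norm{\nabla u}_{L^\infty}\, d\mu(u)$. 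Because $\sigma > d/2 + 2$ by \eqref{eqn:sigmaConstraint}, Sobolev embedding gives $\Hbf \embeds W^{1,\infty}$, and standard first-moment bounds on $\mu$ for each of Systems \ref{sys:2DStokes}--\ref{sys:3DNSE} (cf.\ Proposition \ref{prop:WP} and the references therein) make this finite. Kingman's subadditive ergodic theorem, applied to $(a_t)$ over the probability-preserving flow $(S^t, \PP \times \mu \times \Leb)$, then produces the limit
\[
\lambda^+(\omega, u, x) \;=\; \lim_{t \to \infty} \frac{1}{t}\, a_t(\omega, u, x)
\]
for $(\PP \times \mu \times \Leb)$-a.e.\ $(\omega, u, x)$. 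By Fubini this is equivalent to the almost-everywhere statement in part (i).

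For the constancy assertion in part (ii), I would argue that ergodicity of $\mu \times \Leb$ under the Markov semigroup lifts to ergodicity of $\PP \times \mu \times \Leb$ under the skew product $S^t$. The standard argument proceeds as follows: any $S^t$-invariant $L^2$ function $F(\omega, u, x)$ has conditional expectation $\bar F(u, x) := \E[F \mid \mathcal{F}_0](u, x)$, which is invariant under the Markov semigroup of the $(u_t, x_t)$ process. Assumed ergodicity of $\mu \times \Leb$ forces $\bar F$ to be constant, and then a martingale argument exploiting the Markov property together with $S^t$-invariance of $F$ forces $F$ itself to be constant $(\PP \times \mu \times \Leb)$-a.e. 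Since $\lambda^+$ is by construction $S^t$-invariant, Kingman's conclusion under this ergodicity gives that $\lambda^+$ is a.s.\ constant. The main technical obstacle is this lifting of ergodicity from the semigroup to the skew product, which is by now standard (see, e.g., Kifer's monograph on random transformations) but requires some care in the infinite-dimensional settings; the integrability verification is routine given the Sobolev regularity imposed by \eqref{eqn:sigmaConstraint} and known moment bounds on $\mu$.
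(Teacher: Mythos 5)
Your proposal is correct and follows essentially the same route as the paper: both ultimately rest on Kingman's subadditive ergodic theorem over the skew product, with ergodicity of the skew product obtained by lifting the assumed ergodicity of $\mu \times \Leb$ via the equivalence in Kifer (Theorem I.2.1, cited in the paper as Proposition~\ref{prop:characterizeErgodicityRDS}). The only presentational difference is that you re-derive the top-exponent portion of the Multiplicative Ergodic Theorem directly via the cocycle relation, Gronwall, and Kingman, whereas the paper simply invokes the full MET (Theorem~\ref{thm:MET}, cited from Arnold), verifying condition (H2) in the Appendix via the regularization estimate~\eqref{ineq:locRegu}; two small points worth adding to make your argument airtight are that the continuous-time version of Kingman requires integrability of $\sup_{0 \leq t \leq 1} a_t^+$ (which your Gronwall bound does supply, just not stated as such), and that finiteness of the limit follows from $\det D_x\phi^t_{\omega,u} \equiv 1$, which forces $a_t \geq 0$.
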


Ergodicity of $\mu \times \Leb$ as a stationary measure for the Lagrangian process $(u_t, x_t)$
is a necessary ingredient for Theorem \ref{thm:Lyap}. See Section \ref{subsubsec:ErgodicProperties}
below for a discussion of the ergodic properties of the $(u_t, x_t)$ process.

\begin{remark}\label{rmk:everyAEOutline}
	Note that in Theorem \ref{thm:Lyap}, the Lyapunov exponent $\lambda^+$ is
	asserted to exist with probability 1 at \emph{every} initial $(u, x) \in \supp \mu \times \T^d$, as opposed to $\mu \times \Leb$-
	almost every $(u, x)$ as in 
	Proposition \ref{prop:lyapOutline}.
	The strong Feller property (Definition \ref{defn:strongFeller})
	 for the $(u_t, x_t)$ process allows us to pass between these formulations: see Lemma 
	\ref{lem:ultraFeller}(b) in Section \ref{subsec:furstInfiniteDimensions}.
\end{remark}

\subsection{Determining positive Lyapunov exponents: Furstenberg's criterion}\label{subsubsec:FurstenbergOutline}

An entirely separate matter is to verify that $\lambda^+$ as in Proposition \ref{prop:lyapOutline} is
strictly positive. This problem is notoriously difficult (see Remark \ref{remark:deterministicHard} above). Aiding us, however, is the  fact that the cocycle $(\omega, u, x) \mapsto D_x \phi^t_{\omega, u}$
is subjected to some noise. For such cocycles, a powerful tool known as Furstenberg's criterion
implies $\lambda^+ > 0$ under suitable nondegeneracy conditions described in detail below.
The criterion was originally obtained in \cite{furstenberg1960products} for IID products of matrices,
and extended in scope by various authors in the ensuing years: see, e.g., 
\cite{avila2010extremal, guivarc1985frontiere, gol1989lyapunov, baxendale1989lyapunov, ledrappier1986positivity}, and also
the citations of Chapter 1 of \cite{bougerol2012products} for a more complete bibliography.

Ignoring for now the requisite quantifiers and other details, the
relevant version of Furstenberg's criterion can be stated as follows.
Proposition \ref{prop:furstenbergOutline} below is a version of the criterion given in \cite{ledrappier1986positivity}, and will 
be stated in full as Theorem \ref{thm:furstenberg} in Section \ref{subsec:FurstenbergFD}.
Below, $P^{d-1} = P(\R^d)$ denotes the manifold of one-dimensional subspaces of $\R^d$.
\begin{proposition}[Informal Furstenberg criterion]\label{prop:furstenbergOutline}
Assume $\mu \times \Leb$ is an ergodic stationary measure for the Lagrangian process $(u_t, x_t)$.
If $\lambda^+ = 0$, then to each $(\mu \times \Leb)$-generic
$(u, x)$, there is associated a deterministic (i.e., $\omega$-independent) probability measure
$\nu_{u, x}$ on $P^{d-1}$ with the property that 
\begin{align}\label{eq:furstenberg111}
 (D_x \phi^t_{\omega, u})_* \nu_{u, x} = \nu_{\Theta^t_\omega(u, x)}
 \end{align}
for all $t > 0$ and $\P \times \mu \times \Leb$-almost all $(\omega, u, x) \in \Omega \times \Hbf \times \T^d$.
 \end{proposition}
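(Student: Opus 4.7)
The plan is to lift the Lagrangian process to the projective bundle: consider the Feller Markov process $(u_t,x_t,[v_t])$ on $Z \times P^{d-1}$, where $Z = \hat{\Hbf} \times \T^d$, $[v_t]$ denotes the projective class of the transported vector $D_{x_0}\phi^t_{\omega, u_0} v_0$, and the base projection $(u_t, x_t)$ is the Lagrangian flow from Section \ref{subsubsec:RDSframeworkOutline}. Since $P^{d-1}$ is compact while the base admits the ergodic stationary measure $\mu \times \Leb$, a standard Krylov--Bogolyubov argument produces a Markov-stationary probability $m$ on $Z \times P^{d-1}$ projecting to $\mu \times \Leb$; passing to an ergodic component yields such an $m$ that is ergodic for the projective process. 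Disintegrate
\[
m = \int \nu_{u,x}\, d(\mu \times \Leb)(u,x),
\]
producing the candidate family $\set{\nu_{u,x}}$ of deterministic probability measures on $P^{d-1}$. The remaining task is to promote stationarity-in-law of $m$ to the sample-path equivariance \eqref{eq:furstenberg111}.

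This promotion is where the hypothesis $\lambda^+ = 0$ enters decisively. Because $u_t$ is divergence-free, $\phi^t$ is volume-preserving and $D_x\phi^t \in \mathrm{SL}(d,\R)$; hence the Lyapunov spectrum sums to zero, and $\lambda^+ = 0$ forces \emph{every} Lyapunov exponent to vanish. By the MET (Theorem \ref{thm:MET}), for $\P$-a.e.\ $\omega$ and $(\mu\times\Leb)$-a.e.\ $(u,x)$,
\[
\lim_{t\to\infty} \tfrac{1}{t}\log \abs{D_x\phi^t_{\omega,u} v} = 0 \quad \text{for every nonzero } v \in \R^d.
\]
One then invokes a Furstenberg--Ledrappier rigidity principle: when all Lyapunov exponents coincide at zero, any stationary measure for the linear cocycle on $P^{d-1}$ must be sample-path invariant under the cocycle, giving $(D_x\phi^t_{\omega,u})_*\nu_{u,x}=\nu_{\Theta^t_\omega(u,x)}$ for $\P$-a.e.\ $\omega$. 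Concretely this comes from a Jensen-type inequality applied $\omega$-pointwise and integrated against $m\otimes\P$: combined with Birkhoff's theorem for the projective process and the MET-guaranteed vanishing of the pathwise growth rate, all inequalities must saturate, and the equality case rigidifies the push-forward action as claimed. This approach follows \cite{ledrappier1986positivity}, adapted to the continuous-time Markov setting.

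The main obstacle is carrying out the rigidity step, classical for IID matrix products, in our continuous-time, infinite-dimensional setup. Two difficulties arise. First, noncompactness of $\hat{\Hbf}$ necessitates polynomial moment bounds on $\norm{u_t}_{\Hbf}$ (available from standard energy estimates for Systems \ref{sys:NSE}--\ref{sys:3DNSE}) to guarantee tightness in Krylov--Bogolyubov and that disintegrations be well-defined in a measurable way. Second, passing from the $(\mu \times \Leb)$-a.e.\ equivariance furnished by this argument to a statement valid at \emph{every} $(u,x)$ in the support (as needed elsewhere in the paper) relies on the strong Feller property, and thus ultimately on Assumption \ref{a:Highs}, cf.\ Remark \ref{rmk:everyAEOutline}. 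Both difficulties are manageable, but require care beyond the finite-dimensional case.
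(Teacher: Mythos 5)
Your approach is essentially the one the paper takes. The paper does not write out a proof of Proposition \ref{prop:furstenbergOutline}: it is presented as an informal version of Theorem \ref{thm:furstenberg}, which is in turn deduced directly from Proposition 2 and Theorem 3 of \cite{ledrappier1986positivity} (with the discrete-to-continuous-time translation noted as the only wrinkle), and your outline — lift to the projective process, produce a stationary measure via Krylov--Bogolyubov, disintegrate, use the $SL_d(\R)$ structure to collapse the spectrum when $\lambda^+ = 0$, and then invoke Ledrappier's Jensen/martingale rigidity to promote stationarity-in-law to pathwise equivariance — is precisely the Ledrappier argument being cited.
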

\noindent To prove $\lambda^+ > 0$, then, it suffices to obtain a contradiction from the conclusions of Proposition \ref{prop:furstenbergOutline}.

Conceptually, the measures $\nu_{u, x}$ should be thought of as deterministic ``configurations''
of vectors on $\R^d$, and the relation \eqref{eq:furstenberg111} says that this $(u,x)$-dependent family $(\nu_{u, x})$
 of deterministic ``configurations'' is left invariant by the Jacobian matrices $D_x \phi^t_{\omega, u}$ with probability 1.
As such, the relation \eqref{eq:furstenberg111} has the connotation of a \emph{degeneracy} in the probabilistic law of the matrices
$D_x \phi^t_{\omega, u}$ with $\omega$ distributed as $\P$.

\subsection{Ruling out Furstenberg's criterion: finite-dimensional models}\label{subsubsec:ruleOutFurstFDOutline}

Given a pair of probability measures $\nu, \nu'$ on $P^{d-1}$, the set of matrices $M \in SL_d(\R)$
for which $M_* \nu = \nu'$ has empty interior (Lemma \ref{lem:emptyInterior}). Roughly speaking, 
we can rule out \eqref{eq:furstenberg111} in Furstenberg's criterion if we can show
that for a ``large enough'' set of pairs $(u, x), (u', x') \in \hat \Hbf \times \T^d$, the probabilistic law of 
$A_t := D_{x_0} \phi^t_{\omega, u_0}$ conditioned on the event $(u_0, x_0) = (u, x), (u_t, x_t) = (u', x')$
is sufficiently nondegenerate. 

For the finite-dimensional models in Systems \ref{sys:2DStokes} and \ref{sys:Galerkin}, we can compute
this conditional law explicitly. 
The matrix-valued process $A_t := D_x \phi^t_{\omega, u}$ is a component of the Markov process $(u_t, x_t, A_t)$
 generated by the $(u_t)$ together with \eqref{eq:xtintro} and
 \begin{align}\label{eq:matrixDefOutline}
 \partial_t A_t = \grad u_t(x_t) \, A_t 
 \end{align}
on the finite-dimensional manifold $\mathcal M := \hat \Hbf \times \T^d \times SL_d(\R)$.

Under suitable nondegeneracy conditions on the SDE governing $(u_t, x_t, A_t)$, for instance,
H\"ormander's condition as described in \ref{subsubsec:ErgodicProperties} below, the law
$Q_t((u, x, \Id), \cdot)$ of $(u_t, x_t, A_t)$ conditioned on $(u_0, x_0, A_0) = (u, x, \Id)$
admits an everywhere-positive smooth density $\rho = \rho_{(u, x)} : \hat \Hbf \times \T^d \times SL_d(\R) \to (0,\infty)$
for all initial $(u, x) \in \hat \Hbf \times \T^d$.
It then follows that for 
any pair $(u, x), (u', x') \in \hat \Hbf \times \T^d$ and any $t > 0$, the probabilistic law of $A_t$ 
conditioned on $(u_0, x_0) = (u, x), (u_t, x_t) = (u',x')$ 
admits a smooth, everywhere-positive density $\hat \rho = \hat \rho_{(u, x), (u', x')}$, given for $M \in SL_d(\R)$
by 
	\[
	\hat \rho(M) = \rho(u', x', M)\bigg/\int_{SL_d(\R)} \rho(u', x', M') \,\dee \Leb_{SL_d(\R)} (M') \, ,
	\]

\noindent We conclude that \eqref{eq:furstenberg111} is impossible, hence $\lambda^+ > 0$, when H\"{o}rmander's
condition for the matrix process $(u_t, x_t, A_t)$ is satisfied. See 
 Proposition \ref{prop:hormanderForAll} in Section \ref{subsubsec:ErgodicProperties} below for a precise statement
 of H\"{o}rmander's condition, and see condition (C) in Section \ref{subsubsec:nondegenCondLaws} for a more
 detailed version of this argument.

We note that the technique of using H\"{o}rmander's condition for the matrix process $(u_t, x_t, A_t)$ to rule out Furstenberg's criterion is well-known; see, e.g., \cite{carverhill1987furstenberg, baxendale1989lyapunov}.

\subsection{Furstenberg's criterion: infinite-dimensional models}\label{subsubsec:furstenbergInfDimOutline}

For the infinite-dimensional models, Systems \ref{sys:NSE}--\ref{sys:3DNSE}, we are not aware of any means by which one can prove a positive density for the conditional law of $A_t = D_x \phi^t_{\omega, u}$ as was possible for the finite-dimensional models. 

Instead, we are able to prove a certain ``approximate controllability'' statement, described below. 
To articulate this we define the \emph{projective process} $(u_t, x_t, v_t)$ on 
 $ \Hbf \times \T^d \times P^{d-1}$, where $(v_t)$ is defined for initial $v_0$ by
 setting $v_t$ to be the projective representative of $D_{x_0} \phi^t_{\omega, u_0} v_0$. 
 Equivalently, $(v_t)$ is generated by $(u_t)$, \eqref{eq:xtintro} and
 \begin{align}\label{eq:projectiveSDEOutline}
 \partial_t v_t =\Pi_{v_t} \nabla u(x_t) v_t \, .
 \end{align}
 Here, $\Pi_{v_t}$ denotes the projection onto the orthogonal complement of (a unit vector representative of) $v_t$.
 
\begin{proposition}\label{prop:approxControlOutline}
Consider the Markov processes $(u_t, x_t, v_t)$ and $(u_t,x_t,A_t)$ 
generated by either of Systems \ref{sys:NSE} or \ref{sys:3DNSE}, together with \eqref{eq:xtintro}, \eqref{eq:matrixDefOutline}, and \eqref{eq:projectiveSDEOutline}. Then, for any $x, x' \in \T^d$ and $t > 0$, we have the following.
	\begin{itemize}
		\item[(a)] For any $\epsilon, M > 0$, we have that \[
		\P((u_t, x_t) \in B_\epsilon(0) \times B_\epsilon(x') \, , | A_t| > M \, | u_0 = 0, x_0 = x, A_0 = \Id) > 0 \, .\]
		\item[(b)] For any $\epsilon > 0$, $v \in P^{d-1}$ and open $V \subset P^{d-1}$, we have
		\[
		\P((u_t, x_t) \in B_\epsilon(0) \times B_\epsilon(x') \, , v_t \in V | u_0 = 0, x_0 = x, v_0 = v) > 0 \, .
		\]
	\end{itemize}
\end{proposition}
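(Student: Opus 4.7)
The plan is to convert each positivity statement into a deterministic approximate-controllability problem. Since $x_t$, $A_t$ and $v_t$ are continuous deterministic functionals of the path $(u_s)_{s \leq t}$ via \eqref{eq:xtintro}, \eqref{eq:matrixDefOutline}, and \eqref{eq:projectiveSDEOutline}, a Stroock--Varadhan-type support theorem for the SPDE governing $u_t$ implies that the law of the joint process is supported on the closure of the set of endpoints reached by the deterministic \emph{controlled} system
\[
\partial_t \bar u + \bar u \cdot \nabla \bar u = -\nabla \bar p + \mathcal L \bar u + Q \eta, \quad \partial_t \bar x = \bar u(\bar x), \quad \partial_t \bar A = \nabla \bar u(\bar x)\, \bar A,
\]
ranging over smooth controls $\eta$ with values in the range of $Q$, with $\mathcal L$ the (hyper-)viscous dissipation operator. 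It therefore suffices to exhibit, given $\epsilon, M > 0$ and $x, x' \in \T^d$, a single smooth $\eta$ such that the deterministic solution starting from $(0, x, \Id)$ at time $0$ lands at time $t$ in $B_\epsilon(0) \times B_\epsilon(x') \times \{|A| > M\}$; continuous dependence of the coupled flow on $\eta$ then promotes this to an open family of successful controls, which the support theorem charges with positive probability.

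I would build $\eta$ in two phases. Over a short initial window $[0, \tau]$ with $\tau \ll t$, choose $\eta$ of large amplitude supported on modes with $|k|_\infty = 1$ (available thanks to Assumption~\ref{a:lowms}) to drive $\bar u$ through a prescribed sequence of shear-type flows such as $a(\sin x_j)\, e_i$ and $a(\cos x_j)\, e_i$ for $i \ne j$. During this window the transport equation $\partial_t \bar x = \bar u(\bar x)$ can be arranged to carry $\bar x$ from $x$ to $x'$ by tuning amplitudes, while \eqref{eq:matrixDefOutline} is driven by rank-one nilpotent Jacobians $\nabla u = a\cos(x_j)\, e_i \otimes e_j$, yielding elementary shears in $\SL_d(\R)$. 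After $\tau$, set $\eta \equiv 0$ and rely on $\nu > 0$ so that viscous dissipation brings $\bar u$ back into $B_\epsilon(0)$ during $[\tau, t]$, while $\bar x_s$ stays near $x'$ (because $\bar u_s$ is small) and $\bar A_s$ continues to evolve in a controlled way.

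For part~(a), the key point is that elementary shears generate $\SL_d(\R)$ and their image is unbounded: alternating shears in distinct coordinate pairs with sufficiently large amplitudes can make $|\bar A_\tau|$ exceed any $M$. Using large amplitudes over short times allows the ordered product in \eqref{eq:matrixDefOutline} to accumulate arbitrary stretching while $\bar x_s$ tracks its intended path. For part~(b), the same shear family acts transitively on $P^{d-1}$ through the $\SL_d(\R)$-action, so one can concatenate shears to rotate any $v \in P^{d-1}$ into any open $V$, and continuity in $\eta$ again opens a positive-measure family of successful controls.

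The principal obstacle is the tension between \emph{large} stretching of $\bar A$ (or accurate rotation of $\bar v$) and keeping $\bar u_t$ close to $0$: strong forcing during the first phase leaves a $\bar u_\tau$ of large norm that must dissipate within the bounded budget $t - \tau$. Since $t > 0$ is fixed but $\epsilon, M$ are prescribed, $\tau$ must be chosen small enough that (i) enough stretching still occurs on $[0,\tau]$, which is possible because control amplitudes are unconstrained, and (ii) $t - \tau$ remains long enough for dissipation to reach $B_\epsilon(0)$ in the $\Hbf$-topology. A second delicate point is justifying the infinite-dimensional support theorem for the coupled $(u_t, x_t, A_t)$ (respectively $(u_t, x_t, v_t)$) process; in practice this is established by a Girsanov comparison between the noise-driven and control-driven solutions over $[0,t]$, combined with continuous dependence of the (hyper-)viscous Navier--Stokes Cauchy problem on its forcing supplied by Proposition~\ref{prop:WP}.
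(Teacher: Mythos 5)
Your high-level strategy---reduce each positivity statement to a deterministic approximate-control problem and then charge an open family of successful controls with positive probability---is the same as the paper's (Lemma~\ref{lem:Ctrluxv}, Lemma~\ref{lem:MarkovTrans}, Proposition~\ref{prop:Actrl}). Two aspects differ, one cosmetic and one substantive. Cosmetically, you invoke a Stroock--Varadhan support theorem plus Girsanov; the paper only uses the elementary positivity of Wiener measure on neighborhoods of a smooth path (via \eqref{eq:AprxEvent}), followed by a Gr\"onwall-type stability estimate in the mild formulation. This is lighter and avoids having to establish a support theorem for the degenerate coupled $(u_t, x_t, A_t)$ process.

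The substantive gap is in your two-phase construction ``large forcing on $[0,\tau]$, then $\eta \equiv 0$ and let dissipation bring $\bar u$ to $B_\epsilon(0)$.'' To drive $|\bar A_\tau| > M$ with shears you must take $\bar u$ to be large over $[0,\tau]$, leaving a large $\bar u_\tau$. During the free-decay phase $[\tau, t]$ the particle moves by $\int_\tau^t |\bar u_s(\bar x_s)|\,\dee s$, which is not small for large initial enstrophy and fixed $t-\tau$, so $\bar x_t$ drifts away from $x'$; you cannot simultaneously shrink $\tau$ to control this and keep enough dissipation time. The paper sidesteps this by choosing controls for which $u_1 = 0$ \emph{exactly}: the shear and cellular flows in \eqref{eq:flows} are both steady Euler solutions and eigenfunctions of the Stokes operator, so the nonlinearity vanishes along the whole controlled trajectory, the required control is just $Q g = (f' + f)\times(\text{flow})$ with $f \in C_c^\infty(0,1)$, and there is no decay phase. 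Moreover, the cellular flow is translated so that the hyperbolic stagnation point sits at $x'$ (Proposition~\ref{prop:Actrl}); this produces hyperbolic matrix growth (not merely nilpotent shears) \emph{without moving the particle at all}. The same shifted cellular flow rotates $v$ on $P^{d-1}$ while fixing $x$, which is what your alternating-shear rotation does not do. Using flows that are simultaneously steady Euler solutions, Stokes eigenfunctions, and have a stagnation point where you want it is the one idea missing from your write-up, and it is exactly what makes the argument close.
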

Condition (a) says, roughly, that gradient norms can be made arbitrarily large while
``approximately conditioning'' on the time $0$ and time $t$ values of the Lagrangian process, while
condition (b) says that we can rotate vectors arbitrarily in projective space. We see that this is weaker than obtaining information on the conditional law, but is clearly closely related. Our proof of Proposition \ref{prop:approxControlOutline} for Systems \ref{sys:NSE} and \ref{sys:3DNSE} is very physically intuitive; see Section \ref{subsubsec:ErgodicProperties} for more discussion. 

Furstenberg's criterion as in Proposition \ref{prop:furstenbergOutline} cannot be applied directly to the ``softer'' nondegeneracy condition in Proposition \ref{prop:approxControlOutline}. 
Possible issues include (1) that the family of measures $\set{\nu_{u, x}}_{(u, x) \in \Hbf \times \T^d}$
in Proposition \ref{prop:furstenbergOutline} might, a priori, be discontinuous in space, and (2)
that the individual measures $\nu_{u, x}$ could be quite pathological, e.g., singular continuous w.r.t. Lebesgue on $P^{d-1}$.
To address this, we obtain the following classification of all possible demeanors of the 
measure family $\nu_{u, x}$.

\begin{proposition}\label{prop:refineFurstOutline}
Assume that $\mu \times \Leb$ is an ergodic stationary measure for the Lagrangian process $(u_t, x_t)$, and moreover,
assume that the Lagrangian process $(u_t, x_t)$ satisfies the strong Feller property (Definition \ref{defn:strongFeller}). 
If $\lambda^+ = 0$, then one of the following alternatives holds.
\begin{itemize}
	\item[(a)] There is a continuously-varying family $\{ \langle \cdot, \cdot \rangle_{u, x} \}_{(u, x) \in \Hbf \times \T^d}$ of 
	inner products on $\R^d$ such that 
		\[ \langle D_x \phi^t_{\omega, u} v, D_x \phi^t_{\omega, u} w \rangle_{\Theta^t_\omega(u, x)} = \langle v, w \rangle_{u, x} \qquad \text{with probability 1.}
		\]
	for all $v, w \in \R^d, t > 0$ and $(u, x) \in \Hbf \times \T^d$.
	\item[(b)] There are $p \geq 1$ families $\{ E^i_{(u, x)}\}_{(u, x) \in \Hbf \times \T^d}, 1 \leq i \leq p$ of proper linear
	 subspaces of $\R^d$ such that
	(i) $(u, x) \mapsto E^i_{(u, x)}$ is \emph{locally continuous up to relabeling} (see Theorem \ref{thm:classification} (b) for details), and (ii) for all $(u, x) \in \Hbf \times \T^d$ and $1 \leq i \leq p$,
	\[
		D_x \phi^t_{\omega, u} ( E^i_{u, x}) = E^{\pi(i)}_{\Theta^t_\omega(u, x)} \qquad \text{with probability 1.}
	\]
	Here, $\pi = \pi_{\omega, u, x}$ is a permutation of $\{ 1, \cdots, p\}$.
\end{itemize}
\end{proposition}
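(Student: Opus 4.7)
The plan is to start from the measurable version of Furstenberg's criterion stated in Proposition \ref{prop:furstenbergOutline}, which supplies a $(\mu\times\Leb)$-measurable family of probability measures $\{\nu_{u,x}\}$ on $P^{d-1}$ satisfying the cocycle invariance \eqref{eq:furstenberg111}. Equivalently, the measure $\widehat{m} := \nu_{u,x} \otimes d(\mu \times \Leb)(u,x)$ is a stationary measure for the projective process $(u_t, x_t, v_t)$ on $Z := \hat{\Hbf} \times \T^d \times P^{d-1}$. I would refine this measurable statement into an everywhere-defined structural statement in two steps: first upgrading measurability to continuity using the strong Feller hypothesis, and then classifying the resulting projective invariant measures pointwise into the two cases (a) and (b).

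First, I would promote $(u,x) \mapsto \nu_{u,x}$ to a weak-$*$ continuous family. Since $(u_t, x_t)$ is strong Feller by hypothesis and the gradient cocycle acts smoothly on the compact fiber $P^{d-1}$, the projective process $(u_t, x_t, v_t)$ is itself strong Feller on $Z$. A standard Doob-type regularization argument applied to the Markov semigroup of $(u_t, x_t, v_t)$, together with ergodicity of $\mu \times \Leb$ on the base, then shows that any stationary measure on $Z$ admits a weak-$*$ continuous disintegration; in particular, the family $(u,x) \mapsto \nu_{u,x}$ inherits the continuous dependence of $P_t$ on its argument.

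Second, with continuity in hand, I would apply a pointwise classification of projective stationary measures. Call $\nu_{u,x}$ \emph{non-proper} if no proper linear subspace $W \subsetneq \R^d$ satisfies $\nu_{u,x}(P(W))>0$. In the non-proper case, the classical Furstenberg lemma (see, e.g., \cite{bougerol2012products,furstenberg1963noncommuting} and the references therein) asserts that the stabilizer of $\nu_{u,x}$ in $PGL_d(\R)$ is compact, hence conjugate to a subgroup of $PO(d)$; equivalently, there is an inner product $\langle\cdot,\cdot\rangle_{u,x}$ on $\R^d$, unique up to positive scalar, whose spherical Haar measure projects to $\nu_{u,x}$. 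The scalar ambiguity is fixed by normalizing $\det \langle \cdot, \cdot \rangle_{u,x} \equiv 1$; since $\phi^t$ is volume-preserving on $\T^d$ so that $\det D_x \phi^t_{\omega, u} = 1$, the relation \eqref{eq:furstenberg111} then forces $D_x\phi^t_{\omega,u}$ to preserve the inner product exactly, giving alternative (a), and continuity of $(u,x) \mapsto \langle\cdot,\cdot\rangle_{u,x}$ follows from Step 1 via the continuous inverse of $\langle\cdot,\cdot\rangle \mapsto \nu$. If instead $\nu_{u,x}$ is proper, let $E^1_{u,x}, \dots, E^p_{u,x}$ be the canonical (hence finite) collection of maximal proper subspaces carrying positive $\nu_{u,x}$-mass; uniqueness and finiteness follow from a standard linear-algebraic argument over $\R^d$. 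By canonicity and \eqref{eq:furstenberg111}, $D_x\phi^t_{\omega,u}$ must send $\{E^i_{u,x}\}$ bijectively onto $\{E^j_{\Theta^t_\omega(u,x)}\}$, inducing a permutation $\pi_{\omega,u,x}$ of $\{1,\dots,p\}$; this is alternative (b). Local continuity of the individual $E^i$ up to relabeling follows because the unordered tuple varies continuously in the Hausdorff topology on the Grassmannian by Step 1, and any such continuous unordered tuple admits local continuous orderings, with global obstructions only from monodromy over the infinite-dimensional base.

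The main obstacle I anticipate is Step 1: the promotion from measurable to continuous. The measurable invariant family supplied by Furstenberg's criterion is soft, and the pointwise classification in Step 2 is essentially a finite-dimensional algebraic exercise once continuity is in hand, but obtaining continuity of $(u,x) \mapsto \nu_{u,x}$ on \emph{all} of $\hat{\Hbf} \times \T^d$ (rather than $(\mu\times\Leb)$-almost every $(u,x)$) seems to require a genuine use of the strong Feller hypothesis lifted to the projective bundle, together with careful treatment of possible non-uniqueness of $\widehat{m}$ as a stationary measure on $Z$. This is precisely the reason that strong Feller — and not merely asymptotic strong Feller — is indispensable for the infinite-dimensional systems, aligning with the role of Assumption \ref{a:Highs} discussed in Remark \ref{rmk:WhyHypo}.
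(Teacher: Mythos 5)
Your high-level plan --- regularize the measurable Furstenberg family $(\nu_{u,x})$ to a continuous one via strong Feller, then classify pointwise --- matches the paper's structure, but both of your steps contain genuine gaps.

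\textbf{Step 1.} You assert that strong Feller for $(u_t,x_t)$ implies strong Feller for the projective process $(u_t,x_t,v_t)$ because ``the gradient cocycle acts smoothly on the compact fiber.'' This implication is false in general: the $v_t$-dynamics is not directly noised, and strong Feller on the base does not automatically regularize along degenerate fiber directions. The paper proves strong Feller for the projective process (Proposition \ref{prop:Feller}) by a lengthy Malliavin-calculus argument, not as a consequence of the base-level property. Fortunately the stronger statement is also unnecessary: the continuity result you want (Proposition \ref{prop:contVaryingMeasure}) is proved using strong Feller only for the \emph{base} semigroup $(P_t)$ on $\hat\Hbf \times \T^d$, combined with the $C^0$-semigroup property of the lifted semigroup $\widehat P_t$ (Proposition \ref{prop:semigroupRegularity1}(b)), which is far weaker than strong Feller for $\widehat P_t$.

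\textbf{Step 2.} The more serious gap is your derivation of continuity of $\langle\cdot,\cdot\rangle_{u,x}$ or $E^i_{u,x}$ from weak$^*$ continuity of $\nu_{u,x}$. In case (a) you implicitly take $\nu_{u,x}$ to be the round measure on $P^{d-1}$ associated to $\langle\cdot,\cdot\rangle_{u,x}$, so that $\langle\cdot,\cdot\rangle\mapsto\nu$ is continuously invertible. But the compact stabilizer $H_{\nu_{u,x}}$ can be a proper subgroup of the orthogonal group of $\langle\cdot,\cdot\rangle_{u,x}$, and $\nu_{u,x}$ can be essentially any measure invariant under it, including mixtures of atomic, singular continuous, and absolutely continuous parts; the measure does not determine the inner product through any continuous map. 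Similarly, in case (b) weak$^*$ continuity of $\nu_{u,x}$ gives no control over Hausdorff-continuity of the ``maximal positive-mass subspaces'': the number and position of atoms can jump under arbitrarily small weak$^*$ perturbations. This is precisely the subtlety flagged in Remark \ref{rmk:badFiberMeasures}, and it is why the proof of Theorem \ref{thm:classification} does not argue pointwise. Instead it fixes a reference point $z_0$, classifies the single stabilizer $H_{z_0}$ via Lemma \ref{lem:subgroups}, and transports the resulting structure along orbits using the cocycle (Lemma \ref{lem:fullOrbitMeasu}). Continuity is then obtained by constructing an \emph{auxiliary} invariant fiber measure family --- the normalized Haar or Euclidean volumes induced on $P^{d-1}$ (resp.\ on $P(E^i_z)$) by the transported structures --- which is itself invariant in the sense of \eqref{eq:invariantMeasuresFamily} and is shown weak$^*$ continuous by a second application of Proposition \ref{prop:contVaryingMeasure}. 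The inner products and subspaces are recovered continuously from this auxiliary family, not from the original $\nu_{u,x}$.

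A secondary issue: your pointwise dichotomy (proper vs.\ non-proper $\nu_{u,x}$) is not the paper's dichotomy (compact vs.\ noncompact stabilizer). A measure can be proper yet have compact stabilizer (e.g.\ a sum of the uniform measure on $P^1$ with an atom has trivial stabilizer in $PSL_2(\R)$), and different fibers could in principle fall into different alternatives under your scheme, whereas the proposition requires a single alternative to hold at every $(u,x)$. The paper avoids this by classifying once at a reference point and propagating via the cocycle isomorphisms (Remark \ref{rmk:choiceOfReferencePt}).
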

\noindent Note that the Strong Feller property of the Lagrangian process is explicitly required; see Remark \ref{rmk:whyUseStrongFeller}
below for more discussion. We discuss proving the strong Feller property in Section \ref{subsubsec:ErgodicProperties} below.
Roughly speaking, Proposition \ref{prop:refineFurstOutline} 
follows from the strong Feller property as well as 
certain rigid geometric properties of $SL_d(\R)$ (Lemma \ref{lem:subgroups}) imposed by the condition of leaving 
a projective measure invariant (in the sense of Furstenberg's criterion as in Proposition \ref{prop:furstenbergOutline}). 

\medskip

Proposition \ref{prop:refineFurstOutline} is the analogue of Theorem 6.8 in Baxendale's paper \cite{baxendale1989lyapunov},
a similar classification-type theorem for the derivative cocycle of an SDE on a finite-dimensional manifold. 
The analogue we obtain (stated as Theorem \ref{thm:classification}
and proved in Section \ref{subsubsec:classifyRDS}) is considerably more general and applies to
linear cocycles over continuous-time RDS on possibly infinite-dimensional 
Polish spaces.
Our more general setting entails numerous complications not addressed in \cite{baxendale1989lyapunov};
see Remark \ref{rmk:whyNovelRDS} for a more thorough discussion of these.

Alternatives (a) and (b) in Proposition \ref{prop:refineFurstOutline} can
now be ruled out by straightforward continuity arguments and approximate controllability 
 as in Proposition \ref{prop:approxControlOutline}; see Section \ref{subsubsec:approxControlRuleOut} for more details.
Once this has been carried out, the proof of Theorem \ref{thm:Lyap} for Systems \ref{sys:NSE} and \ref{sys:3DNSE} is complete.

 \begin{remark}\label{rmk:whyUseStrongFeller}
 	As far as the authors are aware, the strong Feller property of the Lagrangian 
	process $(u_t, x_t)$ is required for Proposition \ref{prop:refineFurstOutline}. Specifically, 
	the strong Feller property is used to verify that the ``configurations'' appearing in 
	alternatives (a), (b) of Proposition \ref{prop:refineFurstOutline} are continuously-varying in an appropriate sense.
	We emphasize that this continuity is critical to the argument for ruling out (a), (b) using the approximate
	controllability condition in Proposition \ref{prop:approxControlOutline}. 
	
	In particular, this is precisely the step we are not able to execute for 2D Navier-Stokes
	with ``truly hypoelliptic'' forcing (that is, forcing only a handful of low modes as in Assumption \ref{a:lowms}
	and forgoing forcing all sufficiently high modes as in Assumption \ref{a:Highs}).
	In this regime, the strong Feller property is likely to be false for Systems \ref{sys:NSE}--\ref{sys:3DNSE} \cite{HM06}. 
 \end{remark}
 
 \subsection{Expansion in all directions: proof of Corollary \ref{cor:expandAllDirections}}\label{subsubsec:expandAllDirOutline}
 
For both the finite and infinite dimensional systems considered in this paper,
 Corollary \ref{cor:expandAllDirections} does not follow immediately from Theorem \ref{thm:Lyap}.
Indeed, a priori it is possible that given $(u, x) \in \hat \Hbf \times \T^d$, there are some $v \in \R^d$ for which 
$\limsup_{t \to \infty} \frac{1}{t} \log |D_x \phi^t_{\omega, u} v| < \lambda^+$ holds with probability 1.

We can rule this out using the ergodic theory of the projective process
$(u_t, x_t, v_t)$ as in \eqref{eq:projectiveSDEOutline}.
There is a well-known correspondence between the stationary probability measures $\nu$ 
on $\hat \Hbf \times \T^d \times P^{d-1}$ and the asymptotic exponential 
growth rates $\lim_{t \to \infty} \frac{1}{t} \log |D_x \phi^t_{\omega, u} v|$
realized ``with probability 1'' as $v$ varies in $\R^d \setminus \{0\}$.
The correspondence is given by the so-called Random Multiplicative Ergodic Theorem (Theorem III.1.2 in \cite{kifer2012ergodic}). We will not state the full result here,
except to note the following relevant consequence.

\begin{proposition}\label{prop:expandAllDirectionsOutline}
	Assume that there is a \emph{unique} stationary measure $\nu$ for the projective
	process $(u_t, x_t, v_t)$.
	Then, for $(\mu \times \Leb)$-almost every $(u, x) \in \hat \Hbf \times \T^d$ and 
	\emph{every} $v \in \R^d \setminus \{ 0 \}$, we have that
	\[
	\lim_{t \to \infty} \frac{1}{t} \log |D_x \phi^t_{\omega, u} v| = \lambda^+ \qquad \text{with probability 1.}
	\]
\end{proposition}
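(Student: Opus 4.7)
The plan is to apply the Random Multiplicative Ergodic Theorem (RMET, see Theorem III.1.2 of \cite{kifer2012ergodic}), whose central message is that each distinct Lyapunov exponent of $D_x\phi^t$ gives rise to at least one distinct stationary measure of the projective process on $\hat \Hbf \times \T^d \times P^{d-1}$. Uniqueness of $\nu$ will then force the Lyapunov spectrum to collapse to the single value $\lambda^+$, at which point Oseledets yields the desired uniform growth rate on all of $\R^d$.

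First I would invoke the Oseledets decomposition theorem for the derivative cocycle (Theorem \ref{thm:MET} in Section \ref{subsubsec:METRDS}): for $\mathbf{P}$-a.e.\ $\omega$ and $(\mu\times\Leb)$-a.e.\ $(u, x)$ one has a measurable, covariant splitting
\[
\R^d = \bigoplus_{i=1}^r E^i(\omega, u, x),
\]
with deterministic Lyapunov exponents $\lambda^+ = \lambda_1 > \lambda_2 > \cdots > \lambda_r$, where each $E^i(\omega, u, x)$ consists of vectors whose forward growth rate under $D_x\phi^t_{\omega, u}$ equals $\lambda_i$. Note that ergodicity of $\mu \times \Leb$ for the Lagrangian process, required for the $\lambda_i$ to be deterministic, is automatic under our hypothesis: the pushforward of $\nu$ under $(u, x, v) \mapsto (u, x)$ is a stationary measure for $(u_t, x_t)$ and hence equals the unique $\mu \times \Leb$.

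Next, for each $i \in \{1, \dots, r\}$ I would construct a stationary probability measure $\nu^{(i)}$ on the projective bundle by setting
\[
\nu^{(i)} := \mathbf{E}_\omega \int_{\hat \Hbf \times \T^d} \eta^i_{\omega, u, x} \, d(\mu\times\Leb)(u, x),
\]
where $\eta^i_{\omega, u, x}$ is the pushforward of the uniform probability measure on the unit sphere of $E^i(\omega, u, x)$ onto its projectivization inside $P^{d-1}$. Covariance of the Oseledets subspaces under $D_x\phi^t$ makes each $\nu^{(i)}$ stationary under the projective Markov semigroup, while the ``fiberwise Lyapunov exponent''
\[
\int \mathbf{E}\bigl[\log |D_x\phi^1_{\omega, u}\, \hat v|\bigr] \, d\nu^{(i)}(u, x, v) \;=\; \lambda_i
\]
distinguishes the $\nu^{(i)}$ from one another since the $\lambda_i$ are pairwise distinct. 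The uniqueness hypothesis therefore forces $r = 1$, so $E^1(\omega, u, x) = \R^d$, and Oseledets immediately yields that for $(\mu\times\Leb)$-a.e.\ $(u, x)$ and \emph{every} $v \in \R^d \setminus \{0\}$,
\[
\lim_{t\to\infty} \frac{1}{t} \log |D_x\phi^t_{\omega, u} v| = \lambda^+ \quad \text{with probability one.}
\]

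The main obstacle is the measurable construction of the Oseledets subbundles $E^i(\omega, u, x)$ and the associated sphere measures $\eta^i_{\omega, u, x}$ in our infinite-dimensional setting; however, this is essentially standard because the cocycle acts on the fixed finite-dimensional fiber $\R^d$, so the measurable selection theorems needed are classical. One also needs to verify the Oseledets integrability conditions $\log^+ \|D_x\phi^1_{\omega, u}\|$ and $\log^+ \|(D_x\phi^1_{\omega, u})^{-1}\|$ belonging to $L^1(\mathbf{P} \times \mu \times \Leb)$, which follows from the gradient estimates for the Lagrangian flow used in the proof of Theorem \ref{thm:Lyap} together with the uniform moment bounds on $\|\nabla u_t\|_{L^\infty}$ available under Assumption \ref{a:Highs}.
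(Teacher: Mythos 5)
Your proposal has a fundamental flaw: the conclusion $r = 1$ that you extract from uniqueness of $\nu$ is incompatible with the rest of the paper. Because $D_x\phi^t$ is $SL_d(\R)$-valued, the exponents and multiplicities satisfy $\sum_{i=1}^r m_i\lambda_i = 0$ (see the proof of the Corollary to Condition (C) following Lemma~\ref{lem:singularValuesLimit}), so $r = 1$ forces $\lambda_1 = \lambda^+ = 0$, contradicting Theorem~\ref{thm:Lyap}. The whole point of this proposition is that it holds \emph{alongside} $r \geq 2$: the lower Oseledets subspaces are genuinely random in $\omega$, so a fixed $v$ avoids them with probability $1$. Uniqueness of $\nu$ is perfectly consistent with $r > 1$ (e.g., in Furstenberg's theorem, irreducible i.i.d.\ matrix products have a unique stationary measure on $P^{d-1}$ yet simple top exponent).

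The error is in the construction of the $\nu^{(i)}$. Two issues: first, the MET in the form stated (Theorem~\ref{thm:MET}) gives only the covariant \emph{flag} $F_1 \supset F_2(\omega,z) \supset \cdots$; the direct-sum complements $E_i(\omega,z)$ from Lemma~\ref{lem:singularValuesLimit}(ii) are defined via singular value decomposition and are \emph{not} equivariant under $\Ac^t_{\omega,z}$ when the base dynamics are non-invertible, which is the case for the one-sided RDS here. Second, and more seriously, even if one had a covariant, $\omega$-dependent family of fiber measures $\eta^i_{\omega,z}$, averaging out $\omega$ yields at best a $\hat\tau^t$-invariant measure on $\Omega \times Z \times P^{d-1}$ that is generally \emph{not} of product form $\P \times \nu^{(i)}$; by Lemma~\ref{lem:invariantMeasCharacterization} it is precisely the product-form invariant measures of the skew product that correspond to stationary measures of $(\widehat P_t)$, so your $\nu^{(i)}$ need not be Markov-stationary at all, and the uniqueness hypothesis does not bite. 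The actual argument behind the cited Random MET (Kifer, Theorem III.1.2 of \cite{kifer2012ergodic}) is the reverse: for $\mu$-a.e.\ $z$ and \emph{every} $v$, the a.s.\ exponent must coincide with the fiberwise exponent $\int \E\bigl[\log|\Ac^1_{\omega,z'}v'|\bigr]\,d\nu'(z',v')$ of some ergodic stationary $\nu'$ projecting to $\mu$; uniqueness leaves a single candidate value, and since Lebesgue-a.e.\ $v \notin F_2(\omega,z)$ realizes $\lambda_1$, that value must be $\lambda_1 = \lambda^+$.
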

Proposition \ref{prop:expandAllDirectionsOutline} is formulated in a more general way as Proposition \ref{prop:refineMET} in Section \ref{subsubsec:projectiveProcessRDS}, 
to which we refer the reader for more details. 
The expansion estimate appearing in Corollary \ref{cor:expandAllDirections} now follows from a straightforward argument.

Added to our growing list of ingredients is uniqueness of the stationary measure $\nu$ for the projective process,
to which we refer the reader to Section \ref{subsubsec:ErgodicProperties} for more information.
 
\subsection{Gradient growth: proof of Theorem \ref{thm:ExpGrwth}}
Given an initial $u_0 = u \in \hat \Hbf$, an initial scalar $f_0 = f \in H^1, \int f dx = 0$, and a noise parameter $\omega \in \Omega$, the corresponding solution $(f_t)$ 
for the passive advection equation \eqref{eq:SclNokap}
is given by
\begin{align}
f_t(x) = f \circ (\phi^t_{\omega, u})^{-1}(x) \, .
\end{align}

By incompressibility, we have (recall $-\top$ is standard shorthand for the inverse transpose)
\begin{equation}
\| \grad f_t \|_{L^1} = \int \abs{\grad f_t(x)} \dx = \int \abs{ \left(D_x \phi^t_{\omega,u}\right)^{-\top}\grad f_0(x)} \dx. 
\end{equation}

The object $\left(D_x \phi^t_{\omega,u} \right)^{-\top} $ 
defines a cocycle over the RDS $\Theta^t_\omega$ on $\hat \Hbf \times \T^d$
 in the same manner as $D_x \phi^t_{\omega, u}$. To complete the proof of 
 Theorem \ref{thm:ExpGrwth}, it suffices to obtain the following analogue
 of Corollary \ref{cor:expandAllDirections} for this new cocycle.
 \begin{proposition}\label{prop:expandAllDirectionsNegTrans}
	There is a constant $\lambda > 0$ with the following property. For any $\eta > 0, \eta \ll \lambda$,
  $\mu \times \Leb$-almost every $(u, x) \in \hat \Hbf \times \T^d$, and every unit vector $v \in \R^d$, there is a 
  (random) constant $\hat \delta = \hat \delta_\omega(u, x, v, \eta)$ (i.e., depending on the  noise parameter $\omega \in \Omega)$
  such that with probability 1, $\hat \delta > 0$  and 
  \[
  | (D_x \phi^t_{\omega, u})^{- \top} v| \geq \hat \delta e^{t (\lambda - \eta)} \, .
  \]
  When $d = 2$, we have $\lambda = \lambda^+$.
 \end{proposition}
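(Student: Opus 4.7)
The plan is to transpose the proof strategy of Corollary \ref{cor:expandAllDirections} to the new cocycle $B^t := (D_x \phi^t_{\omega, u})^{-\top}$. First, $B^t$ is a linear cocycle over $\Theta^t_\omega$: the chain rule $D_x \phi^{t+s} = (D_{\phi^s(x)} \phi^t)(D_x \phi^s)$ passes to inverse transposes, and incompressibility $\grad \cdot u_t = 0$ gives $\det D_x \phi^t = 1$, so $B^t \in SL_d(\R)$. The MET applied to $B^t$ then furnishes a top Lyapunov exponent $\lambda$; the task is to prove $\lambda > 0$ and to upgrade to uniform expansion in all directions.

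\emph{Positivity of $\lambda$.} In dimension $d = 2$, the identity $A^{-\top} = J A J^{-1}$ for $A \in SL_2(\R)$ (with $J \in SO(2)$ the rotation by $\pi/2$) shows that $B^t$ has the same singular values as $D_x \phi^t$, so $\lambda = \lambda^+ > 0$ by Theorem \ref{thm:Lyap}. In dimension $d = 3$, let $\lambda_1 \geq \lambda_2 \geq \lambda_3$ be the Lyapunov spectrum of $D_x \phi^t$; the spectrum of $B^t$ is $-\lambda_3 \geq -\lambda_2 \geq -\lambda_1$. Incompressibility gives $\lambda_1 + \lambda_2 + \lambda_3 = 0$, and the positivity $\lambda_1 = \lambda^+ > 0$ together with the ordering force $\lambda_3 < 0$; hence $\lambda = -\lambda_3 > 0$.

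\emph{Uniform expansion.} The remaining ingredient, parallel to Proposition \ref{prop:expandAllDirectionsOutline}, is uniqueness of a stationary measure for the projective process $(u_t, x_t, w_t)$ on $\hat \Hbf \times \T^d \times P^{d-1}$ generated by $(u_t, x_t)$ together with the projectivization of $\partial_t b_t = -\nabla u_t(x_t)^\top b_t$. In the two-dimensional case this is automatic: the constant conjugation by $J$ intertwines the $B$-projective process with the $(u_t, x_t, v_t)$ process, so uniqueness is inherited directly from the analysis outlined in Section \ref{subsubsec:furstenbergInfDimOutline}. In three dimensions, the Hodge identification $A^{-\top} \leftrightarrow \wedge^2 A$ (valid because $\det A = 1$) lets one regard the $B$-projective process as the projectivization of the exterior power cocycle $\wedge^2 D_x \phi^t$ acting on $P(\wedge^2 \R^3) \cong P^2$. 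Here I would replay the strategy of Section \ref{subsubsec:furstenbergInfDimOutline}: the strong Feller and ergodic properties of the coupled process on $\hat \Hbf \times \T^d \times P^2$ follow from the same Malliavin-calculus and hypoellipticity arguments already used for the original projective process, while the classification analogue of Proposition \ref{prop:refineFurstOutline} for the exterior cocycle is ruled out by the same approximate controllability input as in Proposition \ref{prop:approxControlOutline}, since $\wedge^2 D_x \phi^t$ is driven by the same low Fourier directions of $u_t$. Once uniqueness is in hand, Proposition \ref{prop:expandAllDirectionsOutline} applied to $B^t$ yields $\tfrac{1}{t}\log |B^t v| \to \lambda$ almost surely for $\mu \times \Leb$-a.e.\ $(u, x)$ and every unit $v \in \R^d$; the random constant $\hat \delta$ is then extracted in the standard way from an Egorov-type argument.

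\emph{Main obstacle.} The delicate step is uniqueness of the projective stationary measure in $d = 3$. Structurally the problem is identical to that solved in the main text, since the noise and the underlying RDS are unchanged; however, each geometric step used to rule out the alternatives of Proposition \ref{prop:refineFurstOutline}---in particular the rigidity of $SL_d(\R)$-actions preserving a projective measure---must be re-checked for the exterior representation $\wedge^2 : SL_3(\R) \to SL_3(\R)$. Since this representation is faithful and irreducible, I expect the verification to be routine, but it is the only substantive content specific to this proposition beyond the MET-style bookkeeping above.
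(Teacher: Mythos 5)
Your overall scheme is correct, and the two ``easy'' ingredients are done well: the spectral symmetry argument for positivity of $\lambda$ (via $A^{-\top}$ having singular values reciprocal to those of $A$, plus $\sum_i \chi_i = 0$ from incompressibility) is exactly the content of Proposition \ref{prop:METforCheckAc}, and the $d=2$ conjugation identity $A^{-\top} = JAJ^{-1}$ on $SL_2(\R)$ is a genuinely slicker way to transfer both the exponent and the projective-process uniqueness than the route the paper takes (which simply re-verifies H\"ormander/strong Feller/irreducibility for the $\check v$-process directly).

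Where you go off track is the ``main obstacle'' paragraph. You have mixed up two distinct ingredients. Proving $\lambda > 0$ requires the Furstenberg classification machinery --- but you already get $\lambda > 0$ for free from Theorem \ref{thm:Lyap} and the spectral symmetry, so you never need to redo Proposition \ref{prop:refineFurstOutline} or verify any $SL_d(\R)$-rigidity for the exterior representation. What you actually need for the ``uniform expansion'' step is \emph{uniqueness of the stationary measure} for the $(-\top)$-projective process $(u_t,x_t,\check v_t)$, and this follows from Doob--Khasminskii once you have strong Feller (Proposition \ref{prop:Feller}) and weak irreducibility (Proposition \ref{prop:UniErg}) for that process; Furstenberg/rigidity plays no role here. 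The paper proves these two properties for $(u_t,x_t,\check v_t)$ directly, by the same Malliavin and controllability arguments as for $(u_t,x_t,v_t)$ --- see the remark after Lemma \ref{lem:v-span} for the H\"ormander computation and the explicit inclusion of $\check v_t$ in Lemmas \ref{lem:Ctrluxv} and \ref{lem:MarkovTrans}. Your $\wedge^2$ reformulation is mathematically valid (since $\wedge^{d-1}A$ is $A^{-\top}$ up to Hodge duality for $A\in SL_d(\R)$), but it is a change of notation rather than a change of method: the generator of the wedge projective process is literally the generator of the $\check v$-process, so there is nothing to ``re-check'' beyond what the paper already does directly. Finally, the extraction of $\hat\delta$ at the end is not an Egorov-type argument but a pointwise one: given $\frac{1}{t}\log|B^t v| \to \lambda$ a.s., pick a random $T(\omega)$ past which $|B^t v| \geq e^{(\lambda-\eta)t}$, and set $\hat\delta = \min\{1,\, e^{-(\lambda-\eta)T}\inf_{0\le t\le T}|B^t v|\}$.
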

  Setting $v = \grad f_0(x) / | \grad f_0(x)| $ and integrating over $\{  x \in \T^d : \grad f_0 \neq 0\}$, we obtain Theorem \ref{thm:ExpGrwth}
for $p = 1$. The estimate for the remaining $L^p$ spaces follows from  $\| \grad f_t\|_{L^1} \lesssim \| \grad f_t\|_{L^p}$ for all $p \in [1,\infty]$.

\medskip

To prove Proposition \ref{prop:expandAllDirectionsNegTrans}, we prove Theorem \ref{thm:Lyap} and Corollary \ref{cor:expandAllDirections} with the $(-\top)$-cocycle $(D_x \phi^t)^{- \top}$ 
replacing the usual $D_x \phi^t$.
Let us summarize briefly how this will be done. For Theorem \ref{thm:Lyap} we have the following.

\begin{proposition}\label{prop:METforNegTrans} \
\begin{itemize}
	\item[(a)] For $\mu \times \Leb$-almost every $(u, x) \in \hat \Hbf \times \T^d$, the growth rate
		\[
		\check \lambda^+ (\omega, u, x) = \lim_{t \to \infty} \frac{1}{t} \log |(D_x \phi^t_{\omega, u})^{- \top}|
		\]
		exists with probability 1. Moreover, if $\mu \times \Leb$ is the unique (hence ergodic) 
		stationary measure for the $(u_t, x_t)$ process, then $\check \lambda^+$ is independent of $\omega, u, x$.
	\item[(b)] Let $\lambda^+$ be as in Proposition \ref{prop:lyapOutline}. Then, $\lambda^+ > 0$ iff $\check \lambda^+ > 0$.
	Indeed, $\lambda^+ = \check \lambda^+$ if $d = 2$. 
\end{itemize}	
\end{proposition}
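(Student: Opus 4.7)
The plan is to re-apply the RDS machinery of Section \ref{subsubsec:RDSframeworkOutline} to the new linear cocycle $B^t_{\omega, u, x} := (D_x \phi^t_{\omega, u})^{-\top}$ over the RDS $\Theta^t_\omega$ on $\hat \Hbf \times \T^d$. The cocycle identity for $B^t$ is inherited from the chain rule $D_x \phi^{t+s} = (D_{\phi^t(x)} \phi^s_{\omega, u_t})\, D_x \phi^t$ by taking inverse transposes, so $B^t$ is a linear cocycle in the sense of Section \ref{subsec:LinearCocycles}.

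For the integrability hypothesis required by the MET, I would exploit incompressibility: $\Div u_t = 0$ gives $\det D_x \phi^t \equiv 1$, so $D_x \phi^t \in \SL_d(\R)$. Since each entry of the inverse of an $\SL_d(\R)$ matrix $M$ is a signed $(d-1)\times(d-1)$ minor of $M$, there is a deterministic bound $|B^t| = |(D_x \phi^t)^{-1}| \leq C_d |D_x \phi^t|^{d-1}$, hence $\log^+|B^t| \leq (d-1)\log^+|D_x \phi^t| + \log C_d$. The moment bounds on $\log^+|D_x \phi^t|$ used to verify the MET hypotheses for the forward cocycle (see Section \ref{subsec:wellPosedApp}) therefore extend verbatim to $B^t$. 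Proposition \ref{prop:lyapOutline} (the Furstenberg-Kesten half of the MET) applied to $B^t$ then yields the existence of the limit $\check \lambda^+(\omega, u, x)$ for $\mu \times \Leb$-a.e. $(u,x)$ and $\P$-a.e. $\omega$, and its $(\omega, u, x)$-independence whenever $\mu \times \Leb$ is ergodic. This proves (a).

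For (b), I would appeal to the full MET (Theorem \ref{thm:MET}) to obtain the Oseledets decomposition and complete Lyapunov spectrum $\lambda^+ = \lambda_1 \geq \cdots \geq \lambda_d$ of $D_x \phi^t$, counted with multiplicity. Volume preservation gives the trace identity $\sum_i \lambda_i = \lim_{t\to\infty} t^{-1}\log|\det D_x \phi^t| = 0$. The pairing $\langle B^t v, D_x \phi^t w\rangle = \langle v,w\rangle$, together with the Oseledets filtration for $D_x \phi^t$, identifies the Lyapunov spectrum of $B^t$ as the reversed negatives $-\lambda_d \geq \cdots \geq -\lambda_1$; in particular $\check \lambda^+ = -\lambda_d$. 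The trace-zero constraint forbids a spectrum that is all nonnegative or all nonpositive unless it is identically zero, so $\lambda_1 > 0 \iff \lambda_d < 0$, i.e., $\lambda^+ > 0 \iff \check \lambda^+ > 0$. In dimension $d = 2$ the constraint $\lambda_2 = -\lambda_1$ collapses the argument completely and gives $\check \lambda^+ = -\lambda_2 = \lambda_1 = \lambda^+$.

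The only conceptually delicate step is the spectrum-reversal identity $\check \lambda^+ = -\lambda_d$. It is standard for matrix cocycles in $\SL_d(\R)$, but the infinite-dimensional base space $\hat \Hbf$ requires us to confirm that the abstract MET of Section \ref{subsubsec:METRDS} applies to $B^t$; the $\SL_d$-adjugate bound above reduces this to the same RDS-level hypotheses already verified for the forward cocycle, so the argument is essentially routine once that bound is in hand. In dimension two, the argument bypasses this step entirely via the trace constraint, which is why the strictly stronger equality $\check \lambda^+ = \lambda^+$ can be asserted when $d=2$.
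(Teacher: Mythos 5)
Your proposal is correct and follows essentially the same route as the paper: apply the MET to the $(-\top)$-cocycle, obtain the spectrum-reversal identity $\check\lambda_i = -\lambda_{r-(i-1)}$ (Proposition~\ref{prop:METforCheckAc}), and use the trace-zero constraint for $SL_d(\R)$ to pass between $\lambda^+>0$, $\lambda^-<0$, and $\check\lambda^+>0$. Two small remarks. First, the adjugate bound $|A^{-1}|\leq C_d|A|^{d-1}$ for verifying the integrability hypothesis is valid but unnecessary: condition~(H2) already bounds both $\log^+|\Ac^t|$ and $\log^+|(\Ac^t)^{-1}|$, and since $|\check\Ac^t|=|(\Ac^t)^{-1}|$ and $|(\check\Ac^t)^{-1}|=|\Ac^t|$, (H2) for $\check\Ac$ is verbatim (H2) for $\Ac$ with the two integrands swapped --- which is exactly how the paper handles it. Second, the final sentence claiming that in $d=2$ "the argument bypasses [the spectrum-reversal] step entirely via the trace constraint" is inaccurate as written: your own computation $\check\lambda^+ = -\lambda_2 = \lambda_1$ still invokes $\check\lambda^+ = -\lambda_2$, which \emph{is} the spectrum reversal at $i=1$. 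A genuine $d=2$ bypass does exist --- for $A\in SL_2(\R)$ one has $\sigma_1(A)\sigma_2(A)=1$, hence $|A^{-\top}|=|A^{-1}|=\sigma_1(A)=|A|$ deterministically, giving $\check\lambda^+=\lambda^+$ directly --- but you would need to spell this out rather than attribute it to the trace constraint on exponents. The paper's proof of Proposition~\ref{prop:METforCheckAc} via the identity $\log\bigl((\check\Ac^t)^\top\check\Ac^t\bigr)=-\log\bigl((\Ac^t)^\top\Ac^t\bigr)$ and Lemma~\ref{lem:singularValuesLimit} is the clean general-dimension version of both your duality argument and this $SL_2$ observation.
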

Item (a) is merely a repetition of Proposition \ref{prop:lyapOutline} for the $(-\top)$-cocycle and is a consequence of
the Multiplicative Ergodic Theorem; see Theorem \ref{thm:MET} for details. As in Theorem \ref{thm:Lyap}, 
passing between ``almost every'' and ``every'' is done using the Strong Feller property; see Remark \ref{rmk:everyAEOutline}.
 Item (b) is a consequence of a general
relationship between the Lyapunov exponents of $D_x \phi^t$ and $(D_x \phi^t)^{- \top}$; see Section \ref{subsubsec:inverseTransCocycle} for details.
In particular, note that the relation $\lambda^+ = \check \lambda^+$ is exclusive to $d =2$; the authors are unaware
of any reason to expect it to hold in dimension $d = 3$.

\medskip

Having shown (Theorem \ref{thm:Lyap}) that $\lambda^+ > 0$, we conclude $\check \lambda^+ > 0$. 
To prove the analogue of Corollary \ref{cor:expandAllDirections} for the $(- \top)$-cocycle
will require, as in Proposition \ref{prop:expandAllDirectionsOutline}, for us to study the so-called
\emph{$(- \top)$-projective process} $(u_t, x_t, \check v_t)$ on $\hat \Hbf \times \T^d \times P^{d-1}$, 
defined for initial $\check v_0 \in P^{d-1}$ by setting $\check v_t$ to be the projective representative
of $(D_x \phi^t_{\omega, u})^{- \top} \check v_0$.
Equivalently, the $(\check v_t)$ process is governed by $(u_t)$, \eqref{eq:xtintro}, and
\begin{align}
\partial_t \check{v}_t & = -\Pi_{\check{v}_t} (\grad u_t (x_t))^{\top} \check{v}_t \,.
\end{align}
Repeating Proposition \ref{prop:expandAllDirectionsOutline} verbatim with $D_x \phi^t$ replaced
by $(D_x \phi^t)^{- \top}$, we see that Proposition \ref{prop:expandAllDirectionsNegTrans} follows
immediately from the existence of a unique (hence ergodic) stationary measure $\check \nu$
for the $(-\top)$-projective process $(u_t, x_t, \check v_t)$. 

\subsection{Hypoellipticity}\label{subsubsec:ErgodicProperties}

The previous discussion of the proofs of Theorems \ref{thm:Lyap} and \ref{thm:ExpGrwth} 
requires a number of ingredients pertaining to the properties of the various stochastic
processes (Lagrangian, projective, $(- \top)$-projective, and matrix) mentioned so far. 
Specifically, we need the following:
\begin{itemize}
	\item[(a)] Uniqueness of the stationary measure for the (i) Lagrangian, (ii) projective and (iii) $(- \top)$-projective processes;
	\item[(b)] For the infinite-dimensional Systems \ref{sys:NSE}--\ref{sys:3DNSE}, the Strong Feller property (Definition \ref{defn:strongFeller}) for the Lagrangian process $(u_t, x_t)$; and
	\item[(c)] For the matrix process $(u_t, x_t, A_t)$ and projective process $(u_t, x_t, v_t)$, either:
		\begin{itemize}
			\item[(i)]  H\"ormander's condition for the SDE defining $(u_t, x_t, A_t)$ for the finite-dimensional Systems \ref{sys:2DStokes}--\ref{sys:Galerkin};
			\item[(ii)] or approximate controllability condition in Proposition \ref{prop:approxControlOutline} for the infinite-dimensional Systems \ref{sys:NSE} -- \ref{sys:3DNSE}.
		\end{itemize}
\end{itemize}
Let us recall briefly where each of these is used. 
First, ingredient (a)(i) was used to deduce the almost-sure constancy of the exponential growth rates $\lambda^+, \check \lambda^+$
as in Proposition \ref{prop:lyapOutline} and Proposition \ref{prop:METforNegTrans}(a), respectively. Meanwhile, (a)(ii) was
used to deduce almost sure growth for $(D_x\phi^t)v$ in Corollary \ref{cor:expandAllDirections} (see Proposition \ref{prop:expandAllDirectionsOutline}); analogously,
 (a)(iii) was used to deduce growth of the $(D_x\phi^t)^{-\top}v$ in Proposition \ref{prop:expandAllDirectionsNegTrans}. On the other hand, (b) is used
to justify the refinement of Furstenberg's criterion (Proposition \ref{prop:refineFurstOutline}) used for Systems \ref{sys:NSE}--\ref{sys:3DNSE}. For the finite-dimensional Systems \ref{sys:2DStokes}, \ref{sys:Galerkin}, ingredient (c)(i) was used to rule out Furstenberg's criterion 
(Proposition \ref{prop:furstenbergOutline}); see the discussion in Section \ref{subsubsec:ruleOutFurstFDOutline}.
Lastly, ingredient (c)(ii) was used to rule out the refinement of Furstenberg's criterion in Proposition \ref{prop:refineFurstOutline}
for Systems \ref{sys:NSE} -- \ref{sys:3DNSE}.

All of items (a)-(c) require us to understand how the noise in the low modes of $u_t$ spread to the degrees of freedom associated with the Lagrangian flow. 
Note the additional degrees of freedom  $(x_t,v_t,\check{v}_t,A_t)$ solve a series of random ODEs (collected below in equation \eqref{eq:defineSDE}). Since these unknowns are not directly forced by any noise, the corresponding SDE's are degenerate and we need to depend on hypoellipticity to show (a)-(c).

\medskip

\subsubsection{Finite dimensions: Systems \ref{sys:2DStokes} and \ref{sys:Galerkin}} 

Let us discuss how the ingredients for the finite-dimensional Systems \ref{sys:2DStokes}, \ref{sys:Galerkin} are obtained.
 For these models, all relevant stochastic processes as above are given by an SDE on a finite-dimensional manifold. Provided that one can show the algebra formed by taking successive Lie brackets of vector fields associated to the drift and the noise directions $e_k\gamma_k^i$ span the tangent space at every point, a condition known as {\em H\"{o}rmander's condition} (see Definition \ref{def:Hor} for a precise definition and Remark \ref{rem:hormanderPhysical} for a conceptual discussion), we may apply H\"{o}rmander's Theorem (see \cite{Hormander67,Hormander85} and the discussions in \cite{DaPrato14,HairerNotes11}) to deduce that the Markov transition kernels for the Lagrangian, projective, $(- \top)$-projective and matrix processes have a smooth positive density. Assumption \ref{a:lowms} ultimately ensures that H\"{o}rmander's condition is satisfied. Specifically we prove the following Proposition in Section \ref{sec:galerkin}:
\begin{proposition}\label{prop:hormanderForAll}
	Assume $(u_t)$ is governed by either of the finite-dimensional Systems \ref{sys:2DStokes} or \ref{sys:Galerkin}. 
	For each of (i) the Lagrangian process $(u_t, x_t)$, (ii) the projective process $(u_t, x_t, v_t)$, (iii)
	the matrix process $(u_t, x_t, A_t)$, and (iv) the $(-\top)$-projective process $(u_t, x_t, \check v_t)$,
	the SDE governing the relevant process satisfies H\"ormander's condition.
\end{proposition}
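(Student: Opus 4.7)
The goal is to verify H\"ormander's condition pointwise on each of the four finite-dimensional phase spaces. I would write each SDE in Stratonovich form; since the noise is additive in $u$ and the noise vector fields $X_k^i = (e_k\gamma_k^i, 0, 0, \ldots)$ are constant along the Lagrangian fiber directions $(x, v, A, \check v)$, the It\^o and Stratonovich formulations agree and the relevant vector fields are simply the constant $X_k^i$ in the $u$-variable together with the full drift $Y$. H\"ormander's condition then reduces to showing that the Lie algebra generated by $\{X_k^i\}$ and $Y$ spans the tangent space at every point of the respective manifold $\hat\Hbf \times \T^d \times (\text{fiber})$.

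\textbf{Reduction to fiber directions.} Hypoellipticity of the $u$-marginal under Assumption \ref{a:lowms} is already established in \cite{E2001-lg, Romito2004-rc} for System \ref{sys:Galerkin} and is immediate for System \ref{sys:2DStokes}, where the drift is linear and all forced modes are directly reached. It therefore suffices to verify that iterated brackets produce vector fields whose projections onto each fiber direction $T_x\T^d$, $T_v P^{d-1}$, $T_A\mathrm{SL}_d(\R)$, and $T_{\check v} P^{d-1}$ span at every base point $(u, x, \ldots)$. The first bracket $[X_k^i, Y]$ has $x$-component exactly $e_k(x)\gamma_k^i$, since the $x$-drift is $u(x)$ differentiated in the $u$-direction $e_k\gamma_k^i$. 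Under Assumption \ref{a:lowms}, the family $\{e_k(x)\gamma_k^i : |k|_\infty = 1\}$ spans $\R^d$ at every $x \in \T^d$ via the identity $\sin^2(x_j) + \cos^2(x_j) = 1$ applied componentwise, handling the Lagrangian process.

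\textbf{Higher-order brackets and main obstacle.} For the projective and $(-\top)$-projective processes, take a second bracket $[X_\ell^j, [X_k^i, Y]]$: its $v$- (resp.\ $\check v$-) component is of the form $\Pi_v\bigl(\nabla(e_k\gamma_k^i)(x)\bigr) v$ (resp.\ $-\Pi_{\check v}\bigl(\nabla(e_k\gamma_k^i)(x)\bigr)^\top \check v$), and varying $k$ over the forced low modes produces a spanning set in the $(d-1)$-dimensional projective fiber, again using $\sin$/$\cos$ orthogonality. The $A$-component of the same bracket is $\nabla(e_k\gamma_k^i)(x) A$, viewed as a vector in $T_A\mathrm{SL}_d(\R)$ after right-translation by $A^{-1}$, which is a traceless matrix because $\nabla\cdot(e_k\gamma_k^i) = 0$. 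The main obstacle is to exhibit $d^2 - 1$ such matrices that span $\mathfrak{sl}_d(\R)$ uniformly in $x$; this requires combining brackets coming from several distinct forced modes (and, for System \ref{sys:Galerkin}, further brackets with the nonlinear drift $(u\cdot\nabla)u$, which couple different Fourier modes and generate new matrix directions). The verification amounts to a linear-algebra check that the matrix fields $x \mapsto \nabla(e_k\gamma_k^i)(x)$ and their products arising from iterated brackets generate all of $\mathfrak{sl}_d(\R)$ at every $x$, which I would carry out case-by-case in $d=2$ and $d=3$ using Assumption \ref{a:lowms} and the trigonometric identities noted above.
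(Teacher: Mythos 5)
Your reduction to fiber directions and the observation that the noise vector fields are constant in the fiber variables (so It\^o and Stratonovich agree) are fine, and your treatment of the $x$-component via the first bracket and the identity $\sin^2 + \cos^2 = 1$ matches the paper. However, the step where you produce the $v$-, $\check v$-, and $A$-fiber directions from the second bracket $[X_\ell^j, [X_k^i, Y]]$ is wrong, and this is the heart of the proposition. The fiber drift $V$ (resp.\ $G$ for the matrix process) depends \emph{linearly} on $u$, so the first bracket $[X_k^i, Y]$ already has $u$-independent fiber components. Taking a further bracket with the constant $u$-direction $X_\ell^j$ therefore annihilates the entire fiber part: $[X_\ell^j, [X_k^i, Y]] = [X_\ell^j, [X_k^i, U]]$, which lives purely in $\hat\Hbf$. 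The formula $\Pi_v \nabla(e_k\gamma_k^i)(x) v$ you wrote down is in fact the $v$-component of the \emph{first} bracket $[X_k^i, Y]$, not of a second one; the index $(\ell, j)$ is a phantom.

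The correct mechanism is that the first bracket carries both the $x$-component and the fiber component simultaneously, and one isolates them by combining $k$ and $-k$ with the trigonometric identity $e_k^2 + e_{-k}^2 = 1$ and the symmetry $\gamma_{-k} = -\gamma_k$:
\begin{align}
e_k(x)\,[e_k\gamma^i_k, V] - e_{-k}(x)\,[e_{-k}\gamma^i_{-k}, V] &= \begin{pmatrix}\gamma^i_k \\ 0\end{pmatrix}, \\
e_{-k}(x)\,[e_k\gamma^i_k, V] + e_{k}(x)\,[e_{-k}\gamma^i_{-k}, V] &= \begin{pmatrix}0 \\ (k\cdot v)\Pi_{v}\gamma^i_k\end{pmatrix},
\end{align}
and similarly for the $\check v$- and $A$-components (the latter giving $(\gamma_k^i\otimes k)A$, i.e.\ the constant matrices $\gamma_k^i\otimes k \in \mathfrak{sl}_d(\R)$). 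From there the spanning argument is the linear algebra you sketched: for $T_v\S^{d-1}$ one needs $d$ linearly independent $k$'s, and for $\mathfrak{sl}_d(\R)$ one needs $d+1$ suitably chosen modes (e.g.\ $(1,0), (0,1), (1,1)$ in 2D). Meanwhile, the vanishing of the fiber part of the \emph{second} bracket is not a bug but a feature: it is precisely what decouples the spanning of $\hat\Hbf$ from the fiber and lets one invoke the existing bracket analysis of the Galerkin--Navier-Stokes drift, as you rightly wanted to do. So the architecture of your argument is right, but the bracket you rely on to produce the fiber vectors is identically zero in those components, and the fix is to look one order lower.
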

\noindent
 By standard arguments (see e.g. \cite{DPZ96}), uniqueness of the stationary measures then follows for the Lagrangian, projective and $(- \top)$-projective processes
 \cite{DaPrato14}, thereby fulfilling ingredients (a)(i) -- (iii) above as well as (b). Likewise (c)(i) is immediately satisfied for the matrix process.

\begin{remark}\label{rem:hormanderPhysical} 
Physically, one may view H\"{o}rmander's condition as an infinitesimal controllability statement.  When it is satisfied for the $(u_t, x_t, v_t, A_t)$ process, one can infinitesimally move each component of this process independently of the others using special choices of noise paths. Hence, all possible infinitesimal deformations of the flow map are realized with non-zero probability. 
\end{remark}

\medskip

\subsubsection{Infinite dimensions: Systems \ref{sys:NSE}--\ref{sys:3DNSE}}

In infinite dimensions, H\"ormander's condition is not applicable and so we must work harder to verify ingredients (a)(i) -- (iii). There have been a number of works proving uniqueness of the stationary measure for the Navier-Stokes equations under degenerate noise. A standard approach is to apply the Doob-Khasminskii Theorem \cite{Doob1948-rb, khasminskii1960ergodic}, the fact that
distinct ergodic stationary measures for strong Feller processes (Definition \ref{defn:strongFeller}) have disjoint supports,
 and then to check that there exists a point which belongs to the support of every invariant measure (a.k.a. weak irreducibility). Following this strategy, in Section \ref{sec:Feller} we prove the strong Feller property for the Lagrangian, projective and $(-\top)$-projective processes. 

\begin{proposition}[Strong Feller] \label{prop:Feller}
For Systems \ref{sys:NSE}--\ref{sys:3DNSE}, the Markov semigroups associated with the Lagrangian process $(u_t,x_t)$ and the projective processes $(u_t,x_t,v_t)$, $(u_t,x_t,\check{v}_t)$ are all strong Feller in $\mathbf{H} \times \T^d \times P^{d-1}$.
\end{proposition}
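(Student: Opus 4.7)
The plan is to establish strong Feller for each of the three enlarged processes by proving a Bismut--Elworthy--Li-type integration-by-parts formula, adapting the Malliavin-calculus approach of Flandoli--Maslowski \cite{FM95} and Eckmann--Hairer \cite{EH01} to the extended phase space. Abstractly, for $Y_t$ equal to $(u_t,x_t)$, $(u_t,x_t,v_t)$, or $(u_t,x_t,\check v_t)$ with state space $Z$, the goal is to show that for every bounded measurable $\varphi:Z\to\R$, every initial $y_0 \in Z$, and every test direction $\xi$ in the appropriate tangent space, there is a representation
\[
\xi\cdot \nabla \E[\varphi(Y_t)\mid Y_0=y_0] \;=\; \E\bigl[\varphi(Y_t)\,\Xi_t(\xi)\bigr],
\]
where the Malliavin weight $\Xi_t(\xi)$ lies in $L^p(\P)$ uniformly on bounded subsets of initial data. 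From such a formula one immediately has $|\nabla P_t\varphi(y_0)| \lesssim \|\varphi\|_\infty$ locally, and the strong Feller property follows by a standard approximation of $\varphi$.

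The central analytic object is the Malliavin covariance matrix of $Y_t$. Since the noise $Q\dot W_t$ enters only the $u$-component, Malliavin derivatives propagate to the $(x_t,v_t)$- or $(x_t,\check v_t)$-coordinates solely through the linearization of \eqref{eq:xtintro} and \eqref{eq:projectiveSDEOutline} along the $u$-trajectory. The Malliavin covariance of $Y_t$ therefore inherits a block structure: the $u$-block is, up to Jacobian factors, the Malliavin covariance for the fluid equation alone, while the finite-dimensional and mixed blocks encode sensitivity of $(x_t,v_t)$ to perturbations of $(u_s)_{s\in[0,t]}$. The $u$-block can be inverted by adapting the Mattingly--Pardoux-type analysis: Assumption \ref{a:Highs}, together with the smoothing and dissipativity of (hyper-viscous) Navier--Stokes, yields invertibility on the appropriate subspace of $\Hbf$ with all negative moments finite; the lower bound on $\sigma$ in \eqref{eqn:sigmaConstraint} is calibrated precisely so that the inverse acts in the correct Sobolev scale (cf.\ Remark \ref{rem:sigmaStrongFeller}).

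The main obstacle is to invert the Malliavin covariance on the finite-dimensional coordinates with quantitative control on negative moments. Although $(x_t,v_t)$ receive no direct forcing, their drifts $u_t(x_t)$ and $\Pi_{v_t}\nabla u_t(x_t)v_t$ depend on $u_t$ and $\nabla u_t$ at $x_t$, so infinitesimal perturbations of low Fourier modes of $u$ produce a full-rank family of infinitesimal deformations of $(x,v)$. Assumption \ref{a:lowms}, which forces every mode with $|k|_\infty=1$, is used to check that the span of these deformations exhausts $\R^d\oplus T_v P^{d-1}$ (and $\R^d\oplus T_{\check v}P^{d-1}$ in the $(-\top)$-case), yielding an almost-sure non-degeneracy statement. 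Converting this spanning into quantitative negative-moment estimates for the smallest singular value of the finite-dimensional block, and then combining with the $u$-block estimate via a Schur-complement decomposition while keeping the resulting Malliavin weight in $L^p(\P)$, is the technically delicate step; the relevant estimates will need to be uniform in the driving noise realization of $u$.

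Once the full Malliavin covariance is invertible with the required integrability, the Bismut--Elworthy--Li formula produces the representation displayed above, and Proposition \ref{prop:Feller} follows for the Lagrangian process. The projective and $(-\top)$-projective cases proceed in parallel: one simply augments the linearized system by \eqref{eq:projectiveSDEOutline} or its $(-\top)$-analogue and reruns the same Hörmander-type computation, since the projective drifts depend on $\nabla u$ in an identical way and the low-mode forcing acts on them via the very same mechanism.
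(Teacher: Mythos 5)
Your high-level instinct -- Malliavin calculus, a block structure mixing the fluid Malliavin matrix with the finite-dimensional directions for $(x,v)$, Assumption \ref{a:Highs} for the $u$-block and Assumption \ref{a:lowms} for the finite-dimensional spanning -- is in the right neighborhood, but the proposed route has a genuine gap at its central step. You are proposing to prove a Bismut--Elworthy--Li representation directly for the full process $Y_t = (u_t,x_t,v_t)$, which requires inverting its Malliavin covariance with all negative moments. That is not achievable here, and the paper says so explicitly: one does not expect a gradient estimate to hold for $\widehat P_t$ itself. The Navier--Stokes nonlinearity is not globally Lipschitz, so the Jacobians and approximate Jacobians of the flow do not admit the uniform-in-$\omega$ almost-sure bounds needed to control the negative moments of the full covariance; and even ignoring the nonlinearity's growth, one cannot in this infinite-dimensional setting find a control $g$ with $\MalD_g w_T = D w_T h$ exactly -- you inevitably pick up a remainder $r_T$, and the whole argument hinges on proving that remainder is small (of order $T^{1/2}$) and then running a semigroup/bootstrap iteration to close the gradient estimate. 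Your ``Schur-complement decomposition, keeping the weight in $L^p(\P)$'' step is precisely the place where the direct approach collapses.

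The paper circumvents this by first passing to a cutoff process $w_t^\rho$: the nonlinearity is smoothly switched off when $\|u\|_{\Hbf}$ exceeds $\rho$, which makes the drift globally Lipschitz and gives almost-sure, $T$-uniform Jacobian bounds. But this introduces a second problem that your outline does not anticipate: once the nonlinearity is off, the $(x,v)$ directions lose their only coupling to the noise, so the hypoelliptic spanning you invoke via Assumption \ref{a:lowms} fails on the cutoff region. The paper repairs this by augmenting the state with an auxiliary Brownian motion $z_t \in \R^{2d}$ and turning on a bounded vector field $H(v,z)$ exactly where the nonlinearity is off, so that the partial Malliavin matrix $\cC_T^L$ (built from the approximate inverse low-mode Jacobian $V^L_s$, not from a restriction of the full covariance) stays uniformly non-degenerate in both regimes; that non-degeneracy (Lemma \ref{lem:Malmatrix-bound}) is proved via the Lie-bracket computation of Section \ref{sec:galerkin} together with a Hairer--Mattingly-style interpolation inequality, not via Norris' lemma on the full matrix. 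Finally, the control is assembled as a genuine high/low split: at high frequencies one inverts $Q_H$ directly (Assumption \ref{a:Highs}), which makes that piece literally a Bismut--Elworthy--Li term, while at low frequencies one inverts $\cC_T^L$; the unavoidable mismatch is the remainder $r_T$, and the cutoff process's gradient estimate is then bootstrapped to strong Feller (not a gradient estimate) for the original process via an approximation in probability. Without the cutoff, the auxiliary noise $z_t$, and the remainder/iteration scheme, the proof as you sketch it does not close.
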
 
\begin{remark} \label{rem:sigmaStrongFeller}
This proposition is where we need the lower bound $\sigma > \alpha - 2(d-1)$ as in \eqref{eqn:sigmaConstraint}.
\end{remark} 

\begin{remark}
If the noise if suitably non-degenerate then the strong Feller property for the Navier-Stokes equations can be proved by the Bismut-Elworthy-Li formula (see for instance \cite{FM95} and \cite{Cerrai1999-ce}). However if the noise if too degenerate,
 it is not known whether the strong Feller property even holds. Indeed, to get around this difficulty, Hairer and Mattingly \cite{HM06,HM11} introduced a weaker notion, {\it the asymptotic strong Feller property}, which when combined with weak irreducibility, gives a generalization of the Doob-Khasminskii Theorem, still giving uniqueness of the stationary measure. While the asymptotic strong Feller property is clearly good enough obtain ingredients (a)(i) -- (iii), it {\em does not} appear to be enough to prove the refinement of Furstenberg's criterion (Proposition \ref{prop:refineFurstOutline}), which requires that $(u_t, x_t)$ be strong Feller (ingredient (b)). It is precisely this strong Feller requirement for Furstenberg's criterion that dictates our non-degeneracy Assumption \ref{a:Highs}. 
\end{remark}

To conclude uniqueness of the stationary measures as in (a)(i) -- (iii), it suffices to prove the following weak irreducibility properties, proved in Section \ref{sec:Irr} below. 
\begin{proposition} \label{prop:UniErg} 
For Systems \ref{sys:NSE}--\ref{sys:3DNSE} we have the following.
\begin{itemize}
	\item[(1)] The support of any stationary measure for the Lagrangian process $(u_t, x_t)$ on $\Hbf \times \T^d$ must contain
	the set $\{ 0 \} \times \T^d$.
	\item[(2)] The support of any stationary measure for the projective processes $(u_t, x_t, v_t)$, $(u_t, x_t, \check{v}_t)$ on $\Hbf \times \T^d \times P^{d-1}$ must
	contain $\{ 0 \} \times \T^d \times P^{d-1}$.
\end{itemize}
\end{proposition}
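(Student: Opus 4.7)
The plan is to combine a Stroock--Varadhan support theorem with an explicit multi-phase deterministic control construction. Let $P_t$ denote the Markov semigroup on the relevant space $Z$ (either $\Hbf \times \T^d$ for part~(1), or $\Hbf\times\T^d\times P^{d-1}$ for each of the two processes in part~(2)), and let $\bar\mu$ be any stationary measure for $P_t$. Fix a target $y^\star$ in $\{0\}\times\T^d$ or $\{0\}\times\T^d\times P^{d-1}$ and an open neighborhood $U \ni y^\star$. By stationarity, $\bar\mu(U) = \int P_t(y,U)\,\dee\bar\mu(y)$, so it suffices to produce some $y_0 \in Z$ and $t > 0$ with $P_t(y_0, U) > 0$. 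By the Stroock--Varadhan support theorem applied to the coupled SPDE plus the Lagrangian and projective equations \eqref{eq:xtintro}, \eqref{eq:projectiveSDEOutline}, this in turn reduces to exhibiting a smooth deterministic control $\phi \in C^\infty([0,T];\Hbf)$ such that the controlled trajectory obtained by replacing $Q\dot W_t$ with $Q\dot \phi_t$ satisfies $(u^\phi_T, x^\phi_T, v^\phi_T) \in U$.

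I would construct $\phi$ in three phases. In Phase~1 on $[0, T_1]$ I set $\dot\phi \equiv 0$; standard energy estimates for the unforced Navier--Stokes (or hyper-viscous) equation yield exponential decay, so $\|u^\phi_{T_1}\|_{\Hbf} < \delta$ for arbitrarily small $\delta$ and sufficiently large $T_1$, while $(x, v)$ drift to some configuration $(x_1, v_1)$. In Phase~2 on $[T_1, T_1+T_2]$ I activate only the finite set of low modes $\{|k|_\infty = 1\} \subset \mathcal K$ guaranteed by Assumption~\ref{a:lowms}. In $d=2$ this realizes the Kolmogorov-type shears $\sin y\,\hat x$, $\cos y\,\hat x$, $\sin x\,\hat y$, $\cos x\,\hat y$ of Remark~\ref{rmk:StokesDegn}; by alternately pulsing these shears (avoiding their nodal lines) one can steer $x$ from $x_1$ to any prescribed $x' \in \T^2$, and, because the shears have non-commuting and non-parallel Jacobians $\nabla u$, the composition of tangent maps along the trajectory additionally rotates $v$ (or $\check v$) into any projective direction $v'$. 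An analogous construction using coordinate-plane sub-shears handles $d=3$. In Phase~3 on $[T_1+T_2, T]$ I again set $\dot\phi \equiv 0$ to re-dissipate the velocity below any desired threshold; the incremental drift of $(x,v)$ is controlled by $\int_{T_1+T_2}^T \|u^\phi_s\|_{C^1}\,\dee s$, which can be made arbitrarily small by tuning the Phase~2 amplitudes and $T-T_1-T_2$, placing the final state in $U$.

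The main obstacle is the coupled controllability in Phase~2. Steering the Lagrangian particle $x$ via pure shear flows is essentially immediate, but the projective coordinate evolves by $\partial_t v = \Pi_v\,\nabla u_t(x_t)\,v$ (and similarly for $\check v$ with $-(\nabla u)^\top$), so one must verify that as $x_t$ traverses the constructed path, the composition of Jacobian blocks along it can realize any rotation of $P^{d-1}$. I plan to establish this via a short Lie-algebraic computation: the Jacobians of $\sin y\,\hat x$ and $\sin x\,\hat y$ at any interior point are two linearly independent, traceless, nilpotent matrices, and the subgroup they generate in $SL_d(\R)$ acts transitively on $P^{d-1}$; together with the freedom to choose both the particle path and the duration of each pulse, this is enough to hit any $v'$. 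A secondary technical point is the Stroock--Varadhan support statement for the coupled SPDE with infinite-dimensional noise and the finite-dimensional Lagrangian and projective ODEs; I would handle this by first truncating the driving noise to $\mathcal K\cap\{|k|_\infty \leq N\}$ (which is enough since the constructed control only uses these modes), applying the classical support theorem in the resulting finite-dimensional approximation, and then passing to the limit using continuous dependence of the Lagrangian and projective flows on $u$.
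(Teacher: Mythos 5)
Your high-level strategy --- dissipate the velocity, steer $(x,v)$ with explicit low-mode controls, re-dissipate, then convert the controlled trajectory into a lower bound on transition probabilities --- matches the paper's proof, which constructs exact controls (Lemma~\ref{lem:Ctrluxv}), proves a stability estimate from the positivity of Wiener measure (Lemma~\ref{lem:MarkovTrans}), and then feeds these into the irreducibility argument. However, there are a few points that need repair.

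\textbf{The reduction is incorrect as stated.} From $\bar\mu(U) = \int P_t(y,U)\,\dee\bar\mu(y)$ it does \emph{not} follow that finding a single $y_0$ with $P_t(y_0, U) > 0$ gives $\bar\mu(U) > 0$: you need $P_t(y, U) > 0$ on a set of positive $\bar\mu$-measure, for a single time $t$. The paper handles this by using parabolic smoothing and stationarity to show that $\bar\mu$ assigns mass $>1/2$ to a set $\mathcal B$ of \emph{bounded} initial data, and then runs the decay and control uniformly over $\mathcal B$ for a time $T_\gamma + 1$ that depends only on the bound. Alternatively you could fix $y_0 \in \supp\bar\mu$ and invoke Feller continuity to get that $\{ y : P_t(y, U) > 0\}$ is open, hence of positive $\bar\mu$-measure. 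Either way, this step is not a formality and must be done; as written, your argument does not produce a positive-measure set.

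\textbf{Projective control.} The paper does not rely on the iterated-Lie-bracket picture you sketch. Instead it observes that the shear flows $(\cos(y_2-b),0)$, $(0,\cos(y_1-a))$ and the cellular flow $(-\sin(y_2-b),\sin(y_1-a))$ are simultaneously \emph{exact stationary Euler solutions} (so $B(u_*,u_*) = 0$) \emph{and Stokes eigenfunctions}; this lets one hit the target with $u_1$ exactly $= 0$ by choosing $Qg = (f' + f)u_*$ with $f$ compactly supported inside each phase interval. Moreover, the cellular flow is applied with the particle sitting at its \emph{stagnation point} $(a,b)$, so that $x_t$ does not move while $\nabla u$ is a pure rotation on $T_{v}\S^{d-1}$. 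This cleanly decouples the $x$-control from the $v$-control. Your shear-pulsing plan has a real coupling issue: under a single pulse of $\sin y\,\hat x$ the displacement of $x$ and the infinitesimal rotation of $v$ are both proportional to the pulse integral (with ratio $\tan y$), so to realize an arbitrary pair $(x',v')$ you must exploit the availability of both $\sin$- and $\cos$-shears, or position $y_0$ at a nodal point, and this should be spelled out rather than waved through by ``freedom to choose the particle path.'' It is plausible, but it is not the clean construction.

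\textbf{Phase~3 re-dissipation.} Because the paper's control makes $u_1 = 0$ exactly, there is no Phase 3. Your approximate version works only if $\|u^\phi_{T_1+T_2}\|$ is already small, which in turn requires the Phase~2 amplitude to be small (with the phase duration lengthened to compensate). This is repairable but needs to be stated; otherwise the residual drift of $(x,v)$ during re-dissipation is $O(1)$.

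\textbf{Support theorem.} The ``truncate the noise and pass to the limit'' route does not really sidestep the infinite-dimensionality: even with noise supported on finitely many modes, the state $u_t \in \Hbf$ is infinite-dimensional, so the classical Stroock--Varadhan theorem still does not apply. What you actually need is only the \emph{easy} inclusion: positive mass on a tube around the controlled path. This is exactly what Lemma~\ref{lem:MarkovTrans} proves directly, via a mild-form Gr{\"o}nwall stability estimate around $u^c_t$ combined with positivity of the law of the stochastic convolution on a $C^0$-ball. It would be simpler (and, in this setting, necessary) to argue this way rather than through a finite-dimensional support theorem and a limit.
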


Uniqueness of the stationary measures now follow.
\begin{corollary} 
The processes $(u_t), (u_t,x_t), (u_t,x_t,v_t),$ and $(u_t,x_t,\check{v}_t)$ all have unique stationary measures. 
\end{corollary}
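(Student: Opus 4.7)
The plan is to deduce uniqueness via the standard Doob--Khasminskii principle: any Markov semigroup that is strong Feller and weakly irreducible admits at most one stationary probability measure. Uniqueness of $\mu$ for the $(u_t)$ process alone was already recorded in Proposition~\ref{prop:WP}; the genuine content of the corollary lies in the three lifted processes $(u_t,x_t)$, $(u_t,x_t,v_t)$, and $(u_t,x_t,\check v_t)$, which I treat in parallel, since the argument is structurally identical for all three.

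Fix one such process $\Phi_t$ on its state space $Z$ (one of $\Hbf\times\T^d$ or $\Hbf\times\T^d\times P^{d-1}$), with Markov semigroup $P_t$, and suppose toward a contradiction that $\rho_1,\rho_2$ are two distinct ergodic stationary measures. By ergodicity they are mutually singular, so there is a Borel set $A\subset Z$ with $\rho_1(A)=1$ and $\rho_2(A)=0$. Invariance gives $P_t\mathbf 1_A = \mathbf 1_A$ in $L^1(\rho_i)$ for each $i$, so $P_t(\cdot,A)=1$ on a $\rho_1$-full set and $P_t(\cdot,A)=0$ on a $\rho_2$-full set. By the strong Feller property (Proposition~\ref{prop:Feller}), the function $z\mapsto P_t(z,A)$ is continuous on $Z$; hence $\{P_t(\cdot,A)=1\}$ and $\{P_t(\cdot,A)=0\}$ are disjoint closed sets that must contain $\supp\rho_1$ and $\supp\rho_2$ respectively, and therefore $\supp\rho_1\cap\supp\rho_2=\emptyset$.

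This contradicts Proposition~\ref{prop:UniErg}, which places $\{0\}\times\T^d$ (respectively $\{0\}\times\T^d\times P^{d-1}$) inside the support of every stationary measure of the relevant process. Consequently $\Phi_t$ admits at most one ergodic stationary measure, and by the ergodic decomposition theorem at most one stationary measure overall. Existence of a stationary measure is furnished by a routine Krylov--Bogoliubov argument applied to an initial condition lying in $\supp\mu\times\T^d$ (resp.\ $\supp\mu\times\T^d\times P^{d-1}$), using tightness inherited from the known stationary measure $\mu$ on $\Hbf$ together with compactness of $\T^d$ and $P^{d-1}$. The deduction itself is essentially mechanical; the genuine obstacles have already been absorbed into the two supporting propositions, and the main hard ingredient is the strong Feller property for the projective lifts (Proposition~\ref{prop:Feller}), whose proof is precisely what dictates the high-mode nondegeneracy Assumption~\ref{a:Highs}.
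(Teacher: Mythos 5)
Your proposal is correct and matches the paper's intended argument exactly: uniqueness follows from combining the strong Feller property (Proposition~\ref{prop:Feller}) with weak irreducibility (Proposition~\ref{prop:UniErg}) via the Doob--Khasminskii principle that distinct ergodic stationary measures of a strong Feller semigroup have disjoint supports. You have merely spelled out that standard fact (which the paper cites rather than proves; one could also invoke Lemma~\ref{lem:ultraFeller}(b) directly at the step where $P_t(\cdot,A)$ is forced to equal $1$ on all of $\supp\rho_1$), and added the routine existence step, but the route is the same.
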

Additionally, it remains to address ingredient (c)(ii), the approximate controllability condition in Proposition \ref{prop:approxControlOutline}.
Once Propositions \ref{prop:UniErg} and \ref{prop:approxControlOutline} are completed, the proof of Theorem \ref{thm:Lyap} for System \ref{sys:NSE} is complete.

\subsubsection{Strong Feller}\label{subsubsec:feller}
Our proof of Proposition \ref{prop:Feller} is inspired by the methods of Eckmann and Hairer \cite{EH01}. In \cite{EH01}, the authors prove strong Feller for the complex Ginzburg-Landau equations with forcing that satisfies Assumption \ref{a:Highs}, using a cut-off technique and a high-low frequency splitting. This cut-off approach has since been extended to Markov selections of the 3D Navier-Stokes equations in \cite{RomitoXu11}. Similar results to \cite{RomitoXu11} were proved in \cite{Albeverio2012-or} using the infinite dimensional Kolmogorov equation. Our proof of strong Feller is closer to \cite{EH01} and \cite{RomitoXu11}, but differs in our choice of the cut-off process, the use of non-adapted controls, estimates on Skorohod integrals, and an interpolation inequality introduced in \cite{HM11} used to circumvent some technicalities with applying Norris's Lemma in $L^2([0,1])$.

Similarly to \cite{RomitoXu11,EH01,FM95}, it does not seem possible to obtain an estimate on the derivative of the Markov semigroup of the projective process $(u_t,x_t,v_t)$. The strategy is to show that such an estimate is available for a ``cut-off'' or ``regularized'' process.  In our setting, we will find it convenient to augment the projective process $(u_t,x_t,v_t)$ by a Brownian motion $(z_t)$ on $\R^{2d}$ (likewise for the $(-\top)$ projective process). The augmented process $w_t = (u_t,x_t,v_t,z_t)$ solves an abstract evolution equation
\begin{equation}
 \partial_t w_t = F(w_t) + Aw_t + Q\dot{W}_t
 \end{equation} 
on $\Hbf\times \cM$ where $\cM$ is a smooth finite dimensional manifold. Let $\widehat{P}_t$ be the Markov semi-group associate to $w_t$, then our goal is to find a regularized process $w^\rho_t$ such that $\P\left((w_t)_{t\in[0,T]} \neq (w^\rho_t)_{t\in[0,T]}\right)$ is vanishingly small as $\rho \rightarrow \infty$ but for which one can obtain a derivative estimate on the associated semigroup $\widehat{P}^\rho_t$. 
\begin{remark}
It is important to note that our choice of cut-off process $w^\rho_t$ is different from that used in \cite{EH01} and \cite{RomitoXu11} and uses the augmentation by $z_t$ to introduce new sources of noise while avoiding technical difficulties with multiplicative white noise (see Section \ref{sec:Feller} for more details on the cut-off process).
\end{remark} 
Our main effort is then to prove that the cut-off semi-group $\widehat{P}^\rho_t$ satisfies the following gradient estimate (Proposition \ref{prop:GradPhi})
\begin{equation}\label{eq:grad-est-trunc_intro}
\|D \widehat{P}_t ^\rho \phi(w)\|_{\Hbf\times T_v\cM} \leqc_\rho t^{-a_\ast} \left(1 +  \norm{w}_{\Hbf}^{b_\ast}\right) \norm{\phi}_{L^\infty}
\end{equation}
for all bounded measurable $\phi$ on $\Hbf\times\cM$ and sufficiently small $t$, and $a_*$ and $b_*$ are certain constants.  We show in the proof of Proposition \ref{prop:Feller} in Section \ref{sec:Feller} this estimate on $\widehat{P}^\rho_t$ implies that $\widehat{P}_t$ is strong Feller, albeit without an estimate on the derivative.

The fundamental tool for proving \eqref{eq:grad-est-trunc_intro} is Malliavin calculus. This involves taking derivatives of the solution with respect to the noise. Well-posedness of the cutoff process implies that for each $\rho$ and initial data $w\in \Hbf\times\cM$, the solution $w_t^\rho$ at time $t>0$ is a continuous function of the noise path $W\in C(\R_+,\Wbf)$. Specifically, we have $W|_{[0,t]}\mapsto w_t^\rho\left[W|_{[0,t]}\right]$
is a continuous mapping from $C([0,t],\Wbf)$ to $\Hbf\times \cM$ for each $t>0$. In fact, it is straightforward to show that $W|_{[0,t]}\mapsto w_t^\rho\left[W|_{[0,t]}\right]$ is actually differentiable over the Banach space $C([0,t];\Wbf)$ (see for instance \cite{HM11} Proposition 4.1). Indeed, for any process $g = (g_t)$ (not necessarily adapted to $\F_t$) that belongs almost surely to $L^2(\R_+,\Wbf)$, the Malliavin derivative $\MalD_g w_t^\rho$ of $w_t^\rho$ in the direction of $g$, defined by
\begin{equation}\label{eq:MalD-def}
	\MalD_g w_t^\rho = \frac{\dee}{\dee h} w_t^\rho\!\left[W + hG \right]\big|_{h=0},\quad G = \int_0^\cdot g_s\ds,
\end{equation}
exists almost surely for each $t>0$. We will often refer to $g$ as a \emph{control}. A key feature of the Malliavin derivative is the celebrated Malliavin integration by parts formula, which states that for each $\phi\in C^1(\Hbf\times\cM)$ and a suitably regular $g$ (see Proposition \ref{lem:MalIBP} for the precise conditions) one has
\begin{equation}\label{eq:MalIBP-intro}
	\E\left(D \phi(w_t^\rho) \MalD_g w_t^\rho\right) =  \E  \, \MalD_g \phi(w^\rho_t)= 
	 \E\left(\phi(w_t^\rho)\int_0^t \langle g_s , \delta W_s\rangle \right),
\end{equation}
where the stochastic integral $\int_0^t \langle g_s , \delta W_s\rangle$ above denotes the Skorohod integral (see, e.g., Definition 1.3.1 in \cite{Nualart06} or Section 11.3 in \cite{DaPrato14}). If $g$ is adapted to the filtration $\mathcal{F}_t$ then the Skorohod integral coincides with the usual It\^{o} integral. The formula \eqref{eq:MalIBP-intro} can be used to obtain smoothing estimates on the semi-group $\widehat{P}_t^\rho$. Indeed, if for every $h\in \Hbf\times T_v \cM$ one could find a ``nice enough'' control $g$ such that $\MalD_g w_t^\rho = Dw_t^\rho h$, where $Dw_t^\rho h$ denotes the direction derivative of $w_t^\rho$ in the direction $h$ with respect to the initial data, then an estimate on $D\widehat{P}^\rho_t$ follows  from \eqref{eq:MalIBP-intro} as long as one can bound the Skorohod integral term (see \eqref{eq:MallIntro} below for more details). However, in our setting we are unable to find such a control $g$ due to subtleties in infinite dimensions. Instead we opt to find a control $g$ such that for each fixed $0<T<1$, we have 
\begin{equation}\label{eq:Mal+remainder}
	\MalD_g w_T^\rho = Dw_T^\rho h + r_T
\end{equation}
where $r_T$ is a remainder which will be small when $T$ is small, and consequently the Skorohod integral $E\left|\int_0^t \langle g_s , \delta W_s\rangle\right|$ will be singular as $T$ approaches $0$ (see Lemma \ref{lem:RTbd+SkrdBd} for the exact estimates). The (non-adapted) control $g$ is chosen with an elaboration of the high-low splitting used in \cite{EH01}. At high frequencies it is chosen such that the contribution to the Malliavin integration by parts formula reduces to the Bismut-Elworthy-Li formula, while at lower frequencies, the control is set by inverting a finite-dimensional approximation of the Malliavin matrix (the partial Malliavin matrix) while attempting to minimize the amount by which the low frequency control perturbs the higher frequencies. The invertibility of the partial Malliavin matrix can be deduced from the fact that the projective process associated to finite dimensional approximations of the Navier-Stokes equations satisfy H\"{o}rmander's condition (shown in Section \ref{sec:galerkin}).

The fact that we can have a remainder in \eqref{eq:Mal+remainder} and can still prove a smoothing estimate depends heavily on the precise dependence of the bounds on $r_T$ and the Skorohod integral.  The key idea, inspired by \cite{EH01} and \cite{Cerrai1999-ce} involves using the semi-group property and the integration by parts formula \eqref{eq:MalIBP-intro} to write
\begin{equation}
\begin{aligned}
	D \widehat{P}_{2T}^\rho\phi(w)h &=
	\E \big(D\widehat{P}_{T}^\rho\phi(w_T) Dw_T h \big) \\  
	& = \E\left(\widehat{P}_T^\rho\phi(w_T^\rho)\int_0^T \langle g_t, \delta W(t)\rangle_{\Wbf}\right) - \E \left(D \widehat{P}_{T}^\rho\phi(w_T^\rho)r_T\right). 
	 \end{aligned}\label{eq:MallIntro}
\end{equation}
Using the estimates on $r_T$ and the Skorohod integral one can close estimates on $D \widehat{P}_{t}^\rho\phi$ for sufficiently short times. The details of this argument can be found in the proof of Proposition \ref{prop:GradPhi}.

\subsubsection{Weak irreducibility and approximate control}

Let us first discuss Proposition \ref{prop:UniErg}. For simplicity, let us here only discuss the 2D case, System \ref{sys:NSE}. Weak irreducibility for $(u_t)$ is a consequence of the energy/enstrophy dissipation (see Section \ref{sec:Irr} and, e.g. \cite{E2001-lg}), which shows that $0$ is in the support of all stationary measures for the $(u_t)$ processes. 
Using a stability argument and the positivity of the Weiner measures, the main content of the irreducibility in Proposition \ref{prop:UniErg} is the study of the control problem 
\begin{align}
\partial_t u_t + B(u_t,u_t) = - A u_t + Q g(t) \, , \label{eq:ctrl}
\end{align} 
where $g \in C^\infty(\R_+, \Wbf)$ is a smooth control, $A = - \Delta$ and $B(u, u) =  (I + \grad (-\Delta)^{-1}\grad \cdot)(u\cdot \grad u)$.  Here, $x_t, v_t, \check v_t$ and $A_t$ are implicitly
controlled through $(u_t)$. 
First, we prove that for all $(x,v), (x',v') \in \T^d \times P^{d-1}$, there exist smooth controls $g$ such that
\begin{align}
(u_0,x_0,v_0) = (0,x,v), \quad (u_1,x_1,v_1) = (0,x',v').
\end{align}
(and analogously for the $(u_t,x_t,\check{v}_t)$ process). 
We note that it suffices to control near $u_t \approx 0$ precisely because $0$ is in the support of the stationary measure $\mu$. 
To solve this control problem we use that the following flows are \emph{exact} solutions (for arbitrary $a,b$) of the 
steady Euler equation $B(u, u) = 0$ as well as eigenfunctions of $A$: 
\begin{align}
u(y_1, y_2) =  \begin{pmatrix} \cos(y_2 - b) \\ 0 \end{pmatrix}, \quad  \begin{pmatrix} 0 \\ \cos(y_1 - a) \end{pmatrix}, \quad \begin{pmatrix} \sin(y_2 - b) \\ -\sin(y_1-a) \end{pmatrix}. \label{eq:flows}
\end{align}
The first two are shear flows whereas the last flow is a cellular flow with separatices aligned along the diagonals. The first two flows are used to move the particle $x_t$ whereas the latter flow is used to move $v_t$ without moving the particle. 
Once these flows can be formed, it is not difficult to verify the necessary controllability of System \eqref{eq:ctrl}; see Lemma \ref{lem:Ctrluxv} for details. Note that Assumption \ref{a:lowms} is slightly stronger than what is necessary to form the flows \eqref{eq:flows}, which is why, for example, Remark \ref{rmk:StokesDegn} holds (see Lemma \ref{lem:Ctrluxv} and Remark \ref{rmk:2Dstokes}). 
Similarly, for the case of Systems \ref{sys:NSE} and \ref{sys:3DNSE}, one can prove Theorem \ref{thm:Lyap} (and all of the other main results) using only Assumption \ref{a:Highs}; see Remark \ref{rmk:HighOnly}. 

The non-degeneracy of the $(u_t,x_t,v_t)$ and $(u_t,x_t,\check{v}_t)$  processes needed to prove Proposition \ref{prop:UniErg} and Condition (b) in  Proposition \ref{prop:approxControlOutline} then follow from the controllability and suitable stability estimates (see Section \ref{sec:Irr} for details). 
In order to satisfy Condition (a) in Proposition \ref{prop:approxControlOutline} we also need to demonstrate arbitrarily large growth of $A_t$ in the $(u_t,x_t,A_t)$ process (under similar constraints as for the projective control statements). This is done by applying the cellular flow as above, but shifted so that the hyperbolic fixed point causes exponential growth of $A_t$ without moving the particle $x_t$; see Proposition \ref{prop:Actrl} for details. 

\subsection{Proof of Yaglom's Law \eqref{eq:Yaglom} as in Theorem \ref{thm:turb} (ii)}

Next, we summarize the proof of Theorem \ref{thm:turb} (see Section \ref{sec:turbulence} for details). 
First, we prove the estimate \eqref{eq:WAD}. This result follows from a straightforward adaptation of the compactness method of \cite{BCZGH}, originally applied to passive scalars with deterministic, constant-in-time velocity fields.
The first step is to renormalize $f^\kappa_t = \sqrt{\kappa} g_t$ to obtain 
\begin{align}
\partial_t f^\kappa_t + u \cdot \grad f^\kappa_t = \kappa \Delta f^\kappa_t + \sqrt{\kappa} \widetilde{Q} \dot{\widetilde{W}}_t. \label{eq:frenorms}
\end{align}
The balance \eqref{eq:SclBal} then becomes, for statistically stationary solutions,  
\begin{align}
\EE \norm{\grad f^\kappa}_{L^2}^2 = \bar{\eps}. \label{eq:BalRenorm}
\end{align}
Denote by $\set{\bar{\mu}^\kappa}_{\kappa > 0}$ a sequence of stationary measures to \eqref{eq:frenorms} supported on $\hat \Hbf \times H^1$. The bound \eqref{eq:BalRenorm} is sufficient to obtain tightness of $\set{\bar{\mu}^\kappa}_{\kappa > 0}$ to pass to the limit and deduce the existence of a stationary measure $\bar{\mu}^0$ of the problem \eqref{eq:frenorms} with $\kappa = 0$ supported on $\hat \Hbf \times H^1$. Theorem \ref{thm:ExpGrwth} is then applied to prove by contradiction that necessarily $\bar{\mu}^0 = \hat \Hbf \times \delta_0$ (where $\delta_0$ denotes the Dirac delta centered at zero). The limit \eqref{eq:WAD} then follows from additional moment bounds in $L^2$; see Section \ref{sec:turbulence} for more details.

In order to prove \eqref{eq:Yaglom} we in turn adapt the method  of \cite{BCZPSW18} .
One of the basic identities used in \cite{BCZPSW18} is a version of the classical K\'arm\'an-Horvath-Monin relation \cite{deKarman1938,MoninYaglom,Frisch1995} which is a refinement of the $L^2$ energy balance. 
Here, we apply a similar identity, now a refinement of the $L^2$ balance for $g_t$ (see Proposition \ref{prop:KHM} below).
This identity implies a differential equation (in weak form) for the quantity (see \eqref{eq:ODE}), 
\begin{align}
\bar{\mathfrak{D}}(\ell) = \E \fint_{\T^{d}}\fint_{\mathbb S^{d-1}} \delta_{\ell n} u \cdot n \abs{\delta_{\ell n} g}^2 \dee S(n) \dx \, .
\end{align}
Solving the ODE \eqref{eq:ODE} in terms of the source and dissipation, we apply \eqref{eq:WAD} to show that the effect of the diffusivity on the balance vanishes over an appropriate range of scales $[\ell_D(\kappa), \ell_I]$ satisfying $\lim_{\kappa \rightarrow 0} \ell_D(\kappa) = 0$. This then yields \eqref{eq:Yaglom}. 

\subsection{A guide to notation}\label{subsec:notation}

\begin{itemize}
    \item We use the notation $f \lesssim g$ if there exists a constant $C > 0$ such that $f \leq Cg$ where $C$ is independent of the parameters of interest. Sometimes we use the notation $f \approx_{a,b,c,...} g$ to emphasize the dependence of the implicit constant on the parameters, e.g. $C = C(a,b,c,...)$. We denote $f \approx g$ if $f \lesssim g$ and $g \lesssim f$. 
    \item Throughout, $\R^d$ is endowed with the standard Euclidean inner product $(\cdot, \cdot)$ and corresponding norm $|\cdot|$. We continue
	to write $|\cdot|$ for the corresponding matrix norm. We use $\abs{k}_{p}$ to denote the $\ell^p$ norms.   
    \item When the domain of the $L^p$ space is omitted it is always understood to be $\T^d$: $\norm{f}_{L^p} = \norm{f}_{L^p(\T^d)}$. We use the notations $\EE X = \int_{\Omega} X(\omega) \PP(d\omega)$ and $\norm{X}_{L^p(\Omega)} = \left(\EE \abs{X}^p \right)^{1/p}$. We use the notation $\norm{f}_{H^s} = \sum_{k \in \Z^d} \abs{k}^{2s} \abs{\hat{f}(k)}^2$ (denoting $\hat{f}(k) = \frac{1}{(2\pi)^{d/2}} \int_{\T^d} e^{-ik\cdot x}f(x) dx$ the usual complex Fourier transform).  
	\item If $M$ is a Riemannian manifold, we write $\Leb_M$ for the Lebesgue volume on $M$. For short, we write $\Leb$ for the normalized Lebesgue measure on $\T^d$. 
	\item For $d \geq 1$, we write $M_{d \times d}(\R)$ for the space of real $d \times d$ matrices, and $SL_d(\R)$ for the subgroup of matrices
	of determinant 1. 
	\item We write $P^{d-1} = P(\R^d)$ for the real projective space of $\R^d$, i.e., the manifold of equivalence classes
	of vectors in $\R^d \setminus \{0\}$ up to scaling. When it is clear from context, we will abuse notation and
	intentionally confuse an element $v \in P^{d-1}$ with a unit vector representative $v \in \R^d$, and vice versa. Likewise $\S^{d-1}$ denotes the unit sphere in $\R^d$.
	\item Given a matrix $B \in M_{d \times d}(\R)$ we
	use the same symbol $B : P^{d-1} \to P^{d-1}$ to denote the corresponding map on projective space. If $\nu$ is a probability measure on $P^{d-1}$, we write $B_* \nu := \nu \circ B^{-1}$ for the pushforward of $\nu$ by $B$. 
	\item For $\sigma \in  (\alpha-2(d-1), \alpha - \frac{d}{2})$ fixed, we write $\Hbf$ for the subspace of $H^\sigma$ divergence-free, mean-zero
	 vector fields on $\T^d, d = 2$ or 3.
	Given $N \geq 1$ as in System \ref{sys:Galerkin}, we write $\Hbf_N \subset \Hbf$ for the span of all Fourier modes $k$ with $|k|_{\infty} \leq N$. Given $\mathcal K \subset \Z^d$
	as in Assumption \ref{a:lowms}, we write $\Hbf_{\mathcal K} \subset \Hbf$ for the span of all Fourier modes in $\mathcal K$.
	\item Given the vector field process $(u_t)$ on $\hat \Hbf$ governed by Systems \ref{sys:2DStokes}, \ref{sys:Galerkin}, \ref{sys:NSE} or \ref{sys:3DNSE}, we write $(u_t, x_t)$
	for the \emph{Lagrangian process} on $\hat \Hbf \times \T^d$ as defined by $x_t = \phi^t(x_0), \phi^t$ as in \eqref{eq:xtintro}, 
	where $\hat \Hbf$ is the appropriate space of vector fields as above. We write $\Theta^t_\omega : \hat \Hbf \times \T^d \to \hat \Hbf \times \T^d, t \geq 0$
	for the corresponding RDS as defined in Section \ref{subsubsec:RDSframeworkOutline}. We write $(u_t, x_t, v_t)$ for the \emph{projective process} on
	$\hat \Hbf \times \T^d \times P^{d-1}$ as defined in Section \ref{subsubsec:expandAllDirOutline}, and $(u_t, x_t, A_t)$ for the \emph{matrix process}
	on $\hat \Hbf \times \T^d \times SL_d(\R)$ as defined in Section \ref{subsubsec:ruleOutFurstFDOutline}.
	These processes are governed by $(u_t)$ as in Systems \ref{sys:2DStokes} -- \ref{sys:3DNSE} and
	the random ODE
\begin{subequations}  \label{eq:defineSDE} 
\begin{align}
\partial_t x_t & = u_t(x_t), \label{eq:xctrl} \\ 
\partial_t v_t  & = \Pi_{v_t} \grad u_t (x_t) v_t \, ,  \label{eq:vctrl} \\
\partial_t\check{v}_t & = - \Pi_{\check{v}_t} \grad u_t(x_t)^{\top}  \check{v}_t,\\
\partial_t A_t & = \grad u_t (x_t) A_t,
\end{align}
\end{subequations}
where $\Pi_v = \Id - v \tensor v$ is the orthogonal projection from $\R^d$ onto the tangent space of $\S^{d-1}$ (viewing $v$ as a unit vector in $\R^d$).

\item We denote by $B(u,u) = \left(I + \grad (-\Delta)^{-1} \grad \cdot \right)\left(u \cdot \grad u\right)$ the Euler nonlinearity in both 2D and 3D. We similarly denote $A = -\nu \Delta + \eta \Delta^2$ in 3D and $A =  -\nu \Delta$ in 2D. 
\end{itemize}

\section{Random dynamical systems preliminaries}\label{sec:RDS}

\newcommand{\Bor}{\operatorname{Bor}}
\newcommand{\N}{\mathbb N}
\newcommand{\codim}{\operatorname{codim}}
\newcommand{\Ac}{\mathcal A}
\newcommand{\Sc}{\mathcal S}
\newcommand{\Cc}{\mathcal C}
\newcommand{\Pc}{\mathcal P}
\newcommand{\Fc}{\mathcal F}
\newcommand{\Tc}{\mathcal T}

In this section we will present necessary background from random dynamical systems theory. This section is mostly an exposition of 
material drawn from various sources in the dynamics literature.
General references include the books of Arnold \cite{arnold2013random}, 
Kifer \cite{kifer2012ergodic}, and Kuksin \& Shirikyan \cite{KS}. 

The plan for Section \ref{sec:RDS} is as follows. We begin in Section \ref{subsec:RDSelements}
with some essential ergodic-theoretical background: the definition and standard axioms we use
for random dynamical systems (RDS) and some elementary results.
Section \ref{subsec:LinearCocycles} introduces the notion of \emph{linear cocycle} over a given RDS
and formulates the Multiplicative Ergodic Theorem (MET), allowing us to define the Lyapunov exponent
$\lambda^+$ appearing in Theorem \ref{thm:Lyap}. 
In Section \ref{subsec:FurstenbergFD}  
we turn our attention to the problem of how to prove $\lambda^+ > 0$ using Furstenberg's criterion (Theorem \ref{thm:furstenberg}).

\subsection{Elements of ergodic theory of random dynamical systems}\label{subsec:RDSelements}

\subsubsection{Basic setup for random dynamics}\label{subsubsec:basicSetupRDS}

Let $(\Omega, \Fc, \P)$ be a probability space and let 
$(\theta^t)$ be a measure-preserving semiflow on $\Omega$, i.e.,
$\theta : [0,\infty) \times \Omega \to \Omega, (t, \omega) \mapsto \theta^t\omega$ is a measurable mapping satisfying (i) $\theta^0 \omega \equiv \omega$ for all $\omega \in \Omega$;
(ii) $\theta^t \circ \theta^s = \theta^{t + s}$ for all $s, t \geq 0$, and (iii) $\P \circ (\theta^t)^{-1} = \P$ for all $t \geq 0$.
At times (which we will specify), it will be useful to assume that $\Omega$ has some topological structure. If so, we will assume additionally that $\Omega$ is a Borel subset of a Polish space, and $\Fc$ is the set of Borel subsets of $\Omega$.

\smallskip

Let $(Z, d)$ be a separable and complete metric space. A \emph{random dynamical system}
or RDS on $Z$ is an assignment to each $\omega \in \Omega$ of a mapping
$\Tc_\omega : [0,\infty) \times Z \to Z$ satisfying the following basic properties.

\begin{itemize}
\item[(i)] (Measurability) The mapping $\Tc : [0,\infty) \times \Omega \times Z \to Z$, $(t, \omega, z) \mapsto \Tc_\omega^t z$, 
is measurable with respect to $\Bor([0,\infty)) \otimes \Fc \otimes \Bor(Z)$ and $\Bor(Z)$.
\item[(ii)] (Cocycle property) For all $\omega \in \Omega$, we have $\Tc^0_\omega = \Id_Z$ (the identity mapping 
	on $Z$), and for $s, t \geq 0$, we have $\Tc^{s + t}_\omega = \Tc^t_{\theta^s \omega} \circ \Tc^s_\omega$. 
\end{itemize}

\begin{itemize}
	\item[(iii)] (Continuity) For all $\omega \in \Omega$, the mapping $\Tc_\omega : [0,\infty) \times Z \to Z$
	belongs to $C_{u, b}([0,\infty) \times Z, Z)$.
\end{itemize}

Here, for metric spaces $V, W$, the space $C_{u, b}(V, W) \subset C(V, W)$ is defined as follows:
\begin{definition}
We define\footnote{We use the slightly non-standard topology $C_{u, b}(Z, Z)$ to accommodate for the situation when $Z$ is not locally compact. The regularity of $C_{u, b}$-topology is used in several places, especially in Section \ref{subsec:furstInfiniteDimensions}, and so will be assumed from this point on.} $C_{u,b}(V, W)$ to be the space of continuous maps $F : V \to W$ for which the following holds for each bounded $U \subset V$: 
 \begin{itemize}
 	\item[(a)] The restriction $F|_U$ is uniformly continuous; and
	\item[(b)] the image $F(U)$ is a bounded subset of $W$.
 \end{itemize}
We endow $C_{u, b}(V, W)$ with the topology of uniform convergence on bounded sets (abbreviated UCBS).
It is a simple exercise to check that if $(F_n)_n$ is a sequence in $C_{u, b}(V, W)$ converging to
some $F : V \to W$ in the UCBS mode, then $F \in C_{u, b}(V, W)$ holds.
Moreover, it is a simple exercise to check in this setting that $C_{u, b}(V, W)$ is metrizable.
\end{definition}

Note that automatically, condition (iii) implies that 
$\Tc_\omega^t \in C_{u, b}(Z, Z)$ for all $t \geq 0, \omega \in \Omega$. Indeed, by (iii), for any 
$\omega \in \Omega, T > 0$ and bounded $U \subset Z$, the family $\{ \Tc^t_\omega|_U : U \to Z\}_{t \in [0,T]}$ is
equicontinuous.

\begin{definition}
We refer to $\Tc$ satisfying (i) -- (iii) above as a {\em continuous RDS} on $Z$. 
\end{definition}

In addition to (i) -- (iii) above, we will almost always assume that the RDS $\Tc$
satisfies the following \emph{independent increments
assumption}.

\begin{itemize}
	\item[(H1)] For all $s, t > 0$, we have that $\Tc^t_\omega$ is independent of $\Tc^s_{\theta^t \omega}$. That is,
	the $\sigma$-subalgebra $\sigma(\Tc^t_\cdot) \subset \Fc$ generated by the $C_{u, b}(Z, Z)$-valued random variable
	 $\omega \mapsto \Tc^t_\omega$ 
	is independent of the $\sigma$-subalgebra $\sigma(\Tc^s_{\theta^t \cdot})$
	 generated by $\omega \mapsto T^s_{\theta^t \omega}$.
\end{itemize}

\begin{example}\label{exam:SDERDS}
	Let $n \geq 1$ and let $Y_0, Y_1, \cdots, Y_m$ be smooth, globally Lipschitz
	 vector fields on $\R^n$. Let $W^1_t, \cdots, W^m_t$ be independent standard Brownian motions.
	Then, the stochastic differential equation
	\[
	\dee X_t = Y_0 (X_t) \dt + \sum_{i =1 }^m Y_i(X_t) \dee W^i_t
	\]
	defines a random dynamical system on $Z = \R^n$, 	where
	 $\Omega = C_0([0,\infty), \R)^{\otimes m}$ is the $k$-fold product of
	Canonical Spaces equipped with the $k$-fold product Borel $\sigma$-algebra and Wiener measure $\P$, 
	and $\theta^t : \Omega \to \Omega$
	is the leftward shift by $t \geq 0$. The resulting RDS satisfies the measurability and continuity
	 conditions (i) -- (iii). The independent increments condition (H1) follows from the independence of the Brownian increments
	 $W^i_{s + t} - W^i_t$ and $W^j_t$ for all $s, t > 0$ and each $1 \leq i , j \leq k$.
	 See, e.g., \cite{kunita1996stochastic, arnold2013random} for more details.
\end{example}

\subsubsection{Markov chain formulation and stationary measures}

For fixed $z \in Z$, consider the stochastic process $(z_t)_{t \geq 0}$ given by $z_t = \Tc^t_\omega z_0, z_0 := z$.

\begin{lemma}\label{lem:MarkovProperty}
	Let $\Tc$ be a continuous RDS as in Section \ref{subsubsec:basicSetupRDS} satisfying the independent increments condition (H1). 
	Then, the process $(z_t)_{t \geq 0}$ as above is \emph{Markovian}.
\end{lemma}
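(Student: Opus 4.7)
The plan is to reduce the Markov property to the cocycle identity, $\theta^s$-invariance of $\P$, and the independence built into (H1). Fix $z_0 = z \in Z$ and set $z_t = \Tc^t_\omega z_0$, with natural filtration $\Fc^z_t := \sigma(z_r : 0 \le r \le t)$. The goal is to verify that for each $s, t \ge 0$ and each bounded Borel $\phi : Z \to \R$,
\begin{equation*}
\E\bigl[\phi(z_{s+t}) \bigm| \Fc^z_s\bigr] = (P_t\phi)(z_s), \qquad P_t\phi(y) := \E\bigl[\phi(\Tc^t_\omega y)\bigr].
\end{equation*}

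The first step is the cocycle identity $z_{s+t} = \Tc^t_{\theta^s \omega}(z_s)$, which rewrites the left-hand side as the conditional expectation of $\phi(\Tc^t_{\theta^s \omega}(z_s))$ given $\Fc^z_s$. The second step is to enlarge the conditioning $\sigma$-algebra: set $\mathcal G_s := \sigma(\Tc^r_\cdot : 0 \le r \le s)$, and observe that $\Fc^z_s \subset \mathcal G_s$, since each $z_r = \Tc^r_\omega z$ is $\sigma(\Tc^r_\cdot)$-measurable. Computing the conditional expectation against $\mathcal G_s$ instead of $\Fc^z_s$ suffices, because the resulting expression $(P_t\phi)(z_s)$ is already $\Fc^z_s$-measurable.

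The core step is to verify the independence $\mathcal G_s \perp \sigma(\Tc^t_{\theta^s \cdot})$. Once this is established, the standard freezing lemma applies: because $z_s$ is $\mathcal G_s$-measurable and the random map $\omega \mapsto \phi \circ \Tc^t_{\theta^s \omega}(\cdot)$ is independent of $\mathcal G_s$, one has
\begin{equation*}
\E\bigl[\phi(\Tc^t_{\theta^s \omega}(z_s)) \bigm| \mathcal G_s\bigr] = \E\bigl[\phi(\Tc^t_{\theta^s \omega}(y))\bigr]\big|_{y = z_s}.
\end{equation*}
The $\theta^s$-invariance of $\P$ gives that $\Tc^t_{\theta^s \cdot}$ has the same distribution as $\Tc^t_\cdot$, so the right-hand side equals $(P_t \phi)(z_s)$, which completes the verification.

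The main obstacle---and where (H1) is truly used---is proving $\mathcal G_s \perp \sigma(\Tc^t_{\theta^s \cdot})$. Condition (H1) directly yields only the pairwise statement $\sigma(\Tc^s_\cdot) \perp \sigma(\Tc^t_{\theta^s \cdot})$. To upgrade this to independence of the full past $\mathcal G_s = \sigma(\Tc^r_\cdot : 0 \le r \le s)$ from the future increment, I would partition $[0,s]$ into a finite grid $0 = r_0 < r_1 < \cdots < r_n = s$, use the cocycle identity to decompose each $\Tc^{r_i}_\cdot$ into a composition of disjoint increments $\Tc^{r_j - r_{j-1}}_{\theta^{r_{j-1}} \cdot}$, and iterate (H1) on these disjoint sub-intervals, closing via a $\pi$-system/monotone class argument on cylinder sets drawn from $\sigma(\Tc^{r_i}_\cdot)$. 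This step is the delicate one: (H1) as stated encodes two-factor independence, whereas the Markov property requires joint independence of the entire past from a single future increment. In the canonical SDE realization (Example \ref{exam:SDERDS}) the conclusion is immediate from the independent-increments property of Brownian motion; the abstract argument above carries out the same reduction using only the axiomatic form of (H1).
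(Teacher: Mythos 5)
The paper does not record a proof of this lemma, deferring entirely to \cite{KS}, so there is no in-paper argument to compare against. Your skeleton---cocycle identity $z_{s+t} = \Tc^t_{\theta^s\omega}(z_s)$, enlargement from $\Fc^z_s$ to $\mathcal G_s$, freezing lemma, then $\theta^s$-invariance of $\P$---is the standard reduction and is the correct shape for this argument.

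The step you yourself flag as delicate is, however, a genuine gap, and the iteration you propose does not close it. Condition (H1) gives $\sigma(\Tc^b_\cdot) \perp \sigma(\Tc^a_{\theta^b\cdot})$ for each fixed pair $(a,b)$: the single \emph{composite} map over $[0,b]$ is independent of a single future increment. Decomposing $\Tc^{r_i}_\cdot$ into increments $\Tc^{r_j - r_{j-1}}_{\theta^{r_{j-1}}\cdot}$ via the cocycle identity runs the inclusion the wrong way: $\sigma(\Tc^{r_i}_\cdot)\subset\sigma(\Tc^{r_1}_\cdot,\Tc^{r_2-r_1}_{\theta^{r_1}\cdot},\ldots)$, potentially strictly, so knowing that the composite is independent of the future tells you nothing about whether the tuple of increments is. Applying (H1) at an intermediate time $r_i<s$ gives $\sigma(\Tc^{r_i}_\cdot)\perp\sigma(\Tc^{s-r_i+t}_{\theta^{r_i}\cdot})$, which also does not help, since $\sigma(\Tc^t_{\theta^s\cdot})$ is not a sub-$\sigma$-algebra of $\sigma(\Tc^{s-r_i+t}_{\theta^{r_i}\cdot})$; only the composite $\Tc^t_{\theta^s\cdot}\circ\Tc^{s-r_i}_{\theta^{r_i}\cdot}$ is measurable there. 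So the required statement $\mathcal G_s\perp\sigma(\Tc^t_{\theta^s\cdot})$ is strictly stronger than anything obtainable by applying (H1) repeatedly on a grid: you cannot pass from pairwise independence of composites to joint independence of increments without an additional hypothesis.

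What makes the conclusion true for the RDS actually used in this paper is the structure of Example \ref{exam:SDERDS}: there $\Tc^t_{\theta^s\cdot}$ is measurable with respect to the Brownian increments on $[s,s+t]$, and the canonical noise filtration has the genuine (joint) independent-increments property, which delivers $\mathcal G_s\perp\sigma(\Tc^t_{\theta^s\cdot})$ directly, not via (H1). A clean abstract argument would need (H1) strengthened to assert independence of the two-parameter past $\sigma$-algebra $\sigma(\Tc^b_{\theta^a\cdot}:a\ge 0,\, a+b\le s)$ from $\sigma(\Tc^t_{\theta^s\cdot})$; that is what is in fact verified for SDE/SPDE-driven RDS and what the freezing lemma actually consumes. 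Your last sentence correctly observes the SDE case is immediate, but the abstract reduction you sketch does not carry that over.
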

For a proof of Lemma \ref{lem:MarkovProperty}, see, e.g., Kuksin-Shirikyan \cite{KS} where the Markov property is proved under a somewhat weaker hypothesis than (H1).

For $t > 0, z \in Z$ and 
	 $K \in \Bor(Z)$, we define the {\em Markov kernel}
	\[
	P_t(z, K):= \P(z_t \in K | z_0 = z).
	\]
The Markov kernel $P_t(z,K)$ has a natural action on any bounded measurable observable $h : Z \to \R$
\[
P_t h (z) := \int h(z') P_t(z, \dz') \, .
\]
The Markov property of $(z_t)$ implies the semigroup relation $P_{s + t} = P_t \circ P_s$. We refer to the operators $(P_t)_{t>0}$ as the {\em Markov semi-group} asssociated to $(z_t)$.

The proof of the following proposition is straightforward and omitted for brevity.

\begin{proposition}\label{prop:semigroupRegularity1} Assume the setting of Lemma \ref{lem:MarkovProperty}.
	\begin{itemize}
		\item[(a)] The semigroup $(P_t)$ has the {\em Feller property}, i.e., for any $t \geq 0$ and any 
		$h : Z \to \R$ be continuous and bounded, we have that $P_t h$ is defined and is a continuous 
		function $Z \to \R$.
		\item[(b)] The semigroup $(P_t)$ is a $C^0$-semigroup on $C_{u, b}(Z, \R)$. That is, for any
		fixed $h \in C_{u, b}(Z, \R)$, we have that (1) $P_t h \in C_{u, b}(Z, \R)$ for all $t > 0$, and (2)
		the mapping $t \mapsto P_t h, t \geq 0$ is continuous in the topology on $C_{u, b}(Z, \R)$. 
	\end{itemize}
\end{proposition}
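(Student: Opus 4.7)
The plan is to represent $P_t h(z) = \E[h(\Tc^t_\omega z)]$ and reduce both assertions to the continuity properties guaranteed by condition (iii) of Section \ref{subsubsec:basicSetupRDS}, combined with the bounded convergence theorem. The crucial observation throughout is that boundedness of $h$ makes all integrands automatically dominated, so that swapping limits with the expectation requires only pointwise convergence in $\omega$.

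For part (a), I would take continuous bounded $h$ and a sequence $z_n \to z$ in $Z$. Condition (iii) makes $\Tc^t_\omega$ continuous in $z$ for every $\omega$, so $\Tc^t_\omega z_n \to \Tc^t_\omega z$, and continuity of $h$ gives pointwise convergence $h(\Tc^t_\omega z_n) \to h(\Tc^t_\omega z)$. Since $|h|$ is bounded by $\|h\|_\infty$, bounded convergence yields $P_t h(z_n) \to P_t h(z)$, establishing the Feller property. For part (b), fix $h \in C_{u,b}(Z,\R)$. Boundedness of $P_t h$ is immediate from $|P_t h| \leq \|h\|_\infty$, and for uniform continuity on a bounded set $U \subset Z$, I would introduce the modulus
\[
\psi_\delta(\omega) := \sup_{\substack{z_1, z_2 \in U \\ d(z_1, z_2) \leq \delta}} \bigl|h(\Tc^t_\omega z_1) - h(\Tc^t_\omega z_2)\bigr|,
\]
which is measurable in $\omega$ because $U$ (as a subset of the separable space $Z$) is separable, so the sup can be expressed over a countable dense pair set using joint continuity of the integrand in $(z_1, z_2)$. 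For each $\omega$, condition (iii) says $\Tc^t_\omega|_U$ is uniformly continuous with bounded image $\Tc^t_\omega(U)$, on which $h$ is in turn uniformly continuous; hence $\psi_\delta(\omega) \to 0$ as $\delta \to 0^+$. With dominant $2\|h\|_\infty$, bounded convergence gives $\E \psi_\delta \to 0$, and since $|P_t h(z_1) - P_t h(z_2)| \leq \E \psi_\delta$ whenever $z_1, z_2 \in U$ with $d(z_1, z_2) \leq \delta$, uniform continuity on $U$ follows. This shows $P_t h \in C_{u,b}(Z, \R)$.

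For the $C^0$-semigroup property, I would fix $s \geq 0$, a bounded $B \subset Z$, and some $T > s$, and define
\[
\chi_t(\omega) := \sup_{z \in B} \bigl|h(\Tc^t_\omega z) - h(\Tc^s_\omega z)\bigr|, \qquad t \in [0, T],
\]
again measurable by the separability argument above. Condition (iii) provides that $\Tc_\omega$ restricted to the bounded set $[0, T] \times B$ is uniformly continuous with bounded image $\bigcup_{r \in [0,T]} \Tc^r_\omega(B)$, on which $h$ is uniformly continuous; hence $\chi_t(\omega) \to 0$ as $t \to s$. Dominated by $2\|h\|_\infty$, bounded convergence yields $\E \chi_t \to 0$, which upgrades to $\sup_{z \in B} |P_t h(z) - P_s h(z)| \leq \E \chi_t \to 0$, i.e.\ convergence in the UCBS topology. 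The only nontrivial bookkeeping is the measurability of the suprema $\psi_\delta$ and $\chi_t$, resolved by separability; otherwise the proof is purely a matter of applying bounded convergence to integrands dominated by $\|h\|_\infty$, and no substantive obstacle is expected.
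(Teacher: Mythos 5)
Your proof is correct and follows exactly the routine approach the paper has in mind: express $P_t h(z) = \E[h(\Tc^t_\omega z)]$, use the regularity axiom (iii) defining a continuous RDS (uniform continuity of $\Tc_\omega$ on bounded subsets of $[0,\infty)\times Z$ with bounded image) together with the fact that $h\in C_{u,b}$, and then pass limits through the expectation by bounded convergence, with the sole technical point being the measurability of the suprema $\psi_\delta$ and $\chi_t$, handled by separability of $Z$. The paper in fact omits the proof entirely (``straightforward and omitted for brevity''), and what you have written is precisely the straightforward argument being referred to; no substantive gap, and the bookkeeping you flag as ``the only nontrivial'' part is indeed the only point that requires a word.
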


We regard the (formal) dual $(P_t)^*$ of the operator $P_t$ as acting on the space of finite signed Borel measures
on $Z$. Given a finite signed Borel $\mu$ on $Z$, $(P_t)^* \mu$ is defined for Borel $A \subset Z$ by
\[
(P_t)^* \mu(A) = \int P_t(z, A) \dee \mu(z) \, .
\]
If $\mu$ is a (positive) Borel probability on $Z$ for which $(P_t)^* \mu = \mu$ for all $t \geq 0$, we call $\mu$ \emph{stationary}.

The following Lemma is a consequence of a standard Krylov-Bogoliubov argument.

\begin{lemma}\label{lem:stationaryMeasureExists}
Assume the setting of Lemma \ref{lem:MarkovProperty}.
Then, the Markov semigroup $(P_t)$ admits at least one stationary measure $\mu$ in either of the following circumstances:
	\begin{itemize}
		\item[(a)] The space $Z$ is compact; or
		\item[(b)] there exists a Borel probability $\mu_0$ for which the sequence $(P_t)^* \mu_0$ is tight.
	\end{itemize}
\end{lemma}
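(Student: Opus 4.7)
The plan is to deduce both statements from the standard Krylov--Bogoliubov procedure. First I will show that (a) reduces to (b): when $Z$ is compact, every Borel probability measure on $Z$ is tight, and in particular for any choice of initial $\mu_0$ the orbit $\{(P_t)^*\mu_0\}_{t \geq 0}$ is automatically a tight family, so it suffices to treat (b).

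Assuming the setting of (b), I would define the Cesàro-type time averages
\[
\nu_T := \frac{1}{T}\int_0^T (P_t)^*\mu_0 \, \dt,
\]
interpreted in the usual weak sense as the probability measure satisfying $\int h\, \dee \nu_T = \frac{1}{T}\int_0^T \int P_t h\, \dee\mu_0 \, \dt$ for every $h \in C_b(Z)$. Tightness of $\{(P_t)^*\mu_0\}$ immediately gives tightness of the family $\{\nu_T\}_{T \geq 1}$: given $\eps > 0$, pick a compact $K_\eps \subset Z$ with $(P_t)^*\mu_0(K_\eps) \geq 1-\eps$ uniformly in $t$, and then $\nu_T(K_\eps) \geq 1 - \eps$ by Fubini. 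Prokhorov's theorem then extracts a sequence $T_n \to \infty$ along which $\nu_{T_n}$ converges weakly to some Borel probability $\mu$ on $Z$.

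The final step is to verify that $\mu$ is stationary, i.e., $(P_s)^*\mu = \mu$ for every $s \geq 0$. Fix $s \geq 0$ and any $h \in C_b(Z)$. By the Feller property from Proposition \ref{prop:semigroupRegularity1}(a), $P_s h \in C_b(Z)$, so weak convergence $\nu_{T_n} \to \mu$ gives
\[
\int P_s h \, \dee\mu = \lim_{n \to \infty} \frac{1}{T_n}\int_0^{T_n} \!\!\int P_{s+t} h\, \dee\mu_0 \, \dt = \lim_{n \to \infty} \frac{1}{T_n}\int_s^{T_n+s} \!\!\int P_t h\, \dee\mu_0 \, \dt.
\]
The difference between this last expression and $\frac{1}{T_n}\int_0^{T_n} \int P_t h\, \dee\mu_0 \, \dt = \int h\, \dee\nu_{T_n}$ is bounded in absolute value by $\frac{2s \|h\|_\infty}{T_n}$, which tends to zero. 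Therefore $\int P_s h\, \dee\mu = \lim_n \int h\, \dee\nu_{T_n} = \int h\, \dee\mu$, which by density of $C_b(Z)$ in the duality with finite Borel measures on a Polish space yields $(P_s)^*\mu = \mu$.

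The argument is entirely routine; the only real input beyond elementary measure theory is the Feller property (to pass weak convergence through $P_s$) and Prokhorov's theorem (to extract a convergent subsequence), so there is no substantial obstacle. The one minor subtlety worth flagging is that the Feller property as stated in Proposition \ref{prop:semigroupRegularity1} is formulated for $C_{u,b}$ rather than $C_b$, but part (a) of that Proposition explicitly asserts the standard Feller property on bounded continuous functions, which is exactly what this argument requires.
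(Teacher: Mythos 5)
Your proof is correct and is precisely the standard Krylov--Bogoliubov argument that the paper alludes to when it says the lemma "is a consequence of a standard Krylov-Bogoliubov argument" and omits the proof. The reduction of (a) to (b), the Cesàro averaging, the tightness/Prokhorov extraction, and the use of the Feller property (Proposition \ref{prop:semigroupRegularity1}(a)) to pass the limit through $P_s$ are exactly the expected ingredients, and your handling of the $C_{u,b}$-versus-$C_b$ point is right.
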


\subsubsection{Skew product formulation and invariant measures}\label{subsubsec:skewProd}

The material in Section \ref{subsubsec:skewProd} is mostly taken from Chapter I of \cite{kifer2012ergodic}.

The Markov chain formulation given above is useful in that it identifies `time-invariant' statistics on $Z$ 
for the RDS, namely, its stationary measures. On the other hand, the Markov kernel loses some structure of the RDS, in the 
sense that the same Markov kernel can arise from qualitatively different RDS. See, e.g., Example I.1.1 of \cite{kifer2012ergodic} for an extreme example of this. 

The following \emph{skew product} formulation, unlike the Markov chain, 
encodes the entire RDS.

\begin{definition}
	The {\em skew product} associated to the above random dynamics is the mapping
	$\tau : [0,\infty) \times \Omega \times Z \to \Omega \times Z$ given by $\tau(t, \omega, z) = \tau^t(\omega, z) = (\theta^t \omega, T^t_\omega z)$.
\end{definition}

We regard $\tau$ as a single ``deterministic'', measurable semiflow on the augmented space $\Omega \times Z$.
In particular, this provides us a connection between ``standard'' ergodic theory, i.e., the theory of invariant measures
for individual mappings of a measurable space, and our present setting of random dynamical systems. 
The following Lemma makes this connection explicit.

Recall that a probability measure $\eta$ on $\Omega \times Z$
is \emph{invariant} for the semiflow $\tau$ if $\eta \circ (\tau^t)^{-1} = \eta$ for all $t \geq 0$.

\begin{lemma}[Lemma I.2.3
in \cite{kifer2012ergodic}]\label{lem:invariantMeasCharacterization}
	Assume $\Tc$ is a continuous RDS as in Section \ref{subsubsec:basicSetupRDS} satisfying (H1)
	and generating the Markov semigroup $(P_t)$ as in Lemma \ref{lem:MarkovProperty}.
	Let $\mu$ be a Borel probability measure on $Z$. Then, the following are equivalent.
		\begin{itemize}
			\item[(a)] The measure $\P \times \mu$ is invariant for the skew product $(\tau^t)$.
			\item[(b)] The measure $\mu$ is stationary for the Markov semigroup $(P_t)$.
		\end{itemize}
\end{lemma}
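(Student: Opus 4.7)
The plan is to establish the two implications separately by applying Fubini's theorem, with the independence hypothesis (H1) playing the key role in the direction (b) $\Rightarrow$ (a).

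For (a) $\Rightarrow$ (b), I would fix an arbitrary Borel $A \subset Z$ and $t \geq 0$, and test the invariance of $\P \times \mu$ under $\tau^t$ against the cylinder $\Omega \times A$. Since $\tau^{-t}(\Omega \times A) = \{(\omega,z) : \Tc^t_\omega z \in A\}$, Fubini gives
\begin{align*}
\mu(A) = (\P \times \mu)(\tau^{-t}(\Omega \times A)) &= \int_\Omega \mu\{z : \Tc^t_\omega z \in A\}\, d\P(\omega) \\
&= \int_Z \P(\Tc^t_\omega z \in A)\, d\mu(z) = \int_Z P_t(z, A)\, d\mu(z) = (P_t^*\mu)(A),
\end{align*}
so $P_t^*\mu = \mu$. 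This direction requires only the definition of $P_t$ and no nontrivial structural input from (H1).

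For (b) $\Rightarrow$ (a), a standard $\pi$-$\lambda$ argument reduces the claim $(\tau^t)_*(\P \times \mu) = \P \times \mu$ to verifying $(\P \times \mu)(\tau^{-t}(B \times A)) = \P(B)\mu(A)$ for all $B \in \Fc$ and $A \in \Bor(Z)$. Unwinding,
\[
(\P \times \mu)(\tau^{-t}(B \times A)) = \int_\Omega \mathbf{1}_B(\theta^t\omega)\,(\Tc^t_\omega)_*\mu(A)\, d\P(\omega).
\]
The crucial step is to factor this integral using independence between the shift coordinate $\theta^t\omega$ and the cocycle value $\Tc^t_\omega$. By (H1) applied across all $s \geq 0$, $\sigma(\Tc^t_\cdot)$ is independent of the forward $\sigma$-algebra $\bigvee_{s \geq 0}\sigma(\Tc^s_{\theta^t \cdot})$, which in the natural path-space setting coincides with $\sigma(\theta^t\cdot)$ up to $\P$-null sets. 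The integral therefore factors as
\[
\P(\theta^{-t}B)\cdot \int_\Omega (\Tc^t_\omega)_*\mu(A)\, d\P(\omega) = \P(B)\cdot (P_t^*\mu)(A) = \P(B)\mu(A),
\]
using $\theta^t$-invariance of $\P$ and stationarity of $\mu$.

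The main technical point is precisely the factorization step in (b) $\Rightarrow$ (a): hypothesis (H1) as stated gives independence of $\sigma(\Tc^t_\cdot)$ from each individual $\sigma(\Tc^s_{\theta^t\cdot})$, but one needs to upgrade this to independence from $\sigma(\theta^t\cdot)$. In the canonical Wiener setting of Example \ref{exam:SDERDS} (and in the path-space constructions used throughout this paper), the forward $\sigma$-algebra $\bigvee_s \sigma(\Tc^s_{\theta^t\cdot})$ exhausts $\sigma(\theta^t\cdot)$ modulo null sets, since $\theta^t\omega$ is recoverable from the cocycle increments it drives; in a fully abstract setting one either imposes this as an additional hypothesis on $(\Omega,\Fc,\P,\theta)$ or passes to the canonical path-space realization of the RDS as in \cite{kifer2012ergodic}.
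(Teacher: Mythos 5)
Your proof is correct and is essentially the standard argument underlying the citation; the paper itself defers to Lemma I.2.3 in Kifer and does not spell the proof out, so there is no competing proof in the paper to compare against. The direction (a) $\Rightarrow$ (b) is exactly the Fubini computation you give and needs no structural input beyond the definition of $P_t$.

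The substantive point, which you correctly identify, is the factorization step in (b) $\Rightarrow$ (a). As literally stated, (H1) only gives independence of $\sigma(\Tc^t_\cdot)$ from each individual $\sigma(\Tc^s_{\theta^t\cdot})$, not from the full $\sigma$-algebra $(\theta^t)^{-1}\Fc$ against which the indicator $\mathbf 1_B \circ \theta^t$ is measured. Pairwise independence does not in general upgrade to independence from the join $\bigvee_{s>0}\sigma(\Tc^s_{\theta^t\cdot})$, and even that join can be a proper subalgebra of $(\theta^t)^{-1}\Fc$ (e.g., on a two-sided shift space $(\theta^t)^{-1}\Fc$ is all of $\Fc$, so it still ``sees'' the past that $\Tc^t_\cdot$ depends on). What is actually used, both in Kifer's proof (discrete time, one-sided i.i.d.\ product $\Omega = Y^{\mathbb N}$, $\theta$ the shift) and in the continuous-time setting of this paper, is the stronger one-sided Markov/increment structure: on the canonical one-sided Wiener path space of Section \ref{subsec:wellPosedApp} one has $\sigma(\Tc^t_\cdot)\subset\Fc_t$ and $(\theta^t)^{-1}\Fc = \sigma(W_{s+t}-W_t : s\geq 0)$, and these are independent by independence of Brownian increments. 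Your concluding paragraph flagging this as either an implicit strengthening of (H1) or a consequence of passing to the canonical path-space realization is exactly the right way to close the gap, and matches how the paper's appendix sets things up. So the proposal is sound; it just makes explicit a hypothesis that (H1) as printed states in a slightly weaker pairwise form than what the argument genuinely uses.
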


A similar correspondence exists between the \emph{ergodic} stationary measures of the 
semigroup $(P_t)$ and the ergodic invariant measures of the skew product $(\tau^t)$.

Recall the following standard definition from ergodic theory (see, e.g., \cite{walters2000introduction}):
 a $(\tau^t)$-invariant measure $\eta$ is \emph{ergodic} if,
for any bounded measurable $h : \Omega \times Z \to \R$ for which
$h \circ \tau^t = h$ holds $\eta$-almost-surely for all $t \geq 0$, we have
that $h$ is constant $\eta$-almost surely.
For stationary measures $\mu$ of the Markov semigroup $(P_t)$, we use the following
definitions:
\begin{definition}[pg. 19 of \cite{kifer2012ergodic}]\label{defn:PtmuInvariant}
	Let $h : Z \to \R$ be bounded and Borel measurable. Given a stationary $\mu$, we say that $\phi$ is $(P_t, \mu)$-\emph{invariant} if $P_t \phi = \phi$ holds $\mu$-almost surely for all $t \geq 0$. 
We say that a set $K \subset Z$ is $(P_t, \mu)$ invariant if its characteristic function $\chi_K$ is $(P_t, \mu)$-invariant
in the above sense.

We call a stationary measure $\mu$ \emph{ergodic} if the only $(P_t, \mu)$-invariant functions are $\mu$-almost-surely constant.
\end{definition}

\begin{proposition}[Theorem I.2.1 in \cite{kifer2012ergodic}]\label{prop:characterizeErgodicityRDS}
Assume the setting of Lemma \ref{lem:invariantMeasCharacterization}. Let $\mu$ be a stationary
measure for $(P_t)$, noting that $\P \times \mu$ is an invariant measure for $(\tau^t)$ by
Lemma \ref{lem:invariantMeasCharacterization}. Then, the following are equivalent.

\begin{itemize}
	\item[(a)] The invariant measure $\P \times \mu$ is ergodic for the skew product $(\tau^t)$.
	\item[(b)] The stationary measure $\mu$ is ergodic for the Markov semigroup $(P_t)$.
\end{itemize}
\end{proposition}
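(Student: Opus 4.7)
The equivalence is a classical statement for skew products; the plan is to carry out each direction in turn via a Hilbert-space/martingale argument that translates invariance between the Markov and skew-product levels.

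For $(a) \Rightarrow (b)$, I would lift any $(P_t, \mu)$-invariant function on $Z$ to a $\tau^t$-invariant function on $\Omega \times Z$ and invoke ergodicity of $\P \times \mu$. Let $\phi : Z \to \R$ be bounded Borel and $(P_t, \mu)$-invariant, and consider the process $M_t := \phi(\mathcal{T}^t_\omega z_0)$ with $(\omega, z_0) \sim \P \times \mu$. Stationarity of $\mu$ gives $\E M_t^2 = \E M_0^2$ for all $t \geq 0$, while $(P_t,\mu)$-invariance of $\phi$ gives $\E(M_t \mid z_0) = P_t \phi(z_0) = \phi(z_0) = M_0$ $\mu$-a.s., hence $\E(M_t M_0) = \E M_0^2$. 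Consequently $\E (M_t - M_0)^2 = 0$, i.e.\ $\phi \circ \mathcal{T}^t_\omega = \phi$ $\P \times \mu$-a.s.\ for each fixed $t \geq 0$. The lift $\tilde \phi(\omega, z) := \phi(z)$ is therefore $\tau^t$-invariant $\P \times \mu$-a.s., and ergodicity of $\P \times \mu$ forces $\phi$ to be $\mu$-a.s.\ constant.

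For $(b) \Rightarrow (a)$, I would project a $\tau^t$-invariant observable down to a $(P_t,\mu)$-invariant function and then use an $L^2$ identity to recover the original. Let $h : \Omega \times Z \to \R$ be bounded Borel with $h \circ \tau^t = h$ $\P \times \mu$-a.s.\ for all $t \geq 0$; subtracting a constant, we may assume $\int h \, d(\P \times \mu) = 0$. Define the marginal $\phi(z) := \int h(\omega, z) \, d\P(\omega)$. The crucial step is to show $\phi$ is $(P_t, \mu)$-invariant: integrating the identity $h(\omega, z) = h(\theta^t\omega, \mathcal{T}^t_\omega z)$ against $\P(d\omega)$ and using the independence of $\theta^t \omega$ and $\mathcal{T}^t_\omega$ under $\P$ (discussed below), one obtains
\[
\phi(z) = \int h(\theta^t\omega, \mathcal{T}^t_\omega z) \, d\P(\omega) = \iint h(\omega', \mathcal{T}^t_\omega z) \, d\P(\omega') \, d\P(\omega) = P_t \phi(z).
\]
Hypothesis (b) then gives $\phi \equiv 0$ $\mu$-a.s. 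To upgrade this to $h \equiv 0$, apply $\tau^t$-invariance of $h$ and once again the same independence to compute
\[
\int h^2 \, d(\P \times \mu) = \int h(\omega, z) \, h(\theta^t \omega, \mathcal{T}^t_\omega z) \, d\P(\omega) \, d\mu(z) = \int h(\omega, z) \, \phi(\mathcal{T}^t_\omega z) \, d\P(\omega) \, d\mu(z) = 0,
\]
yielding $h \equiv 0$ $\P \times \mu$-a.s.

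The main obstacle is justifying the independence of $\theta^t$ and $\mathcal{T}^t_\cdot$ under $\P$ used in $(b) \Rightarrow (a)$. Assumption (H1) gives that $\sigma(\mathcal{T}^t_\cdot)$ is independent of each $\sigma(\mathcal{T}^s_{\theta^t \cdot}) = (\theta^t)^{-1}(\sigma(\mathcal{T}^s_\cdot))$; after the harmless restriction of $\mathcal{F}$ to the sub-$\sigma$-algebra generated by $\{\mathcal{T}^s_\cdot\}_{s \geq 0}$, one has $\sigma(\theta^t) = (\theta^t)^{-1}(\mathcal{F}) = \bigvee_{s\geq 0} \sigma(\mathcal{T}^s_{\theta^t \cdot})$, which upgrades (H1) to independence of $\sigma(\theta^t)$ and $\sigma(\mathcal{T}^t_\cdot)$ under $\P$. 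Once this is in place, the Fubini manipulations above are routine and both directions are complete.
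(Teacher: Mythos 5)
The paper does not actually prove this proposition — it cites Theorem~I.2.1 of Kifer's book verbatim — so there is no in-text argument to compare against, and the review is a correctness check of your blind proof.

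Your argument for $(a) \Rightarrow (b)$ is correct and clean: lifting $\phi$, the $L^2$ computation $\E(M_t - M_0)^2 = 0$ correctly exploits stationarity of $\mu$ together with $(P_t,\mu)$-invariance to show $\phi \circ \Tc^t_\omega = \phi$ a.s., and ergodicity of $\P \times \mu$ closes the argument. This direction needs no independence beyond what is stated.

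The direction $(b) \Rightarrow (a)$ is where the difficulty lies, and you have located it exactly: one needs $\sigma(\theta^t) := (\theta^t)^{-1}(\Fc)$ to be independent of $\sigma(\Tc^t_\cdot)$, which is strictly more than the pairwise condition (H1) as stated. However, your proposed repair does not hold up. First, the restriction of $\Fc$ to $\cG := \sigma(\{\Tc^s_\cdot\}_{s\geq 0})$ is not ``harmless'': the function $h$ in $(b) \Rightarrow (a)$ is an arbitrary bounded $\Fc\otimes\Bor(Z)$-measurable $\tau^t$-invariant observable, and there is no reason it should be $\cG\otimes\Bor(Z)$-measurable; moreover $\theta^t$ may fail to be $\cG/\cG$-measurable when the maps $\Tc^t_\omega$ are non-invertible (as is the case for the parabolic flows in this paper), since the cocycle identity then does not let one recover $\Tc^s_{\theta^t\cdot}$ from $\Tc^t_\cdot$ and $\Tc^{s+t}_\cdot$. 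Second, even granting the restriction, (H1) gives only pairwise independences of the form $\sigma(\Tc^t_\cdot) \perp \sigma(\Tc^s_{\theta^t\cdot})$; upgrading these to joint independence of $\sigma(\Tc^t_\cdot)$ from $\bigvee_{s>0}\sigma(\Tc^s_{\theta^t\cdot})$ is not a formal consequence of (H1) together with the cocycle property, and your proof asserts rather than argues this. Indeed, the proposition as stated cannot follow from (H1) alone: take $Z$ a single point, so that $\Tc$ is trivial and (H1) holds vacuously; then $\mu = \delta_*$ is trivially $(P_t)$-ergodic, but $\P\times\mu$ is ergodic for $\tau^t$ iff $\theta^t$ is $\P$-ergodic, which is not assumed. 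The correct hypothesis behind Kifer's theorem is an i.i.d.\ (Bernoulli) structure in which the entire future $\sigma$-algebra $(\theta^t)^{-1}(\Fc)$ is independent of $\sigma(\Tc^t_\cdot)$; the paper realizes this concretely in Appendix~\ref{subsec:wellPosedApp} by taking $\Omega$ to be a countable product of Wiener spaces with $\theta^t$ the shift. Your Fubini computations are exactly right once that structural independence is granted; what is missing is the recognition that this requires more than (H1) and that neither the restriction nor the ``upgrade'' is a valid substitute.
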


\subsection{Linear cocycles over RDS and the Multiplicative Ergodic Theorem}\label{subsec:LinearCocycles}

We start by defining and motivating the concept of a linear cocycle over a random dynamical system in Section \ref{subsubsec:linearCocycleBasicRDS}.
Next, in Section \ref{subsubsec:METRDS} we state precisely the Multiplicative Ergodic Theorem (Theorem \ref{thm:MET}). The remainder of Section \ref{subsec:LinearCocycles}
is devoted to establishing useful Corollaries and refinements of Theorem \ref{thm:MET}.

\subsubsection{Basic setting: Linear cocycles over RDS}\label{subsubsec:linearCocycleBasicRDS}

Fix a positive integer $d$. Roughly speaking, a linear cocycle over a given ``base'' dynamical system is a 
composition of \emph{time-dependent} $d \times d$-matrices driven by the dynamics on the base. More precisely,
in our setting we have the following definition.

\begin{definition}
	Let $\Tc$ be a continuous RDS as in Section \ref{subsubsec:basicSetupRDS}, referred to below as the \emph{base RDS},
	and let $(\tau^t)$ be its associated skew product as in Section \ref{subsubsec:skewProd}. 
	A $d$-dimensional linear cocycle $\Ac$ over the base RDS $\Tc$ is a
	mapping $\Ac : \Omega \to C_{u, b}([0,\infty) \times Z, M_{d \times d}(\R))$
	with the following properties:
		\begin{itemize}
			\item[(i)] The evaluation mapping $\Omega \times [0,\infty) \times Z \to M_{d \times d}(\R)$
			sending $(\omega, t, z) \mapsto \Ac^t_{\omega, z}$ is $\Fc \otimes \Bor([0,\infty)) \otimes \Bor(Z)
			-$measurable, and for fixed $\omega \in \Omega, t \geq 0$, the mapping $Z \to M_{d \times d}(\R)$, 
			$z \mapsto \Ac^t_{\omega, z}$, is continuous.
			\item[(ii)] The mapping $\Ac$ satisfies the \emph{cocycle property}: for any $z \in Z, \omega \in \Omega$
			we have $\Ac^0_{\omega, z} = \Id_{\R^d}$, the $d \times d$ identity matrix, and for $s, t \geq 0$ we have
			\begin{align}\label{eq:cocycleProperty}
			\Ac^{s + t}_{\omega, z} = \Ac^s_{\tau^t(\omega, z)} \circ \Ac^t_{\omega, z} \, .
			\end{align}
		\end{itemize}
\end{definition}

To motivate this definition, consider the following example.

\begin{example}\label{rmk:derivativeCocycle}
		Let $Z$ be a Riemannian manifold and assume that for each $\omega \in \Omega$,
		 $\Tc_\omega^t : Z \to Z$ is a $C^1$ mapping on $Z$ (e.g., the RDS defined in Example \ref{exam:SDERDS}).
	   The cocycle $\Ac_{\omega, z}^t := D_z \Tc^t_\omega, z \in X, t \geq 0$, is often referred to
	   as the \emph{derivative cocycle} for $\Tc$.
	The cocycle property \eqref{eq:cocycleProperty} is a manifestation
	of the Chain Rule from standard calculus
	and the cocycle property (ii) in Section \ref{subsec:RDSelements} for the RDS $\Tc$. For more information,
	see, e.g., \cite{arnold2013random, kifer2012ergodic}.
\end{example}

\subsubsection{The Multiplicative Ergodic Theorem (MET)}\label{subsubsec:METRDS}

It is of natural interest, in the setting described above, to study the
\emph{asymptotic exponential growth rate}
\begin{align}\label{eq:lyapExpDefinition}
\lim_{t \to \infty} \frac{1}{t} \log |\Ac^t_{\omega, z} v| \, ,
\end{align}
at $z \in Z, v \in \R^d$. When it exists, the quantity in \eqref{eq:lyapExpDefinition} is the \emph{Lyapunov exponent}
at $z$ in the direction $v$. For systems such as those in Example \ref{rmk:derivativeCocycle}, the existence and \emph{positivity} of the limit \eqref{eq:lyapExpDefinition}
implies that the orbit of $x$ is \emph{sensitive with respect to initial conditions}, a possible symptom 
of an asymptotically chaotic regime for $\Tc$.

However, there is a priori no guarantee that the limits \eqref{eq:lyapExpDefinition}
even exist in the first place. As it turns out, the most successful approach to the problem of the existence of 
the limits \eqref{eq:lyapExpDefinition} is
through ergodic theory: the limits \eqref{eq:lyapExpDefinition} exist for 
all $v \in \R^d$, $\P$-almost all $\omega \in \Omega$,
 and for points $z \in Z$ \emph{generic with respect to 
stationary measures} for the RDS $\Tc$, modulo 
a condition ensuring $|\Ac^t_{\omega, z}|$ does not get too large too fast as $t \to \infty$ for `most' $(\omega, z) \in \Omega \times Z$. 
This is the content of the MET,
which we will now state precisely.

\bigskip

Let $\mu$ be a stationary measure
for the RDS $\Tc$ satisfying the independent increments condition (H1). Let $\Ac$
be a linear cocycle as above. Throughout, we will assume the following
\emph{integrability condition} for the cocycle $\Ac$.

\begin{itemize}
	\item[(H2)] The triple $(\Tc, \Ac, \mu)$ has the property that $\Ac^t_{\omega, z}$ is an invertible matrix
	for all $t \in [0,\infty), \omega \in \Omega, z \in Z$, and\footnote{Here, $\log^+(a) := \max\{ 0 , \log a\}$ for $a > 0.$}
	\begin{align}\label{eq:integrability1}
	\E \int \bigg( \sup_{0 \leq t \leq 1} \log^+ |\Ac^t_{\omega, z}| \bigg) \, \dee \mu(z) \, , \quad
	\E \int \bigg( \sup_{0 \leq t \leq 1}  \log^+ | (\Ac^t_{\omega, z})^{-1}| \bigg) \, \dee \mu( z) < \infty \, .
	\end{align}
\end{itemize}
These conditions are standard for the derivative cocycles of stochastic flows generated by SDE; see, e.g., \cite{kifer1988}.

\begin{theorem}[Multiplicative Ergodic Theorem; Theorem 3.4.1 in \cite{arnold2013random}]\label{thm:MET}
Let $\Tc$ be a continuous RDS as in Section \ref{subsubsec:basicSetupRDS} satisfying condition (H1). Let $\mu$ be an \emph{ergodic}
stationary measure associated to $\Tc$ and assume that $\Ac$ is a linear cocycle over $\Tc$
for which the integrability condition $(H2)$ holds.

Then, there exist $r$ 
distinct deterministic real numbers 
\[
\lambda_1 > \cdots > \lambda_r \, , 
\]
$r \in \{ 1 , \cdots, d\}$, a $(\tau^t)$-invariant\footnote{That is, $T^t \Gamma \subset \Gamma$ for all $t \geq 0$.}  set $\Gamma \subset \Omega \times Z$ of full $\P \times \mu$-measure, 
and for each $(\omega, z) \in \Gamma$, a flag of subspaces 
\[
\R^d =: F_1 \supset F_2(\omega, z)  \supset \cdots \supset F_r(\omega, z) \supset F_{r + 1} := \{ 0 \} \, , 
\]
with $\dim F_i \equiv m_i$ for constants $m_i \in \{ 1, \cdots, d\}, 1 \leq i \leq r$,
for which the following holds. For any $1 \leq i \leq r$ and $v \in 
F_i(\omega, z) \setminus F_{i + 1}(\omega, z)$, we have
\begin{align}\label{eq:MET}
\lim_{t \to \infty} \frac1t \log | \Ac^t_{\omega, z} v | = \lambda_i \, .
\end{align}
Moreover, the assignment $(\omega, z) \mapsto F_i(\omega, z)$ varies measurably.
\end{theorem}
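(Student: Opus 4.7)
The plan is to follow Oseledets' classical approach via exterior powers and singular value decompositions, carried out in detail in Chapter 3 of \cite{arnold2013random}. First I would pass to the skew product picture: by Proposition \ref{prop:characterizeErgodicityRDS}, the measure $\P \times \mu$ is ergodic for the semiflow $(\tau^t)$ on $\Omega \times Z$. The cocycle identity $\Ac^{s + t}_{\omega, z} = \Ac^s_{\tau^t(\omega, z)} \Ac^t_{\omega, z}$ together with submultiplicativity of the operator norm yields the subadditive relation
\begin{align}
\log |\Ac^{s + t}_{\omega, z}| \leq \log |\Ac^s_{\tau^t(\omega, z)}| + \log |\Ac^t_{\omega, z}|,
\end{align}
and the analogous relation holds for each induced cocycle $\wedge^k \Ac^t$ on $\wedge^k \R^d$, $1 \leq k \leq d$, whose norm records the product of the top $k$ singular values of $\Ac^t_{\omega, z}$. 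The integrability hypothesis \eqref{eq:integrability1}, combined with a standard discretization to integer times, supplies the $L^1$ bound needed for Kingman's subadditive ergodic theorem; this produces deterministic constants $\Lambda_1, \dots, \Lambda_d \in \R$ with
\begin{align}
\lim_{t \to \infty} \frac{1}{t} \log |\wedge^k \Ac^t_{\omega, z}| = \Lambda_k \qquad \text{for $(\P \times \mu)$-a.e.\ } (\omega, z),
\end{align}
where finiteness of $\Lambda_d$ is secured by the integrability bound on $\log^+ |(\Ac^t)^{-1}|$.

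Next I would extract the Lyapunov spectrum and construct the filtration. The increments $\Lambda_k - \Lambda_{k - 1}$ (with $\Lambda_0 := 0$) form a nonincreasing list of length $d$ whose distinct values constitute the desired $\lambda_1 > \cdots > \lambda_r$, with respective multiplicities $m_i$. To build the flag I set $F_1 := \R^d$, $F_{r + 1} := \{0\}$, and for $2 \leq i \leq r$
\begin{align}
F_i(\omega, z) := \Big\{ v \in \R^d : \limsup_{t \to \infty} \frac{1}{t} \log |\Ac^t_{\omega, z} v| \leq \lambda_i \Big\}.
\end{align}
The cocycle property together with the deterministic character of the $\lambda_i$ immediately gives $\Ac^t_{\omega, z}(F_i(\omega, z)) \subset F_i(\tau^t(\omega, z))$, and the volume-growth identity $\log |\det \Ac^t_{\omega, z}| = \log |\wedge^d \Ac^t_{\omega, z}| \to \Lambda_d$ pins down the sum of all Lyapunov exponents with multiplicity, ruling out pathological behavior in which too many vectors could have growth rate strictly below $\lambda_i$.

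The main obstacle is upgrading the $\limsup$ bound to an exact limit and verifying the dimension identity $\dim F_i = m_i + \cdots + m_r$. Following the standard SVD route, I would decompose $\Ac^t_{\omega, z}$ via its right singular vectors at each time $t$, group these into blocks according to the Lyapunov spectrum, and argue that the corresponding family of flag subspaces of $\R^d$ stabilizes as $t \to \infty$ with angles between distinct spectral blocks decaying subexponentially. This decay is produced by a two-sided Kingman argument exploiting the invertibility of $\Ac^t$ and the $L^1$ bound on $\log^+ |(\Ac^t)^{-1}|$ from (H2), applied also to the exterior-power cocycles $\wedge^k \Ac^t$ on the complementary subspaces. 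From the stabilization one simultaneously extracts the measurable, $\tau^t$-invariant full-measure set $\Gamma$; the measurable equivariant flag $(F_i)$ with the correct dimensions; and the exact asymptotic \eqref{eq:MET} for $v \in F_i \setminus F_{i + 1}$ by decomposing $v$ along the stabilized blocks. Measurability of $(\omega, z) \mapsto F_i(\omega, z)$ is inherited from the measurability of the SVD. A full execution of this program is given in Theorem 3.4.1 of \cite{arnold2013random}, whose argument carries over directly to the present setting.
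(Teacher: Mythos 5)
The paper does not supply its own proof of this statement; it is cited verbatim as Theorem 3.4.1 of Arnold's textbook, and the paper even records an intermediate step of the Raghunathan--Ruelle proof technique separately as Lemma \ref{lem:singularValuesLimit}. Your sketch is precisely that SVD/exterior-power route (Kingman on $\wedge^k\Ac$, two-sided integrability from (H2), stabilization of singular directions), so it matches the argument the paper is invoking.
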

\noindent Note that automatically, for any $(\omega, z) \in \Gamma$ and $t > 0$ we have that
\[
\Ac^t_{\omega, z} F_i(\omega, z) = F_i(\tau^t(\omega, z))
\]
for each $i = 1,\cdots, d$. This is a straightforward consequence of \eqref{eq:MET} and is left to the reader.

\medskip

The MET as above is originally due to Oseledets \cite{oseledets1968multiplicative}; since then
many proofs of the MET have been recorded, each providing a different perspective on this seminal
result. One perspective useful to us in this study is that given by the proof-technique 
of Ragunathan \cite{raghunathan1979proof} and Ruelle \cite{ruelle1979ergodic, ruelle1982characteristic}.
For future use, we record the following intermediate step in this proof. 

Below, for a $d \times d$-matrix $A$ and for $1 \leq i \leq d$, we write $\sigma_i(A)$ for the $i$-th singular value of $A$.

\begin{lemma}\label{lem:singularValuesLimit}
Let $\lambda_i$ and $(\omega, z) \mapsto F_i(\omega, z), 1 \leq i \leq r$ be as in Theorem \ref{thm:MET}.
\begin{itemize}
\item[(i)] For any $1 \leq i \leq d$, the limits
\[
\chi_i = \lim_{t \to \infty} \frac{1}{t} \log \sigma_i(\Ac^t_{\omega, z})
\]
exist and are constant for $\P\times \mu$-almost every $(\omega, z) \in \Omega \times Z$.
Moreover, the Lyapunov exponents $\lambda_i, 1 \leq i \leq r$ are precisely the distinct values among the $\chi_i, 1 \leq i \leq d$.

\item[(ii)] For $\P \times \mu$-almost every $(\omega, z) \in \Omega \times Z$, the limit
\[
\Lambda_{\omega, z} := \lim_{t \to \infty} \frac{1}{t} \log \big( ( \Ac^t_{\omega, z})^{\top} \Ac^t_{\omega, z} \big) 
\]
exists. The matrix $\Lambda_{\omega, z}$ is symmetric with distinct eigenvalues
$\lambda_i, 1 \leq i \leq r$ and corresponding eigenspaces $E_1(\omega, z), \cdots, E_r(\omega, z)$. Moreover, for each $1 \leq i \leq r$ we
have
\[
F_i(\omega, z) = \bigoplus_{j = i}^r E_j(\omega, z) \, .
\]
\end{itemize}
\end{lemma}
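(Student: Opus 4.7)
The plan is to prove both parts via the Raghunathan--Ruelle approach, deducing singular value asymptotics from Kingman's subadditive ergodic theorem applied to exterior powers of the cocycle and then using the spectral gaps between distinct $\lambda_j$ to upgrade to convergence of the entire matrix $\frac{1}{t}\log((\Ac^t)^\top \Ac^t)$.

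First, I would set the ergodic-theoretic stage. By Proposition \ref{prop:characterizeErgodicityRDS}, ergodicity of $\mu$ for $(P_t)$ is equivalent to ergodicity of $\P \times \mu$ for the skew product $(\tau^t)$, and this is where Kingman will be applied. Fix $k \in \{1,\dots,d\}$ and consider the $k$-fold exterior power cocycle $\Lambda^k \Ac^t$ on $\bigwedge^k \R^d$; the cocycle identity \eqref{eq:cocycleProperty} together with submultiplicativity of the operator norm under exterior powers gives
\[
\log \| \Lambda^k \Ac^{s+t}_{\omega,z}\| \;\leq\; \log \|\Lambda^k \Ac^s_{\tau^t(\omega,z)}\| + \log \|\Lambda^k \Ac^t_{\omega,z}\| \, ,
\]
while the integrability condition (H2), combined with $\|\Lambda^k A\| \leq |A|^k$, guarantees that $(\omega,z) \mapsto \sup_{0 \leq t \leq 1} \log^+ \|\Lambda^k \Ac^t_{\omega,z}\|$ lies in $L^1(\P \times \mu)$. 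Kingman's theorem then yields an a.s.\ constant limit $\Gamma_k = \lim_{t \to \infty} \tfrac1t \log \|\Lambda^k \Ac^t_{\omega,z}\|$ for each $1 \leq k \leq d$. Using the identity $\|\Lambda^k A\| = \sigma_1(A)\cdots \sigma_k(A)$ and setting $\chi_k := \Gamma_k - \Gamma_{k-1}$ (with $\Gamma_0 = 0$), one obtains existence and a.s.\ constancy of each $\chi_k = \lim \tfrac1t \log \sigma_k(\Ac^t_{\omega,z})$; the second half of (H2), applied similarly to $(\Ac^t)^{-1}$, prevents $\chi_d = -\infty$. This gives part (i) up to identifying the distinct $\chi_k$ with the $\lambda_j$ of Theorem \ref{thm:MET}, which follows by a min-max comparison: the growth rate of $|\Ac^t v|$ for $v$ in a generic subspace of codimension $i-1$ is controlled by $\sigma_i(\Ac^t)$, and matching this against the flag structure $F_1 \supset \cdots \supset F_r$ forces the sorted list $(\chi_1 \geq \cdots \geq \chi_d)$ to equal $(\lambda_1,\dots,\lambda_1,\lambda_2,\dots,\lambda_r,\dots,\lambda_r)$ with $\lambda_i$ appearing $m_i - m_{i+1}$ times.

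For part (ii), write the polar decomposition $\Ac^t_{\omega,z} = K_t P_t$ with $K_t$ orthogonal and $P_t = ((\Ac^t)^\top \Ac^t)^{1/2}$ positive definite, and note that the eigenvalues of $\tfrac{1}{t}\log P_t^2$ are exactly $\tfrac{2}{t}\log \sigma_i(\Ac^t) \to 2\chi_i$, so the spectrum of any subsequential limit $\Lambda_{\omega,z}$ must be the correct one (up to the factor-of-two convention). The real work is to show that the orthonormal eigenframe of $P_t$ converges, so that $\tfrac1t \log P_t^2$ itself converges as a matrix. The idea, following Raghunathan and Ruelle, is to exploit the spectral gaps $\lambda_i > \lambda_{i+1}$: for each $i$ one shows that the span of eigenvectors of $P_t$ corresponding to the top $m_1+\cdots+m_{i-1}$ singular values is (up to an exponentially small error in $t$) the orthogonal complement in $\R^d$ of the Oseledets subspace $F_i(\omega,z)$ as supplied by Theorem \ref{thm:MET}. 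This is proved by contradiction: if a unit vector $v$ projected nontrivially onto $F_i \setminus F_{i+1}$ lay in such a span, then $|\Ac^t v|$ would grow at least at rate corresponding to $\sigma_{m_1+\cdots+m_{i-1}}(\Ac^t)$, contradicting the MET rate $\lambda_i$ for vectors in $F_i \setminus F_{i+1}$. Quantifying this via the gap $\lambda_{i-1} - \lambda_i > 0$ produces exponentially fast convergence of these spans, hence of the whole eigendecomposition of $\tfrac1t \log((\Ac^t)^\top \Ac^t)$ to a deterministic limit $\Lambda_{\omega,z}$. The eigenspaces $E_i(\omega,z)$ of $\Lambda_{\omega,z}$ are orthogonal complements within $F_i$ of $F_{i+1}$ (in the limit frame), whence $F_i(\omega,z) = \bigoplus_{j \geq i} E_j(\omega,z)$.

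The main obstacle is precisely this last step: Kingman's theorem and the MET together easily give convergence of the \emph{spectrum} of $\tfrac1t\log((\Ac^t)^\top \Ac^t)$, but promoting this to matrix convergence requires genuine quantitative input, namely an angle estimate showing that the SVD frames of $\Ac^t$ stabilize. The cleanest way to do this is to estimate the angle between $F_i(\omega,z)$ and the span of the top singular directions of $(\Ac^t)^\top$ and show it decays like $\exp(-(\lambda_{i-1}-\lambda_i - \eta)t)$ for small $\eta$, using only the already-established rates from part (i) and the MET. This is where all the subtlety lies; the rest of the argument is bookkeeping combining Kingman, ergodicity, and the integrability assumption (H2).
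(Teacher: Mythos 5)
Your proposal follows precisely the Raghunathan--Ruelle route the paper itself cites: Kingman applied to the exterior power cocycles $\Lambda^k\Ac^t$ (with (H2) supplying $L^1$ integrability of $\sup_{0\le t\le 1}\log^+\|\Lambda^k\Ac^t\|$) to obtain the $\chi_i$ in part (i), then a spectral-gap angle estimate showing the SVD frame of $\Ac^t$ stabilizes to yield convergence of $\frac1t\log\big((\Ac^t)^\top\Ac^t\big)$ in part (ii). Two small bookkeeping remarks: you are right to flag the factor-of-two discrepancy (the eigenvalues of $\lim\frac1t\log((\Ac^t)^\top\Ac^t)$ are $2\chi_i$, so the paper's normalization implicitly intends $\frac{1}{2t}$), and under the paper's convention $\dim F_i = m_i$ the top singular directions whose span should approximate $F_i^\perp$ number $d - m_i$ rather than $m_1+\cdots+m_{i-1}$ (the latter would be correct only if $m_j$ denoted multiplicities); neither point affects the soundness of the argument.
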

Lemma \ref{lem:singularValuesLimit}(i) is often proved using the Kingman Subadditive Ergodic 
Theorem \cite{kingman1973subadditive}. Item (ii) follows from item (i) and a linear
algebra argument; see \cite{raghunathan1979proof, ruelle1979ergodic}
for more details.

Note that from Lemma \ref{lem:singularValuesLimit}(i), we have
 that $\lambda_1 = \lambda^+$ and $\lambda_r = \lambda^-$, where
\begin{align}\label{eq:defineLambdaPlusMinus}
\lambda^+ = \lim_{t \to \infty} \frac{1}{t} \log | \Ac^t_{\omega, z}| \, , 
\quad \lambda^- = \lim_{t \to \infty} - \frac{1}{t} \log |(\Ac^t_{\omega, z})^{-1}| \, ,
\end{align}
since for any invertible matrix $A \in M_{d \times d}(\R)$ we have $\sigma_1(A) = |A|$ and $\sigma_d(A) = |A^{-1}|^{-1}$.
In particular, $r > 1$ (i.e., there exist at least two distinct Lyapunov exponents) if and only if $\lambda^+ > \lambda^-$.
Of course, the problem of verifying that $\lambda^+ > \lambda^-$ for concrete systems is often
extremely challenging: this is precisely the subject of Sections \ref{subsec:FurstenbergFD} and \ref{subsec:furstInfiniteDimensions}.

For the remainder of Section \ref{subsec:LinearCocycles} we will 
continue our discussion of linear cocycles and the MET by introducing
several auxiliary processes associated to a linear cocycle $\Ac$, namely, the
\emph{projective process} (Section \ref{subsubsec:projectiveProcessRDS}) and \emph{matrix processes} (Section \ref{subsubsec:matrixProcess}),
as well as the \emph{$(-\top)$-cocycle} $\check \Ac$ associated to $\Ac$ (Section \ref{subsubsec:inverseTransCocycle}).

\subsubsection{Projective RDS associated to the cocycle $\mathcal A$}\label{subsubsec:projectiveProcessRDS}

Let us write $P^{d-1}$ for the projective space associated to $\R^d$. 
The action of an invertible matrix $A \in M_{d \times d}(\R)$ on $\R^d$ descends to a well-defined action 
 $A : P^{d-1} \to P^{d-1}$.
 
With this understanding, we can think of the cocycle $\mathcal A$ as giving rise to an RDS
on the product $Z \times P^{d-1}$, i.e., that given for $\omega \in \Omega$ by 
\[
(t, z, v) \mapsto (\Tc^t_\omega z, \Ac^t_{\omega, z} v) \, , \quad (z, v) \in Z \times P^{d-1} \, , t \in [0,\infty) \, .
\]
We refer to the RDS on $Z \times P^{d-1}$ as the \emph{projective RDS} or \emph{projective process}.
As one can easily check, this is a continuous RDS in the sense of Section \ref{subsubsec:basicSetupRDS} with $Z \times P^{d-1}$
replacing $Z$. Correspondingly we will assume in what follows that the following independent increments
condition, analogous to (H1), is satisfied:

\begin{itemize}
	\item[(H3)] \label{item:H3}For all $s, t > 0$, we have that the $C_{u, b}(Z, Z) \times C_{u, b}(Z, M_{d \times d}(\R))$-valued 
	random variables $(T^t_\cdot, \Ac^t_{\cdot, \cdot})$ and $(T^s_{\theta^t \cdot}, \Ac^s_{\theta^t \cdot, \cdot})$
	on $(\Omega, \Fc, \P)$ are independent.
\end{itemize}

Assumption (H3) ensures (Lemma \ref{lem:MarkovProperty}) that associated to the RDS on
 $Z \times P^{d-1}$ is a Markov process $(z_t, v_t)_{t \geq 0}$ on $Z \times P^{d-1}$
with transition kernel 
\[
\widehat P_t((z, v), K) = \P((z_t, v_t) \in K | (z_0, v_0) = (z, v)) = \P \{ (T_\omega^t z, \Ac^t_{\omega, z} v) \in K\}
\]
defined for $(z, v) \in Z \times P^{d-1}, K \subset Z \times P^{d-1}$ Borel. 
In addition, we can consider the associated skew product semiflow
$\hat \tau^t : \Omega \times Z \times P^{d-1} \to \Omega \times Z \times P^{d-1}$, $t \in [0,\infty)$,
as in Section \ref{subsubsec:skewProd}.

\bigskip

We now turn our attention to the relationship between the ergodic theory of the projective process
and the MET.
It is not hard to see that any stationary measure $\nu$ for $(\widehat P_t)$
must project to some $(P_t)$-stationary measure $\mu$ on the $Z$-factor. Conversely, by Lemma \ref{lem:stationaryMeasureExists} we have the following.

\begin{lemma}\label{lem:existenceProjectiveMeasures}
Given a stationary measure $\mu$ for $(P_t)$, there exists at least one stationary measure
$\nu$ for the projective semigroup $(\widehat P_t)$ such that $\nu(A\times Z) = \mu(A)$.
\end{lemma}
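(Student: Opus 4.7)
The plan is a standard Krylov--Bogoliubov averaging argument, taking advantage of the compactness of $P^{d-1}$. The main observation is that any probability measure $\nu_0$ on $Z \times P^{d-1}$ whose projection onto $Z$ is the $(P_t)$-stationary measure $\mu$ will continue to project to $\mu$ under time-$t$ evolution by $\widehat P_t$, so tightness in the fiber direction comes for free from compactness of $P^{d-1}$.

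More concretely, I would first show that the projective semigroup $(\widehat P_t)$ satisfies the Feller property on $Z \times P^{d-1}$. This follows immediately from Proposition \ref{prop:semigroupRegularity1}(a) applied to the projective RDS on $Z \times P^{d-1}$, once one has verified that the projective RDS is itself a continuous RDS satisfying the independent increments condition (H1). Both properties are straightforward consequences of the original assumptions on $\Tc$, the continuity/measurability hypotheses for $\Ac$, and assumption (H3).

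Next, fix a reference probability $\nu_0$ on $Z \times P^{d-1}$ projecting to $\mu$ --- e.g., $\nu_0 := \mu \times m$ with $m$ the normalized Lebesgue measure on $P^{d-1}$ --- and define the Cesàro averages
\[
\nu_T := \frac{1}{T}\int_0^T (\widehat P_t)^* \nu_0 \, \dt \, , \qquad T > 0.
\]
For any Borel $A \subset Z$, since $\pi_Z \circ \widehat{P}_t = P_t \circ \pi_Z$ where $\pi_Z : Z \times P^{d-1} \to Z$ is the projection, stationarity of $\mu$ gives $(\widehat P_t)^* \nu_0(A \times P^{d-1}) = \mu(A)$ for every $t$, hence $\nu_T(A \times P^{d-1}) = \mu(A)$ as well. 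Thus each $\nu_T$ projects to $\mu$ on the $Z$-factor.

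The family $\{\nu_T\}_{T > 0}$ is tight: given $\eps > 0$, pick a compact $K \subset Z$ with $\mu(K) > 1 - \eps$; then $K \times P^{d-1}$ is compact in $Z \times P^{d-1}$ (as $P^{d-1}$ is compact), and $\nu_T(K \times P^{d-1}) = \mu(K) > 1 - \eps$. Extract a subsequence $\nu_{T_n} \rightharpoonup \nu$ weakly. The standard Krylov--Bogoliubov computation, using Feller-ness of $(\widehat P_t)$ to ensure that $\widehat P_s h \in C_b(Z \times P^{d-1})$ whenever $h \in C_b(Z \times P^{d-1})$, shows that $\nu$ is $(\widehat P_t)$-stationary. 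Finally, weak convergence combined with the identity $\nu_{T_n}(\,\cdot\, \times P^{d-1}) = \mu$ and an approximation by continuous bounded test functions on $Z$ yields $\nu(A \times P^{d-1}) = \mu(A)$ for all Borel $A \subset Z$, as required. The only mildly delicate point is this last step, since the set $A \times P^{d-1}$ need not be open or closed; it is handled by applying weak convergence to $\phi \otimes \mathbf{1}_{P^{d-1}}$ for $\phi \in C_b(Z)$ and using the uniqueness of measures determined by integrals against $C_b(Z)$.
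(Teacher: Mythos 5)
Your proof is correct and takes essentially the same approach the paper intends: the paper simply invokes Lemma \ref{lem:stationaryMeasureExists}(b) (Krylov--Bogoliubov) applied to the projective RDS, whereas you spell out the argument — choosing $\nu_0 = \mu \times m$, observing that each Cesàro average projects to $\mu$, deducing tightness from compactness of $P^{d-1}$ together with inner regularity of $\mu$ on the Polish space $Z$, and verifying the marginal condition survives the weak limit via tests against $\phi \otimes \mathbf{1}_{P^{d-1}}$. The unwinding is sound in every step.
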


If $\nu$ as above is the \emph{unique} stationary measure with marginal $\mu$, then we obtain the following refinement of the MET.

\begin{proposition}\label{prop:refineMET}
Assume that there is only one stationary measure $\nu$ for the projective RDS projecting
to $\mu$ on the $Z$-factor. Then, we have the following: for $\mu$-almost every $z \in Z$
and any $v \in \R^d \setminus \{ 0 \}$, we have
\[
\lim_{t \to \infty} \frac{1}{t} \log | \Ac^t_{\omega, z} v |  = \lambda_1
\]
with $\P$-probability 1.
\end{proposition}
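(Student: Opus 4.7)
The plan is to combine the MET with a comparison argument that exploits the uniqueness hypothesis and Lebesgue-generic initial projective directions. First I would establish that $\nu$ is ergodic for $(\widehat P_t)$: the ergodic decomposition of $\nu$ yields stationary components whose $Z$-marginals must all equal $\mu$ by ergodicity of $\mu$ for $(P_t)$ (an MET hypothesis), and uniqueness then forces the decomposition to be trivial. Hence $\P \times \nu$ is ergodic for $\hat\tau$ by Proposition \ref{prop:characterizeErgodicityRDS}. Applying Theorem \ref{thm:MET}, the limit $\Psi(\omega, z, v) := \lim_{t \to \infty} t^{-1} \log|\Ac^t_{\omega, z} v|$ exists for $\P \times \mu$-a.e.\ $(\omega, z)$ and every $v \neq 0$, equals $\lambda_i$ when $v \in F_i(\omega, z) \setminus F_{i+1}(\omega, z)$, and is $\hat\tau$-invariant by the cocycle property \eqref{eq:cocycleProperty}. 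Ergodicity then forces $\Psi \equiv \lambda_{i_0}$ on a set of full $\P \times \nu$-measure for some $i_0 \in \{1, \dots, r\}$, and the heart of the matter is to show $i_0 = 1$.

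To pin down $i_0 = 1$---which I expect to be the main obstacle---I would compare $\nu$ against the long-time behavior driven by a Lebesgue-generic initial direction. Let $\sigma$ denote normalized Lebesgue measure on $P^{d-1}$ and set $\rho_N := N^{-1} \int_0^N \widehat P_t^\ast(\mu \times \sigma) \, dt$. Each $\rho_N$ has $Z$-marginal $\mu$, so by compactness of $P^{d-1}$ and Krylov-Bogoliubov every weak subsequential limit is $(\widehat P_t)$-stationary with marginal $\mu$, hence equals $\nu$ by uniqueness, giving $\rho_N \rightharpoonup \nu$ weakly. The crucial test function is $\Phi_T(z, v) := T^{-1} \E \log|\Ac^T_{\omega, z} v|$, which is continuous thanks to (H2) and the continuity of $\Ac$. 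A cocycle-and-independence computation using (H3) yields $\widehat P_t \Phi_T(z, v) = T^{-1}[(t + T) \Phi_{t + T}(z, v) - t \Phi_t(z, v)]$, and rearranging the resulting time average produces a Ces\`aro identity
\[
\int \Phi_T \, d\rho_N = \frac{1}{NT}\left[\int_N^{N + T} s \widetilde F(s) \, ds - \int_0^T s \widetilde F(s) \, ds\right],
\]
where $\widetilde F(s) := \int \Phi_s \, d(\mu \times \sigma)$. Because $F_2(\omega, z)$ is a proper subspace and hence $\sigma$-null, MET combined with Kingman-type $L^1$ convergence forces $\widetilde F(s) \to \lambda_1$; the Ces\`aro identity then yields $\int \Phi_T \, d\rho_N \to \lambda_1$. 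Weak convergence gives $\int \Phi_T \, d\nu = \lambda_1$ for every $T$, while pointwise and $L^1(\nu)$-convergence $\Phi_T \to \E \Psi = \lambda_{i_0}$ gives $\int \Phi_T \, d\nu \to \lambda_{i_0}$, so $\lambda_{i_0} = \lambda_1$ and hence $i_0 = 1$. The difficulty here is the need for a continuous test function isolating the top exponent: using the \emph{expected} one-step log growth $\Phi_T$ rather than $\Psi$ itself is essential, since $\Psi$ is only measurable and weak convergence requires continuity.

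For the per-$v$ conclusion, fix $v \in \R^d \setminus \{0\}$ and repeat the construction with $\sigma = \delta_v$, yielding $\rho_N^v \rightharpoonup \nu$ by the same uniqueness argument. The same Ces\`aro identity with $\widetilde F_v(s) := \int \Phi_s(z, v) \, d\mu(z)$ and the already-established $\int \Phi_T \, d\nu = \lambda_1$ forces
\[
\lim_{s \to \infty} \widetilde F_v(s) \;=\; \sum_{i = 1}^r \lambda_i \, (\P \times \mu)\{v \in F_i(\omega, z) \setminus F_{i + 1}(\omega, z)\} \;=\; \lambda_1.
\]
Since this is a convex combination of $\lambda_i \leq \lambda_1$ equal to the maximum, only the $i = 1$ term can have positive weight: $(\P \times \mu)\{v \in F_2(\omega, z)\} = 0$. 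Fubini then gives, for $\mu$-a.e.\ $z$ (with a null set depending on $v$), $\P\{v \in F_2(\omega, z)\} = 0$; on the complementary event, MET yields $\lim_t t^{-1}\log|\Ac^t v| = \lambda_1$ with $\P$-probability $1$, completing the proof.
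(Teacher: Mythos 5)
The paper does not prove this proposition; it defers to the Random Multiplicative Ergodic Theorem (Kifer, Theorem III.1.2 in \cite{kifer2012ergodic}, and Carverhill \cite{carverhill1985flows}). Your proposal is therefore a self-contained alternative, and several of its ingredients are correct and nicely organized: the ergodicity of $\nu$ from uniqueness, the Ces\`aro identity obtained by combining the cocycle relation with independence (H3), the Kingman-driven $L^1$ convergence $\widetilde F(s) \to \lambda_1$ (since Lebesgue $\sigma$ ignores the proper subspace $F_2(\omega,z)$), and the per-$v$ step with $\sigma = \delta_v$.

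The gap is in the step $\int \Phi_T \, d\rho_N \to \int \Phi_T \, d\nu$. You assert that $\Phi_T(z,v) = T^{-1}\E\log|\Ac^T_{\omega,z} v|$ is ``continuous thanks to (H2) and the continuity of $\Ac$,'' but this does not follow from the abstract hypotheses of Section \ref{subsec:LinearCocycles}. Continuity of $z \mapsto \Ac^T_{\omega, z}$ is only assumed for each fixed $\omega$ (and the $C_{u,b}$ bound is $\omega$-dependent), while (H2) is an $L^1(\P \times \mu)$ bound, \emph{not} a pointwise-in-$z$ bound of the form $\E \sup_{z' \in U} \log^+ |\Ac^T_{\omega, z'}| < \infty$ for bounded $U$. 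Without such domination one cannot pass the limit $z_n \to z$ through the $\omega$-expectation defining $\Phi_T$, so continuity of $\Phi_T$ is not established. Moreover, even granting continuity, $\Phi_T$ is unbounded when $Z$ is noncompact (the paper's case), so $\rho_N \rightharpoonup \nu$ alone does not give convergence of integrals; this can be patched using the shared $Z$-marginal $\mu$ of all $\rho_N$ and $\nu$ and the domination $|\Phi_T(z,v)| \leq T^{-1}\E[\log^+|\Ac^T_{\omega,z}| + \log^+|(\Ac^T_{\omega,z})^{-1}|] \in L^1(\mu)$ to get uniform integrability, but that argument does not repair the missing continuity. The Kifer/Carverhill proof circumvents the issue entirely by constructing measurable invariant subbundles $W_i(z)$ directly rather than routing through a Krylov--Bogoliubov weak limit, which is exactly why (H1)--(H3) suffice there. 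A secondary point: your Fubini step yields ``for every $v$, for $\mu$-a.e.\ $z$'' (null set in $z$ depending on $v$), rather than the stated ``for $\mu$-a.e.\ $z$, for every $v$''; the two are not interchangeable in general, although the weaker order is all that is used downstream in Corollary \ref{cor:expandAllDirections}, since the strong-Feller upgrade to $z \in \supp\mu$ is performed at fixed $v$.
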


Proposition \ref{prop:refineMET} is actually a corollary of the more general
 Random Multiplicative Ergodic Theorem, discovered independently by Kifer (Theorem III.1.2 in \cite{kifer2012ergodic}) and Carverhill \cite{carverhill1985flows},
  describing the situation when several stationary measures $\nu$ project to a single stationary $\mu$.
Since we do not use this more general formulation here, we omit it and refer the interested reader
to the references above for more information.

\subsubsection{Matrix RDS associated to the cocycle $\mathcal A$}\label{subsubsec:matrixProcess}

The cocycle $\mathcal A$ also gives rise to an RDS on the product space $Z \times M_{d \times d}(\R)$; 
for $\omega \in \Omega$, the time-$t$ mapping applied to $(z, A) \in Z \times M_{d \times d}(\R)$ 
is given by
\[
(z, B) \mapsto (\Tc^n_\omega z, \Ac^n_{\omega, z}  B) \, .
\]
Like before, this RDS on $Z \times M_{d \times d}(\R)$ falls into the framework given in Section \ref{subsubsec:basicSetupRDS}
 with $Z \times M_{d \times d}(\R)$ replacing $Z$.

Similarly, under the independent increments hypothesis
 (H3) we can associate to this RDS a Markov process $(z_t, A_t)$ on $Z \times M_{d \times d}(\R)$
with transition kernel $Q_t((z, A), K)$.
Note that if $A \in M_{d \times d}(\R)$ is invertible
and $K = K_1 \times K_2$ where $K_1 \subset Z, K_2 \subset M_{d \times d}(\R)$, then
\[
Q_t((z, A), K) = Q_t((z, \Id), K_1 \times (K_2 A^{-1}) ) \, ,
\]
where $\Id = \Id_{\R^d}$. Thus, frequently we are only interested in the Markov kernel $(Q_t)$ evaluated at $(z, \Id)$.

\subsubsection{The MET for the $(-\top)$-cocycle $\check \Ac$}\label{subsubsec:inverseTransCocycle}

In this paper we will also need to consider what we call the \emph{$(- \top)$-cocycle} $\check \Ac$, defined for
$z \in Z, \omega \in \Omega, t \geq 0$ by
\[
\check \Ac^t_{\omega, z} = (\Ac^t_{\omega, z})^{-\top} \, .
\]
Here, ``$(-\top)$'' refers to the inverse-transpose of a $(d \times d)$-matrix. As one can easily check, 
$\check \Ac$ is a linear cocycle over the RDS $\mathcal T$; when
(H2) and (H3) for the original cocycle $\Ac$ are assumed, the same hold for the $(-\top)$-cocycle $\check \Ac$. 
Therefore the MET (Theorem \ref{thm:MET}) and all the aforementioned material 
applies, yielding Lyapunov exponents $\check \lambda_1 > \cdots > \check \lambda_{\check r}$ 
and associated subspaces $\check F_2(\omega, z), \cdots, \check F_{\check r}(\omega, z)$. 

These objects can be directly represented in terms of the exponents and subspaces
of the original cocycle $\Ac$. 

\begin{proposition}[Theorem 5.1.1 in \cite{arnold2013random}]\label{prop:METforCheckAc}
	We have that $\check r = r$, and for each $1 \leq i \leq r$, we have
	\begin{gather}
		\check \lambda_i = - \lambda_{r - (i - 1)} \, , \quad \text{ and} \\
\label{eq:subspacesCorrespondenceRDS}		\check F_i(\omega, z) = \big( F_{r - (i-1) + 1}(\omega, z) \big)^{\perp} \quad \text{ for almost all } (\omega, z) \in \Omega \times Z \, .
	\end{gather}
\end{proposition}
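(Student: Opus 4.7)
The plan is to reduce Proposition \ref{prop:METforCheckAc} to the polar/singular-value picture given in Lemma \ref{lem:singularValuesLimit}, applied in parallel to the cocycles $\mathcal{A}$ and $\check{\mathcal{A}}$. First I would verify that $\check{\mathcal{A}}$ itself satisfies (H2). Since the operator norm is invariant under transposition, $|\check{\mathcal{A}}^t_{\omega,z}| = |(\mathcal{A}^t_{\omega,z})^{-1}|$ and $|(\check{\mathcal{A}}^t_{\omega,z})^{-1}| = |\mathcal{A}^t_{\omega,z}|$, so (H2) for $\check{\mathcal{A}}$ is just (H2) for $\mathcal{A}$ with the two bounds in \eqref{eq:integrability1} swapped. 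Consequently Theorem \ref{thm:MET} and Lemma \ref{lem:singularValuesLimit} apply to $\check{\mathcal{A}}$ and produce distinct exponents $\check{\lambda}_1 > \cdots > \check{\lambda}_{\check r}$, a limiting matrix $\check{\Lambda}_{\omega,z}$, and a flag $\check F_i(\omega,z)$.

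For the exponent identity I would use the singular value decomposition: writing $\mathcal{A}^t_{\omega,z} = U\Sigma V^{\top}$, one sees $\check{\mathcal{A}}^t_{\omega,z} = U \Sigma^{-1} V^{\top}$, and arranging the diagonal of $\Sigma^{-1}$ in decreasing order gives $\sigma_i(\check{\mathcal{A}}^t_{\omega,z}) = \sigma_{d-i+1}(\mathcal{A}^t_{\omega,z})^{-1}$. Applying Lemma \ref{lem:singularValuesLimit}(i) to both cocycles shows that the almost-sure limits $\check \chi_i$ exist and satisfy $\check \chi_i = -\chi_{d-i+1}$; reading off the distinct values in decreasing order immediately yields $\check r = r$ and $\check{\lambda}_i = -\lambda_{r-i+1}$, which is the first assertion in Proposition \ref{prop:METforCheckAc}.

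For the flag identity, the key algebraic observation is
\[
(\check{\mathcal{A}}^t_{\omega,z})^{\top}\check{\mathcal{A}}^t_{\omega,z} = (\mathcal{A}^t_{\omega,z})^{-1}(\mathcal{A}^t_{\omega,z})^{-\top} = \big((\mathcal{A}^t_{\omega,z})^{\top}\mathcal{A}^t_{\omega,z}\big)^{-1} \, .
\]
Taking $\frac{1}{t}\log$ and passing to the limit along the full-measure set provided by Lemma \ref{lem:singularValuesLimit}(ii) gives $\check{\Lambda}_{\omega,z} = -\Lambda_{\omega,z}$. Since $\Lambda_{\omega,z}$ is symmetric, the two matrices share the same orthogonal spectral decomposition $\R^d = E_1(\omega,z) \oplus \cdots \oplus E_r(\omega,z)$, with eigenvalues negated and relabeled: the $\check{\lambda}_i$-eigenspace of $\check \Lambda_{\omega,z}$ is $E_{r-i+1}(\omega,z)$. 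Applying the flag formula of Lemma \ref{lem:singularValuesLimit}(ii) to $\check{\mathcal{A}}$ then gives
\[
\check F_i(\omega,z) = \bigoplus_{j=i}^r E_{r-j+1}(\omega,z) = \bigoplus_{k=1}^{r-i+1} E_k(\omega,z) \, ,
\]
and by orthogonality of the $E_k$'s this last space is precisely $\big(\bigoplus_{k \geq r-i+2} E_k(\omega,z)\big)^{\perp} = (F_{r-(i-1)+1}(\omega,z))^{\perp}$, which is \eqref{eq:subspacesCorrespondenceRDS}.

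I do not anticipate any serious obstacle: the argument is essentially a linear-algebraic bookkeeping exercise once the identity $\check{\Lambda} = -\Lambda$ is in hand. The only point that requires mild care is confirming that one may pass the $\log$ through the inverse in the limit — i.e.\ that Lemma \ref{lem:singularValuesLimit}(ii) for $\check{\mathcal{A}}$ really does produce the negative of the matrix obtained for $\mathcal{A}$ — but this is immediate because the relation $(\check{\mathcal{A}}^t)^{\top}\check{\mathcal{A}}^t = ((\mathcal{A}^t)^{\top}\mathcal{A}^t)^{-1}$ holds for every $t$, so taking logs and dividing by $t$ commutes with the limit. Everything else (orthogonality of Oseledets spaces, correspondence of flag to direct sum) is already contained in Lemma \ref{lem:singularValuesLimit}(ii).
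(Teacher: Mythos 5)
Your proof is correct and follows the same route as the paper: both hinge on the identity $(\check{\mathcal{A}}^t)^{\top}\check{\mathcal{A}}^t = \big((\mathcal{A}^t)^{\top}\mathcal{A}^t\big)^{-1}$ (equivalently $\log(\check{\mathcal{A}}^\top\check{\mathcal{A}}) = -\log(\mathcal{A}^\top\mathcal{A})$) combined with Lemma \ref{lem:singularValuesLimit} applied to $\check{\mathcal{A}}$. You merely spell out several steps the paper compresses (verification of (H2), the singular-value relabeling, and the orthogonality of the eigenspaces of $\Lambda_{\omega,z}$ that turns the direct-sum formula into the orthogonal complement $(F_{r-(i-1)+1})^{\perp}$).
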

\begin{proof}
This follows on applying Lemma \ref{lem:singularValuesLimit} to the cocycle $\check \Ac$ and noting that
\[
\log \big( ( \check \Ac^t_{\omega, z})^\top \check \Ac^t_{\omega, z}\big)  = - \log \big( ( \Ac^t_{\omega, z})^{\top} \Ac^t_{\omega, z} \big) 
\]
holds for all $(\omega, z) \in \Omega \times Z$ and $t \geq 0$. 
\end{proof}

Under assumption (H3), the cocycle $\check \Ac$ induces the \emph{$(-\top)$-projective process} $(z_t, \check v_t)$ on $Z \times P^{d-1}$ defined
for fixed initial $z_0 \in Z, \check v_0 \in P^{d-1}$ by setting $\check v_t$ to be the projective representative of $\check \Ac^t_{\omega, z_0} \check v_0$.
Then, all the material from Section \ref{subsubsec:projectiveProcessRDS} applies with $\check \Ac$ replacing $\Ac$ and $(z_t, \check v_t)$ replacing
$(z_t, v_t)$. 

In particular, the conclusions of Proposition \ref{prop:refineMET} hold with $\check \Ac$ replacing $\Ac$ when the stationary measure
for $(z_t, \check v_t)$ projecting to $\mu$ on the $Z$ factor is unique. 

\subsection{The MET in the random setting: Furstenberg's criterion}\label{subsec:FurstenbergFD}

Furstenberg's criterion was originally discovered by Furstenberg in his seminal 1968 paper,
\emph{Noncommuting Random Products} \cite{furstenberg1963noncommuting}.
 It has since been refined and extended over the
subsequent years by a variety of authors; see Section \ref{subsubsec:FurstenbergOutline} for some
citations.

In Section \ref{subsubsec:introduceFurstenberg} we will state Furstenberg's criterion precisely
in the setup of Sections \ref{subsec:RDSelements} and \ref{subsec:LinearCocycles}.
In Section \ref{subsubsec:nondegenCondLaws} we provide a condition
for checking Furstenberg's criterion which is most useful when $\Tc$ and $\Ac$ are generated by
finite-dimensional SDE.
In Section \ref{subsec:furstInfiniteDimensions} we will consider conditions for checking Furstenberg's criterion which
are amenable to the situation when the phase space for $\Tc$ is more general and, possibly, infinite-dimensional.

\medskip

{\it For the remainder of Section \ref{sec:RDS} we assume the setting of Sections \ref{subsec:RDSelements} and \ref{subsec:LinearCocycles}. 
Specifically, $\Tc$ is a continuous RDS
on the metric space $Z$ as in Section \ref{subsubsec:basicSetupRDS} satisfying (H1) and admitting an ergodic
stationary measure $\mu$, while the cocycle $\Ac$ over $\Tc$ satisfies the conditions of Section \ref{subsubsec:linearCocycleBasicRDS}
as well as the integrability condition (H2) and the independent increments condition (H3).}

\subsubsection{Furstenberg's criterion in the RDS setting}\label{subsubsec:introduceFurstenberg}

Furstenberg's criterion revolves around a central theme:
if $\lambda^+ = \lambda^-$ as above, then there is a \emph{deterministic}, i.e., 
$\omega$-independent, structure preserved by the cocycle $\Ac$
with probability one. 

Let us make this more precise. 

\begin{theorem}\label{thm:furstenberg}
	If $\lambda^+ = \lambda^-$, then for each $z \in Z$ there a Borel measure $\nu_z$ on $P^{d-1}$ such that
	(i) the assignment $z \mapsto \nu_z$ is measurable\footnote{To wit,
	 for any Borel $K \subset P^{d-1}$, the function $z \mapsto \nu_z(K)$ is Borel measurable. Equivalently,
	 $z \mapsto \nu_z$ is Borel measurable in the weak$^*$ topology on finite Borel measures on $P^{d-1}$.}
	 and (ii) for each $t \in [0,\infty)$ and $(\P \times \mu)$-almost all $(\omega, z) \in \Omega \times Z$ (perhaps depending on $t$), we have that $T_\omega^t z \in Z$ and
	\begin{align}\label{eq:invariantMeasuresFamily}
	(\Ac^t_{\omega, z})_* \nu_z = \nu_{\Tc^t_\omega z} \, .
	\end{align}
\end{theorem}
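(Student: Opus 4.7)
The plan is to build the family $\{\nu_z\}$ by disintegrating a stationary measure on the projective bundle $Z \times P^{d-1}$, and then to exploit the equal-exponents hypothesis $\lambda^+ = \lambda^-$ to upgrade the resulting in-expectation invariance to the pointwise almost-sure invariance claimed in \eqref{eq:invariantMeasuresFamily}. First, by Lemma \ref{lem:existenceProjectiveMeasures} applied to the projective RDS from Section \ref{subsubsec:projectiveProcessRDS}, there exists a stationary measure $\hat\nu$ for the Markov semigroup $(\widehat{P}_t)$ on $Z \times P^{d-1}$ whose marginal on the $Z$-factor is $\mu$. Since $Z$ is Borel in a Polish space and $P^{d-1}$ is compact Polish, Rokhlin's disintegration theorem yields a measurable family $\{\nu_z\}_{z \in Z}$ of Borel probabilities on $P^{d-1}$ with $d\hat\nu(z, v) = d\nu_z(v)\, d\mu(z)$. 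This produces item (i) of the conclusion, with all measurability built in from the outset.

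Next, I would unpack the stationarity $\widehat{P}_t^* \hat\nu = \hat\nu$ using this disintegration. Testing against product functions $g(z)\phi(v)$ with $g : Z \to \R$ bounded Borel and $\phi \in C(P^{d-1})$, stationarity reads
\[
\int_Z g(z') \int_{P^{d-1}} \phi \, d\nu_{z'} \, d\mu(z') \;=\; \int_Z \E\!\left[ g(\Tc^t_\omega z) \int_{P^{d-1}} \phi \, d(\Ac^t_{\omega, z})_* \nu_z \right] d\mu(z).
\]
A regular conditional-expectation argument on the $\sigma$-algebra generated by $\omega \mapsto \Tc^t_\omega z$ then yields, for $\mu$-almost every $z' \in Z$,
\[
\nu_{z'} \;=\; \E\!\left[ (\Ac^t_{\omega, z})_* \nu_z \,\big|\, \Tc^t_\omega z = z' \right],
\]
interpreted weakly against $\phi \in C(P^{d-1})$. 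This is the desired invariance in expectation; the existence of the $\mathcal{P}(P^{d-1})$-valued regular conditional expectation is guaranteed because $\mathcal{P}(P^{d-1})$ is Polish.

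Finally, to upgrade this to the pointwise almost-sure equality in \eqref{eq:invariantMeasuresFamily}, I would invoke a strict-convexity / Jensen-saturation argument driven by $\lambda^+ = \lambda^-$. The Furstenberg-Khasminskii formula applied to $\hat\nu$ expresses $\lambda^+$ as the $\hat\nu$-expectation of the infinitesimal log-expansion functional $\E[\log|\Ac^1_{\omega, z} v|]$, while its analogue applied to a stationary measure of the $(-\top)$-projective process from Section \ref{subsubsec:inverseTransCocycle} represents $\check\lambda^+ = -\lambda^-$ (cf.\ Proposition \ref{prop:METforCheckAc}). The equality $\lambda^+ + \check\lambda^+ = 0$ collapses the Jensen gap coming from the concavity of $\log$ in the defining integrals, which in turn forces the $\mathcal{P}(P^{d-1})$-valued random variable $(\Ac^t_{\omega, z})_* \nu_z$ to be a deterministic function of $\Tc^t_\omega z$; combined with the conditional expectation computed in Step 2, this deterministic function must be $\nu_{\Tc^t_\omega z}$. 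I expect the main obstacle to be precisely this last step: in the original i.i.d.\ matrix setting \cite{furstenberg1963noncommuting} and in Ledrappier's presentation \cite{ledrappier1986positivity}, this Jensen-saturation argument is the technical core of the proof, and porting it to the continuous-time RDS setting requires care in (a) handling the $z$-dependence through the base RDS $\Tc$, (b) passing from discrete to continuous time via either the infinitesimal generator of $\widehat{P}_t$ or time-one subsampling followed by semigroup density, and (c) aggregating the exceptional null sets so that the quantifier ``for each $t$, $\P \times \mu$-almost every $(\omega, z)$'' in \eqref{eq:invariantMeasuresFamily} survives intact.
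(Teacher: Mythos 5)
The paper does not give a self-contained proof of this theorem; it cites Proposition 2 and Theorem 3 of \cite{ledrappier1986positivity}, remarking only that passing from Ledrappier's discrete-time setting to continuous time is why the exceptional null set in \eqref{eq:invariantMeasuresFamily} is allowed to depend on $t$. Your proposal attempts to reconstruct the argument from scratch, so let me assess it against Ledrappier's actual mechanism.

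Your Steps 1 and 2 are correct and match the standard structure: disintegrate a stationary measure $\hat\nu$ for $(\widehat P_t)$ with marginal $\mu$ (Lemma \ref{lem:existenceProjectiveMeasures}) to obtain a measurable family $(\nu_z)$, then express stationarity as an in-expectation invariance
\[
\nu_{z'} = \E\bigl[(\Ac^t_{\omega,z})_*\nu_z \,\big|\, \Tc^t_\omega z = z'\bigr] \quad \text{for $\mu$-a.e. } z'.
\]
This is precisely where one starts. Your Step 3, however, does not correctly identify the mechanism. You describe it as a Jensen-saturation applied to the log-expansion functional, with $\lambda^+ + \check\lambda^+ = 0$ collapsing the gap --- but $\lambda^+ = \int \E[\log|\Ac^1_{\omega,z}v|]\,d\hat\nu$ and $\check\lambda^+ = -\lambda^-$ are computed over \emph{different} stationary measures (one for $\Ac$, one for $\check\Ac$), and their sum being zero is merely a restatement of the hypothesis; it does not, by itself, force the $\mathcal P(P^{d-1})$-valued conditional law of $(\Ac^t_{\omega,z})_*\nu_z$ given $\Tc^t_\omega z$ to be a point mass.

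The quantity that actually does this in Ledrappier's and Furstenberg's arguments is a \emph{fiberwise (Furstenberg) entropy}: roughly,
\[
h(\hat\nu) \;=\; -\,\E \int_Z \int_{P^{d-1}} \log \frac{d\,(\Ac^1_{\omega,z})_*\nu_z}{d\,\nu_{\Tc^1_\omega z}}(v)\;d\nu_{\Tc^1_\omega z}(v)\, d\mu(z).
\]
Jensen's inequality, applied \emph{conditionally} on the value of $\Tc^1_\omega z$ and using the identity from your Step 2 (the conditional expectation of the Radon--Nikodym derivative equals one), gives $h(\hat\nu) \ge 0$, with equality if and only if $(\Ac^1_{\omega,z})_*\nu_z = \nu_{\Tc^1_\omega z}$ for $\P\times\mu$-a.e.\ $(\omega, z)$ --- exactly the claimed pointwise invariance. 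That is the Jensen step, but the convexity is used on the conditional density, not on the log-expansion functional. The genuinely hard part, and the one your proposal leaves undiscussed, is then the upper bound $h(\hat\nu) \lesssim \lambda^+ - \lambda^-$: one compares the Jacobian of the projective action of $\Ac^t_{\omega,z}$ on $P^{d-1}$ (controlled by ratios of singular values $\sigma_1(\Ac^t)/\sigma_d(\Ac^t)$, which is where Lemma \ref{lem:singularValuesLimit} and the exponent gap enter) to the distortion of densities it induces. Without this estimate, the hypothesis $\lambda^+=\lambda^-$ cannot be brought to bear, and Step 3 cannot be completed as outlined. This gap, together with the continuous-time subtleties you already flag in (b) and (c), is why the paper defers to \cite{ledrappier1986positivity} rather than reproving the result.
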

\noindent Theorem \ref{thm:furstenberg} as above is a consequence of Proposition 2 and Theorem 3 in \cite{ledrappier1986positivity}. Deducing
the version given above requires passing from the discrete-time setting of \cite{ledrappier1986positivity} to our present
continuous-time setting, and is the reason why the $(\P \times \mu)$-almost sure set may depend on $t$.
Further details are left to the reader.

\smallskip

Note that automatically, if $\lambda^+ = \lambda^-$, then the measure $\nu$ on $Z \times P^{d-1}$ defined by
\[
\dee \nu(z, v) = \dee \mu(z) \dee \nu_z(v) \, , \quad (z, v) \in Z \times P^{d-1} \, , 
\]
is a stationary measure for the Markov semigroup $(\widehat P^t)$ associated to the projective RDS on $Z \times P^{d-1}$.

We conclude that $\lambda^+ > \lambda^-$ if, from 
the conclusions of Theorem \ref{thm:furstenberg}, we can derive a contradiction.
Our goal in the remainder of Section \ref{sec:RDS} is to identify criteria for the cocycle $\mathcal A$
under which a contradiction can be derived. 

Before continuing, let us establish some useful vocabulary. Any measurable family $(\nu_z)$ 
of probability measures on $P^{d-1}$ will be referred to as a \emph{family of fiber measures}, 
while for $z \in Z$ the individual measure $\nu_z$ will be called the \emph{fiber measure at $z$}.
If the family of fiber measures $(\nu_z)$ satisfies \eqref{eq:invariantMeasuresFamily} for 
all $t \geq 0$ and $\P \times \mu$-almost every $(\omega, z) \in \Omega \times Z$ (the almost-sure 
set perhaps depending on $t$), we call $(\nu_z)$ 
an \emph{invariant fiber measure family}.

\subsubsection{Nondegeneracy of conditional laws}\label{subsubsec:nondegenCondLaws}

For simplicity, and because our primary application in this paper falls in this special case, 
let us restrict our attention to the case when $\mathcal A$ is an $SL_d(\R)$ cocycle. That is,
$\det \Ac^t_{\omega, z} \equiv 1$ for all $t \geq 0, z \in Z, \omega \in \Omega$. 

Our starting point is the following observation.
\begin{lemma}\label{lem:emptyInterior}
	Let $\nu, \nu'$ be Borel probability measures on $P^{d-1}$. Then, the set
	\[
	\{ A \in SL_d(\R) : A_* \nu = \nu' \} \subset SL_d(\R) \, ,
	\]
	has empty interior.
\end{lemma}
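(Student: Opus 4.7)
The plan is to reduce the question to showing that the stabilizer subgroup $H_\nu := \{M \in SL_d(\R) : M_*\nu = \nu\}$ has empty interior, and then to argue via a connectedness/invariance contradiction that $H_\nu$ cannot even be open. First I would reduce to the stabilizer case: if the set $S = \{A \in SL_d(\R) : A_*\nu = \nu'\}$ has non-empty interior and $A_0 \in S$, then writing $A \in S \Leftrightarrow A_0^{-1} A \in H_\nu$ shows $S = A_0 \cdot H_\nu$. Since left translation by $A_0$ is a homeomorphism of $SL_d(\R)$, $H_\nu$ then also has non-empty interior, and it suffices to rule this out.

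Second, I would observe that $H_\nu$ is a \emph{closed} subgroup of $SL_d(\R)$. The subgroup property is immediate from $(MN)_* \nu = M_*(N_* \nu)$. Closedness follows from the weak-$*$ continuity of $M \mapsto M_* \nu$: if $M_n \to M$ in $SL_d(\R)$, then $M_n x \to M x$ for every $x \in P^{d-1}$, so for any $f \in C(P^{d-1})$, bounded/dominated convergence (on the compact space $P^{d-1}$) yields $\int f \, d (M_n)_* \nu \to \int f \, d M_* \nu$. Since any closed subgroup of a topological group with non-empty interior is automatically open (translate an interior-point neighborhood to the identity), and since $SL_d(\R)$ is connected, this would force $H_\nu = SL_d(\R)$.

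The final step, where the real content lies, is to exhibit a contradiction by showing that no Borel probability measure on $P^{d-1}$ is $SL_d(\R)$-invariant. Since $SO(d) \subset SL_d(\R)$ acts transitively on $P^{d-1}$, any such $\nu$ must coincide with the unique rotation-invariant (i.e.\ ``uniform'') probability measure $\nu_{\mathrm{unif}}$ on $P^{d-1}$. On the other hand, consider the one-parameter subgroup
\[
A_t := \mathrm{diag}\bigl(e^{(d-1)t}, e^{-t}, \ldots, e^{-t}\bigr) \in SL_d(\R).
\]
A direct computation in projective coordinates shows $A_t \cdot [x_1 : \cdots : x_d] \to [e_1]$ as $t \to \infty$ for every point outside the hyperplane $P(\{x_1 = 0\})$, which has $\nu_{\mathrm{unif}}$-measure zero. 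Dominated convergence then gives $(A_t)_* \nu_{\mathrm{unif}} \rightharpoonup \delta_{[e_1]}$ weakly, contradicting the presumed invariance $(A_t)_* \nu_{\mathrm{unif}} = \nu_{\mathrm{unif}}$ for all $t$.

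The main subtlety, such as it is, lies in the final step: one must exploit \emph{both} compactness (via $SO(d)$, to pin down $\nu$) and non-compactness (via the hyperbolic one-parameter group $A_t$, to break invariance) of the $SL_d(\R)$-action on $P^{d-1}$. Conceptually this is just the fact that $P^{d-1} = SL_d(\R)/P$ for a non-compact parabolic $P$, so $SL_d(\R)/P$ carries no invariant probability. All remaining steps are routine topological-group manipulations.
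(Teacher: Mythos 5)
Your proof is correct. The paper declares this lemma ``straightforward'' and omits its proof, so there is nothing in the paper to compare against; the argument you give is almost certainly the intended one, or very close to it. The four-step structure is sound: (1) the set in question is the left coset $A_0 H_\nu$ of the stabilizer $H_\nu = \{M : M_*\nu = \nu\}$, so it has non-empty interior iff $H_\nu$ does; (2) $H_\nu$ is a subgroup; (3) a subgroup of a connected topological group with non-empty interior is open, hence clopen, hence the whole group; (4) $\nu$ cannot be $SL_d(\R)$-invariant, which you establish cleanly by first using transitivity of the compact subgroup $SO(d)$ to pin down $\nu$ as the uniform measure, and then using a hyperbolic one-parameter subgroup to push the uniform measure weakly to a Dirac mass, a contradiction. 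This interplay between a compact and a non-compact subgroup is exactly the right mechanism, and your parenthetical remark framing $P^{d-1} = SL_d(\R)/P$ with $P$ a non-unimodular parabolic captures the structural reason behind it.

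Two minor remarks. First, your Step 2 (closedness of $H_\nu$) is harmless but superfluous: once $H_\nu$ has non-empty interior it is open, and an open subgroup is automatically closed (its complement is a union of open cosets), so connectedness finishes the job without ever invoking weak-$*$ continuity of $M \mapsto M_*\nu$. Second, the lemma fails trivially for $d=1$ (where $P^0$ is a point and $SL_1(\R) = \{1\}$), but this degenerate case is outside the paper's scope ($d = 2, 3$) and your $SO(d)$-transitivity argument correctly requires $d \geq 2$.
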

\noindent The proof is straightforward and is omitted.
\medskip

In relation to the condition \eqref{eq:invariantMeasuresFamily}, 
Lemma \ref{lem:emptyInterior} says that if for some $t_0 > 0$ we can somehow fix both
$z$ and the image $z' = \Tc^{t_0}_\omega z$, then the set of matrices mapping
the measure $\nu = \nu_z$ to $\nu' = \nu_{z'}$ is `small' in the topological sense.

We can make sense of this using regular conditional probabilities. 
Let us consider the measure $Q_{t_0}((z, \Id), \cdot)$ on $Z \times M_{d \times d}(\R)$
and disintegrate it according to the value $z_{t_0}$ attained by the $(z_t)$ process, 
conditioned on $z_0 = z$. To wit, fix $t_0 > 0$; 
for $P_{t_0}(z, \cdot)$-generic $z' \in Z$, we intend to define the \emph{regular conditional probability}
\[
Q^{t_0}_{z, z'}(K) := \P( \Ac^{t_0}_{\omega, z} \in K | \Tc^{t_0}_\omega z = z') \, , \quad K \in \Bor(SL_d(\R)) \, .
\]
This is justified rigorously below.
\begin{lemma}[\cite{chang1997conditioning}]\label{lem:conditioningRDS}
Assume $\Ac$ is an $SL_d(\R)$ cocycle and that $\Omega$ is a Borel subset of a Polish space equipped with the $\sigma$-algebra $\Fc$ of Borel subsets of $\Omega$. Fix $z \in Z$. Then, there is a mapping $Z \times \Bor(SL_d(\R)) \mapsto [0,1]$, 
$(z', K) \mapsto Q^{t_0}_{z, z'}(K)$, with the following properties.
\begin{itemize}
	\item[(1)] For each $K \in \Bor(SL_d(\R))$, the mapping $z' \mapsto Q^{t_0}_{z, z'}(K)$ is Borel measurable.
	\item[(2)] For $P_{t_0}(z, \cdot)$-almost all $z' \in Z$, the set function $Q^{t_0}_{z, z'}(\cdot) : \Bor(SL_d(\R)) \to [0,1]$
	is a Borel probability measure on $SL_d(\R)$.
	\item[(3)] For any bounded measurable function $h : Z \times SL_d(\R) \to \R$, we have that 
	\[
	\int  h(z', A') \,Q^{t_0}_{z, z'}(\dee A')\,  P_{t_0}(z, \dz') = \int h(z', A') \, Q_{t_0}((z, \Id), \dee (z', A')) \, .
	\]
\end{itemize}
\end{lemma}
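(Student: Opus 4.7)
The plan is to recognize the claim as a direct application of the standard disintegration theorem for Borel probability measures on Polish spaces, applied to the joint law of the pair $(\Tc^{t_0}_\omega z, \Ac^{t_0}_{\omega, z})$ under $\P$. No fine structure of the cocycle is needed; the content of the lemma is purely measure-theoretic.

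First I would fix $z \in Z$ and introduce the evaluation mapping
\[
\Phi : \Omega \longrightarrow Z \times SL_d(\R), \qquad \Phi(\omega) = \bigl(\Tc^{t_0}_\omega z, \Ac^{t_0}_{\omega, z}\bigr).
\]
By the joint measurability hypothesis (i) on $\Tc$ in Section \ref{subsubsec:basicSetupRDS} and the joint measurability hypothesis (i) on $\Ac$ in Section \ref{subsubsec:linearCocycleBasicRDS}, the map $\Phi$ is $(\Fc, \Bor(Z) \otimes \Bor(SL_d(\R)))$-measurable; and since $Z$ and $SL_d(\R)$ are both separable metric, $\Bor(Z) \otimes \Bor(SL_d(\R)) = \Bor(Z \times SL_d(\R))$. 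The pushforward $\Phi_*\P$ is then, by definition, the kernel $Q_{t_0}((z,\Id),\,\cdot\,)$ on $Z \times SL_d(\R)$, and its first marginal is $P_{t_0}(z, \cdot)$.

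Next I would verify the standing Polish hypotheses of the disintegration theorem. The space $Z$ is Polish by the standing assumption on continuous RDS in Section \ref{subsubsec:basicSetupRDS}; and $SL_d(\R)$, being a closed (in fact real-analytic) subgroup of $GL_d(\R) \subset \R^{d^2}$, inherits a Polish topology from the subspace topology of $\R^{d^2}$. Hence $Z \times SL_d(\R)$ is Polish, and $Q_{t_0}((z,\Id),\,\cdot\,)$ is a Borel probability measure on a Polish space.

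Finally I would invoke the disintegration theorem (see, e.g., \cite{chang1997conditioning}, or Theorem 10.2.2 of Dudley, or Theorem 5.4 of Kallenberg): given a Borel probability measure on a product of Polish spaces, there exists an essentially unique Borel-measurable family of regular conditional distributions on the second factor, indexed by points of the first. Applied to $Q_{t_0}((z,\Id),\,\cdot\,)$ with marginal $P_{t_0}(z,\cdot)$, this produces exactly the family $z' \mapsto Q^{t_0}_{z,z'}$ of Borel probabilities on $SL_d(\R)$ satisfying (1), (2), and the disintegration identity (3). There is no real obstacle here beyond checking the topological and measurability prerequisites above; the substantive content of the lemma lies entirely in that the joint law lives on a standard Borel space, which in turn rests on the Polishness of both $Z$ and $SL_d(\R)$.
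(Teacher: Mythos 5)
Your proposal is correct and matches the intent of the paper's citation: the paper does not prove the lemma but refers to \cite{chang1997conditioning}, which is precisely a treatment of conditioning via disintegration, and your argument — pushing $\P$ forward under $\omega \mapsto (\Tc^{t_0}_\omega z, \Ac^{t_0}_{\omega,z})$ to obtain the Borel probability $Q_{t_0}((z,\Id),\cdot)$ on the Polish space $Z \times SL_d(\R)$ and then disintegrating along the first factor — is exactly the content that citation supplies. One small remark: the Polishness of $\Omega$ assumed in the lemma is in fact not needed for your route, since the pushforward of any probability measure under a Borel map into a Polish space is automatically a Borel probability measure there, and disintegration only needs the \emph{target} product to be nice.
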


\begin{definition}\label{defn:propertyCRDS}
Let $\mathcal A$ be an $SL_d(\R)$-cocycle and assume $(\Omega, \Fc)$ is as in Lemma \ref{lem:conditioningRDS}.
We say that $\mathcal A$ satisfies condition (C) 
if there is a $t_0 > 0$ and a set $S \subset Z$ of 
positive $\mu$-measure with the following property: for each $z \in S$, there is a $P_{t_0}(z, \cdot)$-positive
measure set $S_z \subset  Z$ such that $Q^{t_0}_{z, z'}(\cdot) $ is defined and is absolutely continuous 
with respect to Lebesgue measure on $SL_d(\R)$.

\end{definition}
Note that if (C) holds and $z \in S, z' \in S_{z}$, 
then the support of $Q^{t_0}_{z, z'}(\cdot)$ has nonempty interior in $SL_d(\R)$.
Therefore by Theorem \ref{thm:furstenberg} and Lemma \ref{lem:emptyInterior} we conclude the following.

\begin{corollary}
	If the $SL_d(\R)$-cocycle $\mathcal A$ satisfies condition (C), then $\lambda^+ > \lambda^-$. In particular, 
	and $\lambda_1 > 0$.
\end{corollary}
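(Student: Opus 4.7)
The plan is to argue by contradiction: assume $\lambda^+ = \lambda^-$. By Theorem \ref{thm:furstenberg}, there is a measurable family $(\nu_z)_{z \in Z}$ of Borel probability measures on $P^{d-1}$ such that, for each $t \geq 0$, the invariance relation
\[
(\Ac^t_{\omega,z})_* \nu_z = \nu_{\Tc^t_\omega z}
\]
holds on a set $\Gamma_t \subset \Omega \times Z$ of full $\P \times \mu$-measure. I fix the time $t_0$ and positive-$\mu$-measure set $S \subset Z$ furnished by condition (C). A Fubini argument on $\Gamma_{t_0}$ yields a full-$\mu$-measure subset $S' \subset Z$ on which the $\omega$-slice of $\Gamma_{t_0}$ has full $\P$-measure; replacing $S$ by $S \cap S'$ (still positive $\mu$-measure), I may assume the invariance relation at time $t_0$ holds $\P$-almost surely for every fixed $z \in S$. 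Condition (C) then supplies, for such $z$, a $P_{t_0}(z,\cdot)$-positive measure set $S_z \subset Z$ on which $Q^{t_0}_{z,z'}$ is defined and absolutely continuous with respect to Lebesgue measure on $SL_d(\R)$.

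Next, I would use the disintegration of Lemma \ref{lem:conditioningRDS}(3) to translate the $\P$-almost sure invariance into a statement about the conditional laws $Q^{t_0}_{z,z'}$. Applied to the indicator of the jointly Borel event $\{(z',A) : A_* \nu_z \neq \nu_{z'}\}$ (using measurability of $z \mapsto \nu_z$ together with continuity of pushforward in the weak$^*$ topology), the $\P$-almost sure invariance implies that for $P_{t_0}(z,\cdot)$-almost every $z'$, the measure $Q^{t_0}_{z,z'}$ is supported on
\[
E_{z,z'} := \{A \in SL_d(\R) : A_* \nu_z = \nu_{z'}\}.
\]
Intersecting this full-measure set of $z'$ with $S_z$ leaves a $P_{t_0}(z,\cdot)$-positive collection of $z'$ for which $Q^{t_0}_{z,z'}$ is both absolutely continuous with respect to Lebesgue on $SL_d(\R)$ and concentrated on $E_{z,z'}$. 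The remark following Definition \ref{defn:propertyCRDS} forces $\supp(Q^{t_0}_{z,z'})$ to have nonempty interior in $SL_d(\R)$, while Lemma \ref{lem:emptyInterior} asserts that $E_{z,z'}$ has empty interior. This is the contradiction, so $\lambda^+ > \lambda^-$.

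For the final assertion $\lambda_1 > 0$, I would use that $\Ac$ is $SL_d(\R)$-valued, so $\det \Ac^t_{\omega,z} \equiv 1$; together with Lemma \ref{lem:singularValuesLimit}(i) and the identity $\log |\det \Ac^t| = \sum_{i=1}^d \log \sigma_i(\Ac^t)$, this gives the volume-preservation constraint $\sum_{i=1}^r m_i \lambda_i = 0$. Since we have just shown $\lambda^+ > \lambda^-$, the exponents are not all zero, and the constraint together with positive multiplicities forces the maximum $\lambda_1 = \lambda^+$ to be strictly positive. The principal difficulty here is a bookkeeping one, namely extracting a single $z \in S$ on which the abstract Furstenberg invariance statement holds $\P$-almost surely and then carefully disintegrating so that a $\P$-a.s. statement for matrices $\Ac^{t_0}_{\omega,z}$ becomes a $Q^{t_0}_{z,z'}$-a.s. statement for a positive collection of image points $z'$; no analytic machinery beyond the already-quoted lemmas is needed.
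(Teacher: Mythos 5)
Your proof is correct and follows essentially the same approach as the paper's: invoke Theorem \ref{thm:furstenberg} to produce an invariant fiber measure family, disintegrate the matrix-process law conditioned on the arrival point $z'$ as in Lemma \ref{lem:conditioningRDS}, derive a contradiction between the absolute continuity of the conditional law (condition (C)) and the empty interior of the matrix set from Lemma \ref{lem:emptyInterior}, and finally deduce $\lambda_1 > 0$ from $\det \equiv 1$ via Lemma \ref{lem:singularValuesLimit} exactly as the paper does. The paper's own proof is quite terse for the first half; your writeup supplies the Fubini/disintegration bookkeeping that the text before the corollary only sketches.
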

\begin{proof}
	By Theorem \ref{thm:furstenberg}, $\lambda^+ > \lambda^-$. Since $\Ac$ is an $SL_d(\R)$ cocycle,
	it follows from basic linear algebra that $1 = \det (\Ac^t_{\omega, z}) = \prod_{i = 1}^d \sigma_i(\Ac^t_{\omega, z})$
	for all $z \in Z, \omega \in \Omega, t \geq 0$. Thus from Lemma \ref{lem:singularValuesLimit} we have
	 that $\sum_{i = 1}^d \chi_i = 0$, $(\chi_i)$ as in Lemma \ref{lem:singularValuesLimit}(i). Since $\lambda_1 = \lambda^+ = \chi_1 , \lambda_r = \lambda^- = \chi_d$,
	 we conclude from $\lambda^+ >\lambda^-$ that $\chi_1 > 0$ and $\chi_d < 0$.
\end{proof}

Condition (C) holds for a large class of systems for which the process 
$(z_t, A_t)$ is governed by a finite-dimensional SDE on $Z \times SL_d(\R)$;
see Section \ref{subsubsec:ErgodicProperties}. We note that condition (C) is a 
straightforward adaptation of a condition given in \cite{carverhill1985flows}
for the Lyapunov exponent of a divergenceless SDE to have a positive Lyapunov exponent.

\section{Positive Lyapunov exponents for cocycles over infinite-dimensional RDS} \label{subsec:furstInfiniteDimensions}

For stochastic processes on infinite-dimensional spaces there is no corresponding analogue of Hormander's
Theorem. As a result it is frequently quite difficult in applications to verify the condition (C) (Definition \ref{defn:propertyCRDS}). 

Thankfully, condition (C) is far from necessary to rule out the criterion in Theorem \ref{thm:furstenberg}. In this section
we prove a sufficient condition, weaker than (C), which is better suited for infinite dimensional RDS. To the best of our knowledge, this result appears to be new. The proof is carried out in several steps:

First, in Section \ref{subsubsec:measurableToTopRDS} we will establish the \emph{continuous dependence} of an invariant fiber measure
family $(\nu_z)$ on the base point $z$ under the assumption that the Markov semigroup $P_t$ associated
to the RDS $\Tc$ has the strong Feller property (Definition \ref{defn:strongFeller} below). 
Leveraging this continuity result, in Section \ref{subsubsec:classifyRDS} we will take advantage of algebraic properties of $SL_d(\R)$ 
 to obtain a classification (Theorem \ref{thm:classification}) for the family $(\nu_z)$ under the assumption that $\lambda^+ = \lambda^-$ as
 in Furstenberg's criterion (Theorem \ref{thm:furstenberg}).
 Finally, in Section \ref{subsubsec:approxControlRuleOut} we 
will state a weakening (C') (Definition \ref{defn:condCprimeRDS}) ruling out each alternative in the classification we obtain.

\medskip

For the entirety of Section \ref{subsec:furstInfiniteDimensions}, we assume the setting given at the beginning of Section \ref{subsec:FurstenbergFD}.

\subsection{From measurable to topological}\label{subsubsec:measurableToTopRDS}

The goal of Section \ref{subsubsec:measurableToTopRDS} is to turn the measurable information
contained in Theorem \ref{thm:furstenberg}, namely, that the
invariant measure family $(\nu_z)$ satisfies \eqref{eq:invariantMeasuresFamily} 
for $(\P \times \mu)$-almost all $(\omega, z)$, into topological information concerning ``all'' $\omega$, in a suitable sense,
and all $z$ in a closed set.
This will be accomplished in two phases: First, the family $(\nu_z)$ will be replaced
with a \emph{$\mu$-almost sure version} $(\bar \nu_z)_{z \in \supp \mu}$ which is weak$^*$ continuous
as $z$ varies in $\supp \mu$ (Proposition \ref{prop:contVaryingMeasure}). 
Second, the $\P \times \mu$-almost sure relation \eqref{eq:invariantMeasuresFamily}
for the family $(\nu_z)$ will be turned into a corresponding relation among the family $(\bar \nu_z)$
for all $z \in \supp \mu$ and ``$\P$-almost-all'' replaced by ``all'', in a sense to be made precise (Lemma \ref{lem:almostSurelyToTopological}).

\smallskip

The material in Section \ref{subsubsec:measurableToTopRDS} is analogous to Proposition 6.3 and Lemma 6.5 of \cite{baxendale1989lyapunov}.
For a summary of the differences between the latter and our results in this setting, see Remark \ref{rmk:whyNovelRDS} below.

\bigskip

Going forward, we will require an additional regularity assumption on the Markov semigroup $(P_t)$
associated to the RDS $\Tc$, which we now spell out here.
\begin{definition}\label{defn:strongFeller}
	We say that the Markov semigroup $(P_t)$ has the \emph{strong Feller property}
	if for all bounded, measurable $h : Z \to \R$, and for all $t > 0$, the 
	function $P_t h : Z \to \R$ is bounded and continuous. 
\end{definition}

At times it will be helpful to use the following well-known result regarding strong Feller semigroups.

\begin{lemma}\label{lem:ultraFeller}
Assume $Z$ is a Polish space. 
\begin{itemize}
\item[(a)] If the Markov semigroup $(P_t)$ on $Z$ has the strong Feller property, then 
it is automatically \emph{ultra Feller}, i.e., for all $t > 0$ the mapping $z \mapsto P_t(z, \cdot)$ is 
continuous in the total variation distance\footnote{Given two finite signed measures $\eta_1, \eta_2$ on the same measurable space $(X, \mathfrak F)$, the total variation distance is defined by $\| \eta_1 - \eta_2\|_{tv} = \sup_{K \in \mathfrak F} | \eta_1(K) - \eta_2(K)|$.} $\| \cdot \|_{tv}$ on the space of finite signed measures
on $Z$.
\item[(b)] Let $\mu$ be a stationary measure for $(P_t)$ and let $K \subset Z$ be a Borel set of full $\mu$ measure.
Then, $P_t(z, K) = 1$ for all $t > 0$ and $z \in \supp \mu$.
\end{itemize}
\end{lemma}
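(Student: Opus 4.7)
The plan for part (a) is to invoke the classical ``strong Feller implies ultra Feller'' result via the semigroup decomposition $P_t = P_{t/2} \circ P_{t/2}$. By Proposition \ref{prop:semigroupRegularity1}(a), $P_{t/2}$ is Feller; by hypothesis, it is also strong Feller. The abstract fact I would cite is: \emph{if $K$ and $L$ are Markov kernels on a Polish space with $K$ Feller and $L$ strong Feller, then the composition $KL$ is ultra Feller, i.e., $z \mapsto KL(z,\cdot)$ is continuous in total variation.} Applied with $K = L = P_{t/2}$, this yields part (a) immediately.

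The main obstacle is exactly the proof of this abstract fact. The naive argument fails: from the Feller property of $K$ one gets $K(z_n,\cdot) \to K(z_0,\cdot)$ weakly, and from strong Feller of $L$ the map $y \mapsto L(y,A)$ is bounded and continuous for each Borel $A$. Writing
\[
KL(z_n, A) - KL(z_0, A) = \int L(y, A)\, [K(z_n, dy) - K(z_0, dy)],
\]
weak convergence handles each fixed $A$, but uniformity over $A$ would require equicontinuity of the family $\{ L(\cdot, A) : A \in \Bor(Z)\}$, which is not automatic from strong Feller alone. The standard workaround combines Prohorov tightness of $(K(z_n,\cdot))$ on the Polish space $Z$ with a compactness property of strong Feller kernels (essentially a Dunford--Pettis-type result for $L$ viewed as an operator from bounded Borel functions to $C_b(Z)$) to reduce to uniform control on a compact subset. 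I would invoke this as a known result rather than reproducing its proof here.

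Part (b) then follows from part (a) by a short support argument. Stationarity of $\mu$ gives
\[
\int_Z P_t(z, Z \setminus K)\, d\mu(z) = \mu(Z \setminus K) = 0,
\]
so the set $K_* := \{ z \in Z : P_t(z, K) = 1\}$ has full $\mu$-measure. Now fix $z_0 \in \supp \mu$: by definition of support, every open neighborhood of $z_0$ has positive $\mu$-measure and therefore meets $K_*$. Choosing any sequence $z_n \in K_*$ with $z_n \to z_0$, part (a) gives $\|P_t(z_n,\cdot) - P_t(z_0,\cdot)\|_{tv} \to 0$, hence in particular $P_t(z_n, K) \to P_t(z_0, K)$. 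Since $P_t(z_n, K) = 1$ for every $n$, we conclude $P_t(z_0, K) = 1$, which is the claim.
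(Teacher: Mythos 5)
Your proposal is correct and follows essentially the same route as the paper: for part (a) the paper simply cites the classical "strong Feller composed with Feller is ultra Feller" result (via the decomposition $P_t = P_{t/2}\circ P_{t/2}$, which is precisely what Seidler's note does), and for part (b) the paper observes that $\{z : P_t(z,K)=1\}$ is dense in $\supp\mu$ and concludes by total-variation continuity, which is exactly your argument written out in slightly more detail.
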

\begin{proof}
	Item (a) is proved in \cite{seidler2001note}. For (b), one checks that for all $t \geq 0$, the set
	$\{ z \in Z : P_t(z, K) = 1\}$ is dense in $\supp \mu$. Item (b) now follows from continuity in total variation
	as in (a).
\end{proof}

With these preparations out of the way, 
we can now state precisely the first step in our program,
a continuity result for the invariant measure family $(\nu_z)_{z \in Z}$.

\begin{proposition}\label{prop:contVaryingMeasure}
	Assume $(P_t)$ is strong Feller, and let $(\nu_z)$ be
	an invariant fiber measure family on $Z$ as in Section \ref{subsubsec:introduceFurstenberg}.
	Then, there exists an invariant fiber measure family $(\bar \nu_z)$, defined for $z \in \supp \mu \subset Z$,
	with the following properties.
	\begin{itemize}
		\item[(a)] The family $(\bar \nu_z)$ is a \emph{$\mu$-almost sure version} of the original family $(\nu_z)$, i.e.,
		for $\mu$-almost every $z \in \supp \mu$, we have $\nu_z = \bar \nu_z$.
		\item[(b)] The family $(\bar \nu_z)$ is continuously varying in the weak$^*$ topology 
			on $P^{d-1}$. 
	\end{itemize}
\end{proposition}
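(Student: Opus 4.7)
The plan is to construct $(\bar\nu_z)_{z \in \supp\mu}$ via an averaged formula derived from the invariance of $(\nu_z)$, then verify properties (a) and (b) in turn. First I would fix some $t > 0$ and define, for $z \in \supp\mu$ and $f \in C(P^{d-1})$,
\[
\bar\nu_z(f) := \E \left[ \int_{P^{d-1}} f \circ (\Ac^t_{\omega, z})^{-1} \, d\nu_{\Tc^t_\omega z} \right].
\]
The original family $\nu_\cdot$ is defined on some full-$\mu$-measure set $K \subset Z$, and Lemma \ref{lem:ultraFeller}(b) gives $P_t(z, K) = 1$ for every $z \in \supp\mu$, so the integrand is well-defined $\P$-a.s. for each such $z$. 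Part (a) is then immediate from the invariance relation \eqref{eq:invariantMeasuresFamily}: for $\mu$-a.e. $z$ we have $(\Ac^t_{\omega, z})^{-1}_* \nu_{\Tc^t_\omega z} = \nu_z$ with probability one, hence $\bar\nu_z = \nu_z$.

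For property (b), I would establish weak$^*$ continuity of $z \mapsto \bar\nu_z$ on $\supp\mu$ by fixing $f \in C(P^{d-1})$ and a sequence $z_n \to z$ in $\supp\mu$ and splitting
\[
\bar\nu_{z_n}(f) - \bar\nu_z(f) = I_n + II_n,
\]
where $I_n := \E[\nu_{\Tc^t_\omega z_n}(g_\omega^{(n)} - g_\omega)]$ captures the change in the inverse cocycle through $g_\omega^{(n)} := f \circ (\Ac^t_{\omega, z_n})^{-1}$ and $g_\omega := f \circ (\Ac^t_{\omega, z})^{-1}$, and $II_n := \E[\nu_{\Tc^t_\omega z_n}(g_\omega) - \nu_{\Tc^t_\omega z}(g_\omega)]$ captures the change of base point. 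The term $I_n$ is routine: the continuity of the cocycle in $z$ (axiom (i) of Section \ref{subsubsec:linearCocycleBasicRDS}) together with uniform continuity of $f$ on the compact $P^{d-1}$ yields $\|g_\omega^{(n)} - g_\omega\|_\infty \to 0$ pointwise in $\omega$, and dominated convergence (bounded by $2\|f\|_\infty$) gives $I_n \to 0$.

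The main obstacle is controlling $II_n$: because $\nu_\cdot$ is only Borel measurable in its index and $\Tc^t_\omega z_n \to \Tc^t_\omega z$ converges only pointwise in $\omega$, the integrand inside $II_n$ does not converge $\omega$-wise. The plan is to address this by disintegration. Let $\lambda_z$ denote the joint law of $(\Tc^t_\omega z, \Ac^t_{\omega, z})$ on $Z \times M_{d \times d}(\R)$ and $Q^t_{z, y}(dA) := \P(\Ac^t_{\omega, z} \in dA \mid \Tc^t_\omega z = y)$ the regular conditional kernel of Lemma \ref{lem:conditioningRDS}; then
\[
\bar\nu_z(f) = \int_Z \widetilde\Phi_f(z, y) \, P_t(z, dy), \qquad \widetilde\Phi_f(z, y) := \int_{M_{d \times d}(\R)} \nu_y(f \circ A^{-1}) \, Q^t_{z, y}(dA),
\]
and the strategy is to combine the ultra Feller TV-continuity $z \mapsto P_t(z, \cdot)$ from Lemma \ref{lem:ultraFeller}(a), which controls the outer kernel against the uniformly bounded integrand $\widetilde\Phi_f$, with the continuity of the cocycle in $z$, which controls the inner $z$-dependence entering through $Q^t_{z, y}$. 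This latter step is the delicate point: one approximates the measurable integrand $\widetilde\Psi_f(y, A) := \nu_y(f \circ A^{-1})$ by continuous test functions on compacta of matrix values and then exploits the $C_{u, b}$ regularity of $\Ac^t_{\omega, z}$ together with uniform integrability coming from (H2).

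Finally, once continuity of $z \mapsto \bar\nu_z$ is in hand, the invariance identity $(\Ac^s_{\omega, z})_* \bar\nu_z = \bar\nu_{\Tc^s_\omega z}$ a.s. for every $s \geq 0$ and $z \in \supp\mu$ follows by extending the $\mu$-a.s. version of this identity (for the original family $\nu_\cdot$) to all of $\supp\mu$ using weak$^*$ continuity of both sides together with density of the $\mu$-typical set inside $\supp\mu$. The main technical content is the continuity argument for $II_n$: the interplay between TV-continuity of $P_t(z, \cdot)$ in $z$ and the coupled $z$-dependence of $\Tc^t_\omega z$ and $\Ac^t_{\omega, z}$ through the common noise $\omega$ must be handled carefully, and this is precisely where the strong Feller hypothesis on $(P_t)$ enters essentially.
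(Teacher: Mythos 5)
Your construction of $\bar\nu_z$ via the averaged pushforward and the verification of part (a) are fine, and the estimate on $I_n$ by dominated convergence works. The gap is in the treatment of $II_n$ — equivalently, in the disintegration formula — and it is not a matter of filling in details; the argument as outlined does not close. Write $\lambda_z$ for the joint law of $(\Tc^t_\omega z, \Ac^t_{\omega,z})$ on $Z\times M_{d\times d}(\R)$ and $\widetilde\Psi_f(y,A)=\nu_y(f\circ A^{-1})$, so that $\bar\nu_z(f)=\int\widetilde\Psi_f\,d\lambda_z$. The observable $\widetilde\Psi_f$ is bounded, continuous in $A$, but only Borel measurable in $y$. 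Against such a mixed observable, TV-continuity of the marginal $P_t(z,\cdot)$ (ultra Feller, Lemma \ref{lem:ultraFeller}(a)) handles only observables that depend on $y$ alone, while weak continuity of $\lambda_z$ in $z$ (coming from RDS continuity) handles only jointly continuous observables; neither, nor their combination with "continuity of the cocycle in $z$," yields continuity of $z\mapsto\int(\widetilde\Psi_f(z_n,y)-\widetilde\Psi_f(z,y))\,P_t(z,dy)$, because the $z$-dependence there sits inside the conditional kernel $Q^t_{z,y}$, and regular conditional probabilities are not continuous in the conditioning parameter even when the joint law is weakly continuous and the marginal is TV-continuous. What would suffice is TV-continuity of the full joint law $\lambda_z$, i.e.\ a strong Feller property for the matrix process $(z_t,A_t)$ — but this is not assumed, and the paper avoids proving it (Proposition \ref{prop:Feller} in Section \ref{sec:Feller} covers the projective process on the compact $P^{d-1}$, not the matrix process on the non-compact $SL_d(\R)$). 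The Lusin-type approximation you propose requires transferring a "$\lambda_z$-small" set to a "$\lambda_{z_n}$-small" set, which is exactly the uniform absolute continuity that is missing.

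The paper's proof sidesteps the obstruction by never conditioning on the cocycle. For $z$ in a good full-measure set $\tilde Z$ it uses the identity $\int \widehat P_s g\,d\nu_z = P_s G(z)$, $G(z):=\int g\,d\nu_z$. Taking $s$ small, the $C^0$-semigroup property of $(\widehat P_s)$ on $C_{u,b}$ (Proposition \ref{prop:semigroupRegularity1}(b)) replaces $g$ by $\widehat P_s g$ up to an error $\epsilon$ uniformly on a bounded set, giving $|G(z^m)-G(z^{m'})|\leq 2\epsilon + |P_s G(z^m)-P_s G(z^{m'})|$; strong Feller is then applied to the single function $P_s G$ on $Z$. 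This shows $G|_{\tilde Z}$ sends Cauchy sequences to Cauchy sequences, and $\bar\nu_z$ for $z\in\supp\mu\setminus\tilde Z$ is defined as a weak$^*$ limit along approximating sequences in $\tilde Z$, rather than by an explicit pushforward formula. The structural point is that the hypothesis is only ever applied to bounded measurable observables of the $Z$-variable, never to mixed observables on $Z\times M_{d\times d}(\R)$, which is exactly where your argument needs it.
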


That is, by Proposition \ref{prop:contVaryingMeasure} we can replace the possibly discontinuous
invariant measure family $(\nu_z)$ with a continuously-varying invariant measure family $(\bar \nu_z)$
defined at each $z \in \supp \mu$, at the expense of modifying $(\nu_z)$ on a set of $\mu$-measure zero.
So as not to interrupt the flow of ideas, Proposition \ref{prop:contVaryingMeasure} is proved at the end of Section \ref{subsubsec:measurableToTopRDS}.

\bigskip

Let us now describe the second step in our program, namely, turning
the $\P \times \mu$-almost sure relation \eqref{eq:invariantMeasuresFamily} 
into an analogous relation holding ``surely''-- roughly speaking, holding for
all $(\omega, z) \in \Omega \times \supp \mu$ and for all $t \geq 0$, in a sense we make precise below.

To begin, some notation: let us write 
\[
\Cc =  C_{u, b}(Z, Z) \times C_{u, b}(Z, M_{d \times d}(\R)) 
\]
equipped with the product topology. Elements of $\Cc$ are written $(T, A)$ where
$T : Z \to Z, z \mapsto T z \in Z$ and $A : Z \to M_{d \times d}(\R), z \mapsto A_z \in M_{d \times d}(\R)$.
Given $t \geq 0$, let us write $\Sc_t$ for the topological support of the $\Cc$-valued
random variable $(\Tc^t_\omega, \Ac^t_{\omega})$ where $\omega$ is distributed as $\P$.
We set $\Sc = \overline{\cup_{t \geq 0} \Sc_t}$ for the closure of the union of the $\Sc_t$ in $\Cc$.

\begin{lemma}\label{lem:almostSurelyToTopological}
	Assume the setting, notation and conclusions of Proposition \ref{prop:contVaryingMeasure}. 
	Then, for any $z \in \supp \mu$ and $(T, A) \in \Sc$, we have that $T z \in \supp \mu$, and
	\begin{align}\label{eq:surelyInvariantMeasureFamily}
	A_z \bar \nu_z = \bar \nu_{T z} \, .
	\end{align}
\end{lemma}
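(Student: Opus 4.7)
My plan is to upgrade the $(\P\times\mu)$-a.s.\ invariance relation \eqref{eq:invariantMeasuresFamily} to the ``sure'' statement \eqref{eq:surelyInvariantMeasureFamily} via three successive continuity/density steps: (i) from $\mu$-a.e.\ $z$ to all $z\in\supp\mu$, (ii) from $\P$-a.s.\ $\omega$ to all $(T,A)\in\Sc_t$ at each fixed $t$, and (iii) from fixed $t$ to all $(T,A)\in\Sc$. The essential tools are the $C_{u,b}$ continuity of $(T,A)$, the weak-$*$ continuity of $z\mapsto\bar\nu_z$ from Proposition \ref{prop:contVaryingMeasure}(b), the closedness of $\supp\mu$, and the invariance $P_t(z,\supp\mu)=1$ for every $z\in\supp\mu$ from Lemma \ref{lem:ultraFeller}(b).

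For Step (i), I fix $t\geq 0$ and use Theorem \ref{thm:furstenberg} together with Fubini and Lemma \ref{lem:ultraFeller}(b) to produce a $\P$-full measure set $\Omega_t\subset\Omega$ such that for each $\omega\in\Omega_t$ the set
\[
Z_{\omega,t} := \left\{z\in\supp\mu : \Tc^t_\omega z\in\supp\mu,\ (\Ac^t_{\omega,z})_*\bar\nu_z = \bar\nu_{\Tc^t_\omega z}\right\}
\]
has full $\mu$-measure; since $\mu$ charges every open subset of $\supp\mu$, the set $Z_{\omega,t}$ is dense in $\supp\mu$. Given any $z\in\supp\mu$, I pick $z_n\in Z_{\omega,t}$ with $z_n\to z$. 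Continuity of $\Tc^t_\omega$ yields $\Tc^t_\omega z_n\to\Tc^t_\omega z$, and since each $\Tc^t_\omega z_n\in\supp\mu$, closedness forces $\Tc^t_\omega z\in\supp\mu$. Continuity of $z\mapsto\Ac^t_{\omega,z}$ gives $\Ac^t_{\omega,z_n}\to\Ac^t_{\omega,z}$, and weak-$*$ continuity of $\bar\nu$ on $\supp\mu$ gives $\bar\nu_{z_n}\to\bar\nu_z$ and $\bar\nu_{\Tc^t_\omega z_n}\to\bar\nu_{\Tc^t_\omega z}$. Passing to the limit in $(\Ac^t_{\omega,z_n})_*\bar\nu_{z_n}=\bar\nu_{\Tc^t_\omega z_n}$ gives the identity for all $z\in\supp\mu$ and all $\omega\in\Omega_t$.

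For Step (ii), since $\Omega_t$ has full $\P$-measure, its image under the map $\omega\mapsto(\Tc^t_\omega,\Ac^t_{\omega,\cdot})$ has full measure under the pushforward of $\P$, whose support is $\Sc_t$ by definition; hence this image is dense in $\Sc_t$. Given $(T,A)\in\Sc_t$, I approximate by $(\Tc^t_{\omega_n},\Ac^t_{\omega_n,\cdot})\to (T,A)$ in $\Cc$ with $\omega_n\in\Omega_t$. For any fixed $z\in\supp\mu$, uniform convergence on bounded sets yields $\Tc^t_{\omega_n}z\to Tz$ and $\Ac^t_{\omega_n,z}\to A_z$; since $\Tc^t_{\omega_n}z\in\supp\mu$ and $\supp\mu$ is closed, $Tz\in\supp\mu$; and the same weak-$*$ continuity argument as in Step (i) propagates the Step (i) identity to $A_z\bar\nu_z=\bar\nu_{Tz}$. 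Step (iii) is then immediate: any $(T,A)\in\Sc$ is a $\Cc$-limit of a sequence drawn from $\bigcup_{t\geq 0}\Sc_t$, and the identity together with the inclusion $Tz\in\supp\mu$ pass to the limit by the same continuity tools.

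The main bookkeeping obstacle to watch for is that the $\P$-null exceptional set supplied by Theorem \ref{thm:furstenberg} depends on $t$ --- this is precisely why Steps (i) and (ii) are performed at each fixed $t$, so that the final step (iii) only invokes topological approximation in $\Cc$ and never a joint almost-sure statement over $t$. Beyond this, nothing subtle occurs; the argument reduces to systematically trading probabilistic full-measure statements for topological density and closing up with continuity.
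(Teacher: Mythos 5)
Your proof is correct and rests on the same underlying idea as the paper --- promote the $\P\times\mu$-a.e.\ relation to a topologically dense one, then close up using weak$^*$ continuity of $z\mapsto\bar\nu_z$, the $C_{u,b}$ topology on $\Cc$, and closedness of $\supp\mu$ --- but the quantifier order is genuinely reorganized. The paper fixes $z$ in a full-measure set $\tilde Z$ (constructed to work for all \emph{rational} $t$ simultaneously), defines the closed set $G_z\subset\Cc$ of pairs $(T,A)$ satisfying $(A_z)_*\bar\nu_z=\bar\nu_{Tz}$, shows $G_z\supset\Sc_t$ for rational $t$ by density, extends to irrational $t$ by approximating elements of $\Sc_t$ by elements of $\Sc_{t_n}$, concludes $G_z\supset\Sc$, and finally extends to $z\in\supp\mu\setminus\tilde Z$ by continuity of $\bar\nu$. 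You instead fix $t$ first, let the exceptional null set $\Omega_t\subset\Omega$ depend freely on $t$, establish the ``sure'' identity for all $(z,(T,A))\in\supp\mu\times\Sc_t$ at that one $t$, and only then take a purely topological closure over all $t$; this sidesteps the rational/irrational-$t$ dichotomy entirely, which is a modest simplification. Two minor remarks. First, the invocation of Lemma \ref{lem:ultraFeller}(b) in Step (i) is unnecessary: stationarity alone yields $P_t(z,\supp\mu)=1$ for $\mu$-a.e.\ $z$, which is all you use to build $\Omega_t$ (the paper explicitly notes that strong Feller is not needed for the $Tz\in\supp\mu$ portion of the argument --- weak$^*$ continuity of $z\mapsto P_t(z,\cdot)$ and the Portmanteau theorem suffice). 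Second, the phrase ``its image \ldots has full measure under the pushforward'' should be read as ``the \emph{closure} of the image has full pushforward measure,'' since the forward image of a measurable set need not be measurable; the density conclusion you actually use, namely $\Sc_t\subset\overline{\{(\Tc^t_\omega,\Ac^t_{\omega,\cdot}):\omega\in\Omega_t\}}$, is of course unaffected.
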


The relation \eqref{eq:surelyInvariantMeasureFamily} for all $(T, A) \in \Sc$
is analogous to the `measure-theoretical' relation \eqref{eq:invariantMeasuresFamily}; in contrast to the latter,
\eqref{eq:surelyInvariantMeasureFamily} holds identically for all $(T, A)$ in the \emph{closed} subset $\Sc\subset \Cc$.
For this reason we regard \eqref{eq:surelyInvariantMeasureFamily}
as a ``topological'' statement, as opposed to a measure-theoretic one.

\bigskip

We now turn to the proofs of Proposition \ref{prop:contVaryingMeasure} and Lemma \ref{lem:almostSurelyToTopological}.

\subsubsection*{Proof of Proposition \ref{prop:contVaryingMeasure}}
Fix a continuous function $g : P^{d-1} \to \R$. Define $G : Z \to \R$ by $G(z) = \int g(v) \dee \nu_z(v)$. 
We begin by making the following Claim.

\begin{claim}\label{cla:fullMeasCauchy}
	There is a full $\mu$-measure subset $\tilde Z \subset \supp \mu$ with the following property. Let $G : Z \to \R$ be as above. 
	Then, $G|_{\tilde Z}$ has the property that for any Cauchy sequence $\{ z^m\}_{m \geq 1} \subset \tilde Z$,
	we have that the sequence $\{ G(z^m)\}_{m \geq 1}$ is Cauchy.
\end{claim}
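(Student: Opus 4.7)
My plan is to exploit the strong Feller property of $(P_t)$, combined with the invariance \eqref{eq:invariantMeasuresFamily}, to approximate $G$ uniformly on a full $\mu$-measure subset of $\supp\mu$ by continuous functions; a triangle-inequality argument will then yield the Cauchy-preservation claim.

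\textbf{Step 1 (invariance identity).} Fix $t > 0$. From the $\P \times \mu$-almost-sure relation $(\mathcal A^t_{\omega,z})_* \nu_z = \nu_{\mathcal T^t_\omega z}$, Fubini will produce a full $\mu$-measure set $Z_t \subset \supp\mu$ such that, for every $z \in Z_t$, one has $G(\mathcal T^t_\omega z) = \int_{P^{d-1}} g(\mathcal A^t_{\omega,z} v) \,d\nu_z(v)$ for $\P$-a.e.\ $\omega$. Integrating over $\omega$ and writing $\tilde g(z, v) := g(v)$, I obtain
\[
(P_t G)(z) = \int_{P^{d-1}} (\widehat P_t \tilde g)(z, v)\, d\nu_z(v) \qquad \text{for every } z \in Z_t,
\]
where $(\widehat P_t)$ is the projective semigroup from Section~\ref{subsubsec:projectiveProcessRDS}.

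\textbf{Step 2 (uniform approximation as $t \to 0^+$).} I will establish
\[
\lim_{t \to 0^+} \sup_{z \in U,\, v \in P^{d-1}} \bigl|(\widehat P_t \tilde g)(z, v) - g(v)\bigr| = 0 \qquad \text{for every bounded } U \subset Z.
\]
Since $\mathcal A \in C_{u,b}([0,\infty) \times Z, M_{d \times d}(\R))$ for each $\omega$ with $\mathcal A^0 \equiv \Id$, for each $\omega$ one has $\sup_{z \in U}|\mathcal A^t_{\omega, z} - \Id| \to 0$ as $t \to 0^+$; uniform continuity of $g$ on the compact space $P^{d-1}$ then yields $\sup_{z \in U,\, v} |g(\mathcal A^t_{\omega,z} v) - g(v)| \to 0$ pointwise in $\omega$. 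Dominated convergence (with the uniform envelope $2\|g\|_\infty$) followed by the elementary estimate $\sup_{z, v}\E(\cdot) \le \E \sup_{z, v}(\cdot)$ gives the claim. Combined with Step~1, this yields $|P_t G - G| \to 0$ uniformly on $Z_t \cap U$ as $t \to 0^+$.

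\textbf{Step 3 (Cauchy preservation).} Pick $t_n \to 0^+$ and set $\tilde Z := \bigcap_n Z_{t_n}$, a full $\mu$-measure subset of $\supp\mu$. Each $P_{t_n} G$ is continuous by the strong Feller property (Definition~\ref{defn:strongFeller}), since $G$ is bounded and measurable. For a Cauchy sequence $(z^m) \subset \tilde Z$ (necessarily contained in some bounded $U$) and any $\epsilon > 0$: Step~2 supplies an index $n$ with $|P_{t_n}G(z^m) - G(z^m)| < \epsilon/3$ for all $m$, and continuity of $P_{t_n} G$ then supplies $|P_{t_n}G(z^m) - P_{t_n}G(z^{m'})| < \epsilon/3$ for $m, m'$ large. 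The triangle inequality will give $|G(z^m) - G(z^{m'})| < \epsilon$. The principal technical point is the uniformity in $z$ in Step~2, which rests on the $C_{u,b}$-regularity built into the cocycle definition; the strong Feller hypothesis itself enters only in Step~3 to produce the continuous approximants $P_{t_n} G$.
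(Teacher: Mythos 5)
Your argument is correct and follows essentially the same route as the paper's: restrict to a full-measure set $\tilde Z$ defined via a countable family of times, use the invariance of the fiber measures together with Fubini to write $P_t G(z) = \int (\widehat P_t \tilde g)(z,v)\,d\nu_z(v)$ on $\tilde Z$, exploit uniform convergence $\widehat P_t \tilde g \to \tilde g$ on bounded sets as $t \to 0^+$, and finally close via the strong Feller property applied to $P_{t_n} G$. The only cosmetic differences are that the paper lets $\tilde Z$ absorb all rational times and cites Proposition~\ref{prop:semigroupRegularity1}(b) directly for the $C^0$-semigroup uniform convergence, whereas you work with a fixed sequence $t_n \to 0^+$ and rederive that uniform convergence from the $C_{u,b}$-regularity of the cocycle and $\mathcal A^0 \equiv \mathrm{Id}$; both variants are equivalent and the triangle-inequality conclusion in your Step~3 matches the paper's final estimate.
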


Assuming the Claim, let us define the family $(\bar \nu_z)$. To start, for $z \in \tilde Z$
we set $\bar \nu_z := \nu_z$. Note that this ensures $(\bar \nu_z)$ is a version of $(\nu_z)$ 
as in item (a) above. 

Next, for $z \in \supp \mu \setminus \tilde Z$, we define $\bar \nu_z$ as follows.
Since $\tilde Z$ is dense in $\supp \mu$, we can find a sequence $\{ z^m\}_{m \geq 1} \subset \tilde Z$
converging to $z$. We now define $\bar \nu_z$ to be any weak$^*$ limit of the $\bar \nu_{z^m}$
(at least one exists by Prokhorov's Theorem since $P^{d-1}$ is compact \cite{billingsley2013convergence}).

Indeed, the weak$^*$ limit $\lim_{m \to \infty} \bar \nu_{z^m}$ actually exists: to see this,
fix any $g : P^{d-1} \to \R$ continuous and observe that the sequence 
$\{ G(z^m) = \int g(v) \dee \bar \nu_{z^m}(v)\}_{m \geq 1}$ is Cauchy by the Claim; this implies weak$^*$ convergence. Moreover
this same argument implies that the definition of $\bar \nu_z, z \in \supp \mu \setminus \tilde Z$
is independent of the approximating sequence $\{ z^m\}_{m \geq 1} \subset \tilde Z$.

This completes the definition of the family $(\bar \nu_z)$. By construction, $(\bar \nu_z)$ is a $\mu$-almost-sure
version of $(\nu_z)$, and so item (a) in Proposition \ref{prop:contVaryingMeasure} is satisfied.
 To show continuity as in item (b),
fix a continuous $g : P^{d-1} \to \R$; we will check that $\bar G(z) := \int g(u) \dee \bar \nu_z(u)$ is a 
continuous real-valued function.
For this, fix $z \in \supp \mu$ and let $\{ z^m\}_{m \geq 1} \subset \supp \mu$ be a sequence converging to $z$.
For each $m$, fix $\check z^m \in \tilde Z$ such that $d(\check z^m, z^m) < 1/m$ and $|G(\check z^m) - \bar G(z)| < 1/m$.
Then, 
\[
|\bar G(z^m) - \bar G(z)| \leq |\bar G(z^m) - G(\check z^m)| + |G(\check z^m) - \bar G(z)| \leq \frac{1}{m} + |G(\check z^m) - \bar G(z)| \, .
\]
The Claim and our definition of $\bar \nu_z$ imply that the second RHS term goes to zero.
This completes the proof
of continuity as in item (b).
It remains to prove the Claim.

\begin{proof}[Proof of Claim \ref{cla:fullMeasCauchy}]
	It is straightforward to construct a full $\mu$-measure subset $\tilde Z \subset \supp \mu$
	with the property that for all $z \in \tilde Z$ and rational $t$, we have with probability 1
	that $\Tc^t_\omega z \in \tilde Z$
	and that \eqref{eq:invariantMeasuresFamily} holds.
	For such $z \in \tilde Z$, on integrating the left and right-hand sides of \eqref{eq:invariantMeasuresFamily} 
	with respect to $d\P(\omega)$, we obtain that
	\[
	\int (\widehat P_{t} g) (z, v) \, \dee \bar \nu_z(v) = P_{t} G(z) \, ,
	\]
	where $\widehat P_t$ denotes the Markov semigroup associated to the projective process as 
	defined in Section \ref{subsubsec:projectiveProcessRDS}.
	
	Now, fix a Cauchy sequence $\{ z^m\}_{m \geq 1} \subset \tilde Z$ converging to some $z \in Z$.
	Fix $\epsilon > 0$ and fix a neighborhood $U$ of $z$; without loss, $\{ z^m\}_{m \geq 1} \subset U$. 
	Since $\widehat P_t g \to g$ uniformly on bounded
	subsets of $Z \times P^{d-1}$ (Proposition \ref{prop:semigroupRegularity1}(b)), 
	we have that $\widehat P_t g \to g$ uniformly
	on $U \times P^{d-1}$. Fix $t = t_\epsilon$ for which $|\widehat P_s g - g| < \epsilon$ on all of $U \times P^{d-1}$
	for all $s \in [0,t_\epsilon]$.
	
	Fix a rational $t_* \in [0,t_\epsilon]$. Given $m, m' \geq 1$ we estimate
	\begin{align}
	|G(z^m) - G(z^{m'})| &= \bigg| \int g(v) \dee \nu_{z^m}(v) - \int g(v) \dee \nu_{z^{m'}}(v) \bigg| \\
	& \leq \int |g(u) - P_{t_*} g(v)| \dee \nu_{z^m}(v) + \bigg| \int P_{t_*} g(v) \dee \nu_{z^m}(v) - \int P_{t_*} g(v) \dee \nu_{z^{m'}}(v) \bigg| \\
	& \hspace{.5in}+ \int |g(v) - P_{t_*} g(v)| \dee \nu_{z^{m'}}(v) \\
	& \leq 2 \epsilon + | P_{t_*} G({z^m}) - P_{t_*} G(z^{m'})| \, .
	\end{align}
	
	Now, $P_{t_*} G$ is a continuous function by the strong Feller property, and so $\{ P_{t_*} G(z^m)\}_{m \geq 1}$ is 
	a Cauchy sequence. The Cauchy property for $\{ G(z^m)\}_{m \geq 1}$ now follows.
\end{proof}

\subsubsection*{Proof of Lemma \ref{lem:almostSurelyToTopological}}
We begin by verifying that $T z \in \supp \mu$ for any $z \in \supp \mu$ and $(T, A) \in \Sc$.
To start, observe that since $\supp \mu$ has full $\mu$-measure, we have from
stationarity that $P_t(z, \supp \mu) = 1$ for all $t > 0$ and for $\mu$-almost all $z \in Z$.
As one can easily check, for continuous RDS $\Tc$ as in Section \ref{subsubsec:basicSetupRDS} satisfying (H1), the mapping
 $z \mapsto P_t(z, \cdot)$ is weak$^*$ continuous (irrespective of the strong Feller property).
 Thus, by the Portmanteau Theorem and the density of $\mu$-almost sure sets in $\supp \mu$,
 we conclude that $P_t(z, \supp \mu) = 1$ for all $z \in \supp \mu$. 
 
So, for any fixed $z \in Z$, we have for all $t \geq 0$ that $\Tc_\omega^t z \in \supp \mu$
with probability 1. In particular, any $(T, A) \in \Sc_t$ is the limit (in the topology on $\Cc$)
of elements $(T^m, A^m) \in \Sc_t$ for which $T^m z \in \supp \mu$ for all $m$. 
Therefore $T z \in \supp \mu$ holds by the closedness of $\supp \mu$ for any $(T, A) \in \Sc_t$.
A similar argument implies $T z \in \supp \mu$ for any $(T, A) \in \Sc$.

\medskip

Let us now move on to verifying the relation \eqref{eq:surelyInvariantMeasureFamily}.
For $z \in \supp \mu$, we define
\[
G_z = \{ (T, A) \in \Cc : (A_z)_* \bar \nu_z = \bar \nu_{T z} \} \, .
\]
Note that by the argument in the previous two paragraphs, 
$\bar \nu_{T z}$ is defined for all $z \in \supp \mu$ and $(T, A) \in \Sc$.
To complete the proof of Lemma \ref{lem:almostSurelyToTopological} 
it will suffice to show that $G_z \supset \Sc$ for all $z \in \supp \mu$.

To start, one checks that $G_z$ is closed in $\Cc$ by the closedness of $\supp \mu$ and the fact
that $z \mapsto \bar \nu_z$ is weak$^*$ continuous.
Next, let $\tilde Z$ be as constructed in the proof of Claim \ref{cla:fullMeasCauchy}.
It follows that for $z \in \tilde Z$ and all rational $t$ that
\[
\P( (\Tc^t_\omega, \Ac^t_{\omega}) \in G_z ) = 1 \, .
\]
So, for all rational $t \geq 0$ we deduce that $G_z$ is dense in $\Sc_t$, hence $G_z \supset \Sc_t$
since $G_z, \Sc_t$ are closed in $\Cc$. Moreover, for irrational $t \geq 0$, each $(T, A) \in \Sc_t$
is a limit of elements $(T^n, A^n) \in \Sc_{t_{n}}$ in $\Cc$, 
where $\{ t_n\}$ is a sequence of rationals for which $t_n \to t$ as $n \to \infty$. Again by closedness of $G_z$
we deduce that $G_z \supset \Sc_t$ for all $t \geq 0$. We conclude $G_z \supset \Sc$
for all $z \in \tilde Z$.

To conclude for $z \in \supp \mu \setminus \tilde Z$: let $z^m \to z$ be a convergent sequence,
$z^m \in \tilde Z$, and fix $(T, A) \in \Sc$. That $(T, A) \in G_z$ now follows from the 
fact that $(T, A) \in G_{z^m}$ for all $m$ from above and from the continuity of $z \mapsto \bar \nu_z$. 
This completes the proof of Lemma \ref{lem:almostSurelyToTopological}.

\subsection{A refinement of Furstenberg's criterion}\label{subsubsec:classifyRDS}

The refinement of Furstenberg's criterion we present here is effectively a \emph{classification} of the 
the fiber measures $\nu_z, z \in \supp \mu$ comprising a family satisfying 
the `topological' relation \eqref{eq:surelyInvariantMeasureFamily}. For the sake of brevity, and because it serves
our purposes in this paper, we prove this classification 
when $d$, the dimension of the cocycle $\mathcal A$, is less than or equal to 3, although it is likely to hold in higher dimensions (see Remark \ref{rmk:why3D}).

This classification, Theorem \ref{thm:classification} below,
is the analogue in our setting of Theorem 6.8 of \cite{baxendale1989lyapunov}. 
Our situation is significantly more general and entails several subtleties unique to our setting;
see Remarks \ref{rmk:badFiberMeasures}, \ref{rmk:whyNovelRDS} for more discussion.

\bigskip

The germ of this idea comes from 
the geometry of $SL_d(\R)$ and the restrictions placed on the subgroup of matrices
preserving a single projective measure. To wit, we have the following (for any dimension $d \geq 1$):

\begin{lemma}\label{lem:subgroups}
Let $d \geq 1$. Let $\eta$ be a Borel measure on $P^{d-1}$ and define $H = H_\eta \subset SL_d(\R)$ to be the subgroup
of matrices $A \in SL_d(\R)$ for which $A_* \eta = \eta$. Then, $H$ is closed, and moreover we
have the following dichotomy:
\begin{itemize}
	\item[(a)] If $H$ is compact, then there is an inner product $\langle \cdot, \cdot \rangle'$ 
	on $\R^d$, with corresponding norm $\| \cdot \|'$, with respect to which every $A \in H$
	is an isometry.
	\item[(b)] If $H$ is noncompact, then there exist distinct, proper, nontrivial linear subspaces $E^1, \cdots, E^p \subset \R^d$, $p \geq 1$,
	with the following properties.
		\begin{itemize}
		\item[(i)] We have $\eta(\cup E^i) = 1$;
		\item[(ii)] For all $A \in H$, we have $A E^i = E^{\pi(i)}$ for all $1 \leq i \leq p$, where $\pi = \pi_A$ is a permutation
			on $\{ 1, \cdots, p\}$; and
		\item[(iii)] For each $1 \leq i \leq p$ there is an inner product $\langle \cdot , \cdot \rangle^i$ on $E^i$ such that
		for all $A \in H$, we have that $A|_{E^i}$ is conformal with respect to the inner products
			$\langle \cdot, \cdot \rangle^i, \langle \cdot, \cdot \rangle^{\pi(i)}$ respectively.
		\end{itemize}
\end{itemize}
\end{lemma}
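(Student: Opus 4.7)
The plan is to reduce the result to two fairly standard ingredients: a Haar-averaging argument in the compact case, and a singular-value-decomposition contraction argument in the noncompact case, followed by an inductive descent on dimension for the conformal structure on each $E^i$.

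First, closedness of $H$ is routine: the map $A \mapsto A_*\eta$ from $SL_d(\R)$ into the weak-$*$ closed set of Borel probabilities on $P^{d-1}$ is continuous, so $H = \{A : A_*\eta = \eta\}$ is the preimage of a closed set. For (a), assuming $H$ compact, let $m_H$ be the normalized Haar measure on $H$ and set $\langle v, w\rangle' := \int_H \langle Av, Aw\rangle \, dm_H(A)$. This is an inner product (positive definiteness uses that each $A \in H$ is invertible), and the left-invariance of Haar measure shows every $A \in H$ is an isometry.

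For (b), choose a sequence $A_n \in H$ with $\|A_n\| \to \infty$ and write the Cartan decomposition $A_n = k_n D_n \ell_n$ with $k_n, \ell_n \in O(d)$ and $D_n = \mathrm{diag}(\sigma_1^n, \ldots, \sigma_d^n)$, $\sigma_1^n \geq \cdots \geq \sigma_d^n > 0$, $\prod_i \sigma_i^n = 1$. By compactness of $O(d)$, extract a subsequence along which $k_n \to k$, $\ell_n \to \ell$; since $\sigma_1^n \to \infty$ and the product is $1$, some ratio $\sigma_r^n / \sigma_{r+1}^n$ tends to $\infty$, and I choose $r$ maximal with this property. Let $V^+ = \mathrm{span}(e_1, \ldots, e_r)$ and $V^- = \mathrm{span}(e_{r+1}, \ldots, e_d)$, and put $U^+ := k V^+$, $W^- := \ell^{-1} V^-$; these are proper nontrivial subspaces of $\R^d$. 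The action of $A_n$ on $P^{d-1}$ uniformly pushes compact subsets of $P^{d-1} \setminus P(W^-)$ into arbitrarily small neighborhoods of $P(U^+)$. Since $(A_n)_*\eta = \eta$, invariance forces $\eta\bigl(P^{d-1} \setminus (P(W^-) \cup P(U^+))\bigr) = 0$, so $\eta$ is supported on the projectivization of a union of two proper subspaces.

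From this, I let $m := \min\{\dim V : V \subsetneq \R^d \text{ a linear subspace with } \eta(P(V)) > 0\}$; by the previous paragraph $m < d$, and minimality plus the fact that $\eta(P(V_1 \cap V_2)) = 0$ when $V_1, V_2$ are distinct minimizers forces the collection $E^1, \ldots, E^p$ of $m$-dimensional subspaces realizing the minimum to be finite. Since $A_* \eta = \eta$ sends minimizers to minimizers, $H$ permutes $\{E^i\}$, giving (ii). For (i), I decompose $\eta = \eta|_{\cup P(E^i)} + \eta_{\mathrm{rest}}$ and observe that $\eta_{\mathrm{rest}}$ is again $H$-invariant and, if nonzero, could be renormalized and fed into the same argument to produce further minimal-dimension subspaces, which must be disjoint from the $E^i$ by minimality; an exhaustion/total-mass argument then forces $\eta_{\mathrm{rest}} = 0$.

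For (iii), fix $i$ and let $H^i := \{A \in H : A E^i = E^i\}$, which has finite index in $H$ and restricts to a closed subgroup of $GL(E^i)$ preserving the probability measure $\eta^i := \eta|_{P(E^i)}/\eta(P(E^i))$ on $P(E^i)$. The image of $H^i$ in $PGL(E^i)$ is a closed subgroup preserving $\eta^i$, and by minimality of $m$ the measure $\eta^i$ does \emph{not} charge any proper subspace of $E^i$; applying the dichotomy of the lemma inductively in dimension $m < d$ (after lifting to $SL(E^i)$ by rescaling), case (b) is excluded, so the projective image is compact, yielding an inner product $\langle \cdot, \cdot \rangle^i$ on $E^i$ preserved up to a scalar by every element of $H^i$. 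The compatibility between $\langle \cdot, \cdot\rangle^i$ and $\langle \cdot, \cdot\rangle^{\pi(i)}$ for a general $A \in H$ follows by transporting $\langle \cdot, \cdot \rangle^i$ via $A$ to $E^{\pi(i)}$ and using the permutation action; the main delicate point, and the step I expect to be the principal obstacle, is precisely this conformal conclusion, since it requires the inductive dichotomy to be available in lower dimension and the $\mathrm{GL}/\mathrm{SL}$ passage to be handled carefully so that scalars (which do not lie in $SL$ but can arise from restricting $SL_d(\R)$-elements to an invariant subspace) are accommodated.
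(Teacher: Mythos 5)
Your treatment of closedness and case (a) is correct and matches the paper's (the paper invokes right-invariance of Haar where you cite left-invariance; for a compact group the Haar measure is bi-invariant, so the conclusion is the same either way). Your Cartan/SVD contraction argument is the same as the paper's Claim \ref{cla:splitSubspaces}, and deducing from it that some proper subspace has positive $\eta$-mass is fine.

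The construction of $E^1, \ldots, E^p$ is where the argument breaks down. You set $m$ to be the minimal dimension of a proper subspace with positive $\eta$-mass and take $E^1, \ldots, E^p$ to be the $m$-dimensional subspaces of positive mass, asserting finiteness from the $\eta$-nullity of pairwise intersections. That nullity only yields $\sum_i \eta(P(E^i)) \leq 1$, hence countability, not finiteness — and the collection genuinely can be infinite. In $d = 3$, fix distinct lines $[v_2], [v_3], \ldots$ in $W := \mathrm{span}(e_2, e_3)$ and set $\eta = \tfrac12\delta_{[e_1]} + \sum_{n \geq 2} 2^{-n}\delta_{[v_n]}$. All atom weights are distinct, so any $A \in SL_3(\R)$ preserving $\eta$ must fix every atom; since at least three $[v_n]$ are distinct in $P(W)$ this forces $A = \mathrm{diag}(c^{-2}, c, c)$, hence $H \cong \R^*$ is noncompact, $m = 1$, and infinitely many lines carry positive $\eta$-mass. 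Moreover, the correct conclusion of the lemma here requires $E^1 = \mathrm{span}(e_1)$ (dimension $1$) together with $E^2 = W$ (dimension $2$) — subspaces of \emph{different} dimensions, which your equal-dimension-$m$ construction excludes by design. This also undercuts the finite-index claim for $H^i$, the exhaustion argument you sketch for (i), and the assertion used for (iii) that $\eta|_{E^i}$ gives zero mass to every proper subspace of $E^i$.

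The paper avoids this by working with \emph{finite tuples} $(E^i)$ of proper subspaces satisfying $\eta(\bigcup_i E^i) = 1$, partially ordered by inclusion of unions, and extracting a minimal tuple (Lemma \ref{lem:subgroupDeeper}). Minimality of the tuple — not minimality of a dimension — delivers both finiteness and the non-degeneracy needed for (iii). For (iii) itself, rather than your inductive appeal to the dichotomy in lower dimension, the paper applies the contraction Claim directly inside each $E^i$ and notes that a proper subtuple of $E^i$ carrying full $\eta|_{E^i}$-mass would contradict minimality of the global tuple. An inductive argument along your lines could perhaps be made to work, but only after replacing the minimal-dimension construction with the tuple-minimality construction and re-deriving the relevant irreducibility of $\eta|_{E^i}$ from that.
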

Lemma \ref{lem:subgroups}(a) can be found in Proposition 6.7 (ii) in \cite{baxendale1989lyapunov},
while the argument for Lemma \ref{lem:subgroups}(b) is an extension of arguments appearing in
the proof of Theorem 8.6 in \cite{furstenberg1963noncommuting}.
Since Lemma \ref{lem:subgroups} is crucial to our approach and contains strictly more information than 
what the authors can find in the literature, we provide a proof sketch later on in Section \ref{subsubsec:classifyRDS}.

Building off Lemma \ref{lem:subgroups}, we give below
a corresponding classification of the linear cocycles $\mathcal A$
preserving the invariant measure family $(\bar \nu_z)$ as in \eqref{eq:surelyInvariantMeasureFamily}.

\begin{theorem}[Classification of invariant fiber measure families]\label{thm:classification}
	Assume $d \leq 3$, and
	assume the setting, notation and conclusions of Proposition \ref{prop:contVaryingMeasure}
	and Lemma \ref{lem:almostSurelyToTopological}. Let $(\bar \nu_z)_{z \in \supp \mu}$
	denote the invariant measure family so-obtained. Then, one of the following alternatives holds.
		\begin{itemize}
			\item[(a)] There is a continuously-varying assignment to each $z \in \supp \mu$ of an 
			inner product $\langle \cdot, \cdot \rangle_z$ on $\R^d$ with the property that for all
			$(T, A) \in \Sc$ and $z \in \supp \mu$, we have that $A_z : (\R^d, \langle \cdot, \cdot \rangle_z) \to (\R^d, \langle \cdot, \cdot \rangle_{T z}) $ is an isometry.
			\item[(b)] For some $p \geq 1$, the following holds. There are $p$ measurably-varying
			assignments to each $z \in \supp \mu$ of a proper, distinct, nontrivial
			 linear subspace $E^i_z \subsetneq \R^d, 1 \leq i \leq p$,
			  with 
			 the property that for each $z \in \supp \mu$
			 and $(T, A) \in \Sc$, we have $A_z E^i_z = E^{\pi(i)}_{T z}$
			 for all $1 \leq i \leq p$, where
			 $\pi = \pi_{(T, A)}$ is a permutation on $\{ 1, \cdots, p\}$. Moreover,
			 $\bar \nu_z( \cup_{i = 1}^p E^i_z) = 1$. 
			 
			 Finally, the collection $(E^i_z)$ is \emph{locally continuous up to re-labelling}: 
			 for every $z \in \supp \mu$ there is an open neighborhood 
			 $U \subset Z$ and a labelling of the subspaces $E^i_z, z \in U \cap \supp \mu$
			 with the property that $z \mapsto E^i_z, z \in U \cap \supp \mu$ is continuously varying.
		\end{itemize}
\end{theorem}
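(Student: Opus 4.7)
The plan is to fix a base point $z_0 \in \supp \mu$, analyze the closed stabilizer subgroup
\[
H_z := \{M \in SL_d(\R) : M_* \bar \nu_z = \bar \nu_z \} \subset SL_d(\R)
\]
at $z_0$ via Lemma \ref{lem:subgroups}, and propagate the resulting structure across all of $\supp \mu$ using the topological invariance \eqref{eq:surelyInvariantMeasureFamily}. The key structural observation is the conjugacy of stabilizers along the cocycle: for any $(T, A) \in \Sc$, one has $A_z H_z A_z^{-1} = H_{Tz}$; moreover, whenever $(T, A), (T', A') \in \Sc$ satisfy $Tz = T'z$, then $(A'_z)^{-1} A_z \in H_z$. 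In particular, compactness of $H_z$ is preserved under the cocycle action, so the set $\{z \in \supp \mu : H_z \text{ is compact}\}$ is stable under applying elements of $\Sc$. Combining this stability with the ergodicity of $\mu$ for $(P_t)$ and the weak$^*$-continuity of $z \mapsto \bar \nu_z$ from Proposition \ref{prop:contVaryingMeasure}, I expect to conclude that either $H_z$ is compact for every $z \in \supp \mu$, or noncompact for every $z \in \supp \mu$.

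In alternative (a), Lemma \ref{lem:subgroups}(a) furnishes an $H_{z_0}$-invariant inner product $\langle \cdot, \cdot\rangle_{z_0}$ on $\R^d$, and I propagate it along $\Sc$ by declaring
\[
\langle v, w\rangle_{Tz_0} := \langle A_{z_0}^{-1} v, \, A_{z_0}^{-1} w \rangle_{z_0} \qquad \text{for } (T, A) \in \Sc.
\]
Well-definedness modulo the choice of $(T, A)$ is precisely the $H_{z_0}$-invariance of $\langle \cdot, \cdot\rangle_{z_0}$ together with the conjugacy property, continuity across $\supp \mu$ follows from a Cauchy-style argument analogous to Claim \ref{cla:fullMeasCauchy}, and the isometry property is built into the construction.

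In alternative (b), Lemma \ref{lem:subgroups}(b) gives at each $z$ a finite family of proper subspaces $\{E^i_z\}_{i=1}^{p_z}$ with $\bar \nu_z\bigl(\bigcup_i E^i_z\bigr) = 1$. The strategy is to characterize this family intrinsically from $\bar \nu_z$ alone---as the minimal collection of proper subspaces carrying full $\bar \nu_z$-mass---so that \eqref{eq:surelyInvariantMeasureFamily} forces every $(T, A) \in \Sc$ to carry $\{E^i_z\}$ bijectively onto $\{E^i_{Tz}\}$, inducing a permutation $\pi$, while the conjugacy property yields $p_z \equiv p$ constant. Here the hypothesis $d \leq 3$ is essential: it restricts the possible dimensions of the $E^i_z$ to lines (for $d = 2$) or lines and planes (for $d = 3$), ruling out the more intricate nested or overlapping lattice configurations that could otherwise obstruct uniqueness of the minimal family.

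The hard part will be establishing the local continuity up to relabelling. Fixing $z \in \supp \mu$ and a sequence $z_n \to z$ in $\supp \mu$, I would exploit the compactness of the Grassmannians of $k$-planes in $\R^d$ (again using $d \leq 3$) to extract, along a subsequence, limits $E^i_{z_n} \to E^i_* \subset \R^d$. The weak$^*$-continuity of $\bar \nu_{z_n} \to \bar \nu_z$ together with the Portmanteau theorem should then force each $E^i_*$ to carry positive $\bar \nu_z$-mass, so that $\{E^i_*\}$ is contained in the intrinsically-defined collection $\{E^j_z\}_j$; the minimality in the intrinsic characterization identifies the two families as unordered sets, and a local relabelling near $z$ then produces the asserted continuous dependence.
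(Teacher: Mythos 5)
Your high-level strategy is the same as the paper's: fix a reference point $z_0$, analyze the stabilizer $H_{z_0}$ via Lemma~\ref{lem:subgroups}, propagate the resulting structure across $\supp\mu$ via the topological invariance \eqref{eq:surelyInvariantMeasureFamily} and the full-orbit property (Lemma~\ref{lem:fullOrbitMeasu}), and observe that the compact/noncompact dichotomy is independent of the base point (Remark~\ref{rmk:choiceOfReferencePt}). That part is fine. The gap is precisely in the step you yourself flag as ``the hard part'': the continuity arguments.

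In case (a), you assert that continuity of $z\mapsto\langle\cdot,\cdot\rangle_z$ ``follows from a Cauchy-style argument analogous to Claim~\ref{cla:fullMeasCauchy},'' but that Claim is a statement about the fiber \emph{measures} $\bar\nu_z$, processed through the strong Feller semigroup $\widehat P_t$ acting on observables over $Z\times P^{d-1}$; it is not an argument about inner products, and there is no obvious way to run it on the inner products directly. Crucially, you cannot recover $\langle\cdot,\cdot\rangle_z$ from $\bar\nu_z$ itself, because the $\bar\nu_z$ may be purely atomic or singular continuous (Remark~\ref{rmk:badFiberMeasures}). The paper's device is to pass from the inner products to the normalized volume measures $\tilde\nu_z$ they induce on $P^{d-1}$. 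This $(\tilde\nu_z)$ is a \emph{new} invariant fiber measure family with $\tilde\nu_z\ll\Leb_{P^{d-1}}$, so Proposition~\ref{prop:contVaryingMeasure} applies to it and yields weak$^*$ continuity, hence continuity of the densities, hence continuity of the inner products. Without constructing such an auxiliary family, your continuity claim is unsupported.

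In case (b), your Grassmannian-compactness plus Portmanteau argument has a genuine gap: to force a subsequential limit $E^i_*$ of $E^i_{z_n}$ to coincide with one of the $E^j_z$, you implicitly need $\liminf_n\bar\nu_{z_n}(E^i_{z_n})>0$, and nothing in weak$^*$ convergence alone gives this. The paper closes this by \emph{defining} $E^i_z$ for general $z\in\supp\mu$ via cocycle propagation from $z_0$ (using $O_{z_0}\cap O_z\neq\emptyset$), from which $\bar\nu_z(E^i_z)=\bar\nu_{z_0}(E^i_{z_0})$ is literally constant in $z$; this, combined with the hypothesis $d\leq 3$, splits the continuity proof into the purely-atomic case ($\dim E^i_z\equiv 1$, where the constant-positive atom masses plus weak$^*$ continuity do give Grassmannian convergence of the atoms) and the $d=3$ case with one two-dimensional $E_z$, where one must again build an auxiliary invariant fiber measure family (the normalized volume on $E_z$ induced by the conformal inner product) and re-apply Proposition~\ref{prop:contVaryingMeasure}. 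Your ``intrinsic characterization'' of the minimal family is the right invariance argument, but the continuity does not follow from it alone; the missing ingredient is the construction of a better-behaved invariant fiber measure family to which the strong-Feller continuity result can be applied.
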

\noindent The proof of Theorem \ref{thm:classification} deviates significantly from that in Theorem 6.8 in  
 \cite{baxendale1989lyapunov}, particularly where it is proved that the objects in alternatives (a) and (b) above 
are continuously varying. See Remark \ref{rmk:badFiberMeasures} below for a discussion
of the subtleties involved.

\bigskip

For the remainder of Section \ref{subsubsec:classifyRDS} we will prove Lemma \ref{lem:subgroups} and Theorem \ref{thm:classification}.

\subsection*{Proof of Lemma \ref{lem:subgroups}}

We will prove Lemma \ref{lem:subgroups} for any value of the dimension $d$.
Let us first dispense with the relatively easier proof of part (a), i.e., the case when $H = H_\eta$ is a compact subgroup of $SL_d(\R)$.
If $H$ is compact, then it admits a right-invariant Haar probability measure $\gamma$ (Proposition 11.4 in \cite{folland2013real}). That is,
$\gamma$ is a Borel probability measure on $H$ with the property that for any $A \in H$ and Borel $K \subset H$,
we have $\gamma(KA) = \gamma(K)$. With $(\cdot, \cdot)$ the standard inner product on $\R^d$, we define
$\langle \cdot, \cdot \rangle'$ on $\R^d$ for $v, w \in \R^d$ by
\[
\langle v, w \rangle' := \int_H ( A' v, A' w) \, \dee \gamma(A') \, .
\]
Using right-invariance of $\gamma$, one easily checks that $\langle A v, A w \rangle' = \langle v, w \rangle'$ for all $v, w \in \R^d$ and $A \in H$.
This completes the proof of Lemma \ref{lem:subgroups} in case (a).

Before proceeding to case (b), let us state and prove the following useful Claim.

\begin{claim}\label{cla:splitSubspaces}
Let $k \geq 1$ and let $(M_n)$ be a sequence of determinant 1 matrices in $M_{k \times k}(\R)$ for which $|M_n| \to \infty$ as $n \to \infty$. 
	Then, on refining to a subsequence $(M_{n'})$, there exist proper linear subspaces 
	$V^1, V^2 \subset \R^k$ for which $\operatorname{dist}(M_{n'} v, V^2) \to 0$ as $n' \to \infty$ for all 
	$v \notin V^1$.
\end{claim}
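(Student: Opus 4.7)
The plan is to use the singular value decomposition of $M_n$ to extract a clean limiting structure along a subsequence, and then read off the subspaces $V^1$ and $V^2$ directly from that limit. First, I would write $M_n = U_n D_n V_n^{\top}$ with $U_n, V_n \in O(k)$ and $D_n = \mathrm{diag}(\sigma_1^n, \ldots, \sigma_k^n)$ the diagonal matrix of singular values $\sigma_1^n \geq \cdots \geq \sigma_k^n > 0$. The determinant-$1$ constraint forces $\prod_i \sigma_i^n = 1$, while $|M_n| = \sigma_1^n \to \infty$ by hypothesis; together these imply $\sigma_k^n \to 0$.

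Next, using compactness of $O(k)$ and of $[0,1]$ together with a diagonal extraction, I would pass to a subsequence (still denoted $M_n$) along which $U_n \to U$, $V_n \to V$ in $O(k)$, and $\sigma_i^n / \sigma_1^n \to c_i \in [0,1]$ for each $i$. Let $p$ be the largest index with $c_p > 0$; then $1 \leq p \leq k-1$, and I would set
\[
V^1 \;:=\; V\big(\Span\{e_{p+1}, \ldots, e_k\}\big), \qquad V^2 \;:=\; U\big(\Span\{e_1, \ldots, e_p\}\big),
\]
both proper linear subspaces of $\R^k$.

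To verify the convergence, I would fix $v \notin V^1$, expand $v = \sum_i \beta_i^n V_n e_i$ in the orthonormal basis $\{V_n e_i\}$ with $\beta_i^n \to \beta_i := \langle v, V e_i \rangle$, and compute
\[
\frac{M_n v}{\sigma_1^n} \;=\; \sum_{i=1}^k \beta_i^n \, \frac{\sigma_i^n}{\sigma_1^n}\, U_n e_i \;\longrightarrow\; \sum_{i=1}^p \beta_i c_i \, U e_i \;=:\; w.
\]
The limit $w$ lies in $V^2$ by construction, and $v \notin V^1$ forces $\beta_i \neq 0$ for some $i \leq p$; since $c_i > 0$ for $i \leq p$ and $\{U e_1, \ldots, U e_p\}$ is linearly independent, $w \neq 0$. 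Thus $M_n v / |M_n v| \to w / |w| \in V^2$, which is what is required.

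The main subtlety, and what I would flag as the real obstacle, is interpretational rather than computational: because $|M_n v| \to \infty$, the statement cannot be read as a literal Euclidean distance against the fixed subspace $V^2$. Indeed, $M_n = R_{1/n} \, \mathrm{diag}(n, 1/n)$, with $R_\theta$ a rotation of $\R^2$ by angle $\theta$, gives $M_n e_1 = n(\cos(1/n), \sin(1/n))$, whose Euclidean distance to any fixed line through the origin does not tend to $0$. The intended reading, consistent with the use of the claim inside Lemma \ref{lem:subgroups} where $M_n$ acts on measures supported on $P^{k-1}$, is as distance in projective space (equivalently, angle between unit vectors), and the argument above delivers exactly this.
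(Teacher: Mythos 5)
Your argument is essentially the same as the paper's: both use the singular value decomposition, pass to a subsequence along which the SVD data converge, and locate a break in the singular value spectrum. Your bookkeeping (normalizing by $\sigma_1^n$, placing the break at the largest $p$ with $\lim_n \sigma_p^n/\sigma_1^n > 0$) differs slightly from the paper's (extracting a consecutive ratio $\sigma_l(M_n)/\sigma_{l+1}(M_n) \to \infty$), but the two are equivalent in substance, and your computation that $M_n v/\sigma_1^n$ converges to a nonzero element of $V^2$ for $v \notin V^1$ is correct.

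The remark you flag at the end is the genuinely valuable part and identifies a real imprecision. Read literally, with $\operatorname{dist}$ the Euclidean distance from the vector $M_n v$ to the fixed subspace $V^2$, the claim is false, and your counterexample is correct: with $M_n = R_{1/n}\operatorname{diag}(n, 1/n)$ in $\R^2$ ($R_\theta$ a rotation), the SVD construction forces $V^1 = \Span\{e_2\}$ and $V^2 = \Span\{e_1\}$, yet $\operatorname{dist}(M_n e_1, V^2) = n\sin(1/n) \to 1$, and no other fixed line fares better since the direction of $M_n e_1$ locks onto $e_1$. The paper's short proof glosses over the same point: it establishes that $\operatorname{dist}(M_n v, V^2_n) \to 0$ for the $n$-dependent subspaces $V^2_n$, but passing to the fixed limit $V^2$ incurs an error of order $|M_n v|$ times the angle between $V^2_n$ and $V^2$, which need not vanish. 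The resolution is exactly the one you give: Claim~\ref{cla:splitSubspaces} is invoked inside Lemma~\ref{lem:subgroups} to control pushforwards of probability measures on projective space, so $\operatorname{dist}$ must be read as projective (angular) distance, under which interpretation both your argument and the paper's go through. You have correctly caught and repaired a lapse in precision in the statement.
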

\begin{proof}[Proof of Claim \ref{cla:splitSubspaces}]
Using the fact that $\det M_n \equiv 1$ for all $n$, we can, without loss, pass to a subsequence with the property that
for some fixed $1 \leq l < k$, we have
\begin{align}\label{eq:SVtoInfty}
\frac{\sigma_l(M_n)}{ \sigma_{l + 1}(M_n)} \to \infty \, .
\end{align}
Applying the Singular Value Decomposition to each $M_n$, let $V^1_n$ be the unique $(k - l)$-dimensional subspace for which $|M_n|_{V^1_n}| = \sigma_{l + 1}(M_n)$, and let
$V^2_n$ be the unique $l$-dimensional subspace for which $|M_n^{-1} |_{V^2_n}| = (\sigma_l(M_n))^{-1}$.
Passing to a further subsequence, we can assume that the subspaces $V^1_n, V^2_n$ converge to subspaces $V^1, V^2$, respectively.
It now follows from \eqref{eq:SVtoInfty} that for all $v \notin V^1$, $\lim_{n \to \infty} \operatorname{dist}(M_n v, V^2) = 0$, as desired.
\end{proof}

We now proceed to case (b), which we prove in a series of Lemmas. 
Assume $H = H_\eta$ is noncompact, and consider the set $\mathcal G$ of finite tuples of proper, nontrivial, distinct subspaces 
$(E^i)_{i = 1}^p$ of $\R^d$ for which $\eta(\cup_i E^i) = 1$.
Applying Claim \ref{cla:splitSubspaces} to a sequence $\{ M_n\} \subset H$ with $|M_n| \to \infty$, note that the pair $(V^i)_{i = 1}^2$ 
so-obtained is such a tuple.
 If $(E^i)_{i = 1}^p, (\check E^i)_{i = 1}^{\check p}$ are two such tuples, 
let us write $(E^i) \leq (\check E^i)$ if $\cup_i E_i \subset \cup_i \check E_i$. Note that $\leq$ is a partial order on $\mathcal G$
We say that two tuples $(E^i)_{i =1 }^p, (\check E^j)_{j = 1}^{\check p}$ in $\mathcal G$ are \emph{equivalent up to relabeling} if $p = \check p$
and there is some permutation $\pi$ on $\{ 1, \cdots, p\}$ for which $\check E^j = E^{\pi(j)}$ for all $1 \leq j \leq p$.

\begin{lemma}\label{lem:subgroupDeeper}
Let $\eta, H_\eta$ be as in the setting of Lemma \ref{lem:subgroups} and assume $H_\eta$ is noncompact (case (b)).
	Then, there is a unique tuple $(E^i)_{i = 1}^p$ (up to relabeling) of distinct, proper and nontrivial linear subspaces of $\R^d$
	minimal with respect to the partial order $\leq$ on $\mathcal G$.
		This tuple has the property that for each $A \in H_\eta$, there is a permutation $\pi = \pi_A$ of $\{ 1, \cdots, p\}$ for which
		$A E^i = E^{\pi(i)}$ for all $1 \leq i \leq p$.
\end{lemma}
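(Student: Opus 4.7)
The plan is to exploit the fact that $(\mathcal{G}, \leq)$ is a meet-semilattice satisfying the descending chain condition, from which existence and uniqueness of a minimal element follow, with $H_\eta$-equivariance as a direct corollary.

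First I will show that $\mathcal{G}$ is closed under pairwise meets. Given $(E^i)_{i=1}^p, (F^j)_{j=1}^q \in \mathcal{G}$, let $(E^i) \wedge (F^j)$ denote the collection of distinct nontrivial subspaces among $\{E^i \cap F^j\}_{i,j}$. Properness and nontriviality of each such $E^i \cap F^j$ are immediate from the construction, and the union satisfies $\bigcup_{i,j}(E^i \cap F^j) = (\bigcup_i E^i) \cap (\bigcup_j F^j)$, which has $\eta$-mass $1$ by inclusion-exclusion. Hence $(E^i) \wedge (F^j) \in \mathcal{G}$, and it is a common lower bound of $(E^i)$ and $(F^j)$ under $\leq$.

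The main step, and in my view the principal obstacle, is the descending chain condition for $(\mathcal{G}, \leq)$. The combinatorial input is the classical algebraic fact that a $k$-dimensional real subspace cannot be covered by finitely many of its proper subspaces; this implies that any finite union $X = \bigcup_i E^i$ of distinct proper nontrivial subspaces admits a unique irredundant decomposition into its maximal subspace components. I then associate to each such $X$ the profile $\tau(X) = (\tau_{d-1}(X), \ldots, \tau_1(X)) \in \Z_{\geq 0}^{d-1}$, where $\tau_k(X)$ counts the dim-$k$ components. I claim that $X' \subsetneq X$ (with both in $\mathcal{G}$) forces $\tau(X') < \tau(X)$ in the reverse-lexicographic order (reading the top coordinate $\tau_{d-1}$ first). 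The argument is a descending induction on $k$: once the profiles agree at all positions $j > k$, the induction hypothesis says the dim-$j$ components of $X'$ and $X$ coincide for $j > k$, so every dim-$k$ component of $X'$ must itself be a dim-$k$ component of $X$—it cannot be covered by lower-dimensional subspaces alone by the dimension-counting fact, and it cannot be strictly contained in any dim-$j$ ($j > k$) component of $X$ without violating irredundancy of $X'$. Since reverse-lexicographic order on $\Z_{\geq 0}^{d-1}$ is well-founded (each coordinate stabilizes in turn from the top, being a nonnegative integer), every strictly descending chain in $\mathcal{G}$ terminates, and so $\mathcal{G}$ contains a minimal element.

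For uniqueness, suppose $(E^i)$ and $(F^j)$ were two minimal elements of $\mathcal{G}$. Their meet $(E^i) \wedge (F^j)$ is a common lower bound in $\mathcal{G}$, and minimality forces $\bigcup_{i,j}(E^i \cap F^j) = \bigcup_i E^i = \bigcup_j F^j$. The uniqueness of the irredundant decomposition then identifies the two tuples up to relabeling. Finally, for each $A \in H_\eta$ the mapping $(E^i) \mapsto (AE^i)$ is an order-preserving bijection of $\mathcal{G}$ onto itself (since $A_* \eta = \eta$), hence sends the unique minimal element to itself up to relabeling, yielding the desired permutation $\pi_A$ with $AE^i = E^{\pi_A(i)}$.
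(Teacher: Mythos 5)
The paper does not in fact supply a proof of this lemma: it is declared ``straightforward and left to the reader,'' with a pointer to Theorem~8.6 of Furstenberg~\cite{furstenberg1963noncommuting}. Your argument is a correct way to fill in the details. The three structural ingredients---$\mathcal G$ is a meet-semilattice, $\mathcal G$ satisfies the descending chain condition, and $H_\eta$ acts on $\mathcal G$ by order isomorphisms---do combine as you claim to yield a unique minimum that is permuted by each $A\in H_\eta$, and your verification of each step is sound.

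Two remarks. First, the profile argument for the DCC, while correct, is heavier than necessary: a finite union of linear subspaces of $\R^d$ is a Zariski-closed subset, and the Zariski topology on $\R^d$ is Noetherian (because $\R[x_1,\dots,x_d]$ is a Noetherian ring), so any strictly decreasing chain of such unions terminates immediately. This would replace the entire irredundant-decomposition and lexicographic-profile paragraph with one line. Your elementary argument has the virtue of being self-contained, but if you keep it, the covering step in the induction should be phrased more carefully: the point is that a dimension-$k$ component $C$ of $X'$ cannot be covered by the proper intersections $C\cap D$ ($D$ ranging over components of $X$), hence must be \emph{contained in} some component $D$; irredundancy of $X'$ together with the inductive hypothesis then forces $\dim D=k$ and $D=C$. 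The phrase ``covered by lower-dimensional subspaces alone'' does not quite capture this. Second, when forming $(E^i)\wedge(F^j)$ you should say explicitly why the resulting tuple is nonempty, i.e.\ why some $E^i\cap F^j$ is a nonzero subspace. This is the one place the measure $\eta$ enters the closure-under-meets step: since $\eta$ is a probability measure on $P^{d-1}$ and $\eta\bigl((\cup_i E^i)\cap(\cup_j F^j)\bigr)=1>0$, the intersection contains a nonzero vector. As written, ``nontriviality \dots\ immediate from the construction'' only records that you discard the zero intersections, not that something survives.
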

Lemma \ref{lem:subgroupDeeper} is straightforward and left to the reader (see Theorem 8.6 in \cite{furstenberg1963noncommuting} for more detail).
The minimal tuple $(E^i)$ therefore satisfies conditions (i) -- (ii) in Lemma \ref{lem:subgroups}. Item (iii) is verified below.

\begin{lemma}\label{lem:item4}
	For each $1 \leq i \leq p$, there is an inner product $\langle \cdot, \cdot \rangle^i$ on $E^i$ with the property that for each $A \in H$,
	we have that $A : (E^i, \langle \cdot, \cdot \rangle^i) \to (E^{\pi(i)}, \langle \cdot, \cdot \rangle^{\pi(i)})$, $\pi = \pi_A$, is a conformal mapping.
\end{lemma}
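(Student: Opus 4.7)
The plan is to first build, for each index $i$, an inner product class on $E^i$ preserved by the stabilizer $H_i := \{A \in H : \pi_A(i) = i\}$, and then propagate this across the $H$-orbits on $\{1,\dots,p\}$ via pullback by chosen transition elements. Let $\rho_i : H_i \to PGL(E^i)$ be the homomorphism obtained by restriction to $E^i$ followed by projectivization. The main analytic step will be to show that $\overline{\rho_i(H_i)}$ is compact in $PGL(E^i)$.

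To establish this compactness, I would argue by contradiction: suppose a sequence $A_n \in H_i$ has $\rho_i(A_n)$ escaping every compact subset of $PGL(E^i)$. After rescaling the restrictions $A_n|_{E^i}$ to have determinant $\pm 1$ and applying Claim~\ref{cla:splitSubspaces} along a subsequence, one obtains proper subspaces $V^1, V^2 \subsetneq E^i$ with the property that $A_n v$ converges projectively into $P(V^2)$ for every $v \in E^i \setminus V^1$. Combining this with the $A_n$-invariance of $\eta|_{P(E^i)}$ via a standard Portmanteau-type argument should force $\supp(\eta|_{P(E^i)}) \subset P(V^1) \cup P(V^2)$.

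The hard part will be to turn this into a contradiction with minimality of the tuple $(E^i)_{i=1}^p$. The plan is to replace the $H$-orbit of $E^i$ inside the tuple by the deduplicated $H$-orbit of $\{V^1, V^2\}$ in the Grassmannian of proper subspaces of $\R^d$. A conjugation argument shows that for any $A \in H$ with $\pi_A(i) = j$, the images $A V^1, A V^2 \subsetneq E^j$ play the same splitting role for the sequence $A A_n A^{-1}|_{E^j}$, so $\supp(\eta|_{P(E^j)}) \subset P(A V^1) \cup P(A V^2)$. The new collection is then $H$-invariant by construction and still carries full $\eta$-measure; since each $V^s \subsetneq E^i$ properly, one should obtain a strict decrease in the partial order $\leq$, contradicting minimality. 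The subtlety that will require real care is the deduplication step and verifying that no $A V^s$ accidentally coincides with some other $E^k$ in the tuple outside the orbit of $i$; this uses the distinctness of the $E^l$ together with a dimension argument.

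Once compactness of $\overline{\rho_i(H_i)}$ is in hand, an invariant inner-product class $[\langle \cdot, \cdot\rangle^i]$ on $E^i$ can be extracted either by Haar-averaging a reference inner product over the compact group $\overline{\rho_i(H_i)}$, or equivalently by applying the Cartan fixed-point theorem on the nonpositively curved symmetric space $PGL(E^i)/PO(E^i)$ of inner-product classes on $E^i$. To handle general $A \in H$, I would fix a representative $i_0$ for each $H$-orbit in $\{1,\dots,p\}$, producing $\langle \cdot, \cdot\rangle^{i_0}$ as above, and for every $j$ in the same orbit choose a transition element $A^*_j \in H$ with $\pi_{A^*_j}(i_0) = j$ and set $\langle v, w\rangle^j := \langle (A^*_j)^{-1} v, (A^*_j)^{-1} w\rangle^{i_0}$ for $v, w \in E^j$. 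Then for any $A \in H$ with $\pi_A(i) = k$, the element $(A^*_k)^{-1} A A^*_i$ lies in $H_{i_0}$ and therefore acts conformally on $(E^{i_0}, \langle \cdot, \cdot\rangle^{i_0})$; unwinding the definitions gives the required conformality of $A : (E^i, \langle \cdot, \cdot\rangle^i) \to (E^k, \langle \cdot, \cdot\rangle^k)$.
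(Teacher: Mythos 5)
Your proposal is correct and follows essentially the same route as the paper: restrict a finite-index subgroup of $H$ to $E^i$ modulo scalar factors, show the restriction is compact by combining Claim~\ref{cla:splitSubspaces} with minimality of the tuple, extract an invariant inner product on $E^i$ by Haar averaging (Lemma~\ref{lem:subgroups}(a)), and push it across each $\mathfrak S$-orbit via transition elements. One remark on what you flag as "the hard part": neither the $H$-invariance of the replacement tuple nor the deduplication subtlety is actually needed for the contradiction---since minimality is with respect to the partial order on $\mathcal G$ (arbitrary full-measure tuples, not just $H$-invariant ones), it suffices to replace $E^i$ by $V^1,V^2$ alone and keep the other $E^j$, noting that $E^i$ cannot be covered by the resulting finite union of proper subspaces.
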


\begin{proof}
For (ii), form the subgroup $\tilde H = \tilde H_\eta = \{ A \in H_\eta : A E^i = E^i \text{ for all } 1 \leq i \leq p\}$. As one can check, 
 $\tilde H \subset H$ is a closed, normal subgroup of finite index. The quotient group $H / \tilde H$ is naturally 
 isomorphic to a subgroup $\mathfrak S$ of the group of permutations on $p$ symbols. 
Let us assume for the moment that $\mathfrak S$ acts transitively\footnote{Let $S \subset \{ 1, \cdots, p \}$ and assume $\mathfrak S S = S$. We say that $\mathfrak S$ acts \emph{transitively} on $S $ if for all $i, j \in S$ there is some $\pi \in \mathfrak S$ for which $\pi(i) = j$.} on $\{ 1, \cdots, p\}$; we will remove this restriction at the end of the proof.

Fix an arbitrary $i \in \{ 1, \cdots, p\}$ and form 
\[
\check H^{(i)} = \{ (\det (A|_{E^i}))^{- \frac{1}{\dim E^i}} A|_{E^i} : A \in \tilde H\} \, .
\]
Note that linear operators in $\check H^{(i)}$ preserve the measure $\eta|_{E^i}$. 
Since any $A \in \tilde H$ maps $E^i$ into itself, we can think of $\check H^{(i)}$ as a subgroup of $SL_{\dim E^i}(\R)$ 
on identifying $E^i$ with $\R^{\dim E^i}$. We claim that $\check H^{(i)}$ is compact. If not, then by Claim \ref{cla:splitSubspaces}
there are proper linear subspaces $\check V^1, \check V^2 \subset E^i$ for which $\eta(\check V^1 \cup \check V^2) = \eta(E^i)$. This contradicts
minimality of $(E^i)_{i =1 }^p$ as in item (i). Thus $\check H^{(i)}$ is compact; it now follows from Lemma \ref{lem:subgroups}(a) 
that there exists an inner product $\langle \cdot, \cdot \rangle^i$ on $E^i$ with respect to which $\check H^{(i)}$ acts isometrically. 
Equivalently, linear operators of the form $A|_{E^i}, A \in \tilde H$ act conformally with respect to $\langle \cdot, \cdot \rangle^i$.

We now define $\langle \cdot, \cdot \rangle^j, 1 \leq j \leq p, j \neq i$ as follows: for each such $j$, fix an $M \in H$ for which $M E^i = E^j$
(such an $M$ exists since $\mathfrak S = H / \tilde H$ acts transitively on $\{1, \cdots, p\}$ by assumption) and define 
\begin{align}\label{eq:pushInnerPRoduct}
\langle v, w \rangle^j = \langle M^{-1} v, M^{-1} w \rangle^i \, , \quad v, w \in E^j \, .
\end{align}
This definition is independent of $M$: if $M' E^i = E^j$ for some other $M' \in H$, then 
$\langle (M')^{-1} v, (M')^{-1} w \rangle^i = \langle M^{-1} v, M^{-1} w \rangle^i = \langle v, w \rangle^j$ holds for all $v, w \in E^j$.
By a similar computation, one checks that if $A \in H$ maps $A E^i = E^j$, then $A$ is conformal with respect to the inner products
$\langle \cdot, \cdot \rangle^i, \langle \cdot, \cdot \rangle^j$, respectively. This completes the proof when $\mathfrak S \cong H / \tilde H$ 
acts transitively on $\{ 1, \cdots, p\}$.

Let us now address the situation when $\mathfrak S$ does not act transitively on $\{ 1, \cdots, p\}$. In this case, by a standard argument
 there is a unique  partition of  $\{ 1, \cdots, p\}$ into disjoint sets $\mathcal P_l, 1 \leq l \leq k$, such that for each partition atom $\mathcal P_l$, we have 
 (1)  $\mathfrak S \mathcal P_l = \mathcal P_l$, and (2) $\mathfrak S$ acts transitively
on $\mathcal P_l$.
For each $\mathcal P_l$, repeat the construction of $\langle \cdot, \cdot \rangle^i$ for some
fixed arbitrary $i \in \mathcal P_l$, and then define $\langle \cdot, \cdot \rangle^j, j \in \mathcal P_l, j \neq i$ as in \eqref{eq:pushInnerPRoduct}
for some arbitrary $M \in H$ sending $M E^i = E^j$ (such an $M$ exists since $\mathfrak S$ acts transitively on $\mathcal P_l$ by construction). 
Lemma \ref{lem:item4} now follows from the previous arguments,
 since for all $A \in H$, we can have $A E^i = E^j$ only if $i, j$ belong to the same $\mathcal P_l$ for some $1 \leq l \leq k$.
\end{proof}

\subsubsection*{Proof of Theorem \ref{thm:classification}}

We first give the following preliminary Lemma. For $z \in \supp \mu$, define
\[
O_z = \{ T z : (T, A) \in \Sc\} \, .
\]
Note that $O_z \subset \supp \mu$ holds by Lemma \ref{lem:almostSurelyToTopological}. 
Using ergodicity of $\mu$ and the strong Feller property, we get the following.

\begin{lemma}\label{lem:fullOrbitMeasu}
	For all $z \in \supp \mu$, we have $\mu(O_z) = 1$.
\end{lemma}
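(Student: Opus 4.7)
The plan is to prove $\mu(\bar O_z) = 1$ with $\bar O_z := \overline{O_z}$, from which the statement follows either directly (if we permit ourselves to work with the closure) or after a brief additional bookkeeping argument. I would proceed in three steps: show that $\bar O_z$ is forward invariant for the Markov semigroup $(P_t)$, apply ergodicity of $\mu$ to deduce the dichotomy $\mu(\bar O_z) \in \{0, 1\}$, and then use the strong Feller property of $(P_t)$ together with $z \in \supp \mu$ to rule out the zero case.

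For forward invariance, I want to check $P_t(z', \bar O_z) = 1$ for each $z' \in \bar O_z$ and $t > 0$. First consider $z' = Tz$ with $(T, A) \in \Sc$ and pick $(T_n, A_n) \in \Sc_{s_n}$ approximating $(T, A)$ in $\Cc$. Independence of increments (H3) and the cocycle property imply that composition sends $\Sc_t \times \Sc_{s_n}$ into $\Sc_{s_n + t} \subset \Sc$; letting $n \to \infty$ and using that $\Sc$ is closed, the composition of any $(T', A') \in \Sc_t$ with $(T, A)$ again lies in $\Sc$. Since $(\Tc^t_\omega, \Ac^t_\omega) \in \Sc_t$ almost surely, this gives $\Tc^t_\omega z' \in O_z$ a.s., so $P_t(z', O_z) = 1$. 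For a general $z' \in \bar O_z$ I would approximate $z' = \lim_n z_n'$ with $z_n' \in O_z$ and, on the full-measure event where $\Tc^t_\omega z_n' \in O_z$ for every $n$ (a countable intersection of probability-one events), use continuity of $\Tc^t_\omega \in C_{u,b}(Z, Z)$ to conclude $\Tc^t_\omega z' \in \bar O_z$.

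From forward invariance, $P_t \chi_{\bar O_z} \geq \chi_{\bar O_z}$ pointwise on $Z$, while stationarity of $\mu$ forces $\int P_t \chi_{\bar O_z}\,\dee\mu = \int \chi_{\bar O_z}\,\dee\mu$; together these yield equality $\mu$-a.s., and ergodicity then forces $\chi_{\bar O_z}$ to be $\mu$-a.s.\ constant, i.e.\ $\mu(\bar O_z) \in \{0, 1\}$. To rule out the zero case, note that $z \in O_z \subset \bar O_z$ (since $(\Id, \Id) \in \Sc_0$), so $P_t(z, \bar O_z) = 1$. The strong Feller property (Definition \ref{defn:strongFeller}) applied to the Borel indicator $\chi_{\bar O_z}$ makes $z' \mapsto P_t(z', \bar O_z)$ continuous on $Z$, so this function exceeds $\tfrac12$ on some open neighborhood $U$ of $z$; since $z \in \supp \mu$ implies $\mu(U) > 0$, stationarity yields
\[
\mu(\bar O_z) = \int P_t(z', \bar O_z)\,\dee\mu(z') \geq \tfrac12 \mu(U) > 0,
\]
and combined with the dichotomy this forces $\mu(\bar O_z) = 1$.

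The only real subtlety — and thus the main obstacle — is the distinction between $O_z$ and its closure. Upgrading $\mu(\bar O_z) = 1$ to the stated $\mu(O_z) = 1$ requires showing $\mu(\bar O_z \setminus O_z) = 0$; this can be handled by rerunning the same dichotomy directly on the universally measurable set $O_z$ (analytic as the continuous image of the closed set $\Sc$ under the evaluation map), using the already-established forward invariance $P_t(z', O_z) = 1$ for $z' \in O_z$ and approximating $\chi_{O_z}$ $\mu$-a.s.\ by Borel indicators to justify both the ergodicity step and the strong Feller continuity step that produces $\mu(O_z) > 0$.
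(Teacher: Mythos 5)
Your proof is correct and follows the same three-step scheme as the paper's: establish $(P_t,\mu)$-invariance of the orbit set via closure of $\Sc$ under cocycle composition, invoke ergodicity for the zero-one law, and rule out the zero alternative using the strong Feller property (the paper uses the equivalent ultra-Feller form from Lemma \ref{lem:ultraFeller}(a) to transport the value $P_t(\cdot, O_z) = 0$ from a $\mu$-full, hence dense, subset of $\supp\mu$ to the point $z$, contradicting $P_t(z, O_z)=1$). Your detour through $\bar O_z$ is ultimately superfluous: as you observe in your final paragraph, the same dichotomy runs directly on the analytic (hence universally measurable) set $O_z$, which is exactly what the paper does.
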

\begin{proof}
	
	First, let us check that $O_z$ is a $(P_t, \mu)$-invariant set in the sense of 
	Definition \ref{defn:PtmuInvariant}. Fix $t > 0$ and let $y \in O_z$.
	Then, $y = T z$ for some $(T, A) \in \Sc$. Now, fix a $\P$-generic $\omega \in \Omega$
	and set $T' = T^t_\omega, A' = \Ac^t_{\omega}$. Noting $(T', A') \in \Sc$ with probability
	1, we see that $T' y = T' \circ T z$, hence $T' y \in O_z$. Since $y \in O_z$ was arbitrary, we conclude
	$\Tc^t_\omega y \in O_z$ for any $t \geq 0$ with probability $1$, hence
	$O_z$ is $(P_t, \mu)$-invariant.
	
	It follows from ergodicity for $\mu$ (Definition \ref{defn:PtmuInvariant}) that
	$O_z$ has zero or full $\mu$-measure. To check $\mu(O_z) > 0$, assume otherwise
	and observe that by stationarity, $P_t(y, O_z) = 0$ for $\mu$-almost all $y \in Z$. From the
	ultra-Feller property for the semigroup $(P_t)$ as in Lemma \ref{lem:ultraFeller}, we conclude
	$P_t(z, O_z) = 0$, a contradiction (note $\{ y \in Z : P_t (y, O_z) = 0 \}$ must be
	dense in $\supp \mu$). We conclude $\mu(O_z) > 0$, hence $\mu(O_z) = 1$.
\end{proof}

\begin{proof}[Proof of Theorem \ref{thm:classification}]
Fix $z_0 \in \supp \mu$, thought of as a reference point, and consider the $SL_2(\R)$ subgroup 
\[
H_{z_0} = \{ A \in SL_2(\R) : A_* \bar \nu_{z_0} = \bar \nu_{z_0}\} \, .
\]
Note that $H_{z_0}$ is closed by Lemma \ref{lem:subgroups}.
We claim that if $H_{z_0}$ is compact we are in case (a), while if $H_{z_0}$ is noncompact
then we are in case (b). Crucially, this distinction does not depend on the choice of reference
point $z_0 \in Z$; see Remark \ref{rmk:choiceOfReferencePt} below for a discussion of this point.

\bigskip

\noindent {\bf Case (a): $H_{z_0}$ is compact.} 
By Lemma \ref{lem:subgroups} there is an inner product $\langle \cdot, \cdot \rangle_{z_0}$ 
with respect to which all matrices in $H_{z_0}$ act as isometries. We define the family
$\{ \langle \cdot, \cdot \rangle_z \}_{z \in \supp \mu}$ as follows. For each $z \in \supp \mu$,
fix $y \in O_{z_0} \cap O_z$ (such a point exists since $\mu(O_{z_0} \cap O_z) = 1$ by Lemma \ref{lem:fullOrbitMeasu}) 
and let $(T, A) , (T', A') \in \Sc$ be such that $T z_0 = y, T' z = y$.

For $v, w \in \R^d$ we define
\[
\langle v, w \rangle_z = \langle A_{z_0}^{-1} \circ A_z' v, A_{z_0}^{-1}\circ A_z' w \rangle_{z_0} \, .
\]
Let us check this definition does not depend on the exact
choice of $(T, A), (T', A')$. If $(\bar T, \bar A), (\bar T', \bar A') \in \Sc$ are any other elements
for which $\bar T z_0 =  y, \bar T' z =  y$, then Lemma \ref{lem:almostSurelyToTopological}
implies $(\bar A_{z_0})^{-1} \bar A_z' (A_z')^{-1} A_{z_0} \in H_{z_0}$, and so
\[
\langle A_{z_0}^{-1} \circ A_z' v, A_{z_0}^{-1}\circ A_z' w \rangle_{z_0} = \langle \bar A_{z_0}^{-1} \circ \bar A_z' v, \bar A_{z_0}^{-1} \circ \bar A_z' w \rangle_{z_0}
\]
holds by Lemma \ref{lem:subgroups}(a).
By a similar proof, one checks that for each $(T, A) \in \Sc$
and $z \in \supp \mu$, we have that $A_z: (\R^d, \langle \cdot, \cdot \rangle_z) \to (\R^d, \langle \cdot, \cdot \rangle_{T z})$ is an isometry.

To prove continuity of $z \mapsto \langle \cdot, \cdot \rangle_z$ we do the following.
For each $z \in \supp \mu$, the inner product $\langle \cdot, \cdot \rangle_z$ gives rise to
a Euclidean volume on $\R^d$ and an induced volume $\tilde \nu_z$ on $P^{d-1}$. By the isometry property, 
it follows that for all $(T, A) \in \Sc$, we have that
$(A_z)_* \tilde \nu_z = \tilde \nu_{T z}$ for all $z \in \supp \mu$.
Thus $(\tilde \nu_z)_{z \in \supp \mu}$
defines an invariant measure family on $\supp \mu$. Repeating the proof of Proposition \ref{prop:contVaryingMeasure}
for this new invariant measure family, 
we conclude $(\tilde \nu_y)$ is continuously varying in the weak$^*$ topology.

From the weak$^*$ continuity of $z \mapsto \tilde \nu_z$ and the fact that $\tilde \nu_z \ll \Leb_{P^{d-1}}$ for all
$z$, we conclude that the densities $\rho_z := \frac{d \tilde \nu_z}{d \Leb_{P^{d-1}}}$,  $z \mapsto \rho_z : P^{d-1} \to \R$, 
vary continuously in the uniform norm on $C(P^{d-1}, \R)$. 
It is now straightforward to check that the corresponding inner products 
$z \mapsto \langle \cdot, \cdot \rangle_z$ vary continuously.

\begin{remark}\label{rmk:badFiberMeasures}
	It is a subtle point in the proof of Theorem \ref{thm:classification}(a) above that 
	the original invariant measure family $(\bar \nu_z)_{z \in \supp \mu}$ need not coincide
	with the measure family $(\tilde \nu_z)_{z \in \supp \mu}$. Indeed, we do not rule out the possibility that
	the $(\bar \nu_z)$ consist of some combination of atomic, singular continuous and 
	absolutely continuous measures. As such, one cannot deduce continuity of the resulting inner products
	$\langle \cdot , \cdot \rangle_z, z \in \supp \mu$ directly from the $(\bar \nu_z)$. As we will see below, the proof
	of Theorem \ref{thm:classification}(b) has a similar complication which must be addressed.
	
	By comparison, Theorem 6.8 in \cite{baxendale1989lyapunov} avoids this subtlety
	for two reasons: (1) in that framework, under a nondegeneracy condition
	it follows that the fiber measures $\bar \nu_z$ are automatically absolutely continuous w.r.t. the volume on $P^{d-1}$; 
	and (2) Theorem 6.8 in \cite{baxendale1989lyapunov} invokes an additional hypothesis 
	that we are not able to justify either at the level
	of generality of Theorem \ref{thm:classification} or for the Lagrangian flow corresponding
	to the infinite-dimensional Systems \ref{sys:NSE}, \ref{sys:3DNSE}.
\end{remark}

\bigskip

\noindent {\bf Case (b): $H_{z_0}$ is noncompact}

Let $E^i_{z_0} = E^i, 1 \leq i \leq p$ be as in Lemma \ref{lem:subgroups}(b) applied to $H = H_{z_0}$.
For each $i$, let $\langle \cdot, \cdot \rangle^i$ denote the corresponding inner product on $E^i = E^i_{z_0}$. 
For $z \in \supp \mu$ we define $E^i_{z}$ as follows. Fix $y \in O_{z_0} \cap O_z$, as in the
proof for case (a), and fix $(T, A), (T', A')$ for which $T {z_0} = y, T' z = y$.
 We define
	\[
	E^i_{z} = (A'_z)^{-1} \circ A_{z_0} (E^i_{z_0}) \, .
	\]
	We also define the inner products $\langle \cdot, \cdot \rangle^i_z$ on $E^i_z$ by setting,
	for $v, w \in E^i_z$,
	\[
	\langle v, w \rangle^i_z = \langle (A_{z_0})^{-1} A'_z v, (A_{z_0})^{-1} A'_z w \rangle^i \, .
	\]

	As in the proof of case (a), one checks that the above definitions do not depend on the
	exact choices of $y \in O_z \cap O_{z_0}$ or $(T, A), (T', A') \in \Sc$. By a similar check, the invariance property for 
	the $E^i_{z}, z \in \supp \mu$ similarly holds, and moreover, for $(T, A) \in \Sc$ and $z \in \supp \mu$, we have that
	$A_z : (E^i_z, \langle \cdot, \cdot \rangle^i_z) \to  (E^{\pi(i)}_{T z}, \langle \cdot, \cdot \rangle^{\pi(i)}_{T z})$
	is conformal.
	
Let us now prove the continuity statement.  
Observe that since $d \leq 3$, there are two cases: either $\dim E^i_z \equiv 1$ for all
$i$ or $\dim E^i_z = 2$ for some $i, z$. If the former, local continuity of $z \mapsto E^i_z$
up to relabeling follows immediately from the fact that $\bar \nu_z|_{E^i_z}$ is a delta mass
supported on the projective point corresponding to $E^i_z$.
If the latter, then by Claim \ref{cla:splitSubspaces} we must have that $p \leq 2$ and that 
at most one of the $E^i_z$ is two-dimensional for each $z \in \supp \mu$. We focus on the case
$p = 1$; essentially the same proof applies when $p = 2$. Hereafter let us write $E_z := E^1_z$. Note that this
can only occur when $d = 3$, which hereafter we assume.

In analogy with the proof of Theorem \ref{thm:classification}(a), consider for each $z \in \supp \mu$,
 the Euclidean volume $m_z$ on $E_z \subset \R^3$ induced by the inner product $\langle \cdot , \cdot \rangle_z := \langle \cdot, \cdot \rangle_z^1$.
This induces a normalized volume $\tilde \nu_z$ on the projectivization of $E_z$ in $P^{2}$.
As in the proof for case (a), the fiber measure family $(\tilde \nu_z)_{z \in \supp \mu}$ is invariant as in \eqref{eq:invariantMeasuresFamily}.
This follows from the conformality property for the inner product $\langle \cdot, \cdot \rangle_z$. 
As in the proof of case (a),
we can repeat the arguments of Proposition \ref{prop:contVaryingMeasure}, from which we obtain that the family $(\tilde \nu_z)$ is weak$^*$ continuous.
Continuity of $z \mapsto E_z$ now follows. \qedhere
\end{proof}

We conclude Section \ref{subsubsec:classifyRDS} with several remarks.

\begin{remark}\label{rmk:choiceOfReferencePt}
The determination between case (a) and (b) made at the beginning of the proof of Theorem \ref{thm:classification}
does not depend on the reference point $z_0 \in \supp \mu$. Indeed, given $z, z' \in \supp \mu$ one can obtain a 
group isomorphism $H_z \to H_{z'}$ as follows: fix $y \in O_z \cap O_{z'}$ and let $(T, A), (T', A') \in \Sc$ be such that
$T z = y, T' z = y$. Then, the mapping $H_z \to H_{z'}$ sending $H_z \ni M \mapsto  (A'_{z'})^{-1} A_z M  A^{-1}_z A'_{z'} \in H_{z'}$
is an isomorphism from $H_z$ to $H_{z'}$.
\end{remark}

\begin{remark}\label{rmk:why3D}
The restriction to $d \leq 3$ is only relevant in case (b) of Theorem \ref{thm:classification}. For $d \geq 4$ the result is likely
to be true, but the proof is lengthier due to the fact that among the $E^i_z, z \in \supp \mu, 1 \leq i \leq p$ there may be arbitrarily many subspaces
of dimension $\geq 2$. Thus, the trick applied in case (b) above must be applied to the projectivization of the Euclidean volume on each $E^i_z$ separately,
and continuity derived in this way. Since the case $d \leq 3$ suits the purpose of our main application in this paper, we leave off the $d \geq 4$ case to a future work.
\end{remark}

\begin{remark}\label{rmk:whyNovelRDS}
Let us summarize the differences between Theorem 6.8 in \cite{baxendale1989lyapunov} and the analogue pursued here in Section \ref{subsubsec:classifyRDS}.
To start, Theorem 6.8 of \cite{baxendale1989lyapunov} proves the classification in Theorem \ref{thm:classification} above in the special case when $Z$
is a locally compact Riemannian manifold, $\Tc$ is the stochastic flow of diffeomorphisms generated by a hypoelliptic SDE satisfying suitable nondegeneracy properties, and
$\Ac$ is its corresponding derivative cocycle.

In comparison, Theorem \ref{thm:classification} does not require that $\Ac$ be the derivative cocycle of $\Tc$. This requires that we work 
with the product space $\Cc$ of pairs of mappings and cocycles, as is done in Lemma \ref{lem:almostSurelyToTopological}.
Moreover, and arguably of greater consequence, is the fact that the base RDS $\Tc$ is not necessarily invertible, 
nor is its phase space $Z$ locally compact. These differences 
are emblematic of dynamics on infinite-dimensional spaces and are 
exemplified by our intended application to the Navier-Stokes equations 
and more generally to regularizing semilinear parabolic problems.
This raises numerous issues which we have dealt with over the course of Section \ref{sec:RDS}, e.g., 
the definition of the topology on observables with respect to which $(P^t)$ is a $C^0$-semigroup (Proposition \ref{prop:semigroupRegularity1}).

Finally, Theorem 6.8 of \cite{baxendale1989lyapunov}, of which the main result Theorem \ref{thm:classification} is an analogue, invokes
an additional hypothesis to get continuity of the obtained invariant inner products in case (a) (resp., finite union of proper linear subspaces in case (b)). 
This additional hypothesis is not accessible in our setting.
This brings up a significant subtlety (Remark \ref{rmk:badFiberMeasures}),
unique to our setting, which our argument addresses.
\end{remark}

\subsection{Sufficient condition for $\lambda_1 > 0$: approximate controllability criteria}\label{subsubsec:approxControlRuleOut}

We will now state a weaker version of the criterion (C) in Section \ref{subsec:FurstenbergFD} 
which can be used to rule out the alternatives (a) and (b) in Theorem \ref{thm:classification}.

\begin{definition}\label{defn:condCprimeRDS}
	We say that the cocycle $\Ac$ satisfies the \emph{approximate controllability condition} (C') if 
	there exist $z, z' \in \supp \mu$ such that $z'$ belongs to the support of the measure 
	$P_{t_0}(z, \cdot)$ for some $t_0 > 0$, and we have each of the following.
		\begin{itemize}
			\item[(a)] We have
				$Q_{t_0} ((z, \Id), B_\epsilon(z') \times \{ A \in SL_d(\R) : | A | > M \}) > 0$ for any $\epsilon, M > 0$.
			\item[(b)] For any $v \in P^{d-1}$, open $V \subset P^{d-1}$ and $\epsilon > 0$,
			we have $\widehat P_{t_0}((z, v), B_\epsilon(z') \times V) > 0$.
		\end{itemize}	
\end{definition}

We can now prove the following.

\begin{proposition}\label{prop:approxControlRuleOut}
Let $d \leq 3$. Let $\Ac$ be an $SL_d(\R)$ linear cocycle as in Section \ref{subsubsec:linearCocycleBasicRDS} over a continuous RDS $\Tc$ as in Section \ref{subsubsec:basicSetupRDS}
 satisfying (H1) -- (H3) for which the Markov semigroup $(P_t)$ has the strong Feller property.
Let $\mu$ be an ergodic stationary measure for which the approximate controllability condition (C') holds. 
Then, $\lambda^+ > \lambda^-$, and in particular $\lambda_1 > 0$, in the MET (Theorem \ref{thm:MET}).
\end{proposition}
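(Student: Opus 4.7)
The plan is to proceed by contradiction: assume $\lambda^+=\lambda^-$, invoke Theorem~\ref{thm:furstenberg} to obtain an invariant fiber measure family $(\nu_z)$, upgrade it via Proposition~\ref{prop:contVaryingMeasure} and Lemma~\ref{lem:almostSurelyToTopological} (using the strong Feller hypothesis) to a continuously-varying family $(\bar\nu_z)_{z\in\supp\mu}$ satisfying the ``topological'' invariance $A_z\bar\nu_z=\bar\nu_{Tz}$ for every $z\in\supp\mu$ and every $(T,A)\in\Sc$, and then apply Theorem~\ref{thm:classification} (whose hypothesis $d\leq 3$ is available) to reduce to one of the two alternatives. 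The approximate controllability conditions (C')(a) and (C')(b) are tailored to contradict alternatives (a) and (b) of Theorem~\ref{thm:classification} respectively. Throughout, let $z,z'\in\supp\mu$ and $t_0>0$ be the objects from condition (C').

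In alternative (a), there is a continuously-varying family $\{\langle\cdot,\cdot\rangle_y\}_{y\in\supp\mu}$ of inner products on $\R^d$ such that $A_y:(\R^d,\langle\cdot,\cdot\rangle_y)\to(\R^d,\langle\cdot,\cdot\rangle_{Ty})$ is an isometry for every $(T,A)\in\Sc$. By continuity of $y\mapsto\langle\cdot,\cdot\rangle_y$ at $z'$, I fix $\delta>0$ and $C>0$ so that $\|\cdot\|_y$ is $C$-comparable to the standard Euclidean norm for $y\in B_\delta(z')\cap\supp\mu$; $\|\cdot\|_z$ is similarly comparable with constant $C'$. The set $\{(T,A)\in\Cc:Tz\in B_\delta(z'),\;|A_z|>M\}$ is \emph{open} in $\Cc$ by continuity of evaluation at $z$, so condition (C')(a) (with $\epsilon=\delta$) forces $\Sc_{t_0}\subset\Sc$ to meet it for every $M$. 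Taking $M>CC'$ yields $(T,A)\in\Sc$ for which $A_z$ is simultaneously isometric between $C,C'$-comparable norms and has $|A_z|>CC'$, a contradiction.

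In alternative (b), we have $p\geq 1$ families of proper linear subspaces $E^i_y\subsetneq\R^d$, locally continuous up to relabeling, with $A_zE^i_z=E^{\pi(i)}_{Tz}$ for every $(T,A)\in\Sc$ and with $\bar\nu_z(\bigcup_iE^i_z)=1$. Using the local continuity statement near $z'$, I choose $\delta$ small enough that, after relabeling, $y\mapsto E^i_y$ is continuous on $B_\delta(z')\cap\supp\mu$; then the closed set $\mathcal{E}:=\overline{\bigcup_{i=1}^p\bigcup_{y\in B_\delta(z')\cap\supp\mu}E^i_y}$ lies in an arbitrarily small neighborhood of the finite union of proper subspaces $\bigcup_iE^i_{z'}$ in $P^{d-1}$, and therefore has nonempty open complement. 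Fix a nonempty open $V\subset P^{d-1}\setminus\mathcal{E}$, and pick $v\in E^1_z\subset P^{d-1}$ (nonempty since $\bar\nu_z(\bigcup_iE^i_z)=1$). Converting openness in $\Cc$ once more, condition (C')(b) applied with this triple $(v,V,\delta)$ produces $(T,A)\in\Sc$ with $Tz\in B_\delta(z')$ and $A_zv\in V$; but the invariance relation forces $A_zv\in E^{\pi(1)}_{Tz}\subset\mathcal{E}$, contradicting $V\cap\mathcal{E}=\emptyset$. With both alternatives ruled out, $\lambda^+>\lambda^-$; since $\Ac$ is an $SL_d(\R)$-cocycle we have $\sum_i\chi_i=0$ by Lemma~\ref{lem:singularValuesLimit}, so $\chi_1=\lambda^+>0$, i.e.\ $\lambda_1>0$, as claimed.

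The main obstacle will be the argument in case (b): one must exploit the (locally continuous up to relabeling) statement in Theorem~\ref{thm:classification}(b) to ensure that as $y$ ranges over a neighborhood of $z'$ in $\supp\mu$, the union $\bigcup_i E^i_y$ does not sweep out all of $P^{d-1}$, so that a nonempty open target $V$ genuinely exists. A recurring, lower-level technical point is the conversion from probabilistic statements about $\P$ to topological statements about $\Sc$: this is what lets us extract an \emph{actual} element $(T,A)\in\Sc$ from the positive-probability events in (C'), and it goes through by noting that the events in (C')(a), (b) are pullbacks of open sets in $\Cc$ under the continuous evaluation $(T,A)\mapsto(Tz,A_z)$, so positive probability gives nonempty intersection with $\Sc_{t_0}\subset\Sc$.
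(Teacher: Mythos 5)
Your proposal is correct and follows essentially the same route as the paper's own proof: reduce to the two alternatives of Theorem~\ref{thm:classification}, then use (C')(a) to contradict the isometry alternative (via norm comparability on a small ball about $z'$) and (C')(b) to contradict the subspace-invariance alternative (via a target open set $V$ disjoint from $\bigcup_i E^i_y$ over $B_\epsilon(z')\cap\supp\mu$). The only minor differences are cosmetic: you extract a witness $(T,A)\in\Sc$ by noting that the events in (C') are preimages of open sets in $\Cc$ under the evaluation map, whereas the paper works with a $\P$-positive event $E$ and pares off a null set so that $(\Tc^{t_0}_\omega,\Ac^{t_0}_\omega)\in\Sc_{t_0}$ for $\omega\in E$; and you pick $v\in E^1_z$ rather than the paper's $v\in E^i_z\setminus\bigcup_{j\neq i}E^j_z$, which makes no difference to the contradiction. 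Both observations are sound, and you correctly highlight (and handle) the one point the paper states without elaboration, namely that local continuity of the families $E^i_y$ ensures $\bigcup_i\bigcup_{y\in B_\epsilon(z')\cap\supp\mu}E^i_y$ has nonempty open complement in $P^{d-1}$.
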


\begin{proof}
	If $\lambda^+ = \lambda^-$, then Theorem \ref{thm:furstenberg} applies, and so
	either case (a) or case (b) holds in Theorem \ref{thm:classification}. 
	
	We start by ruling out (a). For $y \in \supp \mu$, write $| \cdot|_y$ for the norm
	corresponding to the inner product $\langle \cdot, \cdot \rangle_y$.
	Let 
	\[
	\kappa = \max\bigg\{  \max_{v \in \R^d \setminus \{ 0\}} \frac{|v|_z}{|v|} ,  
	\max_{v \in \R^d \setminus \{ 0\}} \frac{|v|_{z'}}{|v|} \bigg\}\, . 
	\]
	Fix $\epsilon > 0$ so that $\frac12 |\cdot|_y \leq |\cdot|_{z'} \leq 2 |\cdot|_y$ for all $y \in B_\epsilon(z')$.
	
	Now, condition (C')(a) says that there is a $\P$-positive measure set $E \subset \Omega$
	such that $\Tc^{t_0}_\omega z \in B_\epsilon(z')$ and $|\Ac^{t_0}_{\omega, z}| > 2 \kappa^2$
	 for all $\omega \in E$. Without loss we can assume
	$\{ (\Tc_\omega^{t_0}, \Ac_{\omega, \cdot}^{t_0}) : \omega \in E \} \subset \Sc_{t_0}$,
	perhaps on paring off an $\P$-measure zero set from $E$.	
	By Theorem \ref{thm:classification}(a), for all $\omega \in E$ 
	we deduce $|\Ac^{t_0}_{\omega,z}|_{z, y} = 1$, where
	$|\cdot|_{z, y}$ is the matrix norm induced by the norms $|\cdot|_z$ at $z$ and $|\cdot|_y$
	at $y = \Tc^{t_0}_\omega z$. From this we obtain the estimate 
	$|A^{t_0}_{\omega, z}| \leq 2 \kappa^2$ in the matrix
	norm induced from $|\cdot|$. This is a contradiction to (C')(a).
	
	\medskip
	
	Turning to case (b), 	
	take $\epsilon > 0$ sufficiently small so that (i) a labelling of the $E^i_y, y \in B_\epsilon(z')$
	exists for which $y \mapsto E^i_y$ is continuous for $1 \leq i \leq p$, and (ii) there is an open set $V \subset P^{d-1}$
	for which $V \cap (\cup_i E^i_y) = \emptyset$ for all $y \in B_\epsilon(z')$.
	
	Fix an arbitrary $1 \leq i \leq p$ and $v \in E^i_z \setminus (\cup_{j \neq i} E^j_z)$. 
	Condition (C')(b) implies that there is a $\P$-positive measure set $E \subset \Omega$
	such that for all $\omega \in E$, we have $\Tc_\omega^{t_0} z \in B_\epsilon(z')$ 
	and $\Ac^{t_0}_{\omega, z} v \in V$. As before, on paring off a $\P$-measure zero set
	we can assume $(\Tc^{t_0}_\omega, \Ac^{t_0}_{\omega}) \in \Sc_{t_0}$ for all $\omega \in E$,
	from which we deduce (Theorem \ref{thm:classification}(b)) that
	 $\Ac^{t_0}_{\omega, z} E^j_z = E^{\pi_\omega(j)}_{\Tc^{t_0}_\omega z}$ for
	all $\omega \in E$ and $1 \leq j \leq p$, where $\pi_\omega$ is some permutation on $\{ 1, \cdots, p\}$. 
	But at $j = i$ this is a contradiction, since $v \in E^i_z$ yet
	$\Ac^{t_0}_{\omega, z} v \notin E^l_{\Tc_\omega^{t_0} z}$ for any $l \in \{ 1, \cdots, p\}$
	by construction. 
\end{proof}

\section{Lie brackets and H\"{o}rmander's condition}\label{sec:galerkin}
The main goal of this section is to explore how noise in the low modes of a fluid model spreads to other variables coupled to the flow. Specifically, for $(u_t)$ given by Systems \ref{sys:2DStokes} and \ref{sys:Galerkin}, we will show that the projective processes $(u_t,x_t,v_t)$, $(u_t,x_t,\check{v}_t)$, and the matrix process $(u_t,x_t,A_t)$ are all generated by vector fields satisfying the parabolic H\"ormander condition in both $2$ and $3$ dimensions (Definition \ref{def:Hor}). Using the a priori estimates on $(u_t)$ and that $\T^d \times P^{d-1}$ is compact, H\"ormander's theorem (see e.g. \cite{Hormander67,Hormander85} and \cite{DaPrato14,HairerNotes11}) then implies $(u_t,x_t)$, $(u_t,x_t,v_t)$, $(u_t,x_t,\check{v}_t)$ have absolutely continuous Markov kernels (with respect to Lebesgue measures) and unique stationary measures. 
Similarly, $(u_t,x_t,A_t)$ also has an absolutely continuous Markov kernel and therefore the arguments
given in Section \ref{subsubsec:ruleOutFurstFDOutline} are validated.
 Theorem \ref{thm:Lyap} hence follows for Systems \ref{sys:2DStokes} and \ref{sys:Galerkin}. 

 In what follows it is technically more convenient to deal with the space $\S^{d-1}$ in place of $P^{d-1}$ while still denoting $v_t$ and $\check{v}_t$ the corresponding versions in $\S^{d-1}$. Since $P^{d-1}$ and $\S^{d-1}$ are locally diffeomorphic, proving H\"{o}rmander's condition on $\S^{d-1}$ implies H\"{o}rmander's condition for $P^{d-1}$. 
 \subsection{Preliminaries}

Recall the orthogonal $L^2(\T^d)$ basis $\{e_k\}_{k\in\Z^d_0}$ and the family of $d\times(d-1)$ matrices $\{\gamma_k\}_{k\in\Z^d_0}$ introduced in Section \ref{subsubsec:noiseProcess} satisfying $\gamma^\top_k k = 0$ and $\gamma_k^\top\gamma_k = \Id$. We will denote for each $k\in \Z^d_0$ the column vectors $\{\gamma_k^1,\ldots \gamma^{d-1}_k\}$ of the matrix $\gamma_k$. These vectors consequently form an orthonormal basis for the subspace of vectors in $\R^d$ perpendicular to $k$. Note that for each $k\in \Z^d_0$ and $i \in \{1,\ldots,d-1\}$, $e_k\gamma_k^i$ is a divergence-free, mean-zero vector field on $\T^d$ and the collection $\{e_k\gamma_k^i \,:\, k\in\Z^d_0, \, i = \{1,\ldots,d-1\}\}$ forms an orthogonal basis for $\Wbf$ with respect to the inner product
\[
	\langle u^1, u^2\rangle_{\Wbf} = \int_{\T^d} u^1(x)\cdot u^2(x)\,\dx.
\]
This means that given a $u\in \Wbf$, we can write
\[
	u = \sum_{i=1}^{d-1}\sum_{k\in\Z^d_0} (u)_k^i e_k\gamma_k^i, \quad \text{where},\quad (u)_k^i = \frac{1}{\pi(2\pi)^{d-1}}\langle u, e_k \gamma_k^i\rangle_{\Wbf}.
\]
It follows that, given $(u_t)$ solving any of Systems \ref{sys:2DStokes} or \ref{sys:Galerkin}, we can write the equations for $(x_t,v_t)$ in $\T^d\times\S^{d-1}$ as
 \begin{align}\label{eq:xt-basis}
\frac{\dee}{\dt}x_t & = \sum_{i=1}^{d-1}\sum_{k\in\Z^d_0}(u_t)_k^i e_{k}(x_t)\gamma^i_k\\ \label{eq:vt-basis}
\frac{\dee}{\dt}v_t & = \sum_{i=1}^{d-1}\sum_{k\in\Z^d_0}(u_t)_k^i (k\cdot v_t) e_{-k}(x_t)\Pi_{v_t}\gamma^i_k.
\end{align}
Likewise the inverse transpose projective process $(\check{v}_t)$ in $\S^{d-1}$ is given by
\[
	\frac{\dee}{\dt}\check{v}_t  = -\sum_{i=1}^{d-1}\sum_{k\in\Z^d_0}(u_t)_k^i (\gamma_k^i\cdot \check{v}_t) e_{-k}(x_t)\Pi_{\check{v}_t}k.
\]
and the matrix process $(A_t)$ in $SL^d(\R)$ satisfies
\begin{equation}\label{eq:At-basis}
	\frac{\dee}{\dt}A_t  = \sum_{i=1}^{d-1}\sum_{k\in\Z^d_0}(u_t)_k^i e_{-k}(x)(\gamma_k^i\tensor k)A_t.
\end{equation}

We are interested in studying the hypoellipticity of the processes $(u_t,x_t,v_t)$ and $(u_t,x_t,A_t)$ , when $(u_t)$ is governed by System \ref{sys:2DStokes} or \ref{sys:Galerkin}. Recall that Systems \ref{sys:2DStokes} and \ref{sys:Galerkin} both live in a finite dimensional subspace $\hat{\Hbf}$ of $\Hbf$ (see Section \ref{subsec:notation}).
In both cases the process of interest will take the form of an abstract degenerate SDE
\begin{equation}\label{eq:finite-SDE}
	\dee y_t = X_0(y_t)\dt + \sum_{j=1}^M X_j \dee W^j_t
\end{equation}
on $\hat{\Hbf}\times \cM$, where $\cM$ is a finite dimensional Riemannian manifold (either $\T^d\times \S^{d-1}$ or $\T^d\times SL_d(\R)$). Here $X_0$ is a vector field on $\hat{\Hbf}\times\cM$ associated to the drift, while $\{X_j\}_{j=1}^M$ is an enumeration of the vectors $\{q_ke_k\gamma^i_k\,:\, k\in \mathcal{K},\, i=1,\ldots,d-1\}$ in $\hat{\Hbf}$.

Recall the {\em Lie bracket} (or commutator) of two vector fields $X$ and $Y$ on a smooth manifold $\cY$ is defined for each $y\in \cY$ by
\[
	[X,Y](y) = D_X Y(y) - D_Y X(y)
\]
where $D_X$ and $D_Y$ denote the directional derivatives in the direction $X$ and $Y$ respectively. The H\"ormander condition is now stated as follows:  
\begin{definition}[Parabolic H\"{o}rmander Condition] \label{def:Hor}
A family of vector fields $\{X_k\}_{k=0}^M$ on a smooth manifold $\mathcal{M}$ is said to satisfy the {\it parabolic H\"{o}rmander condition} if for each $y\in \cY$ the vectors
\[
\begin{aligned}
	&X_k(y), &&k = 1,\ldots, M\\
	&[X_k,X_j](y), &&k = 1,\ldots, M,\, j = 0,\ldots, M\\
	&\big[X_k,[X_j,X_\ell]\big](y), &&k = 1,\ldots, M,\, j,\ell = 0,\ldots, M\\
	& \quad \vdots && \quad\vdots
\end{aligned}
\]
span $T_y\cY$.
\end{definition}

\begin{theorem}[\cite{Hormander67}; see also \cite{Hormander85,DaPrato14,HairerNotes11}]
Let $P_t(y,A) = \PP\left(y_t \in A | y_0 = y\right)$ be the Markov kernel associated to the finite dimensional SDE \eqref{eq:finite-SDE}. 
If Definition \ref{def:Hor} is satisfied, then $P_t(y,\cdot)$ is absolutely continuous with respect to $\Leb_{\hat{\Hbf}\times \cM}$. 
\end{theorem}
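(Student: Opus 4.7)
The plan is to prove this via the Malliavin calculus route to H\"ormander's theorem, in the spirit of Malliavin, Kusuoka--Stroock, and Norris. The strategy reduces absolute continuity of the law of $y_t$ to almost-sure invertibility of the Malliavin covariance matrix, which in turn is established by an iterative argument powered by Norris's lemma that converts degeneracy of the covariance into a violation of the parabolic bracket condition. First, I would rewrite \eqref{eq:finite-SDE} in Stratonovich form, absorbing the It\^o--Stratonovich correction $\tfrac12 \sum_j D_{X_j} X_j$ into a modified drift $\tilde X_0$. Smoothness of the $X_j$ and standard moment bounds on $y_t$ (which on the compact factor $\cM$ are automatic, and on the $\hat\Hbf$ factor follow from the a priori energy estimates for Systems \ref{sys:2DStokes} and \ref{sys:Galerkin}) yield $y_t \in \mathbb D^\infty$ with Malliavin derivative $\mathcal D_s^j y_t = J_{s,t} X_j(y_s)$, where $J_{s,t}$ is the Jacobian of the stochastic flow. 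The Malliavin covariance thus factors as $C_t = J_{0,t}\, V_t\, J_{0,t}^\top$ with reduced matrix $V_t = \sum_{j=1}^M \int_0^t \bigl(J_{0,s}^{-1} X_j(y_s)\bigr)\bigl(J_{0,s}^{-1} X_j(y_s)\bigr)^\top \ds$. Since $J_{0,t}$ is a.s.\ invertible, the Bouleau--Hirsch criterion reduces absolute continuity of the law of $y_t$ to showing $V_t$ is a.s.\ strictly positive definite.

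Arguing by contradiction, suppose $\xi^\top V_t \xi = 0$ with positive probability for some unit vector $\xi$; equivalently, $Z_s^{(j)} := \xi^\top J_{0,s}^{-1} X_j(y_s) \equiv 0$ on $[0,t]$ for each $j$. Here I would invoke Norris's lemma, a quantitative Doob--Meyer decomposition asserting that if a continuous semimartingale $Z_s = Z_0 + \int_0^s a_r\,\dr + \sum_k \int_0^s b_r^k\,\dee W_r^k$ is uniformly small (in an appropriate $L^p$ sense) with well-behaved coefficients, then $a$ and each $b^k$ are also small. Computing the Stratonovich differential of $Z_s^{(j)}$ using the flow identities produces
\begin{equation}
\dee Z_s^{(j)} = \xi^\top J_{0,s}^{-1}[\tilde X_0,X_j](y_s)\,\ds + \sum_k \xi^\top J_{0,s}^{-1}[X_k,X_j](y_s)\circ \dee W_s^k,
\end{equation}
so Norris's lemma forces $\xi^\top J_{0,s}^{-1}[X_k,X_j](y_s) \equiv 0$ for all $k$ and likewise for $[\tilde X_0, X_j]$. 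Iterating on each newly vanishing quantity produces, after finitely many steps, that $\xi$ annihilates every iterated Lie bracket of $\{\tilde X_0, X_1, \ldots, X_M\}$ evaluated at $y_s$ (modulo the invertible transport by $J_{0,s}^{-1}$); evaluating at $s = 0$ and invoking the parabolic H\"ormander condition at $y_0$ forces $\xi = 0$, a contradiction.

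The main obstacle is the quantitative bookkeeping in the Norris iteration: each application consumes integrability on the coefficient processes, so one must propagate uniform $L^p$ moment bounds on the Jacobian $J_{0,s}$, its inverse, and the vector fields (and their derivatives) evaluated along the trajectory through each bracket iteration. Termination in finitely many bracket generations follows from the spanning hypothesis of Definition \ref{def:Hor}, together with a compactness-and-cover argument near $y_0$. The statement here asks only for absolute continuity (as opposed to smoothness of the density), which requires only a.s.\ nondegeneracy of $V_t$ rather than the stronger $\det(V_t)^{-p} \in L^1$ needed for Sobolev-regular densities; this modestly simplifies but does not eliminate the need for the full Norris machinery.
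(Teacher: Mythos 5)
This statement is a cited theorem: the paper invokes H\"ormander's theorem from the literature (\cite{Hormander67,Hormander85,DaPrato14,HairerNotes11}) and does not supply a proof, so there is no ``paper's proof'' to compare against. Your sketch is a correct outline of the standard probabilistic (Malliavin calculus) route, and would indeed establish the statement. A few points worth flagging in the specific setting of \eqref{eq:finite-SDE}.

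First, the noise vector fields $X_1, \ldots, X_M$ in the paper are \emph{constant} vector fields (the fixed vectors $q_k e_k \gamma_k^i$), so the It\^o--Stratonovich correction $\tfrac12\sum_j D_{X_j}X_j$ vanishes identically and $\tilde X_0 = X_0$; there is nothing to absorb. Second, you say that asking only for absolute continuity ``modestly simplifies but does not eliminate the need for the full Norris machinery''--- in fact for absolute continuity one \emph{can} dispense with the quantitative Norris lemma entirely and use the qualitative argument (as in, e.g., \cite{HairerNotes11}): on the event $\{\xi^\top V_t \xi = 0\}$, the semimartingale $Z^{(j)}_s$ vanishes identically, and uniqueness of the Doob--Meyer decomposition of the continuous semimartingale $Z^{(j)}$ forces both the finite-variation part and the martingale part to vanish separately, without any quantitative estimate; iterating this a.s.\ identity through the bracket hierarchy gives the contradiction. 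The full Norris lemma is only needed for the quantitative $L^p$ bound $\E[\det(C_t)^{-p}]<\infty$ that yields $C^\infty$ densities, which is more than the statement asks for. Third, the moment bounds on $J_{0,s}$ and $J_{0,s}^{-1}$ that your iteration requires are genuine work for System \ref{sys:Galerkin} since $X_0 = U^{NS}$ is quadratic, but they follow from the energy-conserving structure of $B$ together with a Gronwall argument; for System \ref{sys:2DStokes} the drift is linear and the bounds are immediate. Finally, your iteration starts from $Z^{(j)}$ with $j\geq 1$ and only ever produces brackets containing at least one $X_j$, so it matches exactly the span in Definition \ref{def:Hor} (which deliberately excludes $X_0$ alone); this is correct, just worth noting since the plain H\"ormander condition would include $X_0$ and would not suffice here.
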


\subsection{Lie brackets for the projective process}
In this section we study the spanning properties of Lie brackets for the process $(x_t,v_t)$ in $\T^d\times \S^{d-1}$. The equations \eqref{eq:xt-basis} and $\eqref{eq:vt-basis}$ can be written as
\[
	\frac{\dee}{\dt}\begin{pmatrix} x_t\\ v_t\end{pmatrix} = V(u_t,x_t,v_t)
\]
where $V(u,x,v)$ is the vector field defined for each $(u,x,v)\in \Hbf\times \T^d\times\S^{d-1}$ by
\[
	V(u,x,v) = \sum_{i=1}^{d-1}\sum_{k\in\Z^d_0}  \begin{pmatrix} (u)^i_ke_k(x) \gamma_k^i\\ (u)^i_k(k\cdot v)e_{-k}(x)(\Pi_{v}\gamma^i_k)\end{pmatrix} \in T_x\T^d \times T_v\S^{d-1}.
\]
Note that $V(u,x,v)$ is linear in $u$ and therefore the Lie-bracket $[e_k\gamma^i_k, V]$ does not depend on $u$ and is readily seen to be given by
\[
	[e_k\gamma^i_k, V](x,v) = \begin{pmatrix} e_k(x) \gamma_k^i\\ (k\cdot v)e_{-k}(x)(\Pi_{v}\gamma^i_k)\end{pmatrix}.
\]
The following Lemma gives sufficient conditions for $[e_k\gamma^i_k, V]$ to span $T_x\T^d\times T_v\S^{d-1}$.
\begin{lemma}
\label{lem:v-span}
Let $k^1, \ldots k^d$ be $d$ linearly independent elements of $\Z^d_0$ and define $K = \{k^1,\ldots,k^d\}\cup\{-k^1,\ldots,-k^d\}\subseteq \Z^d_0$. Then at each point $(x,v)\in \T^d\times\S^{d-1}$, we have
\[
	\Span\big\{[e_k\gamma^i_k,V](x,v)\,:\, k \in K,\, i = 1,\ldots d-1\} = T_x\T^d \times T_v\S^{d-1}.
\]
\end{lemma}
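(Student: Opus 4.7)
The plan is to exploit the pairing structure in $K = \{\pm k^1, \ldots, \pm k^d\}$ to decouple the $T_x\T^d$ and $T_v\S^{d-1}$ components of the bracket vectors, and then to use the linear independence of the $k^j$ to obtain spanning in each factor separately.

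First, I would fix a pair $\{k, -k\} \subset K$ with $k \in \Z^d_+$, abbreviate $a = \sin(k \cdot x)$, $b = \cos(k \cdot x)$, and use $\gamma_{-k} = -\gamma_k$ together with the definitions of $e_k, e_{-k}$ to rewrite the two bracket vectors (for a fixed $i \in \{1,\ldots,d-1\}$) as
\[
V_+ = \begin{pmatrix} a\,\gamma_k^i \\ (k\cdot v)\,b\,\Pi_v\gamma_k^i \end{pmatrix}, \qquad V_- = \begin{pmatrix} -b\,\gamma_k^i \\ (k\cdot v)\,a\,\Pi_v\gamma_k^i \end{pmatrix}.
\]
A direct computation using $a^2 + b^2 = 1$ then yields
\[
a V_+ - b V_- = \begin{pmatrix}\gamma_k^i \\ 0 \end{pmatrix}, \qquad b V_+ + a V_- = \begin{pmatrix} 0 \\ (k\cdot v)\,\Pi_v\gamma_k^i \end{pmatrix}.
\]
Hence for every $k \in \{k^1,\ldots,k^d\}$ and every $i$, both $(\gamma_k^i, 0)$ and $(0, (k\cdot v)\Pi_v\gamma_k^i)$ lie in the desired span at $(x,v)$.

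Next I would verify spanning in $T_x\T^d \cong \R^d$: the first family gives $\Span\{\gamma_{k^j}^i : j = 1,\ldots,d,\ i = 1,\ldots,d-1\}$. Since $\{\gamma_{k^j}^i\}_{i=1}^{d-1}$ spans $(k^j)^\perp$, any $w$ orthogonal to this family must satisfy $w \in \R k^j$ for every $j$, which by linear independence of $k^1,\ldots,k^d$ forces $w = 0$. Thus the first family spans all of $T_x \T^d$.

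For $T_v\S^{d-1} = v^\perp$, the key observation is that $v \neq 0$, so at least one $j_0$ has $k^{j_0}\cdot v \neq 0$ (otherwise the linearly independent $k^j$ would all lie in $v^\perp$, which is impossible in $\R^d$). For such a $j_0$, the vectors $\Pi_v\gamma_{k^{j_0}}^i$, $i=1,\ldots,d-1$, span $\Pi_v((k^{j_0})^\perp) = v^\perp$, since $(k^{j_0})^\perp \cap \R v = \{0\}$ when $v \notin (k^{j_0})^\perp$ forces $\Pi_v$ to map the $(d-1)$-dimensional space $(k^{j_0})^\perp$ injectively into the $(d-1)$-dimensional space $v^\perp$. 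Combining the two spanning statements yields $T_x\T^d \times T_v\S^{d-1}$, as claimed. The only conceptual point requiring care is verifying that $(k\cdot v) \neq 0$ can always be arranged; the linear-algebra observation above handles this uniformly in $v$, so no separate case analysis is needed.
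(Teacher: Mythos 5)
Your proof is correct and follows essentially the same route as the paper's: multiply the two bracket vectors associated to the pair $\{k,-k\}$ by $e_k(x)$ and $e_{-k}(x)$ and combine them (using $e_k^2+e_{-k}^2=1$ and $\gamma_{-k}=-\gamma_k$) to isolate $(\gamma_k^i,0)$ and $(0,(k\cdot v)\Pi_v\gamma_k^i)$, then verify each factor is spanned using the linear independence of $k^1,\ldots,k^d$. The only difference is that you spell out the linear-algebra reasoning in the two spanning steps in slightly more detail (e.g., the injectivity of $\Pi_v|_{(k^{j_0})^\perp}$); the substance is identical.
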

\begin{proof}
	Let $k\in K$. Using the identity $e_k^2 + e_{-k}^2 = 1$ and the fact that $-k\in K$, we find that for each $(x,v)\in \T^d\times\S^{d-1}$ (recall the symmetry $\gamma_{-k} = -\gamma_k$)
\[
	e_k(x)[e_k\gamma^i_k,V](x,v) - e_{-k}(x)[e_{-k}\gamma^i_{-k},V](x,v) = \begin{pmatrix}\gamma^i_k \\ 0\end{pmatrix}
\]
and
\[
	e_{-k}(x)[e_k\gamma^i_k,V](x,v) + e_{k}(x)[e_{-k}\gamma^i_{-k},V](x,v) = \begin{pmatrix}0 \\ (k\cdot v)(\Pi_{v}\gamma^i_k)\end{pmatrix}.
\]
Therefore it suffices to show that 
\begin{equation}\label{eq:span-vcond1}
	\Span\big\{\gamma^i_k\,:\, k \in K,\, i \in \{1,\ldots d-1\}\big\} = \R^d,
\end{equation}
and for each $v \in \S^{d-1}$
\begin{equation}\label{eq:span-vcond2}
	 \Span\big\{(k\cdot v)(\Pi_{v}\gamma^i_k)\,:\,k \in K,\, i \in \{1,\ldots d-1\}\big\} = T_v\S^{d-1}.
\end{equation}
Condition \eqref{eq:span-vcond1} follows from the linear independence of $k^1$ and $k^2$ and the fact that $\{\gamma_k^i\}_{i=1}^{d-1}$ spans the space perpendicular to $k$. Condition \eqref{eq:span-vcond2} follows from the fact that by linear independence of $k^1,\ldots, k^d$, that for each $v\in \S^{d-1}$, there exists a $k\in K$ such that $v\cdot k \neq 0$ and therefore, since $\{\gamma_k^i\}_{i=1}^{d-1}$ spans the space perpendicular to $k$, the vectors $\{\Pi_v\gamma^i_k\}_{i=1}^{d-1}$ span $T_v\S^{d-1}$.

\end{proof}

\begin{remark}
It is not difficult to see that we may replace $v_t$ with $\check{v}_t$ in the above Lemma, without changing the proof much. The only difference being that condition \eqref{eq:span-vcond2} is now replaced with
\[
	\Span\big\{(\gamma^i_k \cdot v)(\Pi_{v}k)\,:\,k \in K,\, i \in \{1,\ldots d-1\}\big\} = T_v\S^{d-1}
\]
which can be deduced from the fact that by linear independence of $k_1,\ldots,k^d$, there exists at least $d-1$ linearly independent elements $\hat{k}^1,\ldots \hat{k}^{d-1}$ of $K$ such $\gamma^i_{\hat{k}^j} \cdot v \neq 0$ for some $i=1,\ldots, d-1$ and such that $\{\Pi_v \hat{k}^j\,:\, j=1,\ldots, d-1\}$ spans $T_v\S^{d-1}$.
\end{remark}

\subsection{Lie brackets for the matrix process}
We would also like to study the spanning properties of Lie brackets for the process $(x_t,A_t)$ in $\T^d\times SL_d(\R)$. Similarly to the $(x_t,v_t)$ process, equations \eqref{eq:xt-basis} and $\eqref{eq:At-basis}$ can be written as
\[
	\frac{\dee}{\dt}\begin{pmatrix} x_t\\ A_t\end{pmatrix} = G(u_t,x_t,A_t)
\]
where for each $(u,x,A)\in \Hbf\times \T^d\times SL_{d}(\R)$
\[
	G(u,x,A) = \sum_{i=1}^{d-1}\sum_{k\in\Z^d_0}  \begin{pmatrix} (u)^i_ke_k(x) \gamma_k^i\\ (u)^i_ke_{-k}(x)(\gamma^i_k\tensor k)A\end{pmatrix} \in T_x\T^d \times T_ASL_{d}(\R).
\]
Again, $G(u,x,A)$ is linear in $u$ and so the Lie-bracket $[e_k\gamma^i_k, G]$ does not depend on $u$. 

\begin{lemma}
\label{lem:A-span}
Let $k^1,\ldots ,k^{d+1}$ be $d+1$ elements of $\Z^d_0$ given by $k^1=(0,1)$, $k^2=(1,0)$, $k^3=(1,1)$ for $d=2$ and $k^1=(0,0,1)$, $k_2=(0,1,0)$, $k^3=(0,0,1)$, $k^4 = (1,1,1)$ for $d=3$. Define $K = \{k^1,\ldots,k^{d+1}\}\cup\{-k^1,\ldots,-k^{d+1}\}\subseteq \Z^d_0$. Then at each point $(x,A)\in \T^d\times SL_{d}(\R)$, we have
\[
	\Span\big\{[e_k\gamma^i_k,G](x,A)\,:\, k \in K,\, i \in \{1,\ldots d-1\}\big\} = T_x\T^d \times T_A SL_{d}(\R).
\]
\end{lemma}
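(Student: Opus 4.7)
The plan is to mimic the proof of Lemma \ref{lem:v-span}: compute the relevant brackets explicitly, use a simple linear-combination trick to decouple the $\T^d$ and $SL_d(\R)$ components, and then verify spanning factor-by-factor. Since $G$ is linear in $u$ and $e_k\gamma_k^i$ is a constant vector field on $\hat{\Hbf}$, the bracket is independent of $u$ and equal to
\[
[e_k\gamma_k^i, G](x, A) = \begin{pmatrix} e_k(x)\gamma_k^i \\ e_{-k}(x)(\gamma_k^i \otimes k)A\end{pmatrix}.
\]
Using $e_k^2 + e_{-k}^2 = 1$ together with the antisymmetry $\gamma_{-k} = -\gamma_k$ (which in particular gives $\gamma_{-k}^i \otimes (-k) = \gamma_k^i \otimes k$), I form
\[
e_k(x)[e_k\gamma_k^i, G] - e_{-k}(x)[e_{-k}\gamma_{-k}^i, G] = \begin{pmatrix}\gamma_k^i \\ 0\end{pmatrix}, \qquad e_{-k}(x)[e_k\gamma_k^i, G] + e_k(x)[e_{-k}\gamma_{-k}^i, G] = \begin{pmatrix}0 \\ (\gamma_k^i \otimes k)A\end{pmatrix},
\]
which decouple the $T_x\T^d$ and $T_ASL_d(\R)$ factors. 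Since $K$ is symmetric under $k\mapsto -k$, the spanning problem then reduces to checking (i) $\Span\{\gamma_k^i : k\in K,\, i\} = \R^d$ and (ii) $\Span\{(\gamma_k^i\otimes k)A : k\in K,\, i\} = T_ASL_d(\R)$.

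I then observe that right multiplication by $A \in SL_d(\R)$ is a linear isomorphism from $\sl_d(\R)$ onto $T_ASL_d(\R)$, and that $\gamma_k^i\otimes k \in \sl_d(\R)$ because $\operatorname{tr}(\gamma_k^i\otimes k) = \gamma_k^i\cdot k = 0$. Hence (ii) is equivalent to the $A$-independent claim that $\{\gamma_k^i\otimes k : k\in K,\, i\}$ spans $\sl_d(\R)$. Claim (i) is immediate for the $K$'s in the statement, since $K$ contains the standard basis and $\{\gamma_{e_j}^i\}_i$ spans $e_j^\perp$ for each $j$. For the reduction of (ii), in $d=2$ the choices $\gamma_{e_1}^1=e_2$, $\gamma_{e_2}^1=e_1$, $\gamma_{(1,1)}^1 = \tfrac{1}{\sqrt{2}}(1,-1)$ produce the matrices $E_{21}$, $E_{12}$, and $\tfrac{1}{\sqrt{2}}(E_{11}+E_{12}-E_{21}-E_{22})$; the combination $\sqrt{2}\,\gamma_{(1,1)}^1\otimes(1,1) - E_{12} + E_{21}$ yields the diagonal generator $E_{11}-E_{22}$, and together the three span the $3$-dimensional space $\sl_2(\R)$. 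In $d=3$, for each $k=e_j$ the rank-one matrices $\gamma_{e_j}^i\otimes e_j$ exhaust $\{v\otimes e_j : v\in e_j^\perp\}$, so summing over $j$ recovers the full $6$-dimensional off-diagonal subspace $\Span\{E_{ij} : i\neq j\}\subset\sl_3(\R)$. The remaining $2$-dimensional diagonal-traceless subspace is captured by $k^4=(1,1,1)$: for any orthonormal basis $\gamma^1,\gamma^2$ of $(1,1,1)^\perp$, the matrix $\gamma^i\otimes(1,1,1)$ has diagonal equal to $\gamma^i$, and $\gamma^1,\gamma^2$ are two linearly independent traceless vectors; subtracting the already-obtained off-diagonal contributions delivers the missing diagonal directions.

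The main conceptual content of the lemma is the observation that the Cartan (diagonal-traceless) part of $\sl_d(\R)$ cannot be reached from pure coordinate-direction modes alone, which is why $K$ must include a ``generic'' diagonal mode such as $(1,1)$ in $d=2$ or $(1,1,1)$ in $d=3$. Beyond this observation the argument is the routine algebraic verification described above.
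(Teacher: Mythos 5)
Your proof is correct and follows the same route as the paper's---decouple the $T_x\T^d$ and $T_ASL_d(\R)$ factors via the $e_k^2 + e_{-k}^2 = 1$ trick, pull back by right multiplication by $A$ to reduce the matrix factor to spanning $\sl_d(\R)$---but you supply the explicit linear-algebra verification (isolating $E_{11}-E_{22}$ in $d=2$, splitting $\sl_3(\R)$ into its off-diagonal and diagonal-traceless parts in $d=3$) that the paper merely asserts as a linear-independence count. You also silently, and correctly, repair the lemma's misprint of listing $(0,0,1)$ for both $k^1$ and $k^3$ when $d=3$: the third vector must be $(1,0,0)$, as both the dimension count and your off-diagonal spanning argument require.
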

\begin{proof}
	Following the same proof strategy as in the proof of Lemma \ref{lem:v-span}, we may conclude that it suffices to show that
\[
 \Span\big\{(\gamma^i_k\tensor k)A\,:\,k\in K,\, i\in\{1,\ldots, d-1\}\big\} = T_A SL_d(\R).
\]
Using that the Lie algebra $\sl_d(\R)$ of traceless $d\times d$ matrices is linearly isomorphic to $T_A\SL_d(\R)$ by right (or left) multiplication by $A$, the above spanning condition is equivalent to showing that
\begin{equation}\label{eq:span-Acond}
	\Span\big\{(\gamma^i_k\tensor k)\,:\,k\in K,\, i\in\{1,\ldots,d-1\} \big\} = \sl_d(\R).
\end{equation}
The above condition \eqref{eq:span-Acond} follows from the fact that for the vectors $k^1,\ldots k^{d+1}$ given, the $d^2-1$ matrices $\big\{(\gamma^i_k\tensor k)\,:\,k = \{k^1,\ldots,k^{d+1}\}\,, i\in\{1,\ldots,d-1\} \big\}$ are all linearly independent in $\sl_d(\R)$. Since $\sl_d(\R)$ is $d^2-1$ dimensional, condition \eqref{eq:span-Acond} must hold.
\end{proof}

\subsection{H\"{o}rmander condition for Stokes and Galerkin-Navier-Stokes systems}
We now turn to study the hypoellipticity of the projective process $(u_t,x_t,v_t)$ and matrix process $(u_t,x_t,A_t)$ when $(u_t)$ satisfies either Systems \ref{sys:2DStokes} or \ref{sys:Galerkin}. We will define the vector field $U^S$ on $\Hbf_{\mathcal{K}}$ associated with the Stokes System \ref{sys:2DStokes} by
\[
	U^{S}(u) := -\sum_{i=1}^{d-1}\sum_{k\in \mathcal{K}}|k|^2 (u)_k^i e_k\gamma_k^i
\]
and the vector field $U^{NS}$ on $\Hbf_N$ associated with the Galerkin-Navier-Stokes System \ref{sys:Galerkin} by
\[
	U^{NS}(u) := -\sum_{i=1}^{d-1}\sum_{|k|_\infty \leq N} \left(B_k^i(u,u) +|k|^2 (u)_k^i\right) e_k\gamma_k^i
\]
where for each $u\in \Hbf_{N}$ (recall the definition of $B$ from Section \ref{subsec:notation}), 
\[
	B_k^i(u,u) := \frac{1}{\pi(2\pi)^{d-1}}\langle B(u,u), e_k\gamma_k^i \rangle_{\Wbf}.
\]

The following Lemma gives sufficient conditions for $(u_t,x_t,v_t)$ to satisfy the parabolic H\"{o}rmander condition:
\begin{lemma}\label{lem:Hormander-span} Let $\{X_j\}_{j=1}^M$ denote an enumeration of the vectors $\{q_ke_k\gamma^i_k\,:\, k\in \mathcal{K},\, i=1,\ldots,d-1\}$ and let $X_0$ be a vector fields on $\hat{\Hbf}\times\T^d\times \S^{d-1}$ of the form
\[
	X_0(u,x,v) = U(u) + V(u,x,v).
\]
The following holds:
\begin{enumerate}
\item If $U(u) = U^S(u)$ and $\mathcal{K}$ contains the elements $(1,0),(0,1)$ and their inversions for $d=2$ and the elements $(1,0,0)$, $(0,1,0)$, and $(0,0,1)$ and their inversions for $d=3$, then $\{X_j\}_{j=0}^M$ satisfies the parabolic H\"{o}rmander condition.
\item If $U(u) = U^{NS}(u)$ and $\mathcal{K}$ contains the elements $(1,0)$ and $(1,1)$ and their inversions for $d=2$ and the elements $(1,0,0)$, $(0,1,0)$, and $(0,0,1)$ and their inversions for $d=3$, then $\{X_j\}_{j=0}^M$ satisfies the parabolic H\"{o}rmander condition.
\end{enumerate}
\end{lemma}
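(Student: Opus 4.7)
The plan is to verify the parabolic H\"ormander condition at each point $(u, x, v) \in \hat{\Hbf} \times \T^d \times \S^{d-1}$ by computing iterated Lie brackets of $\{X_j\}_{j=0}^M$, treating the $u$-component (in $T_u\hat{\Hbf}$) separately from the $(x, v)$-component (in $T_x\T^d \times T_v\S^{d-1}$). The starting observation is that each noise field $X_j = q_k e_k \gamma_k^i$ (with $k \in \mathcal{K}$) is a constant vector field on the product space with vanishing $(x,v)$-component. Consequently, for any vector field $Y$ one has $[X_j, Y] = D_u Y \cdot X_j$, and applying this to $X_0 = U + V$ together with the linearity of $V$ in $u$ yields
\[
[X_j, X_0](u,x,v) = \bigl(DU(u)[q_k e_k\gamma_k^i],\ q_k\, [e_k\gamma_k^i, V](x,v)\bigr),
\]
where $[e_k\gamma_k^i, V]$ is the commutator computed just before Lemma \ref{lem:v-span}.

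For Case 1 (Stokes), I would first note that $\{X_j\}$ already spans the $u$-direction $T_u\Hbf_\mathcal{K} = T_u\hat{\Hbf}$. To extract the $(x,v)$-direction I would take $[X_j, X_0]$ and subtract its $u$-component: since $DU^S(u)[e_k\gamma_k^i] = -|k|^2 e_k\gamma_k^i \in \Hbf_\mathcal{K}$ lies in the span of the $X_{j'}$, the difference is the pure vector $(0, q_k[e_k\gamma_k^i, V](x,v))$. With $k$ ranging over the hypothesized low modes, these vectors together with their negatives supply the set $K$ required by Lemma \ref{lem:v-span} (the hypothesized modes contain $d$ linearly independent elements), and that lemma delivers the spanning of $T_x\T^d \times T_v\S^{d-1}$.

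For Case 2 (Galerkin--Navier--Stokes), the $(x,v)$-spanning argument of Case 1 goes through verbatim since the hypothesized low modes contain $d$ linearly independent vectors with inversions. The substantive new work is the $u$-direction, which now must span all of $T_u\Hbf_N$, strictly larger than $T_u\Hbf_\mathcal{K}$. Here I would exploit the quadratic structure of $U^{NS}$: because $D^2 U^{NS}(u)[a,b] = -\Pi_N(B(a,b) + B(b,a))$ is independent of $u$, and because the $(x,v)$-component of $[X_{j_2}, X_0]$ is also $u$-independent, a direct computation gives
\[
[X_{j_1}, [X_{j_2}, X_0]](u,x,v) = \bigl(-q_{k_1}q_{k_2}\Pi_N\bigl[B(e_{k_1}\gamma_{k_1}^{i_1}, e_{k_2}\gamma_{k_2}^{i_2}) + B(e_{k_2}\gamma_{k_2}^{i_2}, e_{k_1}\gamma_{k_1}^{i_1})\bigr],\ 0, 0\bigr),
\]
a \emph{new constant vector field} in the $u$-direction. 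A product-to-sum identity shows that the Fourier support of the right-hand side is on the modes $k_1 + k_2$ and $k_1 - k_2$ (whenever these lie in $\{|k|_\infty \leq N\}$). Since these new vector fields are constant with vanishing $(x,v)$-component, they can be re-used as ``generators'' in further brackets with $X_0$, yielding an induction on $|k|_\infty$. A standard check shows the generating sets $\{(1,0), (1,1)\}$ in 2D and the standard basis in 3D (each together with inversions) produce all of $\Z^d$ under sums and differences, and by ordering the induction to keep intermediate partial sums inside the Galerkin box one exhausts $\{|k|_\infty \leq N\}$.

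The hard part will be the \emph{vector} bookkeeping in the inductive step for Case 2: the double bracket produces a mode at $k = k_1 \pm k_2$ but paired with a specific projection (via Leray onto $k^\perp$) of a combination of $\gamma_{k_1}^{i_1}$ and $\gamma_{k_2}^{i_2}$, not the desired orthonormal family $\{\gamma_k^i\}_{i=1}^{d-1}$. Showing that, by varying $i_1, i_2$ and interchanging the roles of $k_1, k_2$, one recovers a spanning set of directions at each newly reached mode --- in particular in dimension $d = 3$, where two independent perpendicular directions must be produced at each $k$ --- is the essential technical step. This portion closely parallels the H\"ormander-type verifications carried out in \cite{E2001-lg} (2D) and \cite{Romito2004-rc} (3D) for the bare Galerkin Navier--Stokes equations; the coupling to the $(x, v)$-factor introduces no new obstacle because the Lie brackets producing $u$-directions have zero $(x, v)$-component and therefore decouple from the $(x, v)$-spanning analysis.
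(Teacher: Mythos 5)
Your proposal is correct and takes essentially the same route as the paper: the decomposition $X_0 = U + V$, the use of $V$'s linearity in $u$ to make double brackets $[X_{j_1},[X_{j_2},X_0]]$ pure $u$-direction constant fields, the iteration to span $\Hbf_N$ delegated to the Lie-bracket computations of \cite{E2001-lg,Romito2004-rc} (which the paper packages as Lemma~\ref{lem:mat-e}), and the extraction of the $(x,v)$-directions from single brackets via Lemma~\ref{lem:v-span} all appear in the paper's argument. The paper simply dispatches the Stokes case with a remark and, in the Galerkin case, establishes $\Hbf_N\subseteq\mathcal{V}(u,x,v)$ first and then invokes Lemma~\ref{lem:v-span}, which is the same ordering your subtraction step implicitly requires.
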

\begin{proof} We will consider only the Galerkin-Navier-Stokes case, since the Stokes case is even simpler. Fix $(u,x,v)\in\Hbf_N\times\T^d\times\S^{d-1}$ and denote $\mathcal{V}(u,x,v)$ the span of the the iterated Lie brackets of $\{X_j\}_{j=0}^M$. We have for each $k\in \mathcal{K}$ and $i=1,\ldots,d-1$
\[
	[e_k\gamma_k^i,X_0] = [e_k\gamma_k^i,U^{NS}] + [e_k\gamma_k^i,V]
\]
and because of the linear dependence of the vector field $V$ on $u$, we obtain
\[
	\big[e_j\gamma^i_j,[e_k\gamma_k^i,X_0]\big] = \big[e_j\gamma^i_j,[e_k\gamma_k^i,U^{NS}]\big].
\]

We will find it useful to use the following result adapted from \cite{E2001-lg} and \cite{Romito2004-rc}.
\begin{lemma}\label{lem:mat-e}
Suppose that $K \subseteq \Z^d_0$ satisfies $K = -K$, then at each $u\in \Hbf_N$ and for each $i=1,\ldots d-1$
\[
\Span\left\{\big[e_k\gamma^i_k,[e_j\gamma_j^i,U^{NS}]\big]: j,k\in K \right\} = \Span\{e_{j+k}\gamma^i_{j+k}, e_{j-k}\gamma^i_{j-k},e_{k-j}\gamma^i_{k-j}, e_{-j-k}\gamma^i_{-j-k}: j,k\in K \}.
\]
\end{lemma}
Using the fact that $(1,0)$ and $(1,1)$ and $(1,0,0)$, $(0,1,0)$, and $(0,0,1)$ are generators for the groups $(\Z^2,  + )$ and $(\Z^3,  + )$ respectively, we can iterate Lemma \ref{lem:mat-e} for fixed $i$, taking further Lie brackets with of these new directions. Then repeating the same argument for each $i=1,\ldots, d-1$ to obtain all directions in $\Hbf_N$ and conclude that
\[
	\Hbf_N \subseteq \mathcal{V}(u,x,v).
\]
This means that in order for $\{X_j\}_{j=0}^M$ to satisfy the parabolic H\"{o}rmander condition, it suffices to show that
\[
	\Span \big\{[e_k\gamma^i_k,V]\,:\, k \in \mathcal{K},\, i \in \{1,\ldots d-1\}\big\} = T_v\S^{d-1}. 
\]
This follows from Lemma \ref{lem:v-span}.
\end{proof}

Analogously we have sufficient conditions for $(u_t,x_t,A_t)$ to satisfy the parabolic H\"{o}rmander condition. The proof is almost exactly the same as the proof of Lemma \ref{lem:Hormander-span}, with $V$ replaced with $G$. We omit the proof.
\begin{lemma}
 Let $\{X_j\}_{j=1}^M$ denote an enumeration of the vectors $\{q_ke_k\gamma^i_k\,:\, k\in \mathcal{K},\, i=1,\ldots,d-1\}$ and let $X_0$ be a vector field on $\hat{\Hbf}\times\T^d\times SL_d(\R)$ given by
\[
	X_0(u,x,A) = U(u)+ G(u,x,A),
\]
The following holds:
\begin{enumerate}
\item If $U(u) = U^S(u)$ and $\mathcal{K}$ contains the elements $(1,0),(0,1), (1,1)$ and their inversions for $d=2$ and the elements $(1,0,0)$, $(0,1,0)$,$(0,0,1)$,$(1,1,1)$ and their inversions for $d=3$, then $\{X_j\}_{j=0}^M$ satisfies the parabolic H\"{o}rmander condition.
\item If $U(u) = U^{NS}(u)$ and $\mathcal{K}$ contains the elements $(1,0)$ and $(1,1)$ and their inversions for $d=2$ and the elements $(1,0,0)$, $(0,1,0)$, and $(0,0,1)$ and their inversions for $d=3$, then $\{X_j\}_{j=0}^M$ satisfies the parabolic H\"{o}rmander condition.
\end{enumerate}
\end{lemma}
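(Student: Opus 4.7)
The plan is to mirror the proof of Lemma \ref{lem:Hormander-span}, substituting the matrix vector field $G$ for the projective field $V$ and invoking Lemma \ref{lem:A-span} in place of Lemma \ref{lem:v-span}. Fix a point $(u,x,A) \in \hat{\Hbf} \times \T^d \times SL_d(\R)$ and let $\mathcal{V}(u,x,A)$ denote the span of iterated Lie brackets of $\{X_j\}_{j=0}^M$ at that point; the goal is to show $\mathcal{V}(u,x,A) = \hat{\Hbf} \times T_x\T^d \times T_A SL_d(\R)$.

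First I would exploit the same linearity observation as in the projective case: since $G(u,x,A)$ is linear in $u$, for any noise directions $e_k\gamma_k^i$, $e_j\gamma_j^i$ the bracket splits as $[e_k\gamma_k^i, X_0] = [e_k\gamma_k^i, U] + [e_k\gamma_k^i, G]$, and the iterated bracket $[e_j\gamma_j^i, [e_k\gamma_k^i, X_0]] = [e_j\gamma_j^i, [e_k\gamma_k^i, U]]$ depends only on the drift $U$, not on $G$. Consequently, the $\hat{\Hbf}$-component of $\mathcal{V}$ can be analyzed independently of the $(x,A)$-component.

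Next I would recover all of $\hat{\Hbf}$ inside $\mathcal{V}$. For the Galerkin–NS case ($U = U^{NS}$), I would apply Lemma \ref{lem:mat-e} iteratively with the generators $\{(1,0),(1,1)\}$ of $\Z^2$ (resp.\ $\{(1,0,0),(0,1,0),(0,0,1)\}$ of $\Z^3$) to produce every Fourier mode with $|k|_\infty \leq N$, so that $\Hbf_N$ lies in the $\hat{\Hbf}$-component of $\mathcal{V}$. For the Stokes case ($U = U^S$) no iteration is needed since by hypothesis $\hat{\Hbf} = \Hbf_{\mathcal{K}}$ is already spanned by the noise vectors. In either case, for every relevant $k$ the constant vector field $e_k\gamma_k^i$ (viewed as having zero $(x,A)$-component) belongs to $\mathcal{V}$, so its bracket $[e_k\gamma_k^i, X_0]$ lies in $\mathcal{V}$ as well; modulo the $\hat{\Hbf}$-component, this bracket equals $[e_k\gamma_k^i, G](x,A)$.

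Finally I would invoke Lemma \ref{lem:A-span}: for the mode sets $\{(1,0),(0,1),(1,1)\}$ in 2D and $\{(1,0,0),(0,1,0),(0,0,1),(1,1,1)\}$ in 3D, the brackets $\{[e_k\gamma_k^i, G](x,A)\}$ span $T_x\T^d \times T_A SL_d(\R)$. For Stokes, these modes are in $\mathcal{K}$ by assumption; for Galerkin–NS, they are all available since they have $|k|_\infty \leq N$ (using $N \geq 3$) and were generated in the previous step. Combined with the $\hat{\Hbf}$-component, this establishes the parabolic H\"ormander condition. The main obstacle, compared with Lemma \ref{lem:Hormander-span}, is the increased tangent-space dimension: $\sl_d(\R)$ is $d^2-1$ dimensional rather than $d-1$, which is precisely why the Stokes case of item (1) requires the extra diagonal mode $(1,1)$ (resp.\ $(1,1,1)$), while the Galerkin–NS case can exploit the nonlinearity and hence makes do with fewer a priori modes in $\mathcal{K}$.
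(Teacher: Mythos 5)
Your proposal is correct and matches the approach the paper intends: the paper explicitly omits the proof of this lemma, remarking only that it is ``almost exactly the same as the proof of Lemma \ref{lem:Hormander-span}, with $V$ replaced with $G$,'' and your argument carries out precisely that substitution, invoking Lemma \ref{lem:A-span} in place of Lemma \ref{lem:v-span}. Your closing observation correctly accounts for the discrepancy in the $\mathcal{K}$-hypotheses between the two statements: the Stokes case needs the extra diagonal mode outright because $\dim\sl_d(\R)=d^2-1>d-1$, while the Galerkin--Navier--Stokes case recovers the missing modes from $\Hbf_N$ via iterated brackets with the nonlinearity.
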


\section{Strong Feller for the Lagrangian and projective processes} \label{sec:Feller}  

In Section \ref{sec:Feller} we will prove Proposition \ref{prop:Feller}. We show the proof for the $(u_t,x_t,v_t)$ process; the $(u_t,x_t,\check{v}_t)$ process is the same. 
Note that strong Feller for $(u_t,x_t,v_t)$ implies the same for $(u_t)$ and $(u_t,x_t)$ due to the structure of the coupling. 

\subsection{The cutoff process} 
As described in Section \ref{subsubsec:feller} the main strategy involves proving gradient estimates on a suitable cut-off process $w^\rho_t$. To begin, define the following augmented system
\begin{align}
\partial_t u_t & =-B(u_t,u_t) -Au_t + Q \dot{W}_t^u \\ 
\partial_t x_t & = u_t(x_t) \\ 
\partial_t v_t & = \Pi_{v_t} \grad u_t(x_t) v_t \\
\partial_t z_t & = \dot{W}^z_t,
\end{align}
where $W^u_t$ is a cylindrical Wiener process on $\Wbf$ and $W^z_t \in \Real^{2d}$ is a finite dimensional Wiener process indepenent from $W^u_t$. 
We denote this augmented process by $w_t = (u_t,x_t,v_t,z_t)\in \Hbf \times \cM$, where $\cM = \T^d \times \S^{d-1} \times \Real^{2d}$, which satisfies the abstract SPDE
\begin{align}
\partial_t w_t = \widehat{F}(w_t) - Aw_t + \widehat{Q}\dot{W}_t, \label{eq:SDE-w}
\end{align}
where $\widehat{F}$ and $\widehat{Q}\dot{W}$ are given by
\[
	\widehat{F}(u,x,v,z) = \begin{pmatrix}-B(u,u)\\ u(x) \\ \Pi_{v}\grad u(x)v \\ 0 \end{pmatrix},\quad \widehat{Q}\dot{W} = \begin{pmatrix}
	Q\dot{W}^u\\ 0 \\ 0 \\ \dot{W}^z
	\end{pmatrix},
\]
(with extended definitions $Aw = (\nu(-\Delta) u,0,0,0)$ in $d=2$ and $Aw = (\nu(-\Delta)u + \eta \Delta^2 u,0,0,0)$ in $d=3$). 
For the remainder of this section, we will refer to the initial data of the process simply as 
\begin{align}
w_0 =: w. 
\end{align}

Our goal will be to prove strong Feller for the augmented process \eqref{eq:SDE-w}. 
As $z_t$ is completely uncoupled from $(u_t,x_t,v_t)$, by restricting the class of test functions, this implies strong Feller for the original $(u_t,x_t,v_t)$ process. 
Further, note that by restricting the class of test functions, strong Feller for the process defined with $v_t \in \S^{d-1}$ implies strong Feller for the process defined directly with $v_t \in P^{d-1}$ by relating elements in $P^{d-1}$ to representatives in $\S^{d-1}$.

To define $w^\rho$, we will couple $z_t$ to the $x_t$ and $v_t$ variables to regularize the dynamics. Specifically, as in \cite{EH01}, define a smooth, non-negative cutoff function $\chi$ satisfying 
\begin{align}
\chi(z) & = \begin{cases}
0 \quad z < 1 \\ 
1 \quad z > 2 
\end{cases} 
\end{align}
and let $\chi_\rho(x) = \chi(x/\rho)$ for $\rho > 0$. We then define a regularized drift $F_\rho(w)$ by
\begin{align}
F_\rho(u,x,v,z) = (1-\chi_{3\rho}(\norm{u}_{\Hbf})) \widehat{F}(u,x,v,z) + \chi_{\rho}(\|u\|_{\Hbf})H(v,z),   
\end{align}
where $H(v,z)$ is a bounded vector-field on $\Hbf\times\cM$ given by
\begin{equation}
H(v,z) = \begin{pmatrix} 0 \\ \sum_{j=1}^d \hat{e}_j \frac{z_j}{\left(1 + \abs{z_j}^2\right)^{1/2} } \\  \Pi_{v}\sum_{j=1}^d\hat{e}_j\frac{z_{d+j}}{\left(1 + \abs{z_{d+j}}^2 \right)^{1/2}} \\ 0\end{pmatrix}
\end{equation}
and where we are denoting $\{\hat{e}_j\}_{j=1}^d$ the canonical basis elements in $\R^d$, and we are using that for each $v\in \S^{d-1}$, $\{\Pi_v e_j\}_{j=1}^d$ span $T_v\S^{d-1}$. The cutoff/regularized process $w_t^\rho = (u_t^\rho,x_t^\rho,v_t^\rho,z_t)$ then satisfies the SPDE (replacing $\widehat{Q} \mapsto Q$ for notational simplicity), 
\begin{align}\label{eq:cut-off-eq}
\partial_t w_t^\rho = F_\rho(w_t^\rho) - A w_t^\rho + Q \dot{W}_t, 
\end{align}
It is for this process we will prove a gradient estimate on the Markov semigroup. 
As in \cite{RomitoXu11,EH01}, the purpose of the cutoff is to regularize the nonlinearity so that the flow is globally Lipschitz, which is very convenient for the Malliavin calculus and high/low frequency splitting methods employed below. However, when the nonlinearity is turned off, the hypoellipticity disappears. Recovering the hypoelliptic effect is the purpose of the additional noise coming from the coupling with $z_t$. 
In \cite{RomitoXu11,EH01}, this role is played by multiplicative white noise. This is too singular to carry out directly on the Navier-Stokes equations; in \cite{RomitoXu11} it is dealt with by further mollifying the nonlinearity. 
One can view the use of $z_t$ as providing a suitable regularization of the multiplicative white noise.  

In what follows we denote (via a slight abuse of notation) for $H^\gamma$, $L^2$, and $\Hbf$, 
\begin{align}
\norm{w_t}_{H^\gamma} & := \norm{u_t}_{H^\gamma} + \abs{z_t}.
\end{align}

We denote $T_v \cM$ the tangent space of $\cM$ at $(x,v,z)$ (initial data for $w_t$); note that the tangent space only depends on $v$. 

 We are now ready to begin the proof of Proposition \ref{prop:Feller}. 
The proof requires a number of estimates on $w_t^\rho$, its Jacobian (Frechet derivative with respect to the initial data), various approximate Jacobians and approximate inverse Jacobians, and the Malliavin derivatives thereof. These are outlined in Section \ref{sec:FundEstsCutoff} below after the main bulk of the proof. 
Finally, we emphasize that for the rest of the section, the implicit constants are \emph{always independent of $t$, $T$, $\norm{h}_{\Hbf \times T_v \cM}$, and $\norm{w_t}_{\Hbf}$ unless specifically indicated otherwise}. Moreover, we are always assuming $T \leq 1$. 

The main effort in the proof of Proposition \ref{prop:Feller} is to obtain the following derivative estimate on the cutoff process, the proof of which comprises the rest of Section \ref{sec:Feller}. 
\begin{proposition}\label{prop:GradPhi} 
There exists $a_\ast,b_\ast > 0$ such that for all $\rho$ sufficiently large, there exists a $T^* >0$ and a constant $C_\rho >0$ depending only on $\rho$ such that for all $\phi \in C^2_b(\Hbf\times\mathcal{M})$ and for $t < T^\ast$ the mapping $w \mapsto \widehat{P}_\rho^t \phi(w)$ is differentiable and for each $w \in \Hbf \times \mathcal{M}$ the derivative $D \widehat{P}_\rho^t \phi(w)$ is a bounded linear operator on $\Hbf\times T_v \cM$ and satisfies for each $h \in \Hbf\times T_v \cM$
\begin{equation}\label{eq:grad-est-trunc}
\big|D \widehat{P}_t ^\rho \phi(w)h\big| \leqc_\rho t^{-a_\ast} \left(1 +  \norm{w}_{\Hbf}^{b_\ast}\right) \norm{\phi}_{L^\infty}\|h\|_{\Hbf\times T_v \cM}.
\end{equation}
\end{proposition}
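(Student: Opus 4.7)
The plan is to prove \eqref{eq:grad-est-trunc} via a Malliavin integration-by-parts argument, combined with a short-time bootstrap exploiting the semigroup property. Following the strategy inspired by \cite{EH01,Cerrai1999-ce}, we write for $t = 2T$ small
\begin{equation*}
D\widehat{P}_{2T}^\rho \phi(w)\, h = \E\bigl(D\widehat{P}_T^\rho \phi(w_T^\rho)\, Dw_T^\rho\, h\bigr),
\end{equation*}
and aim to construct a (possibly non-adapted) control $g = (g_s)_{s \in [0,T]} \in L^2([0,T]; \Wbf)$ with the approximate inversion property $\MalD_g w_T^\rho = Dw_T^\rho\, h + r_T$, where $r_T$ is a small remainder. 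The Malliavin integration-by-parts formula \eqref{eq:MalIBP-intro} then gives \eqref{eq:MallIntro}, namely
\begin{equation*}
D\widehat{P}_{2T}^\rho \phi(w)\, h \;=\; \E\Bigl(\widehat{P}_T^\rho\phi(w_T^\rho)\!\int_0^T\!\langle g_s, \delta W_s\rangle_\Wbf\Bigr) \;-\; \E\bigl(D\widehat{P}_T^\rho\phi(w_T^\rho)\, r_T\bigr).
\end{equation*}
The first term is controlled by $\norm{\phi}_{L^\infty}$ times a Skorohod-norm bound on $g$, while the second involves $\norm{D\widehat{P}_T^\rho\phi}$ multiplied by $\E|r_T|$. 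If the Skorohod term blows up like $T^{-a_\ast}$ and $\E|r_T|$ vanishes as $T \to 0$, then setting $\Psi(t) = \sup_{0 < s \leq t} s^{a_\ast}\norm{D\widehat{P}_s^\rho\phi(w)}$ yields an inequality of the form $\Psi(2T) \leq C_\rho\norm{\phi}_{L^\infty}(1 + \norm{w}_\Hbf^{b_\ast}) + C_\rho T^{\gamma}\,\Psi(T)$; choosing $T^\ast$ small enough that the second term is absorbed closes the estimate for all $t < T^\ast$.

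The heart of the matter is the construction of $g$, carried out via a high-low frequency splitting of $h \in \Hbf \times T_v\cM$. On the high-frequency component of $\Hbf$ the noise acts directly through $Q$ (by Assumption \ref{a:Highs}), and $g$ is chosen on those modes so that its contribution to \eqref{eq:MalIBP-intro} reduces to the Bismut-Elworthy-Li formula, producing no remainder there. On the complementary low-frequency/finite-dimensional part of $(u, x, v, z)$, $g$ is obtained by inverting a finite-dimensional partial Malliavin matrix. Its quantitative invertibility, with polynomial-in-$T$ lower bounds, follows from H\"ormander's condition for the Galerkin projective process established in Section \ref{sec:galerkin}, transferred to the cut-off dynamics by the auxiliary directions $H(v, z)$ and the independent noise $\dot W^z_t$ built into \eqref{eq:SDE-w}, which compensate for the fact that the nonlinearity in $F_\rho$ is switched off wherever $\norm{u}_\Hbf$ is large.

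The principal technical obstacle, and where most of the work lies, is establishing the two quantitative estimates that feed the bootstrap (the content of Lemma \ref{lem:RTbd+SkrdBd}). First, $\E\bigl|\int_0^T\langle g_s,\delta W_s\rangle_\Wbf\bigr| \lesssim_\rho T^{-a_\ast}(1 + \norm{w}_\Hbf^{b_\ast})$: since $g$ is non-adapted, the Skorohod integral is not an It\^o integral, and its moments must be controlled using Meyer-type inequalities involving both $\norm{g}_{L^2([0,T];\Wbf)}$ and the Malliavin derivative $\MalD g$; we sidestep the usual $L^2([0,1])$ Norris-lemma complications via the interpolation inequality from \cite{HM11}. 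Second, $\E\norm{r_T}_{\Hbf\times T_v\cM} \lesssim_\rho T^\gamma(1 + \norm{w}_\Hbf^{b_\ast})$ for some $\gamma > 0$: the dominant contribution to $r_T$ comes from the unintended perturbation that the low-frequency part of $g$ imposes on the high-frequency evolution, and containing it is precisely what dictates the delicate matching between the Bismut-Elworthy-Li piece and the partial-Malliavin-inverse piece of $g$. Combining these two estimates with moment bounds on the Jacobian $Dw_t^\rho$, its Malliavin derivatives, and the inverse partial Malliavin matrix from Section \ref{sec:FundEstsCutoff} closes the bootstrap for $T^\ast$ small and produces the stated singularity $t^{-a_\ast}$ and polynomial growth $(1 + \norm{w}_\Hbf^{b_\ast})$ in \eqref{eq:grad-est-trunc}.
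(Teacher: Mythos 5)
Your overall strategy matches the paper's: Malliavin integration-by-parts with a non-adapted control built from a high--low frequency splitting, partial Malliavin matrix inversion for the low modes, a BEL-type construction for the high modes, and a short-time bootstrap to absorb the remainder. That said, two points need correcting.

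First, the bootstrap quantity $\Psi(t) = \sup_{0 < s \leq t} s^{a_\ast}\norm{D\widehat{P}_s^\rho\phi(w)}$ is fixed at the starting point $w$, but the second term in \eqref{eq:MallIntro} is $\E\bigl(D\widehat{P}_T^\rho\phi(w_T^\rho)\,r_T\bigr)$, where the derivative is evaluated at the \emph{random} endpoint $w_T^\rho$, not at $w$. As written, the inequality $\Psi(2T) \leq C + C\,T^\gamma\,\Psi(T)$ does not follow, because the right-hand side should involve $\Psi$ evaluated at $w_T^\rho$. To close the argument one must take a supremum over the initial condition as well and normalize by the polynomial growth, i.e.\ use a seminorm of the form
\[
\|f\|_{a_\ast,b_\ast,T_*} = \sup_{\substack{t\in[0,T_*],\; w,\; h \neq 0}} \frac{t^{a_\ast}\,|Df_t(w)h|}{\|h\|\,(1+\norm{w}_{\Hbf})^{b_\ast}}\,,
\]
so that the term $\E\bigl(D\widehat{P}_T^\rho\phi(w_T^\rho)\,r_T\bigr)$ can be bounded by $\|P\phi\|_{a_\ast,b_\ast,T_*}\,T^{-a_\ast}\,\bigl(\E(1+\|w_T^\rho\|_\Hbf)^{2b_\ast}\bigr)^{1/2}\bigl(\E\|r_T\|^2\bigr)^{1/2}$, and the moment bound on $w_T^\rho$ together with \eqref{eq:Remain-est} then gives the needed $T^{1/2}$ small factor.

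Second, the claim that the high-frequency part of the control produces ``no remainder there'' oversimplifies the construction. The high-frequency control has two pieces: the BEL-type term and a term designed to compensate for the low-frequency control's unintended effect on the high modes, but this compensation is only applied below a secondary frequency cutoff $\Pi_{\leq N}$ (to keep the control in $\Wbf$); the complementary piece $\Pi_{>N}D_LF_H\,\zeta_t$ survives as part of $r_T^H$, along with the cross-Jacobian contributions $D_Hw^L$ and $D_Lw^H$. Likewise $r_T^L$ picks up the reverse perturbation (high modes feeding back into low). Both directions contribute, and the choice of $N$ as a function of $T$ (and the initial data) is precisely what keeps the truncation remainder small enough to close the bootstrap.
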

Indeed, we do not expect that such a gradient estimate \eqref{eq:grad-est-trunc} is available for $\widehat{P}_t$. None-the-less, estimate \eqref{eq:grad-est-trunc} {\em is} enough to prove the strong Feller property for $w_t$, $\widehat{P}_t$.
\begin{proof}[\textbf{Proof of Proposition \ref{prop:Feller}}] 
Let $\phi$ be a bounded, measurable observable on $\Hbf\times \cM$. 
Let $t \leq 1$ be chosen small shortly. 
Let $w^{1}, w^2 \in \Hbf \times \cM$ be such that $d(w^1,w^2) \leq 1$. 
Naturally, we estimate the non-cutoff process by approximation, 
\begin{equation}
\begin{aligned}
\abs{\widehat{P}_t \phi(w^{1}) - \widehat{P}_t \phi(w^{2})} & \leq \abs{\widehat{P}_t \phi(w^{1}) - \widehat{P}_t^{\rho} \phi(w^{1})} + \abs{\widehat{P}_t \phi(w^{2}) - \widehat{P}_t^{\rho} \phi(w^{2})} \nonumber \\ & \quad + \abs{\widehat{P}_t^\rho \phi(w^{1}) - \widehat{P}_t^{\rho} \phi(w^{2})}. \label{ineq:StrFel3e}
\end{aligned}
\end{equation}
For the first two terms in \eqref{ineq:StrFel3e}, note that 
\begin{equation}
\abs{\widehat{P}_t \phi(w^{i}) - \widehat{P}_t^{\rho} \phi(w^{i})} = \abs{\EE \phi(w_t(w^i) - \EE \phi(w^{\rho}_t(w^i))} \leq \norm{\phi}_{L^\infty} \P\left(\sup_{s \in (0,t)}\norm{w_s(w^i)}_{\Hbf} > \rho\right), 
\end{equation}
where $i=1,2$. Then, by the moment bounds in Proposition \ref{prop:WPapp}, this gives the following (with implicit constant independent of $t$), 
\begin{align}
\abs{\widehat{P}_t \phi(w^{i}) - \widehat{P}_t^{\rho} \phi(w^{i})} & \lesssim \rho^{-1} \norm{\phi}_{L^\infty}\norm{w^i}_{\Hbf}. 
\end{align}
We may now choose $\rho$ sufficiently large depending only on $\norm{\phi}_{L^\infty}$, $\norm{w^i}_{\Hbf}$, and $\eps$ such that 
\begin{align}
\abs{\widehat{P}_t \phi(w^{1}) - \widehat{P}_t \phi(w^{2})}  \leq \abs{\widehat{P}_t^\rho \phi(w^{1}) - \widehat{P}_t^{\rho} \phi(w^{2})} + 2\eps. 
\end{align}
Once we have fixed $\rho$, we may now fix $t < T_\ast$ such that \eqref{eq:grad-est-trunc} holds for the cutoff process. 
By an adaptation of [\cite{DPZ96}, Lemma 7.1.5], we see that Proposition \ref{prop:GradPhi} implies (using $d(w^1,w^2) \leq 1$), 
\begin{align}
\abs{\widehat{P}^\rho_t \phi(w^{1}) - \widehat{P}^\rho_t \phi(w^{2})} \lesssim t^{-a_\ast}\norm{\phi}_{L^\infty} (1 + \norm{w^1}^{b_\ast}_{\Hbf}) d(w^1,w^2),  \label{ineq:LocLipPr}
\end{align}
where for $w^i = (u^i,x^i,v^i,z^i) \in \Hbf\times \cM$, we denote $d(w^1,w^2)= \|u^1-u^2\|_{\Hbf} + d_{\cM}\left((x^1,v^1,z^1),(x^2,v^2,z^2)\right)$ where $d_{\cM}$ is the geodesic distance on $\cM$.
Therefore, for the third term in \eqref{ineq:StrFel3e}, we may apply \eqref{ineq:LocLipPr} and choose $d(w^1,w^2)$ sufficiently small such that 
\begin{align}
\abs{\widehat{P}_t \phi(w^{1}) - \widehat{P}_t \phi(w^{2})} < 3\eps. 
\end{align}
Hence, $\widehat{P}_t$ is strong Feller.
\end{proof} 

\subsection{Derivative estimate for cutoff process via Malliavin calculus} 
\emph{In what follows, we will drop the $\rho$ superscripts and $w_t$ will denote the solution to the cut-off equation \eqref{eq:cut-off-eq}}.

First, let us recall some basics on Malliavin calculus. For much of this section we will be dealing with random variables $X = (h,m) \in \cH\times \mathfrak{M}$, where $\cH$ is a Hilbert space and $\fM$ is a smooth finite dimensional Riemannian manifold. The Malliavin derivative $\MalD_g X$ of $X$ in direction $g = (g_t) \in L^2(\R_+,\Wbf)$ is defined by
\[
	\MalD_g X := \frac{\dee}{\dee h}X(W + hG)|_{h=0}, \quad G= \int_0^\cdot g_s\ds,
\]
when the limit exists (in the Fr\'{e}chet sense). If the above limit exists for such a random variable $X$, we say that $X$ is {\em Malliavin differentiable}. In practice $\MalD_gX$ admits a representation of the form
\begin{equation}\label{eq:D_sX-def}
\MalD_g X = \int_0^{\infty} \MalD_s X g_s\, \ds,
\end{equation}
where for almost every $s\in \R_+$, $\MalD_s X$ is a random, bounded linear operator from $\Wbf$ to $\cH\times T_m \fM$ (see \cite{Nualart06} for more details). We will commonly use the following norm of $\MalD_sX$
\[
	\|\MalD_sX\|_{\Wbf \to \mathcal{H}\times T_m\fM} := \sup_{\substack{f\in \Wbf\\ \|f\|_{\Wbf}=1}} \|\MalD_sX f\|_{\mathcal{H}\times T_m\fM}, 
\]
where $\MalD_sX f$ denotes the action of $\MalD_sX$ on $f\in \Wbf$. Formally, one can view the quantity $\MalD_s Xf$ as the limit of the directional derivatives $\MalD_g X$ when $g$ approaches a delta function at times $s$ times $f$.

We will also be taking the Malliavin derivative of various stochastic processes $(X_t) = (h_t,m_t)$ on $\mathcal{H}\times\fM$. It is a standard fact in the theory of Malliavin calculus that if $X_t$ is adapted to the filtration $\mathcal{F}_t$ generated by $W_t$, then $\MalD_s X_t= 0$ if $s \geq t$. For example, for the process $w_t$ in $\Hbf \times \cM$, we have an exact formula for $\MalD_sw_t$. Indeed, it is straightforward to show that $\MalD_g w_t$ satisfies the equation
\[
	\partial_t \MalD_g w_t = DF(w_t)\MalD_g w_t + A\MalD_g w_t + Qg_t, \quad \MalD_g w_0 = 0.
\]
Then, if one defines for $0\leq s \leq t$ the Jacobian $J_{s,t}$ (viewed as a bounded linear operator from $\Hbf\times T_{v_s}\cM$ to $\Hbf\times T_{v_t}\cM$) as the solution to the equation
\[
	\partial_t J_{s,t} = DF(w_t)J_{s,t} + AJ_{s,t}\quad J_{s,s} = \Id.
\] 
Then Duhamel's formula implies that
\[
	\MalD_g w_t = \int_0^\infty J_{s,t}Qg_s\ds,
\]
consequently, by equation \ref{eq:D_sX-def}, this implies the following formula for $\MalD_s w_t$
\[
\MalD_s w_t = \begin{cases} 
J_{s,t}Q &  s < t \\ 
0 &  s > t
\end{cases}.
\]

For real-valued random variables, the Malliavin derivative can be realized as a Fr\'{e}chet differential operator $\MalD: L^2(\Omega)\to L^2(\Omega; L^2(\R_+;\Wbf))$. The adjoint operator $\MalD^*:L^2(\Omega; L^2(\R_+;\Wbf)) \to L^2(\Omega)$ is referred to as the {\em Skorohod integral}, whose action on $g\in L^2(\Omega; L^2(\R_+;\Wbf))$ we denote by
\[
	\int_0^\infty \langle g_t,\delta W_t\rangle_{\Wbf} := \MalD^*g.
\]
The Skorohod integral can be viewed as an extension of the usual It\^{o} integral. In fact, when $g_t$ is adapted to the filtration $\mathcal{F}_t$ generated by $W_t$, then $\int_0^\infty \langle g_t,\delta W_t\rangle_{\Wbf}$ coincides with the usual It\^{o} integral $\int_0^\infty \langle g_t,\dee W_t\rangle_{\Wbf}$. Additionally, there is an analogue of the It\^{o} isometry for the Skorohod integral, which implies the following bound (see \cite{Nualart06} or \cite{DaPrato14})
\[
	\E \left(\int_0^\infty \langle g_t, \delta W_t\rangle_{\Wbf}\right)^2\leq \EE \int_0^\infty \norm{g_{t}}_{\Wbf}^2 + \E \int_0^\infty \int_0^\infty \norm{\MalD_sg_{t}}_{\Wbf\to \Wbf}^2\ds\dt.
\]

A fundamental result in the theory of Malliavin calculs is the Malliavin integration by parts formula, stated below for the process $w^t$.
\begin{proposition}\label{lem:MalIBP}
Let $\phi$ be a bounded differentiable function on $\Hbf\times\cM$ with bounded derivatives
and $g_t$ be a process satisfying 
\begin{equation}\label{eq:control-bound}
\EE \int_0^T \norm{g_t}_{\Wbf}^2 \dt + \E \int_0^T \int_0^T \norm{\MalD_sg_t}_{\Wbf\to \Wbf}^2\ds\dt < \infty, 
\end{equation}
then the following relation holds
\[
	\E \MalD_g\phi(w_t) = \E\left(\phi(w_t)\int_0^t \langle g_s , \delta W_s\rangle_{\Wbf} \right).
\]
\end{proposition}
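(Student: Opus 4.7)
\textbf{Proof proposal for Proposition \ref{lem:MalIBP}.}

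The plan is to derive the identity from the general duality between the Malliavin derivative $\MalD$ and its adjoint, the Skorohod integral $\delta := \MalD^{*}$, which is essentially the definition of $\delta$ (see e.g. Section 1.3 in \cite{Nualart06}): for any scalar $F \in \operatorname{Dom}(\MalD)$ and any $g \in \operatorname{Dom}(\delta)$, one has
\[
\E\bigl( F\,\delta(g) \bigr) \;=\; \E \int_0^\infty \langle \MalD_s F,\, g_s\rangle_{\Wbf} \,\ds.
\]
Once the two hypotheses ``$\phi(w_t) \in \operatorname{Dom}(\MalD)$'' and ``$g \in \operatorname{Dom}(\delta)$'' are established, the stated identity is an immediate specialization, so the real content of the proof is verifying these two facts together with an identification of $\MalD_s \phi(w_t)$.

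First I would check the Malliavin differentiability of $\phi(w_t)$. Because the cut-off SPDE \eqref{eq:cut-off-eq} has globally Lipschitz drift $F_\rho$ (by construction of the cutoff) and additive noise $\widehat Q \dot W_t$, the solution map $W\!\upharpoonright_{[0,t]} \mapsto w_t$ is Fr\'echet differentiable on $C([0,t],\Wbf)$, with Malliavin derivative given in terms of the Jacobian $J_{s,t}$ by
\[
\MalD_s w_t \;=\; J_{s,t}\,\widehat Q\, \chi_{\{s \le t\}};
\]
in particular $\MalD_s w_t = 0$ for $s > t$ by adaptedness. The chain rule for the Malliavin derivative (applied to the composition of the smooth map $\phi$ with the Malliavin differentiable $w_t$) then yields
\[
\MalD_s \phi(w_t) \;=\; D\phi(w_t)\,\MalD_s w_t \;=\; D\phi(w_t)\, J_{s,t}\,\widehat Q\, \chi_{\{s \le t\}},
\]
which combined with the representation \eqref{eq:D_sX-def} gives the pointwise identity
\[
\MalD_g \phi(w_t) \;=\; \int_0^t \langle \MalD_s \phi(w_t),\, g_s\rangle_{\Wbf}\,\ds.
\]
The moment bounds on $J_{s,t}$ recorded in Section \ref{sec:FundEstsCutoff}, combined with boundedness of $D\phi$, place $\phi(w_t)$ in $\operatorname{Dom}(\MalD)$.

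Second, I would check that $g \in \operatorname{Dom}(\delta)$ with $\delta(g\chi_{[0,t]}) = \int_0^t \langle g_s,\delta W_s\rangle_{\Wbf}$. This is exactly the content of hypothesis \eqref{eq:control-bound}: the pair of bounds $\E\int_0^T \|g_t\|_{\Wbf}^2 \dt < \infty$ and $\E\int_0^T\!\int_0^T \|\MalD_s g_t\|_{\Wbf\to\Wbf}^2 \ds\dt < \infty$ is the standard $\mathbb{D}^{1,2}(L^2(\R_+;\Wbf))$ condition guaranteeing both that $\delta(g\chi_{[0,t]})$ is well-defined in $L^2(\Omega)$ and that the extended It\^o-type isometry
\[
\E\,\Bigl( \int_0^t \langle g_s,\delta W_s\rangle_{\Wbf}\Bigr)^{\!2} \;\lesssim\; \E\int_0^t \|g_s\|_{\Wbf}^2\,\ds \;+\; \E\int_0^t\!\int_0^t \|\MalD_r g_s\|_{\Wbf\to\Wbf}^2 \,dr\,ds
\]
holds (see Proposition 1.3.1 in \cite{Nualart06}).

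Finally, plugging $F = \phi(w_t)$ and the test control $g\chi_{[0,t]}$ into the duality formula, using $\MalD_s \phi(w_t) = 0$ for $s > t$ to truncate the $s$-integral to $[0,t]$, and then recognizing the right-hand side via the step above as $\E\,\MalD_g \phi(w_t)$, concludes the proof. The main obstacle is not conceptual but bookkeeping: to rigorously justify Malliavin differentiability of $w_t$ (and hence the chain rule) one needs to know that the solution map for \eqref{eq:cut-off-eq} is $C^1$ on Wiener path space with derivative given by the Jacobian SPDE — this is where the global Lipschitz property of $F_\rho$ built into the cutoff is essential, and it is exactly why the formula is stated for the cut-off process $w_t^\rho$ rather than the original (non-Lipschitz) process.
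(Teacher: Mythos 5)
Your proposal is correct and is the standard argument. The paper itself does not prove Proposition \ref{lem:MalIBP}; it states it as a known fact from Malliavin calculus and refers to the standard references (Nualart, Da Prato--Zabczyk). Your derivation reproduces precisely the reasoning those citations encapsulate: the chain rule $\MalD_s\phi(w_t) = D\phi(w_t)J_{s,t}\widehat Q\,\chi_{\{s\le t\}}$ for the cutoff (globally Lipschitz) system, the truncation $\MalD_s\phi(w_t)=0$ for $s>t$ by adaptedness, and the duality $\E(F\,\delta(g)) = \E\int_0^\infty \langle\MalD_s F,g_s\rangle_{\Wbf}\,\ds$ defining the Skorohod integral as the adjoint of $\MalD$. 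You also correctly identify the two hypotheses to verify ($\phi(w_t)\in\operatorname{Dom}(\MalD)$ from boundedness of $D\phi$ and the a.s.\ bounds on $J_{s,t}$ in Lemma \ref{lem:JstBds}; $g\in\operatorname{Dom}(\delta)$ from \eqref{eq:control-bound}) and accurately diagnose why the proposition is stated for the cutoff process: the global Lipschitz property of $F_\rho$ is what makes the solution map Fr\'echet differentiable on Wiener path space, so the chain rule applies. No gap.
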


As discussed in Section \ref{subsubsec:ErgodicProperties}, this formula can be used to obtain a gradient estimate on the Markov semigroup if for any $h$, one can obtain a control $g$ (depending on $h$) satisfying \eqref{eq:control-bound} such that for some time $T$ we have the equality $\MalD_g w_T = Dw_T h$. 
This however, does not appear to be possible to do in general. We will instead find a control $g$ which satisfies this approximately, so that for some time $T>0$ we have
\[
	\MalD_g w_T = Dw_T h + r_T,
\]
where $r_T$ is a remainder that we make small for small $T$.

Indeed most of the work of this section is to prove the following key Lemma.
\begin{lemma} \label{lem:RTbd+SkrdBd}
For all $\rho > 0$, there exists constants $a_\ast, b_\ast > 0$ such that for $T$ sufficiently small there exists a control $g = (g_{t})_{t\in[0,T]}$ satisfying
\begin{equation}\label{eq:Skoro-est}
\EE \int_0^T \norm{g_{t}}_{\Wbf}^2 \dt + \E \int_0^T \int_0^T \norm{\MalD_sg_{t}}_{\Wbf\to \Wbf}^2\ds\dt\leqc_\rho T^{-2a_\ast} (1+\norm{w}_{\Hbf})^{2b_\ast}\|h\|_{\Hbf \times T_v \cM}^2, 
\end{equation}
such that
\begin{align}
\EE\norm{r_T}_{\Hbf \times T_{v_T} \cM}^2 \lesssim_{\rho} T \norm{h}_{\Hbf \times T_v \cM}^2. \label{eq:Remain-est}
\end{align}
\end{lemma}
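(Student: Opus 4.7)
The plan is to follow the elaboration of the Eckmann--Hairer high/low-frequency splitting outlined in Section \ref{subsubsec:feller}: construct a control of the form $g = g^{\mathrm{hi}} + g^{\mathrm{lo}}$, where $g^{\mathrm{hi}}$ handles Fourier modes above a cutoff $N = N(\rho)$ via a Bismut--Elworthy--Li construction, and $g^{\mathrm{lo}}$ acts on the finite-dimensional ``low" part of the phase space (low Fourier modes of $u$ together with the $(x,v,z)$ coordinates) by inverting a partial Malliavin matrix. The derivative $Dw_T h$ splits naturally along these subspaces; the high-frequency part is already small at time $T$ thanks to the parabolic smoothing of $e^{-TA}$, and hence will be pushed into the remainder $r_T$, while the low-frequency part will be matched exactly by $g^{\mathrm{lo}}$ modulo a controllable perturbation.

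For $g^{\mathrm{hi}}$, the idea is that on modes $|k|_\infty > N$ one can take $g^{\mathrm{hi}}$ \emph{adapted} and of Bismut type, $g^{\mathrm{hi}}_t = T^{-1}\,Q^{-1} \Pi_{>N}\, e^{-(T-t)A} J_{0,t} h$ (or a close variant adapted to the cutoff dynamics). Its contribution $\mathcal D_{g^{\mathrm{hi}}} w_T$ gives $\Pi_{>N} Dw_T h$ plus a lower-order ``nonlinear'' term, which combined with the smoothing bound $\|\Pi_{>N} e^{-TA}\|_{\Hbf \to \Hbf} \lesssim (N^{2} T)^{-a}$ produces the $O(\sqrt{T})$-size contribution to $r_T$. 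Since $g^{\mathrm{hi}}$ is adapted, the Skorohod term reduces to It\^o, and the bound \eqref{eq:Skoro-est} for this part follows from Proposition \ref{prop:GradJ} (the moment estimates on $J_{s,t}$ collected in Section \ref{sec:FundEstsCutoff}) together with the coloring assumption on $Q$.

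For $g^{\mathrm{lo}}$, introduce the partial Malliavin matrix
\[
\mathcal M_T^{\mathrm{lo}} := \Pi_{\mathrm{lo}} \int_0^T J_{s,T} Q Q^{*} J_{s,T}^{*}\, \ds\, \Pi_{\mathrm{lo}}^{*},
\]
where $\Pi_{\mathrm{lo}}$ is the projection onto the finite-dimensional space consisting of low Fourier modes of $u$ plus $T_v\cM$. The key technical step is to show that $\mathcal M_T^{\mathrm{lo}}$ is a.s.\ invertible with all inverse moments $\E \|\bigl(\mathcal M_T^{\mathrm{lo}}\bigr)^{-1}\|^{p} \lesssim_{\rho,p} T^{-\gamma(p)}$; this is where hypoellipticity enters, via Section \ref{sec:galerkin} applied to a Galerkin-type truncation of the cutoff equation (using that the cutoff drift, low-mode noise directions and their iterated Lie brackets still span the relevant tangent space uniformly on compacta in $\{\|u\|_\Hbf \leq 3\rho\}$). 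One then defines, non-adaptively,
\[
g^{\mathrm{lo}}_t := Q^{*} J_{t,T}^{*}\,\Pi_{\mathrm{lo}}^{*}\,\bigl(\mathcal M_T^{\mathrm{lo}}\bigr)^{-1}\,\Pi_{\mathrm{lo}}\bigl(Dw_T h - \mathcal D_{g^{\mathrm{hi}}} w_T\bigr),
\]
so that $\Pi_{\mathrm{lo}}\mathcal D_{g^{\mathrm{lo}}} w_T = \Pi_{\mathrm{lo}}(Dw_T h - \mathcal D_{g^{\mathrm{hi}}} w_T)$ exactly. The ``leakage'' $\Pi_{>N} \mathcal D_{g^{\mathrm{lo}}} w_T$ is again controlled via parabolic smoothing and contributes to $r_T$.

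Putting the two together, $r_T = Dw_T h - \mathcal D_g w_T = \Pi_{>N}\bigl(Dw_T h - \mathcal D_{g^{\mathrm{hi}}}w_T - \mathcal D_{g^{\mathrm{lo}}}w_T\bigr)$, and each of the three pieces is $O_\rho(\sqrt{T}\,\|h\|)$ in $L^2(\Omega)$ after choosing $N$ appropriately (e.g.\ $N$ independent of $T$ but large enough to absorb constants depending on $\rho$). For the Skorohod bound \eqref{eq:Skoro-est}, since $g^{\mathrm{lo}}$ is non-adapted one must estimate $\mathcal D_s g^{\mathrm{lo}}_t$; this involves Malliavin derivatives of $J_{t,T}$ and of $\bigl(\mathcal M_T^{\mathrm{lo}}\bigr)^{-1}$, which reduces via the identity $\mathcal D_s [M^{-1}] = -M^{-1}(\mathcal D_s M)M^{-1}$ to moment bounds on $J$, $\mathcal D_s J$, and the inverse of $\mathcal M_T^{\mathrm{lo}}$ (cf.\ Section \ref{sec:FundEstsCutoff}).

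The hardest step will be the quantitative invertibility of $\mathcal M_T^{\mathrm{lo}}$ with precise $T^{-\gamma(p)}$-bounds on inverse moments: this is the infinite-dimensional replacement for a full Norris's lemma in $L^2([0,T])$, and following \cite{HM11} we will use the interpolation inequality of that paper to circumvent the $L^2$-version of Norris. Once that estimate is in hand, \eqref{eq:Skoro-est} and \eqref{eq:Remain-est} reduce to bookkeeping of powers of $T$, $\rho$, and $\|w\|_\Hbf$, yielding the claimed exponents $a_\ast,b_\ast$.
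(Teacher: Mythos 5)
Your high-level architecture — a high/low frequency splitting of the control with a Bismut-type high-frequency piece and a partial Malliavin matrix inversion on the low modes — matches the paper's strategy, but several of the specifics you commit to would break down.

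First, you state $N$ can be taken independent of $T$ (``large enough to absorb constants depending on $\rho$''). In the paper's argument the cutoff $N = T^{-2a}(1+|z|)^{2b}$ \emph{must} diverge as $T\to 0$: the bound $\E\sup_{t\le T}\|\xi_t\|^2 \lesssim \|h\|^2$ that makes the remainder $r_T$ genuinely $O(\sqrt T)$ rests on the factor $N^{1-\sigma}\,T^{-2a}(1+|z|)^{2b}\lesssim 1$, which fails for fixed $N$. Second, your adapted Bismut control $g^{\mathrm{hi}}_t \approx T^{-1}Q_H^{-1}\Pi_{>N}e^{-(T-t)A}J_{0,t}h$ has a pointwise blow-up $\|Q_H^{-1}e^{-(T-t)A}\| \gtrsim (T-t)^{-\frac{\alpha-\sigma}{2(d-1)}}$ (since the inverse covariance costs $\alpha$ derivatives and $J_{0,t}h$ only has $\sigma$), so $\int_0^T\|g^{\mathrm{hi}}_t\|^2\dt$ is finite only if $\sigma > \alpha-(d-1)$; in $d=2$ this collides with the upper bound $\sigma<\alpha-\tfrac{d}{2}=\alpha-1$ and leaves no admissible $\sigma$ at all. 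The paper gets around this by confining the Bismut part of $g^H$ to the window $[T/4,3T/4]$ and replacing the pointwise parabolic bound with the time-\emph{averaged} smoothing estimate of Lemma \ref{lem:JstBds}(iii), which is exactly what the weaker hypothesis $\sigma>\alpha-2(d-1)$ buys.

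Third, and most structurally: the invertibility of your $\mathcal M_T^{\mathrm{lo}}$ cannot be deduced by appealing to Section \ref{sec:galerkin}, because those Lie-bracket computations are for the uncut Stokes/Galerkin drifts, not the cutoff drift $F_\rho$. In the region $\|u\|_\Hbf\gtrsim\rho$ the nonlinearity is damped to zero, and without it the noise on $u$ no longer propagates to the $(x,v)$ directions by commutators — hypoellipticity fails there. This is precisely why the paper augments the phase space by the Brownian motion $z_t$ and inserts the coupling term $\chi_\rho(\|u\|_\Hbf)H(v,z)$ into $F_\rho$: the $z$-noise directly spans $T_x\T^d\times T_v\S^{d-1}$ exactly when the nonlinearity is cut off (see the two-case argument in Lemma \ref{lem:brackets-lower-bound}). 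Your sketch makes no use of the $z$-augmentation, so the key spanning step would fail. Finally, a more minor point: the paper works with the reduced Malliavin matrix $\cC_T^L = \int_0^T V^L_s Q_L(V^L_s Q_L)^\top\ds$ built from the \emph{inverse} of the approximate low-mode Jacobian $U^L_s$ (a finite-dimensional ODE flow, hence invertible), not from $J_{s,T}$; the full Jacobian is non-invertible on $\Hbf$ and its Malliavin derivative is harder to control, which is part of why the $U^L/V^L$ machinery and the coupled $(\zeta_t,\xi_t)$ fixed-point system in \eqref{def:xyctrl} are introduced.
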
 

Lemma \ref{lem:RTbd+SkrdBd} is indeed enough to prove Proposition \ref{prop:GradPhi}.
\begin{proof}[\textbf{Proof of Proposition \ref{prop:GradPhi}}] 
Using the control from Lemma \ref{lem:RTbd+SkrdBd}, we can now estimate the derivative of the semi-group in direction $h$ at time $2T$ for $\phi \in C^2$
\begin{equation}
\begin{aligned}
D \widehat{P}_{2T}\phi(w)h &= \E \left(D\widehat{P}_{T}\phi(w_T)Dw_T h\right) \\
&= \E \left(D \widehat{P}_{T}\phi(w_T)\MalD_{g}w_T\right) - \E \left(D \widehat{P}_{T}\phi(w_T)r_T\right),
\end{aligned}
\end{equation}
and using the Malliavin integration by parts formula (Proposition \ref{lem:MalIBP}), 
\begin{equation}\label{eq:MalIBPform-rem}
	D \widehat{P}_{2T}\phi(w)h = \E\left(\widehat{P}_T\phi(w_T)\int_0^T \langle g_t, \delta W(t)\rangle_{\Wbf}\right) - \E \left(D \widehat{P}_{T}\phi(w_T)r_T\right),
\end{equation}
where recall that the stochastic integral is interpreted as a Skorohod integral, since the control is not adapted. The Skorohod integral can be estimated by an extension of It\^{o} isometry (see e.g. \cite{Nualart06,HM11} and the references therein) and \eqref{eq:Skoro-est}, giving 
\begin{equation}\label{eq:skoro-ito-est}
\begin{aligned}
	\E \left(\int_0^T \langle g_t, \delta W_t\rangle_{\Wbf} \dt\right)^2 &\leq \EE \int_0^T \norm{g_{t}}_{\Wbf}^2 \dt + \E \int_0^T \int_0^T \norm{\MalD_sg_{t}}_{\Wbf\to \Wbf}^2\ds\dt\\
	&\leqc_\rho T^{-2a_\ast} (1+\norm{w}_{\Hbf})^{2b_\ast}\|h\|_{\Hbf \times T_v \cM}^2.
\end{aligned}
\end{equation}

To finish the proof, introduce the following semi-norm $\|\cdot\|_{a_\ast,b_\ast,T_*}$ on $C([0,T_*];C^1(\Hbf \times \cM))$, for $a_\ast,b_\ast >1$ and $1 \geq T_* >0$ by
\[ 
	\|f\|_{a_\ast,b_\ast,T_*} = \sup_{\substack{t\in[0,T_*]\\ w \in \Hbf \times \cM \\ h \in \Hbf \times T_v \cM,\, h \neq 0}} \frac{t^{a_\ast}|Df_t(w)h|}{\|h\|_{\Hbf \times T_v \cM}(1+\norm{w}_{\Hbf})^{b_\ast}}.
\]
Then it follows from \eqref{eq:Remain-est} and \eqref{eq:skoro-ito-est} that for $2T < T_*$, 
\begin{equation}
\begin{aligned}
	|DP_{2T}\phi(w)h| &\leqc \|\phi\|_{L^\infty}T^{-a_\ast} (1+\norm{w}_{\Hbf}^{b_\ast})\|h\|_{\Hbf \times T_v \cM}\\
	&\quad + \|P\phi\|_{a,b,T_*}T^{-a_\ast}\sqrt{\E(1+\norm{w_T}_{\Hbf})^{2b_\ast}}\sqrt{\E\|r_T\|_{\Hbf \times T_{v_T} \cM}^2}\\
&\leqc  \|\phi\|_{L^\infty}T^{-a_\ast} (1+\norm{w}_{\Hbf})^{b_\ast}\|h\|_{\Hbf\times T_v \cM}\\
	&\quad + \|P\phi\|_{a_\ast,b_\ast,T_*}T^{-a_\ast+\frac{1}{2}}(1+\norm{w}_{\Hbf})^{b_\ast}\|h\|_{\Hbf \times T_v \cM}
\end{aligned}
\end{equation}
and therefore
\[
	\|P\phi\|_{a_\ast,b_\ast,T_*} \leqc \|\phi\|_{L^\infty} + T_*^{\frac{1}{2}} \|P\phi\|_{a_\ast,b_\ast,T_*},
\]
then by taking $T_*$ small enough we obtain
\[
	\|P\phi\|_{a_\ast,b_\ast,T_*} \leqc \|\phi\|_{L^\infty}. 
\]
This is the a priori estimate stated in \eqref{eq:grad-est-trunc}. 
\end{proof} 

\subsection{Construction of control and estimates of remainder} 
 The rest of the section is dedicated to proving Lemma \ref{lem:RTbd+SkrdBd}. First, we implement a splitting into high and low frequencies similar to that of \cite{EH01,RomitoXu11}. This will allow us to build a control that works differently on the high and low frequencies. To this, denote the set $K_L\subseteq \Z^d_0$ of low modes by 
\[\
K_L = \{ k \in \Z^d_0; \quad|k|_\infty \leq L \},
\]
where $L$ is as in Assumption \ref{a:Highs}. Let $\Pi_L: \Hbf \to \Hbf$ denote the corresponding orthogonal projection onto the ``low modes'' belonging to $K_L$ and $\Pi_H = I - \Pi_L$ be the complementary projection onto the ``high modes'' belonging to $\Z^d_0 \backslash K_L$. Let $\Hbf_L$ and $\Hbf_H$ denote the ranges of $\Pi_L$ and $\Pi_H$ respectively so that we have the orthogonal decomposition
\[
	\Hbf = \Hbf_L\oplus\Hbf_H.
\]
Given $w = (u,x,v,z) \in \Hbf \times \mathcal{M}$, we will extend the definition of $\Pi_L$ and $\Pi_H$ to $\Hbf\times\cM$ so that $\cM$ is included with the low modes by
\[
w^L = \Pi_L w = (u^L,x,v,z) \quad \text{and}\quad w^H = \Pi_H w= u^H.
\]
Naturally this defines low and high processes $w^L_t$ and $w^H_t$, which satisfy (note of course they are coupled)
\begin{align}
\partial_t w_{t}^L & =  F_L(w_t) - A_Lw_t^L + Q_L\dot{W}^L_t \\ 
\partial_t w_t^H & = F_H(w_t) - A_Hw_t^H+ Q_H\dot{W}^H_t,
\end{align}
where $F_L(w) = \Pi_L F(w)$, $F_H(w) = \Pi_H F(w)$, $A_Hw = \Pi_H Aw$, $Q_L = \Pi_LQ$ and $Q_H = \Pi_H Q$. We also define the finite dimensional matrix $U^L_{s,t}$ which we view a linear operator from $\Hbf_L\times T_{v_s}\cM$ to $\Hbf_L \times T_{v_t}\cM$ as well as the bounded linear operator $U^H_{s,t}$ from $\Hbf_H$ to $\Hbf_H$ by
\[
	\partial_t U^L_{s,t} = - A_L U^L_{s,t} + D_LF_L(w_t)U^L_{s,t}, \quad U^L_{s,s} = \Id,
\]
and for $0\leq s \leq t$
\[
	\partial_t U^H_{s,t} = - A_H U^H_{s,t} + D_H F_H (w_t)U^H_{s,t}, \quad U^H_{s,s} = \Id.
\]
Both $U^L_{s,t}$ and $U^H_{s,t}$ serve as approximations for the full Jacobian $J_{s,t}$ of the flow $w \mapsto w_t$ projected onto the low and high-modes when $t$ is small. We see that $U_{s,t}^L$ is an invertible operator: denote it's inverse by 
\[
V^L_{s,t} = (U^L_{s,t})^{-1}.
\]
When $s=0$, we write $U^L_t = U_{0,t}^L$ and $V^L_t = V^L_{0,t}$. Usig the fact that $U^L_t$ is invertible we can write $U^L_{s,t} = U_{t}^L V^L_s$. 
\begin{definition}
Define the {\em partial Malliavin matrix} $\cC_t^L :\Hbf_L\times T_v\cM\to \Hbf_L\times T_v\cM$ by
\[
\cC_t^L := \int_0^t V^L_s Q_L(V^L_s Q_L)^\top\ds.
\] 
\end{definition}
\begin{remark} 
$\cC_t^L$ is the analogue of the reduced Malliavin matrix, introduced by Norris \cite{Norris86}, in order to simplify Malliavin's proof of H\"{o}rmander's theorem. The name {\em partial Malliavin matrix} comes from \cite{EH01}, and indicates that it is a finite dimensional Mallaivin matrix associate to the low modes.
\end{remark}

One of main results of Section \ref{sec:Feller} is the non-degeneracy of $C_t^L$, which allows us to build the low frequencies part of the control $g_t$. 
That is, we have the following; the proof is involved and is carried out in Section \ref{subsubsec:mal-nondeg} below. 
\begin{lemma}\label{lem:Malmatrix-bound}
The matrix $\cC_T^L$ is almost surely invertible on $\Hbf_L\times T_v\cM$. Furthermore, there exists constants $a,b$ such that for all $p\geq 1$
\[
	\E|(\cC_T^L)^{-1}|^p \leqc_{\rho,p} T^{-ap}(1+|z|)^{bp}.
\]
\end{lemma}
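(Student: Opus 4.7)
The plan is to follow the standard Malliavin calculus proof of non-degeneracy of the reduced Malliavin matrix, adapted to our cut-off and augmented setting. First, by a standard duality argument it suffices to obtain the quantitative estimate
\[
	\P\left(\inf_{\substack{\phi \in \Hbf_L \times T_v\cM \\ |\phi|=1}} \langle \cC_T^L \phi, \phi\rangle \leq \epsilon\right) \lesssim_{\rho,p} \epsilon^p \, T^{-ap}(1+|z|)^{bp}
\]
for all $p\geq 1$ and $\epsilon > 0$; Chebyshev then yields the claimed moment bound. Fixing a deterministic unit $\phi$ and expanding, we have
\[
	\langle \cC_T^L \phi,\phi\rangle = \int_0^T \norm{Q_L^\top (V_s^L)^\top \phi}_{\Wbf}^2 \ds = \sum_{k \in \mathcal{K} \cap K_L}\sum_{i=1}^{d-1} q_k^2 \int_0^T \big|\langle (V_s^L)^\top \phi, e_k \gamma_k^i\rangle\big|^2 \ds,
\]
so smallness of the left-hand side forces simultaneous smallness of the scalar semi-martingales $\eta_s^{k,i}(\phi) := \langle (V_s^L)^\top \phi, e_k\gamma_k^i\rangle$ in $L^2_s$.

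The heart of the argument is then an iterated application of Norris's lemma. Using the definition of $V_s^L$ and It\^o's formula, I would compute
\[
	\dee \eta_s^{k,i}(\phi) = \langle (V_s^L)^\top \phi,\, (D_L F_L(w_s) - A_L)^\top (e_k\gamma_k^i)\rangle \, \ds + (\text{martingale part from }Q_L \dee W_s),
\]
so that the drift of $\eta^{k,i}$ involves the vector field $e_k \gamma_k^i$ acted on by $D_L F_L(w_s)^\top$. Iterating, higher order It\^o differentiation produces semi-martingales whose drift parts are exactly the iterated Lie brackets of the noise directions $\{q_j e_j \gamma_j^\ell\}_{j\in \mathcal{K}\cap K_L}$ with the drift vector fields $\Pi_L F_\rho(w) + H(v,z)$, tested against $(V_s^L)^\top \phi$. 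Norris's lemma (in the form of the HM11 interpolation inequality described in Section \ref{subsubsec:feller}) converts an $L^2_s$ smallness of a semi-martingale into $L^2_s$ smallness of both its drift and martingale parts, at the cost of polynomial factors in $T^{-1}$ and in supremum norms of the coefficients. Iterating a finite number of times, one obtains that if $\langle \cC_T^L \phi,\phi\rangle < \epsilon$ then, for every iterated Lie bracket $Y$ of the vector fields appearing in the $(u^L,x,v)$-dynamics (including commutators of the noise fields with $\Pi_L \widehat F$ and with $H(v,z)$), one has $|\langle (V_s^L)^\top \phi, Y(w_s)\rangle| \lesssim \epsilon^{c}$ on a set of large $s$-measure, with loss $T^{-a}(1+|z|)^{b}$.

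On the other hand, Lemma \ref{lem:Hormander-span} in Section \ref{sec:galerkin} shows that for Systems \ref{sys:2DStokes} and \ref{sys:Galerkin} the vector fields generating $(u_t^L, x_t, v_t)$ satisfy the parabolic H\"ormander condition at every point of $\Hbf_L \times \T^d \times \S^{d-1}$; the auxiliary vector field $H(v,z)$ (with its $z$-driven noise) plays the role of restoring hypoellipticity on the regions $\{\|u\|_\Hbf > \rho\}$ where the cut-off switches off the Navier--Stokes nonlinearity, and by the same Lie-bracket computation still ensures spanning of $T_{v}\S^{d-1}$. Hence a finite number of iterated brackets $Y_1,\ldots,Y_N$ span $\Hbf_L\times T_{v_s}\cM$ uniformly, so for any unit $\phi$ the quantity $\max_j |\langle (V_s^L)^\top \phi, Y_j(w_s)\rangle|$ is bounded below by a constant. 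Combined with the upper bound from Norris iteration, this forces $\epsilon \geq c_\rho T^{a}(1+|z|)^{-b}$, yielding the desired tail bound.

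The main obstacle in this scheme is carrying out the Norris iteration cleanly in the presence of (i) the cut-off functions $\chi_\rho, \chi_{3\rho}$, which produce extra terms in the drift whose Lipschitz constants depend on $\rho$, and (ii) the coupling to the Brownian motion $z_t$ through $H(v,z)$, which makes some of the coefficient bounds grow in $|z|$; this is precisely the source of the $(1+|z|)^{bp}$ factor. A uniform covering argument in $\phi$ over the unit sphere of the finite-dimensional space $\Hbf_L\times T_v\cM$, together with polynomial control of the Lipschitz moduli of $V_s^L$ and of the brackets $Y_j(w_s)$ in terms of $\rho$ and $|z|$, then upgrades the fixed-$\phi$ estimate to the uniform infimum, completing the proof.
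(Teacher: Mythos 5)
Your sketch follows the same broad template as the paper's proof: reduce to a fixed-direction tail estimate for $\langle \cC_T^L h, h\rangle$ (the reduction from the moment bound on $(\cC_T^L)^{-1}$ being the ``standard fact'' the paper invokes), then show that smallness of the $L^2_s$ norms of $X^m_s = \langle V^L_s Q^m, h\rangle_L$ forces simultaneous smallness of the quantities $\langle Q^m,h\rangle_L$ and $\langle \Lambda_L Q^m,h\rangle_L$ at $s=0$, which contradicts a uniform spanning lower bound. You also correctly identify the role of $H(v,z)$ in supplying hypoellipticity on the set where the cut-off suppresses the nonlinearity, and you correctly locate the source of the $(1+|z|)^{bp}$ loss.

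However, the mechanism you describe is heavier than what the paper actually runs, in two substantive ways. First, the paper restricts the detailed proof to the ``$L=1$'' case where $q_k \approx |k|^{-\alpha}$ for all $k$, so that $\Pi_L Q$ is fully non-degenerate. Because the $Q^m$ are \emph{constant} vector fields, $X^m_s$ is a.s.\ $C^1$ in $s$ with no martingale part, and the paper needs only a \emph{single} time-derivative. It therefore avoids Norris's lemma altogether and instead applies the deterministic interpolation Lemma~\ref{lem:interp-eq} twice (once with $\alpha=1$ on $X^m$, once with $\alpha=1/3$ on $B^m_s=\langle V^L_s \Lambda_L Q^m, h\rangle_L$, whose $C^{1/3}$ regularity comes from Lemma~\ref{lem:holder-ests}). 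Your iterated-Norris scheme with multiple bracket generations is not what the paper does for this Lemma; it is precisely what the paper says would be required for general Assumption~\ref{a:Highs} with $L>1$ and is explicitly deferred to Remark~\ref{rmk:NorrisMal}. Second, because $\Pi_L Q$ is non-degenerate, the spanning step does \emph{not} need the full parabolic H\"ormander condition of Lemma~\ref{lem:Hormander-span}; the paper's Lemma~\ref{lem:brackets-lower-bound} uses only Lemma~\ref{lem:v-span} (spanning of $T_x\T^d\times T_v\S^{d-1}$ by a single bracket with $V$) together with the $z$-coupled field, and gives the explicit quantitative rate $\gtrsim_\rho (1+|z|)^{-3}$ that ultimately produces the $b$ in the statement. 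So your proposal is a correct route, but it is the general-case route; the paper's actual proof is shorter because it exploits the $L=1$ simplification and the constancy of the $Q^m$.
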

 
Using Lemma \ref{lem:Malmatrix-bound}, we can now construct the control. 
Specifically, fix an $h\in \Hbf\times T_v\cM$, a $T\in (0,1)$, a frequency cut-off $N$ chosen as $N := T^{-2a}(1+|z|)^{2b}$ ($a$ and $b$ as in Lemma \ref{lem:Malmatrix-bound}) and define $t\mapsto g_{t} \in \Wbf$ by
\begin{align}
& g^L_{t} = (V^L_t Q_L)^{\top}(\cC_T^L)^{-1}V^L_T D w^L_T h \label{def:vL} \\ 
& g^H_{t} = - Q_H^{-1} \Pi_{\leq N} D_L F_H(w_t)\zeta_t + 2T^{-1}Q_H^{-1}U^H_{0,t}h_H\1_{[T/4,3T/4]}(t), \label{def:vH}
\end{align}
 where $\Pi_{\leq N}$ is a projection onto frequencies less than $N$ and $(\zeta_t)$ is a process belonging for each $t\in[0,T]$ to $\Hbf_L\times \T_{v_t}\cM$ and solving the following system
\begin{equation} \label{def:xyctrl}
\begin{aligned}
	&\dot{\zeta}_t = - A_L \zeta_t + D_L F_L(w_t)\zeta_t + Q_Lg^L_t + D_H F_L(w_t) \xi_t,\\
	&\dot{\xi}_t = - A_H \xi_t + D_HF_H(w_t) \xi_t + \Pi_{> N}D_L F_H(w_t)\zeta_t + 2T^{-1}U^H_{0,t}h_H\1_{[T/4,3T/4]}(t),
	\end{aligned}
\end{equation}
with $\xi_0 = 0$ and  $\zeta_0 = 0$. If one assumes that a solution to \eqref{def:xyctrl} exists and is unique (this is proved in Lemma \ref{lem:Ectrl} below), then we find that the choice of control is made specifically so that the remainder $r_T$ assumes a nice form.
\emph{In what follows the implicit constant is always independent of $N$ unless otherwise indicated}.  

\begin{lemma}
Assume that $g$ is defined as above and that exists a unique solution $(\zeta_t,\xi_t)$ to \eqref{def:xyctrl} in the space $L^2(\Omega;L^\infty([0,T];\Hbf\times T_{v_t}\cM))$, then the remainder $r_T = \MalD_g w_T - Dw_Th$ satisfies
\begin{align}
& r_T^L = \int_0^T U^L_{t,T}D_H F_L(w_t)\xi_t\,\dt \label{def:RTL} \\ 
& r_T^H = \int_0^T U^H_{t,T} \Pi_{>N}D_LF_H(w_t)\zeta_t\,\dt - \int_0^T U^H_{t,T}D_L F_H(w_t)D_H w^L_t h_H\,\dt - D_L w^H_T h_L.  \label{def:RTH}
\end{align}
\end{lemma}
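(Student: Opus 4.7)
The plan is to verify \eqref{def:RTL} and \eqref{def:RTH} by direct computation, leveraging the particular structure of the control $g$ together with Duhamel's formula. The crucial preliminary step is to identify $(\zeta_t, \xi_t)$ with the low/high decomposition $(\MalD_g w_t^L, \MalD_g w_t^H)$ of the Malliavin derivative of the cutoff flow. Differentiating the SPDE \eqref{eq:cut-off-eq} in the direction $g$ shows that $\MalD_g w_t$ satisfies the linearized Cauchy problem $\partial_t \MalD_g w_t = -A \MalD_g w_t + DF(w_t) \MalD_g w_t + Q g_t$ with $\MalD_g w_0 = 0$. Substituting the definition \eqref{def:vH} rewrites the high-mode source as $Q_H g_t^H = -\Pi_{\leq N} D_L F_H(w_t) \zeta_t + 2T^{-1} U^H_{0,t} h_H \mathbf{1}_{[T/4, 3T/4]}(t)$, and using the elementary identity $D_L F_H(w_t) \zeta_t - \Pi_{\leq N} D_L F_H(w_t) \zeta_t = \Pi_{>N} D_L F_H(w_t) \zeta_t$, one checks that the low/high split of $\MalD_g w_t$ satisfies the very same linear system \eqref{def:xyctrl} as $(\zeta_t, \xi_t)$ with the same vanishing initial data. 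Uniqueness for this linear Cauchy problem—afforded by the boundedness of $DF(w_t)$ under the cutoff—forces $\zeta_t = \MalD_g w_t^L$ and $\xi_t = \MalD_g w_t^H$, whence $r_T^L = \zeta_T - D w_T^L h$ and $r_T^H = \xi_T - D w_T^H h$.

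For \eqref{def:RTL}, I apply Duhamel to the $\zeta$-equation using the propagator $U^L_{s,t}$:
\[
\zeta_T = \int_0^T U^L_{t, T} \bigl[ Q_L g_t^L + D_H F_L(w_t) \xi_t \bigr] \, \dt.
\]
Inserting the definition \eqref{def:vL} of $g^L$, using the factorization $U^L_{t, T} = U^L_T V^L_t$ and the definition of the partial Malliavin matrix $\cC_T^L = \int_0^T V^L_t Q_L (V^L_t Q_L)^\top \, \dt$ (invertible a.s. by Lemma \ref{lem:Malmatrix-bound}), the low-frequency piece telescopes:
\[
\int_0^T U^L_{t, T} Q_L g_t^L \, \dt = U^L_T \cC_T^L (\cC_T^L)^{-1} V^L_T D w_T^L h = D w_T^L h.
\]
This precisely cancels the $-D w_T^L h$ in $r_T^L = \zeta_T - D w_T^L h$, yielding \eqref{def:RTL}. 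This is exactly where the careful design of the low-mode control \eqref{def:vL} (a Bismut-type choice built from $\cC_T^L$) pays off.

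For \eqref{def:RTH}, apply Duhamel to the $\xi$-equation with the forward propagator $U^H_{s,t}$:
\[
\xi_T = \int_0^T U^H_{t, T} \Pi_{>N} D_L F_H(w_t) \zeta_t \, \dt + 2T^{-1} \int_{T/4}^{3T/4} U^H_{t, T} U^H_{0, t} h_H \, \dt,
\]
and collapse the second term via the cocycle identity $U^H_{t, T} U^H_{0, t} = U^H_{0, T}$ to obtain $U^H_{0, T} h_H$. On the other hand, decomposing $D w_T^H h = D_L w_T^H h_L + D_H w_T^H h_H$ by linearity and applying Duhamel to each linearized piece—with respective initial data $(h_L, 0)$ and $(0, h_H)$—gives
\[
D_L w_T^H h_L = \int_0^T U^H_{t, T} D_L F_H(w_t) D_L w_t^L h_L \, \dt,
\]
\[
D_H w_T^H h_H = U^H_{0, T} h_H + \int_0^T U^H_{t, T} D_L F_H(w_t) D_H w_t^L h_H \, \dt.
\]
Subtracting $D w_T^H h$ from $\xi_T$, the $U^H_{0, T} h_H$ terms cancel, leaving exactly \eqref{def:RTH}. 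The entire calculation is essentially bookkeeping: each step is a direct application of Duhamel and the designed structure of $g$, and the only subtlety is propagating the low/high cross-coupling through the two propagators $U^L_{s,t}$ and $U^H_{s,t}$, which is precisely what motivates the $2T^{-1} U^H_{0,t} h_H \mathbf{1}_{[T/4, 3T/4]}$ source in \eqref{def:vH}.
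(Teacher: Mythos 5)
Your proof is correct and follows essentially the same route as the paper: identify $(\zeta_t,\xi_t)$ with $(\MalD_g w_t^L, \MalD_g w_t^H)$ via uniqueness for the linearized Cauchy problem, Duhamel on the $\zeta$-equation plus the Bismut-type cancellation $U^L_T\,\cC_T^L(\cC_T^L)^{-1}V^L_T Dw_T^L h = Dw_T^L h$ for $r_T^L$, and Duhamel on the $\xi$-equation with the cocycle identity $U^H_{t,T}U^H_{0,t}=U^H_{0,T}$ together with the split $Dw_T^H h = D_Lw_T^H h_L + D_Hw_T^H h_H$ for $r_T^H$. The paper compresses the identification $\zeta=\MalD_g w^L,\ \xi=\MalD_g w^H$ into the phrase ``Using \eqref{def:xyctrl},'' and you have usefully spelled out the key reformulation — rewriting the $\xi$-equation with $\Pi_{>N}D_LF_H(w_t)\zeta_t + 2T^{-1}U^H_{0,t}h_H\mathbf{1} = D_LF_H(w_t)\zeta_t + Q_H g_t^H$ so the two linear systems visibly coincide — which is exactly the implicit step the paper elides.
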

\begin{proof}	
Using \eqref{def:xyctrl}, we obtain the following formulas for the Malliavin derivatives at time $T$:
\begin{align}
&	\MalD_g w^L_T = D w^L_T h +  U^L_T\int_0^T V^L_tD_H F_L(w_t)\xi_t\, \dt \\ 
&	\MalD_g w^H_T = U^H_{0,T}h_H + \int_0^T U^H_{t,T} \Pi_{ > N}D_LF_H(w_t)\zeta_t\,\dt. 
\end{align}
Note that $\MalD_g w^L_T$ is equal to $D w^L_T h$ plus remainders, while $\MalD_g w^H_T$ is a perturbation of $U^H_{0,T}h_H$, that is, 
\[
	Dw^H_t h_H = U^H_{0,t}h_H + \int_0^t U^H_{s,t}D_L F_H(w_s)D_H w^L_s h_H\,\ds.
\]
Using this relation, we now write
\begin{equation}\label{eq:mal-der-remain}
\begin{aligned}
	&\MalD_g w^L_T = D w^L_T h + r^L_T\\
    &\MalD_g w^H_T = D w^H_T h + r^H_T 
\end{aligned}
\end{equation}
where $r^L_T$ and $r^H_T$ are given by \eqref{def:RTL} and \eqref{def:RTH}.
\end{proof}
Next, we construct a unique solution to \eqref{def:xyctrl} and provide the necessary quantitative estimates. These in turn will imply the existence of a suitable control $g_{t}$. 
\begin{lemma} \label{lem:Ectrl}
For all $T > 0$ sufficiently small (depending only on $\rho$), and all $p \geq 2$, there exists a unique solution $\eta_t = (\zeta_t,\xi_t)\in \Hbf\times T_{v_t}\cM$ on $[0,T]$ to the system \eqref{def:xyctrl} satisfying 
\begin{align*}
\left(\E\sup_{t\in[0,T]}\norm{\eta_t}_{\Hbf\times T_{v_t}\cM}^p\right)^{1/p} + \left(\E \sup_{s,t\in [0,T]}\norm{\MalD_s\eta_t}_{\Wbf\to \Hbf\times T_{v_t}\cM}^p\right)^{1/p} \lesssim  T^{-2a}( 1+  |z| )^{2b} \norm{h}_{\Hbf\times T_v\cM}.
\end{align*}
Note that $\eta_t$ is not adapted to the filtration $(\mathcal{F}_t)$.
\end{lemma}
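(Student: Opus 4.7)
The plan is to treat \eqref{def:xyctrl} as a coupled linear (but non-adapted) Volterra equation in $\eta_t = (\zeta_t, \xi_t)$ with known forcing and to solve it via a standard Duhamel / Gronwall argument, then separately estimate the Malliavin derivative by differentiating the system. Because the cutoff enforces $\|u_t\|_{\Hbf} \leq 2\rho$ on the set where $F_\rho$ differs from the linearization, the operator norms $\|D F_\rho(w_t)\|$, $\|U^L_{s,t}\|$, $\|V^L_{s,t}\|$, $\|U^H_{s,t}\|$ are bounded on $[0,T]$ by constants depending only on $\rho$ (and on $T \leq 1$); this is the key reason uniform-in-$t$ estimates are available at all.

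\textbf{Step 1: Existence and uniqueness of $\eta_t$.} Apply the Duhamel formula against the semigroups generated by $-A_L$ and $-A_H$ (equivalently, against the Jacobian cocycles $U^L_{s,t}$ and $U^H_{s,t}$) to rewrite \eqref{def:xyctrl} as
\[
\zeta_t = \int_0^t U^L_{s,t}\big(Q_L g^L_s + D_H F_L(w_s)\xi_s\big)\ds,\qquad
\xi_t = \int_0^t U^H_{s,t}\big(\Pi_{>N}D_L F_H(w_s)\zeta_s + 2T^{-1} U^H_{0,s} h_H \mathbf 1_{[T/4,3T/4]}(s)\big)\ds.
\]
This is a linear Volterra integral equation in $\eta \in C([0,T];\Hbf\times T_{v_t}\cM)$ with uniformly $\rho$-bounded kernel, so a standard Picard iteration (or a Gronwall--type contraction on a weighted norm $\sup_t e^{-\Lambda t}\|\eta_t\|$) gives a unique solution pathwise, measurably in $\omega$.

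\textbf{Step 2: A priori bound on $\eta_t$.} Gronwall applied to the integral equation yields, pathwise and with constants depending only on $\rho$,
\[
\sup_{t\in[0,T]} \|\eta_t\|_{\Hbf\times T_{v_t}\cM} \lesssim_\rho \int_0^T \|Q_L g^L_s\|_{\Hbf}\ds + T^{-1}\int_{T/4}^{3T/4}\|U^H_{0,s}h_H\|_{\Hbf}\ds \lesssim_\rho \Big(\int_0^T \|g^L_s\|_{\Wbf}^2\ds\Big)^{1/2} T^{1/2} + \|h\|_{\Hbf}.
\]
From the formula $g^L_s = (V^L_s Q_L)^\top (\mathcal C_T^L)^{-1} V^L_T D w^L_T h$ and the uniform (in $s\in[0,T]$) bound $\|V^L_s Q_L\|\lesssim_\rho 1$, together with bounds on $\|V^L_T\|$ and $\|Dw^L_T\|_{op}$ (which are controlled by the Jacobian of the cutoff flow, uniformly in $\rho$), we obtain
\[
\Big(\int_0^T \|g^L_s\|_{\Wbf}^2\ds\Big)^{1/2} \lesssim_\rho T^{1/2}\,|(\mathcal C_T^L)^{-1}|\, \|h\|_{\Hbf\times T_v\cM}.
\]
Combining with Lemma \ref{lem:Malmatrix-bound} and the Minkowski/H\"older inequalities gives, for any $p\ge 2$,
\[
\Big(\E\sup_{t\in[0,T]}\|\eta_t\|^p\Big)^{1/p} \lesssim_{\rho,p} T^{-2a}(1+|z|)^{2b}\|h\|_{\Hbf\times T_v\cM},
\]
which is the first required bound.

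\textbf{Step 3: The Malliavin derivative of $\eta_t$.} Formally differentiating \eqref{def:xyctrl} in the direction of a noise increment at time $s$ produces an analogous linear Volterra system for $\mathcal D_s \eta_t$ driven by three kinds of forcing: (i) $\mathcal D_s$ of the coefficients $DF_\rho(w_t)$ (which is controlled by $\|\mathcal D_s w_t\|\lesssim_\rho \1_{s\leq t}$ from standard Malliavin estimates on the cutoff SPDE), (ii) $\mathcal D_s$ of the Jacobian terms $U^L_{s,t}, U^H_{s,t}$ (again $\rho$-bounded), and (iii) $\mathcal D_s g^L_t$. The only nontrivial piece is (iii): from $g^L_t = (V^L_t Q_L)^\top (\mathcal C_T^L)^{-1} V^L_T D w^L_T h$ together with the identity $\mathcal D_s (\mathcal C_T^L)^{-1} = -(\mathcal C_T^L)^{-1}(\mathcal D_s \mathcal C_T^L)(\mathcal C_T^L)^{-1}$, each factor admits $\rho$-uniform Malliavin bounds except for the two copies of $(\mathcal C_T^L)^{-1}$, so Lemma \ref{lem:Malmatrix-bound} enters again, contributing the factor $T^{-2a}(1+|z|)^{2b}$. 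A final Gronwall argument on the differentiated Volterra system, identical in structure to Step 2, gives
\[
\Big(\E\sup_{s,t\in[0,T]} \|\mathcal D_s \eta_t\|_{\Wbf\to \Hbf\times T_{v_t}\cM}^p\Big)^{1/p} \lesssim_{\rho,p} T^{-2a}(1+|z|)^{2b}\|h\|_{\Hbf\times T_v\cM},
\]
completing the proof.

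\textbf{Main obstacle.} The genuinely delicate point is Step 3: bookkeeping the scaling in $T$ and $|z|$ when taking the Malliavin derivative of $g^L$, because $(\mathcal C_T^L)^{-1}$ appears once in $g^L$ itself and twice in $\mathcal D_s g^L$, and one must verify that no additional negative powers of $T$ beyond $T^{-2a}$ are produced. This is ultimately the reason the estimate in Lemma \ref{lem:Malmatrix-bound} is stated in the $L^p$ sense for every $p$, so that H\"older's inequality absorbs the extra factors.
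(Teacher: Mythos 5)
Your plan follows the paper's core strategy: Duhamel reformulation of \eqref{def:xyctrl} as a linear Volterra equation $\eta = L_T\eta + F_T$, a contraction argument that closes for $T$ small because the cutoff makes the coefficients $\rho$-uniformly bounded, and the scaling $T^{-2a}(1+|z|)^{2b}$ coming from Lemma \ref{lem:Malmatrix-bound} through $g^L_t = (V^L_tQ_L)^\top(\cC_T^L)^{-1}V^L_TDw^L_Th$. The meaningful difference is structural. You solve first for $\eta_t$ (Step 2), then formally Malliavin-differentiate the Volterra system to estimate $\MalD_s\eta_t$ (Step 3); the paper instead does a \emph{single} fixed point in the Banach space $\mathbb{X}_T$ whose norm $\|\eta\|_{\mathbb{X}_T} = (\E\sup_t\|\eta_t\|^p)^{1/p} + (\E\sup_{s,t}\|\MalD_s\eta_t\|^p)^{1/p}$ already carries the Malliavin regularity. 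This is not just aesthetic: your sequential version implicitly presupposes, at Step 3, that the solution produced by Picard iteration is Malliavin differentiable, which requires its own argument (e.g., showing the iterates converge in a Malliavin Sobolev topology and invoking closedness of $\MalD$). The bundled norm $\mathbb{X}_T$ is designed precisely to sidestep this gap --- the fixed point is constructed directly in a space of Malliavin-differentiable processes, so differentiability of the limit comes for free.

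Two smaller points worth spelling out. The Gronwall you invoke in Step 2 is a \emph{singular} Gronwall: the $\xi\to\zeta$ coupling $\int_0^t U^L_{s,t}D_HF_L(w_s)\xi_s\,\ds$ is benign, but the $\zeta\to\xi$ coupling $\int_0^t U^H_{s,t}\Pi_{>N}D_LF_H(w_s)\zeta_s\,\ds$ carries a $(t-s)^{-1/2}$ kernel because $D_LF_H$ loses a derivative that $U^H_{s,t}$ must recover by parabolic smoothing; the paper records this as $\|L_T\eta\|_{\mathbb{X}_T}\lesssim_\rho T^{1/2}\|\eta\|_{\mathbb{X}_T}$ in \eqref{eq:L_T-est}, and your ``Gronwall-type contraction on a weighted norm'' should be stated in that form to make the smallness for small $T$ visible. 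Also a bookkeeping correction on Step 3: the bound for $\MalD_s g^L_t$ is actually $T^{-(2a+1)}(1+|z|)^{2b}\|h\|$, one more power of $T^{-1}$ than you claim (both $(\cC_T^L)^{-1}$ factors and the $\MalD_s\cC_T^L$ factor contribute), with the extra $T^{-1}$ absorbed by the factor of $T$ the Volterra integral over $[0,T]$ produces; your final exponent $T^{-2a}$ is correct but the intermediate estimate undercounts the singularity.
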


\begin{proof}Formally we may re-write a solution to \eqref{def:xyctrl} as
\begin{align}\label{eq:zeta-fix}
\zeta_t & = \int_0^t U^L_{s,t}Q_L g^L_{s} ds + \int_0^t U_{s,t}^L D_H F_L(w_s) \xi_s ds \\ 
\label{eq:xi-fix}
\xi_t & = \frac{2}{T}\abs{[0,t] \cap [\tfrac{T}{4},\tfrac{3T}{4}]}U^H_{0,t} h_H   +  \int_0^t U^H_{s,t} \Pi_{> N}D_L F_H(w_s)\zeta_s ds. 
\end{align}
The Lemma is proved via a fixed point for the pair $\eta = \{(\zeta_t,\xi_t) , t\in [0,T]\}$ in the Banach space $\mathbb{X}_T$ defined by the following norm
\begin{align*}
\norm{\eta}_{\mathbb{X}_T} & :=  \left(\E\sup_{t\in[0,T]}\norm{\eta_t}_{\Hbf\times T_{v_t}\cM}^p\right)^{1/p} + \left(\E \sup_{s,t\in [0,T]}\norm{\MalD_s\eta_t}_{\Wbf\to \Hbf\times T_{v_t}\cM}^p\right)^{1/p}.
\end{align*}
Note that equations \eqref{eq:zeta-fix} and \eqref{eq:xi-fix} are linear and can be written more compactly on $\mathbb{X}_T$ as
\[
	\eta  = L_T\eta + F_T
\]
where $L_T$ and $F_T$ are given by
\[
	(L_T\eta)_t = \begin{pmatrix}\int_0^t U_{s,t}^L D_H F_L(w_s) \xi_s ds\\\int_0^t U^H_{s,t} \Pi_{> N}D_L F_H(w_s)\zeta_s ds\end{pmatrix},\quad (F_T)_t = \begin{pmatrix}\int_0^t U^L_{s,t}Q_L g^L_{s} ds \\ \frac{2}{T}\abs{[0,t] \cap [\tfrac{T}{4},\tfrac{3T}{4}]}U^H_{0,t} h_H \end{pmatrix}.
\]
Our goal will be to estimate $L_T\eta$ and $F_T$ in $\mathbb{X}_T$. Specifically, we will show that
\begin{align}\label{eq:L_T-est}
	&\|L_T\eta\|_{\mathbb{X}_T} \leqc_\rho T^{\frac{1}{2}}\|\eta\|_{\mathbb{X}_T}\\\label{eq:F_T-est}
	&\|F_T\|_{\mathbb{X}_T} \leqc_\rho T^{-2a}(1+|z|)^{2b}\|h\|_{\Hbf\times T_{v}\cM}.
\end{align}
This implies that for small enough $T$ (depending only on $\rho$), the mapping $\eta \mapsto L_T\eta + F_T$ is a contraction and maps the ball $\mathbb{B}_T = \{\eta \in \mathbb{X}_T : \|\eta\|_{\mathbb{X}_T} \leq 2\|F_T\|_{\mathbb{X}_T}\}$ into itself. By the contraction mapping theorem this implies the existence of a unique solution to $\eta = L_T\eta + F_T$ satisfying
\[
	\|\eta\|_{\mathbb{X}_T} \leq 2\|F_T\|_{\mathbb{X}_T} \leqc_\rho T^{-2a}(1 + |z|)^{2b}\|h\|_{\Hbf\times T_v\cM}. 
\]

To estimate $L_T\eta$ and $F_T$ in $\mathbb{X}_T$ we need to compute the Malliavin derivatives. We find for each $f\in \Wbf$
\[
(\MalD_s F_Tf)_t = \begin{pmatrix}\int_0^t [\MalD_s U_{r,t}^Lf]Q_L g^L_{r}\dr+ \int_0^t U_{r,t}^L Q_L\MalD_s g^L_{r}f\dr\\ [\MalD_s U^H_{0,t}f]h_H \frac{2}{T}\abs{[0,t] \cap [\frac{T}{4},\frac{3T}{4}]}\end{pmatrix}.
\]
and for each $\eta \in \mathbb{X}_T$ using the chain rule
\[
	\MalD_s (L_T\eta)f = [\MalD_s L_Tf]\eta + L_T \MalD_s\eta f,
\]
where
\[
	([\MalD_s L_Tf]\eta)_t = \begin{pmatrix}\int_0^t [\MalD_s U_{r,t}^Lf]D_H F_L(w_r)\xi_r\dr + \int_0^t U_{r,t}^L Q_LD^2F_L[\xi_r,J_{s,r}Qf]\dr\\
											 \int_0^t [\MalD_s U^H_{r,t}f] \Pi_{> N}D_L F_H(w_r)\zeta_r \dr +\int_0^t U^H_{r,t}\Pi_{>N}D^2 F_H[\zeta_r, J_{s,r}Qf]
	\end{pmatrix}
\]

We observe by Lemma \ref{lem:Malmatrix-bound}, Lemma \ref{lem:U,V-bounds}, and Lemma \ref{lem:JstBds}, that
\begin{align}
\EE  \sup_{0 < s \leq T} \abs{g^L_{s}}^p \lesssim  T^{-ap}(1+|z|)^{bp} \norm{h}_{\Hbf\times T_v\cM}. 
\end{align}
and by the product rule, Lemmas \ref{lem:U,V-bounds}, \ref{lem:MalliavinForAll}, and \ref{lem:MDbds}, there holds
\begin{align}
\E\sup_{s,t\in[0,T]} \|\MalD_s g^L_t\|_{\Wbf \to \Wbf_L}^p \leqc T^{-(2a +1)p}\left(1 +  |z| \right)^{2bp}\|h\|_{\Hbf\times T_v\cM}^p. 
\end{align}
Using the bounds and bounds on $U^L_{r,t},U^H_{r,t},\MalD_s U^L_{r,t}$ and $\MalD_s U^H_{r,t}$, in Lemmas \ref{lem:U,V-bounds} and \ref{lem:MalliavinForAll}, we moreover have
\[
\begin{aligned}
	\|F_T\|_{\mathbb{X}_T} &\leqc_\rho T \|g^L\|_{\mathbb{X}_T} + \|h_H\|_{\Hbf_H} \leqc T^{-2a}\left(1 +  |z| \right)^{2b}\|h\|_{\Hbf\times T_v\cM}. 
\end{aligned}
\]

To estimate $L_T\eta$ we use the bounds on $U^L_{r,t},U^H_{r,t}$ (from Lemma \ref{lem:U,V-bounds}) to obtain the almost sure bounds
\begin{equation}\label{eq:L_T-est-as}
\begin{aligned}
	\sup_{t\in [0,T]}\|(L_T\eta)_t\|_{\Hbf\times T_{v_t}\cM }&\leqc_\rho T\sup_{t\in [0,T]}\|\xi_t\|_{\Hbf_H} +  T^{\frac{1}{2}}\sup_{t\in [0,T]}|\zeta_t|\\
	 &\leqc_\rho T^{\frac{1}{2}}\sup_{t\in[0,T]}\|\eta_t\|_{\Hbf\times T_{v_t}\cM}. 
\end{aligned} 
\end{equation}
Additionally, using bounds on $J_{s,t},\MalD_s U^L_{r,t}$ and $\MalD_s U^H_{r,t}$ (from Lemmas \ref{lem:JstBds} and \ref{lem:MalliavinForAll}) we also find
\[
	\sup_{s,t\in [0,T]}\|([\MalD_sL_T]\eta)_t\|_{\Wbf\to \Hbf\times T_{v_t}\cM} \leqc_\rho T^{\frac{1}{2}}\sup_{t\in[0,T]}\|\eta_t\|_{\Hbf\times T_{v_t}\cM},
\]
and therefore by  estimate \eqref{eq:L_T-est-as} applied to $\MalD_s\eta$ instead of $\eta$, we find
\begin{equation}\label{L_T-MalD-est}
\begin{aligned}
	\sup_{s,t\in [0,T]}\|\MalD_s (L_T\eta)_t\|_{\Wbf\to \Hbf\times T_{v_t}\cM}\\
	&\leqc T^{\frac{1}{2}}\left(\sup_{t\in[0,T]}\|\eta_t\|_{\Hbf\times T_{v_t}\cM} + \sup_{s,t\in[0,T]}\|\MalD_s\eta_t\|_{\Wbf \to \Hbf\times T_{v_t}\cM}\right).
\end{aligned}
\end{equation}
Putting \eqref{eq:L_T-est-as} and \eqref{L_T-MalD-est} together and taking the $L^p(\Omega)$ norm gives estimate \eqref{eq:L_T-est}.
\end{proof}

We are now ready to prove Lemma \ref{lem:RTbd+SkrdBd}. 
\begin{proof}[{\bf Proof of Lemma \ref{lem:RTbd+SkrdBd}}]

First we prove the estimate \eqref{eq:Remain-est} on the remainder $r_T$. It is here where we will need to set the choice of $N$ depending on $T$ and $|z|$. To begin, we note that from equation \eqref{eq:xi-fix}, using the cut-off $\Pi_{>N}$, we obtain the following improved estimate on $\xi_t$
\begin{align*}
\left(\E\sup_{t\in[0,T]}\norm{\xi_t}_{\Wbf_H}^2\right)^{1/2} \lesssim  \norm{h}_{\Hbf\times T_{v}\cM} + N^{1-\sigma}T^{-2a}(1 + |z|)^{2b} \norm{h}_{\Hbf\times T_{v}\cM}. 
\end{align*}
Therefore, since $\sigma-1 >1$ and the definition $N = T^{-2a}(1+|z|)^{2b}$, we obtain the $T$ independent bound
\[
	\left(\E\sup_{t\in[0,T]}\norm{\xi_t}_{\Wbf_H}^2\right)^{1/2} \lesssim \norm{h}_{\Hbf\times T_v\cM}.
\]
Recall the definition of the remainders \eqref{def:RTL} and \eqref{def:RTH}. We estimate $r^L$ first. We find (noting $\abs{D_H F_L(w_t)\xi_t}\lesssim \chi_{3\rho}(\norm{u_t}_{\Hbf})\norm{u_t}_{H^{\gamma}} \norm{\xi_t}_{\Wbf_H}$ for any $\gamma > \frac{d}{2} + 1$ due to the frequency projection), 
\begin{align}
\abs{r_T^L} = \abs{U^L_T\int_0^T V^L_tD_H F_L(w_t)\xi_t\, \dt} \lesssim_{\rho} T \sup_{t\in[0,T]}\left(\abs{U^L_t}\abs{V^L_t}\norm{\xi_t}_{\Wbf_H}\right),
\end{align}
and therefore using almost sure bounds on $U^L_t$ and $V^L_t$ from Lemma \ref{lem:U,V-bounds}, 
\begin{align}
\EE \abs{r_T^L}^2 \lesssim_{\rho} T^2 \E\sup_{t\in[0,T]}\|\xi_t\|_{\Wbf_H}^2 \lesssim T^2 \norm{h}_{\Hbf\times T_v\cM}^2.
\end{align}
Hence, $r_T^L$ satisfies the estimate required for \eqref{eq:Remain-est}. 

Turn next to estimating $r^H_t$. We again use the the frequency truncation $\Pi_{\leq N}$ and the choice $N = T^{-2a}(1+|z|)^{2b}$ to find
\begin{align}
\norm{r_T^H}_{\Hbf_H} &\lesssim \int_0^T \frac{1}{ N (T-t)^{1/2}} \abs{\zeta_t} dt + \int_0^T \frac{1}{(T-t)^{1/2}} \abs{D_H w^L_t h_H} dt + \norm{D_L w^H_T h_L}_{\Hbf_H}. \\ 
&\lesssim_\rho T^{\frac{1}{2}+2a}(1+|z|)^{2b} \sup_{t\in[0,T]}|\zeta_t| + T^{\frac{1}{2}} \sup_{t\in[0,T]}|D_H w^L_t h_H| + \norm{D_L w^H_T h_L}_{\Hbf_H}. 
\end{align}
Using that Lemma \ref{lem:Ectrl} gives
\[
T^{4a}(1+|z|)^{4b}\E\sup_{t\in[0,T]}|\zeta_t|^2 \leqc 1,
\]
along with Lemma \ref{lem:CrossJacobian} for $D_Hw^L$ and $D_Lw^H$, we conclude that $r^H_T$ satisfies the estimate required for \eqref{eq:Remain-est}.

Next we show the estimate \eqref{eq:Skoro-est} on the control $g$. Recall from the proof of Lemma \ref{lem:Ectrl} that we can use the bounds on the partial Malliavin matrix $\cC^L_t$ to get the following estimate on $g^L$
\begin{align}
\EE  \sup_{0 < t \leq T} \abs{g^L_{t}}^2 + \E\sup_{s,t\in[0,T]} \|\MalD_s g^L_t\|_{\Wbf \to \Wbf_L}^2\lesssim  T^{-4a}(1+|z|)^{4b} \norm{h}_{\Hbf\times T_{v}\cM}^2. 
\end{align}
It remains to estimate $g^H$. Recall the following formula's for $g^H_t$ and $\MalD_sg^H_t$ 
\begin{equation}
	g^H_{t} = - Q_H^{-1} \Pi_{\leq N} D_L F_H(w_t)\zeta_t + 2T^{-1}Q_H^{-1}U^H_{0,t}h_H\1_{[T/4,3T/4]}(t)
\end{equation}
\begin{equation}
\begin{aligned}
	&\MalD_s g^H_tf = Q_H^{-1} \Pi_{\leq N} D^2F_H(w_t)[\zeta_t,J_{s,t}Qf] + Q_H^{-1} \Pi_{\leq N} D_L F_H(w_t)\MalD_s\zeta_tf\\
	&\hspace{1in} + 2T^{-1}Q_H^{-1}\MalD_sU^H_{0,t}fh_H\1_{[T/4,3T/4]}(t). 
\end{aligned}
\end{equation}
Using the cut-off $\Pi_{\leq N}$ and the \emph{lower bound} in Assumption \ref{a:Highs}, 
\[
\begin{aligned}
	\E\int_0^T\|g^H_t\|_{\Wbf_H}^2\dt &\leqc \E\int_0^T\|Q_H^{-1} \Pi_{\leq N} D_LF_H(w_t)\zeta_t\|_{\Wbf_H}^2 + T^{-1}\E\int_{T/4}^{3T/4}\|Q_H^{-1}U^H_{0,t}h_H\|_{\Wbf_H}^2\dt\\
	&\leqc \E\int_0^T\|\Pi_{\leq N}D_LF_H(w_t)\zeta_t\|_{H^\alpha}^2\dt + T^{-1}\E\int_{T/4}^{3T/4}\|U^H_{0,t}h_H\|_{H^\alpha}^2\dt\\
	&\leqc_\rho  N^4T\sup_{t\in[0,T]}|\zeta_t|^2 + T^{-2}(1 + \norm{w}_{\Hbf}^2) \|h_H\|_{\Hbf_H}^2,
\end{aligned}
\]
where in the last line we used \eqref{ineq:LtHgammaJst} on $U^H_{0,t}$ with $\gamma = \alpha - 1$. This is where we use the requirement $\sigma \in (\sigma-2(d-1),\sigma-\frac{d}{2})$. 
A similar calculation for $\MalD_sg^H_t$ yields
and
\[
\begin{aligned}
	\E\int_0^T\int_0^T\|\MalD_sg^H_t\|_{\Wbf\to \Wbf_H}^2\dt\ds &\leqc_\rho  N^4T^2\E\left(\sup_{t\in[0,T]}|\zeta_t| +\sup_{s,t\in[0,T]}\|\MalD_s \zeta_t\|_{\Wbf \to \Hbf_L\times T_{v_t}\cM}^2\right)\\
	&\hspace{.5in} + T^{-2}(1 + \norm{w}_{\Hbf}^2) \|h_H\|_{\Hbf_H}^2
\end{aligned}
\]
Using the estimate on $\xi_t$ from Lemma \ref{lem:Ectrl} and our choice of $N = T^{-2a}(1+|z|)^{2b}$ we find
\[
	\begin{aligned}
	\E\int_0^T\|g^H_t\|_{\Wbf_H}^2\dt + \E\int_0^T\int_0^T\|\MalD_sg^H_t\|_{\Wbf\to \Wbf_H}^2\dt\ds \leqc_\rho T^{-8a}(1+\norm{w}_{\Hbf})^{8b+2}\|h\|_{\Hbf\times T_{v}\cM}^2.
	\end{aligned}
\]
Therefore we have the desired estimate \ref{eq:Skoro-est} on $g_{t}$. 
\end{proof}

\subsection{Non-degeneracy of the partial Malliavin matrix}\label{subsubsec:mal-nondeg}

For simplicitly of presentation and brevity, we will only detail the proof in the case of non-degenerate noise on the Navier-Stokes equations (i.e. $L=1$), that is
\begin{align}
\abs{q_k} \approx \abs{k}^{-\alpha} \quad \forall k \in \Z_0^d.  
\end{align}
Once one has the hypoellipticity deduced in Section \ref{sec:galerkin}, the adaptation to the weaker Assumption \ref{a:Highs} is a well-understood extension using methods from previous works \cite{E2001-lg,Romito2004-rc,EH01,RomitoXu11,HairerNotes11}. This is discussed in more detail in Remark \ref{rmk:NorrisMal} below.

Define the set
\[
	\K = (\Z^d_0\times \{1,\ldots,d-1\}) \cup\{1,2,\ldots,2d\}.
\]
Note that each element $m \in \K$ is either a pair $(k,i)\in \Z^d_0\times\{1,\ldots,d-1\}$ or an integer $j\in \{1,\ldots,2d\}$. We will also denote the set $\K_L$ in a similar way with $\Z^d_0$ replaced by $K_L$and define $\K_H = \K \backslash \K_L$. The operator $\widehat{Q}$ on $\Wbf\times \R^{2d}$ gives rise to a family of vector fields $\{Q^m\}_{m\in \K}$ on $\Hbf\times \cM$ defined by 
\[
	Q^m = \begin{cases} q_ke_k\gamma_k^i &\quad\text{if } m = (k,i)\in \Z^d_0\times\{1,\ldots,d-1\}\\ \hat{e}^z_j &\quad\text{if } m = j\in \{1,\ldots,2d\}\end{cases}
\]
where we are denoting $\{\hat{e}^z_j\}$ the canonical basis on $\R^{2d}$. The pivotal lemma is the following non-degeneracy of the partial Malliavin matrix  $\cC_{t}^L$.
\begin{lemma} \label{lem:pMalNonDeg}
For all $p \geq 1$, $t < 1$, $\ep >0$ and $w\in \Hbf\times \cM$, there exists constant $a,b>1$ such that
\begin{equation}
\sup_{\substack{h \in \Hbf_L\times T_v\cM\\|h| = 1}}\P \bigg(\sum_{m\in\K_L}\int_0^t \big\langle V^L_s Q^m,h\big\rangle_{L}^2\ds < \eps \bigg) \lesssim_{p,\rho} t^{-ap}( 1 +|z| )^{bp}\eps^{p},
\end{equation}
where the constant is independent of $\ep$ and the initial data.
\end{lemma}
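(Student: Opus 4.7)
The plan is to adapt the classical Malliavin-H\"{o}rmander/Norris scheme to the finite-dimensional partial Malliavin matrix $\mathcal{C}_t^L$, using crucially the parabolic H\"ormander condition for the projective process of a Galerkin truncation established in Section \ref{sec:galerkin}. For fixed unit $h \in \Hbf_L \times T_v\cM$ and each $m \in \K_L$, set
\[
\phi^m(s) := \big\langle V^L_s Q^m, h \big\rangle_L \, .
\]
Smallness of $\sum_{m} \int_0^t (\phi^m)^2\, ds$ means that each $\phi^m$ is small in $L^2([0,t])$. Since
\[
\partial_s V^L_s = V^L_s \big(A_L - D_L F_L(w_s)\big),
\]
It\^o's formula (using \eqref{eq:SDE-w} and recalling $DF(w)$ couples low and high modes) gives a semimartingale decomposition
\[
d\phi^m(s) = \big\langle V^L_s [X_0, Q^m]_L , h\big\rangle_L\, ds + \sum_{m'\in \K} \big\langle V^L_s [Q^{m'}, Q^m]_L, h\big\rangle_L\, dW^{m'}_s + \text{(high-mode remainder)},
\]
where $X_0$ denotes the drift vector field for $w^\rho_t$ and $[\cdot, \cdot]_L$ denotes the projection of the relevant Lie bracket/directional derivative onto $\Hbf_L \times T_v\cM$. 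The high-mode remainder involves $D_H F_L(w_s)$ acting through Jacobians; it can be controlled using the bounds on the $L^2\to \Hbf_L$ derivatives in Lemma \ref{lem:CrossJacobian} and an interpolation in the spirit of the Hairer-Mattingly trick mentioned before Section \ref{subsubsec:mal-nondeg}.

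Given this decomposition, the heart of the argument is an iterated application of Norris's lemma \cite{Norris86}, which asserts quantitatively that if a semimartingale is small in $L^2([0,t])$, then both its bounded-variation part and the quadratic variation of its martingale part are small (with a polynomial loss in the size of the coefficients). Applying Norris once to $\phi^m$ shows that for every $m' \in \K$, $\langle V^L_s [Q^{m'}, Q^m]_L, h\rangle_L$ is small in $L^2$. Repeating on each of these semimartingales enlarges the generating family by successive Lie brackets of $\{X_0\} \cup \{Q^{m'}\}_{m' \in \K}$, projected onto $\Hbf_L \times T_v\cM$. Because the noise in the $z$-variable is direct and coupled to $(x,v)$ through $H(v,z)$ exactly so as to mimic the noise directions appearing in the full $V$-field of Section \ref{sec:galerkin}, the iterated brackets we recover are precisely those from Lemmas \ref{lem:v-span}-\ref{lem:Hormander-span}. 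These span $\Hbf_L \times T_v\cM$ at $s = 0$ (which corresponds to $V^L_0 = \Id$). Hence after finitely many iterations (bounded in terms of $L$ and $d$) we conclude $|h|^2 \lesssim C(\rho,|z|)\, t^{-a} \varepsilon^{c}$ on the event in question, for some fixed $a, c > 0$, giving the desired polynomial bound on the probability.

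The main obstacle is the high-mode remainder in the It\^o expansion: unlike in the finite-dimensional H\"ormander setting of Section \ref{sec:galerkin}, the low-mode semimartingale is not closed under Lie brackets because of the terms involving $D_H F_L(w_s)$. Handling this requires, as in \cite{EH01,HM11}, a precise interpolation estimate allowing us to bound the high-mode contribution by a small negative power of a high-frequency cutoff at the cost of a positive power of $\|w\|_{\Hbf}$, which is absorbed into the factor $(1+|z|)^{bp}$ using the moment bounds on $w^\rho_t$. Once this interpolation is in place, the polynomial dependence on $t^{-1}$ and $|z|$ in the final estimate comes from tracking the polynomial losses in each application of Norris's lemma and in the bounds on $U^L_{s,t}, V^L_s$ from Lemma \ref{lem:U,V-bounds}, combined with a standard Chebyshev argument and the uniformity in $h$ obtained by a covering of the unit sphere in the finite-dimensional space $\Hbf_L \times T_v\cM$.
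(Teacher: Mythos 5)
Your plan follows the classical Malliavin--H\"ormander--Norris blueprint, which is a reasonable first instinct, but the paper's actual argument is substantially leaner and relies on a design feature of the cutoff process that your plan doesn't fully exploit. Because the regularized drift $F_\rho$ couples the auxiliary Brownian motion $z_t$ directly into the $(x,v)$ directions through $H(v,z)$, the paper needs only a \emph{single} commutator: Lemma \ref{lem:brackets-lower-bound} proves a uniform, deterministic lower bound on $\max\{|\langle Q^m,h\rangle_L|, |\langle \Lambda_L Q^m, h\rangle_L|\}$, and the rest is two applications of the interpolation inequality of Lemma \ref{lem:interp-eq}, not an iterated Norris scheme. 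Your description of ``iterating to enlarge the generating family by successive Lie brackets'' and ``recovering precisely the brackets of Lemmas \ref{lem:v-span}--\ref{lem:Hormander-span}'' conflates two separate things: the deep iterated brackets of Section \ref{sec:galerkin} are needed to span the velocity directions $\Hbf_L$ (and are used elsewhere, e.g.\ in ruling out case (c)(i)), whereas the partial Malliavin matrix $\cC^L_t$ is set up on $\Hbf_L \times T_v\cM$ and the $\Hbf_L$ part is trivially reached by the additive noise $Q^m$ at depth zero; only the $(x,v)$ tangent directions require a bracket, and by construction these are reached at depth one.

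Two more concrete issues. First, your It\^o decomposition of $\phi^m$ has a martingale part and a ``high-mode remainder,'' but since $Q^m$ is a \emph{constant} vector field on $\Hbf \times \cM$, Proposition \ref{prop:commutator-Lemma} gives a pure-drift decomposition $d\phi^m(s) = \langle V^L_s \Lambda_L Q^m, h\rangle_L\, ds$ with no martingale term and no remainder at all --- crucially, $V^L_s$ is not the low-mode projection of the full Jacobian but the exact inverse of the \emph{reduced} low-mode cocycle $U^L_{s,t}$, so the commutator formula is exact rather than approximate; Lemma \ref{lem:CrossJacobian} is not needed here. Second, you do not address the case dichotomy in Lemma \ref{lem:brackets-lower-bound}: when $\chi_\rho(\|u\|_{\Hbf})$ is small, the $z$-coupling is essentially switched off and one must instead fall back on the drift brackets from $V$ as in Lemma \ref{lem:v-span}; when it is large, the $z$-coupling does the work. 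Your plan treats a single regime. Finally, the iterative Norris scheme you sketch is exactly what Remark \ref{rmk:NorrisMal} says is required for the genuinely degenerate case (Assumption \ref{a:Highs} with $L>1$), which the paper deliberately does \emph{not} carry out in detail --- so your approach is sound for that generalization, but is overkill (and not what is done) for the $L=1$ case the paper proves.
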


Above $\langle \cdot,\cdot \rangle_L$ denotes the Riemannian metric on $\Hbf\times \cM$. We omit the dependence on $v\in\S^{d-1}$.

Note that $\sum_{m\in\K_L} \int_0^t \left\langle V^L_s Q^m(w_s),h\right\rangle_{L}^2\ds = \langle h, \cC^L_{t} h\rangle_L$, so that Lemma \ref{lem:pMalNonDeg} is really about non-degeneracy of $\cC^L_{t}$. It is a standard fact in the theory of Malliavin calculus that Lemma \ref{lem:Malmatrix-bound} is sufficient to deduce the moment bounds on $(\cC_t^L)^{-1}$ stated in Lemma \ref{lem:pMalNonDeg}. 

To begin, we will need the following Lemma that relates time-derivatives of certain quantities to appropriate Lie brackets.
\begin{proposition}
\label{prop:commutator-Lemma}
Let $G$ be a bounded vector field on $\Hbf\times\cM$ whose range belongs to $\Hbf_L\times T\cM$ and with two bounded derivatives, then the following formula holds
\begin{equation}\label{eq:commutator-eq}
\begin{aligned}
	V_t^LG(w_t) &= G(w) + \int_0^tV_L^s\left([F,G]_L(w_s)- [A,G]_L(w_s)\right)\ds\\
	 &\hspace{.5in}+ \frac{1}{2}\sum_{m\in\mathbb{K}}\int_0^t V^L_sD^2 G(w_s)[Q^m,Q^m]\ds + \int_0^t V^L_s DG(w_s)Q\dee W_s
\end{aligned}
\end{equation}
and for and two differentiable vector fields $F,G$ over $\Hbf\times\cM$, we denote
\[
[F,G]_L \equiv \Pi_L [F,G](w) =  (DG_L)(w) F(w) - (DF_L)(w) G(w)
\]
and 
\[
[A,G]_L(w) \equiv D_L G_L(w) A_Lw - A_L G(w).
\]
\end{proposition}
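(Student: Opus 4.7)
The plan is a direct application of Itô's formula combined with the product rule for the process $V_t^L G(w_t)$. Since $w_t$ solves the cut-off SPDE $dw_t = (F(w_t) - A w_t)\,dt + Q\, dW_t$ and $G$ is twice differentiable with bounded derivatives, the infinite-dimensional Itô formula applies (and the Itô correction $\tfrac{1}{2}\sum_{m\in\mathbb{K}} D^2 G(w_t)[Q^m, Q^m]$ converges absolutely because $Q$ is Hilbert--Schmidt), so I would first write
\[
dG(w_t) = DG(w_t)\bigl(F(w_t) - A w_t\bigr)\,dt + \tfrac{1}{2}\sum_{m\in\mathbb{K}} D^2 G(w_t)[Q^m, Q^m]\,dt + DG(w_t) Q\, dW_t.
\]

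Next I would differentiate $V_t^L = (U_t^L)^{-1}$ in time. From the identity $V_t^L U_t^L = \Id$ and the ODE $\dot U_t^L = (-A_L + D_L F_L(w_t))\,U_t^L$, one obtains $\dot V_t^L = V_t^L\bigl(A_L - D_L F_L(w_t)\bigr)$. Because $V_t^L$ is pathwise $C^1$ with no martingale component, the Itô product rule contributes no quadratic covariation term, and so
\[
d\bigl(V_t^L G(w_t)\bigr) = V_t^L\bigl[DG(w_t) F(w_t) - D_L F_L(w_t) G(w_t)\bigr]\,dt - V_t^L\bigl[DG(w_t) A w_t - A_L G(w_t)\bigr]\,dt + \tfrac{1}{2} V_t^L \sum_{m\in\mathbb{K}} D^2 G(w_t)[Q^m, Q^m]\,dt + V_t^L DG(w_t) Q\, dW_t.
\]
The two bracketed drift quantities are then recognized as $V_t^L [F,G]_L(w_t)$ and $-V_t^L [A,G]_L(w_t)$ respectively, directly from the definitions supplied in the statement: since $G$ is valued in $\Hbf_L\times T\cM$, so is $DG(w)$, hence $\Pi_L$ acts trivially on $DG\cdot F$ while $\Pi_L(DF\cdot G) = D_L F_L\cdot G$ gives the $F$-bracket; for the $A$-bracket, $DA\cdot G = A G$ by linearity of $A$ and $\Pi_L A G = A_L G$. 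Integrating from $0$ to $t$ using $V_0^L = \Id$ yields \eqref{eq:commutator-eq}.

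The main technical point, more a bookkeeping obstacle than a substantive difficulty, lies in reconciling the several projection and derivative conventions at play ($\Pi_L$, $D_L$, $DG_L$, and the splitting $\Hbf = \Hbf_L\oplus \Hbf_H$); in particular, to match the stated formula $[A,G]_L(w) = D_L G_L(w)\, A_L w - A_L G(w)$ one must interpret $DG\cdot Aw$ as the full directional derivative, which decomposes as $D_L G\cdot A_L w + D_H G\cdot A_H w^H$, and absorb the high-mode cross term accordingly. All other ingredients are routine: well-posedness and the pathwise bounds on $V_t^L$ are supplied by Lemma \ref{lem:U,V-bounds}, and the infinite-dimensional Itô formula is standard once the cut-off makes $F$ globally Lipschitz on $\Hbf\times\cM$.
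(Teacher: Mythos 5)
Your proposal follows essentially the same route as the paper's own brief proof: Itô's formula on $G(w_t)$, the integral equation $V_t^L = \Id - \int_0^t V_s^L\bigl(D_L F_L(w_s) - A_L\bigr)\,\dee s$, and the product rule, which contributes no quadratic-covariation term since $V_t^L$ is of bounded variation. The computation is correct in structure, and you have rightly put your finger on the one delicate point. The $A$-related drift term produced by the Itô/product-rule computation is
\[
DG(w)\cdot Aw - A_L G(w) = D_L G_L(w)\, A_L w + D_H G(w)\, A_H w^H - A_L G(w),
\]
which differs from the displayed expression $[A,G]_L(w) = D_L G_L(w) A_L w - A_L G(w)$ by the cross term $D_H G(w)\, A_H w^H$. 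Since $G$ takes values in $\Hbf_L\times T\cM$, the quantity $DG(w)\cdot Aw - A_L G(w)$ is precisely $\Pi_L[A,G](w)$, the projection of the Lie bracket, in direct parallel with the declared identity $[F,G]_L = \Pi_L[F,G]$. So the resolution is not to ``absorb'' the cross term elsewhere, as your closing paragraph suggests, but to read $[A,G]_L = \Pi_L[A,G]$ — which is exactly what the Itô computation yields — and regard the $D_L$ prefix in the displayed formula as a slight imprecision in notation rather than a term you must make disappear. Concretely, the cross term vanishes identically in the primary application $G = Q^m$ (constant vector fields) inside the proof of Lemma \ref{lem:pMalNonDeg}, and for nonconstant $G$ such as $\Lambda_L Q^m$ it is controlled by the same cutoff-region bounds that handle the other contributions in Lemma \ref{lem:commutator-bounds}. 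With that correction spelled out, your argument gives \eqref{eq:commutator-eq} exactly.
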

\begin{proof}
The proof follows from It\^{o}`s formula on $G(w_t)$ and the fact that $V^L_t$ satisfies
\[
	V^L_t = \Id - \int_0^t V^L_s\left(D_LF_L(w_s) - A_L\right)\ds.
\]
\end{proof}

\begin{remark}
Note that since we assume that $\Range{G}(w) \subseteq \Hbf_L\times T_v\cM$ and the vector fields $\{Q_i\}_{i\in \K}$ have the property that $\Range{Q_i} \subseteq \Hbf_L\times T_v\cM$ if $i\in \K_L$ and $\Range{Q^m} \subseteq \Hbf_H$ if $im\in \K_H$ then the sum above converges by the fact that the noise is of Hilbert-Schmidt type and therefore the sum over high frequencies can be bounded
\[
\sum_{m\in \K_H} \big\|D^2 G(w)[Q^m,Q^m] \big\|_{\Hbf_L\times T_v\cM} \leq \|D_H^2 G(w)\|_{\Hbf_H\tensor\Hbf_H\to \Hbf_L\times T_v\cM}\sum_{k\in K_H} q_k^2 < \infty.
\]
\end{remark}

For conveneince, we define the following operator $\Lambda_L$ that maps smooth vector fields on $\Hbf\times \cM$ to smooth vector fields on $\Hbf\times \cM$ with range in $\Hbf_L\times T\cM$, defined by
\[
\Lambda_LG := [F,G]_L- [A,G]_L + \frac{1}{2}\sum_{m\in\mathbb{K}}D^2G[Q^m,Q^m].
\]
\begin{lemma}\label{lem:commutator-bounds} The following estimates hold for each $m\in\K_L$
\begin{equation}
|\Lambda_LQ^m|(w) \leqc_\rho 1,\quad |\Lambda_L^2 Q^m|(w) \leqc_\rho 1, \quad \sum_{j\in \K}|[Q^j,\Lambda_L Q^m]_L|^2(w) \leqc_\rho 1.
\end{equation}
\end{lemma}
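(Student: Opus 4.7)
The crucial starting observation is that each $Q^m$ with $m \in \K_L$ is a \emph{constant} vector field on $\Hbf\times\cM$: for $m=(k,i)$ it is the constant vector $(q_ke_k\gamma_k^i,0,0,0)\in\Hbf_L\times\{0\}$, and for $m=j\in\{1,\dots,2d\}$ it is $(0,0,0,\hat e_j^z)\in\{0\}\times\R^{2d}$. Since these are independent of $w$, we have $DQ^m\equiv 0$ and $D^2 Q^m\equiv 0$. Substituting into the definitions of $[F,\cdot]_L$, $[A,\cdot]_L$ and $\Lambda_L$ collapses the expression for $\Lambda_L Q^m$ to
\[
\Lambda_L Q^m(w) = A_L Q^m \;-\; DF_L(w)\,Q^m.
\]
The first term is a fixed vector in $\Hbf_L\times T\cM$ with norm depending only on the finite Fourier support of $Q^m$ (or on $|\hat e^z_j|$ in the $z$-case), hence is bounded independently of $w$. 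The second term is the linearization of $F_L$ in the smooth, low-frequency (or purely $z$-) direction $Q^m$: the Navier-Stokes nonlinearity contributes $-2(1-\chi_{3\rho}(\|u\|_\Hbf))B(u,Q^m)_L$ plus a cutoff-derivative term, both bounded in $\Hbf_L$ by $\rho$ times the smooth norm of $Q^m$; the advection and stretching terms contribute $q_k e_k(x)\gamma_k^i$ and $q_k\Pi_v\nabla(e_k\gamma_k^i)(x)v$, which are bounded in $T_{(x,v)}(\T^d\times\S^{d-1})$ uniformly in $(x,v)$; and the regularized piece $\chi_\rho(\|u\|)H(v,z)$ is smooth and bounded in all arguments. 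This yields the first bound.

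For $|\Lambda_L^2 Q^m|(w)\lesssim_\rho 1$, I apply the definition of $\Lambda_L$ to $G:=\Lambda_L Q^m$. Since the only $w$-dependence in $G$ is through $DF_L(w)Q^m$, one finds
\[
DG(w)\cdot h = -D^2F_L(w)[h,Q^m], \qquad D^2 G(w)[h_1,h_2] = -D^3F_L(w)[h_1,h_2,Q^m].
\]
The bracket terms $[F,G]_L = DG_L\,F - DF_L\,G$ and $[A,G]_L = DG_L\,A_Lw - A_L G$ are then controlled by the already-established bound on $G$, the $\rho$-cutoff boundedness of $DF$ and $A_L$ on the relevant subspace, and boundedness of $D^2 F_L(w)[\,\cdot\,,Q^m]$ as a linear map (the quadratic nonlinearity gives a single $\|u\|_\Hbf$-loss, which is absorbed by the cutoff; the advection/stretching terms are linear in $u$, so $D^2 F_L[\,\cdot\,,Q^m]$ is actually constant there). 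The Hilbert–Schmidt-type sum $\tfrac12\sum_{j\in\K}D^2 G[Q^j,Q^j] = -\tfrac12\sum_j D^3 F_L[Q^j,Q^j,Q^m]$ converges because $D^3 F_L$ is at worst a bounded trilinear form on the $\rho$-truncated region and $\sum_{k}q_k^2<\infty$ by the coloring assumption $\alpha>5d/2$.

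The third bound is similar and uses $DQ^j\equiv 0$, so that
\[
[Q^j,\Lambda_L Q^m]_L(w) = D(\Lambda_L Q^m)_L\,Q^j = -D^2F_L(w)[Q^j,Q^m].
\]
Summing the squared norms over $j\in\K$ amounts to computing the Hilbert–Schmidt norm of the linear operator $\xi\mapsto D^2F_L(w)[\widehat Q\xi,Q^m]$ from the noise space into $\Hbf_L\times T\cM$. For the purely bilinear pieces of $F$ (the advection and stretching terms, and $B$ after the cutoff), $D^2 F_L[\,\cdot\,,Q^m]$ is a bounded linear map into $\Hbf_L\times T\cM$ whose range picks up at most a finite number of Fourier modes through the low-projection $\Pi_L$ and the fixed mode structure of $Q^m$; the sum over $j$ then reduces to $\sum_k q_k^2$ times a bounded factor. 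The bounded, smooth piece $\chi_\rho H$ contributes a finite sum over $j\in\{1,\dots,2d\}$ only.

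\textbf{Main obstacle.} The only nontrivial bookkeeping is the infinite-dimensional summability of the Hilbert-Schmidt contributions coming from high-frequency $Q^j$: the nonlinear term $B(u,Q^m)$ and the stretching term $\Pi_v\nabla u\,v$ do take a derivative of $u$, which would be delicate if one had to pair it with a generic high-frequency direction. The key point that makes everything close is that (i) the low projection $\Pi_L$ kills all outputs outside a fixed finite set of Fourier modes, and (ii) the coloring bound $q_k\lesssim|k|^{-\alpha}$ with $\alpha>5d/2$ leaves ample room to absorb a constant number of derivative losses coming from $D^2 F$ and $D^3 F$.
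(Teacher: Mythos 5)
Your proof is correct and follows essentially the same approach as the paper's (very terse) argument: use that $Q^m$ is a constant vector field so $\Lambda_L Q^m$ simplifies to $A_L Q^m - DF_L(w)Q^m$, exploit the $\rho$-cutoff in $F_\rho$ to make $DF_L$, $D^2 F_L$, $D^3 F_L$ (and $F_\rho$ itself) uniformly bounded in $w$, and use the Hilbert–Schmidt decay $\sum_k q_k^2<\infty$ (guaranteed by $\alpha>5d/2$) for the convergence of the sums over $\K$. You also fill in details the paper leaves implicit, in particular the explicit reduction $[Q^j,\Lambda_L Q^m]_L=-D^2 F_L(w)[Q^j,Q^m]$ and the computation of $\Lambda_L^2 Q^m$. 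One small imprecision: the remark that $\Pi_L$ and the fixed mode structure of $Q^m$ reduce the sum over $j$ to a finite sum is only literally true for the bilinear Navier--Stokes piece of $F$; the cutoff-derivative terms (e.g.\ $D\chi_{3\rho}(\|u\|_\Hbf)[Q^j]$) still produce genuinely infinite sums over $j$, which converge only because $\|Q^j\|_\Hbf\lesssim |k|^{\sigma-\alpha}$ is square-summable. You do invoke the coloring bound for exactly this purpose elsewhere in the argument, so the gap is cosmetic rather than substantive.
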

\begin{proof}
The proof follows from the fact that below the cut-off  $\|u\|_{\Hbf} \leq 6\rho$, we can bound 
\[
 |[F,Q^m]_L| + |[A,Q^m]_L| \leqc_\rho (1+ \|u\|_{\Hbf}^2) \leqc_\rho 1.
\]
When $\|u\|_{\Hbf} > 6\rho$, the Navier-Stokes nonlinearity is turned off and the above non-linear term doesn't contribute, so we can just use $|[A,e_k\gamma_k^i]_L| \leqc 1$. There are also terms which are nonlinear in $z$, however they are bounded and have bounded derivatives, so that $|[F,\hat{e}^z_j]_L| \leqc 1$. The only other subtlety involves ensuring that the infinite sum in $m\in\K$ converges. However, this is due to the fact the $m\in\K_L$ and the noise is Hilbert-Schmidt.
\end{proof}

\begin{lemma}\label{lem:brackets-lower-bound}
The following uniform lower-bound holds every initial data $w = (u,x,v,z) \in \Hbf\times \cM$, and $h \in \Hbf_L\times T_v\cM$
\begin{equation}\label{eq:spanning-cond}
	\max\Big\{ \big|\big\langle Q^m,h\big\rangle_L\big|, \big|\big\langle \Lambda_L Q^m, h\big\rangle_L\big| : m\in \K_L\Big\} \geqc_{\rho} \frac{|h|}{( 1 + |z|)^3}.
\end{equation}
\end{lemma}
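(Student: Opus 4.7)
The plan is to produce, at each $w=(u,x,v,z)\in\Hbf\times\cM$, an explicit subfamily of $\{Q^m,\Lambda_L Q^m:m\in\K_L\}$ that spans $\Hbf_L\times T_v\cM$ with a quantitative lower bound, with the factor $(1+|z|)^{-3}$ arising only from derivatives of the regularizer $H$. Since each $Q^m$ is constant on $\Hbf\times\cM$ we have $DQ^m=D^2Q^m=0$, and the formula for $\Lambda_L$ collapses to
\[ \Lambda_L Q^m \;=\; -(D F_L)(w)\,Q^m + A_L Q^m. \]
The $Q^m$'s already cover the $u^L$- and $z$-components of $\Hbf_L\times T_v\cM$ with a uniform constant: $\{q_k e_k\gamma_k^i\}_{(k,i)\in\K_L}$ spans the low-mode sector of $\Hbf_L$ with constant $\gtrsim \min_{k\in K_L,\,q_k\neq 0}|q_k|$ (positive by Assumption \ref{a:lowms}), while $\{\hat e^z_j\}_{j=1}^{2d}$ is an orthonormal basis of $\R^{2d}$. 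So the only remaining task is to control the $T_x\T^d\times T_v\S^{d-1}$ directions via suitable $\Lambda_L Q^m$.

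I will split into two overlapping regimes in $\|u\|_\Hbf$, in each of which a different coupling source in $F_\rho$ is active:
\begin{itemize}
\item If $\|u\|_\Hbf\le \tfrac{5\rho}{2}$ then $\chi_{3\rho}(\|u\|_\Hbf)=0$, so the Navier--Stokes drift $\widehat F$ sits fully in $F_\rho$. A direct computation shows the $(x,v)$-components of $\Lambda_L Q^{(k,i)}$ equal
\[ \big(q_k\,e_k(x)\gamma_k^i,\ \ q_k(\nabla e_k(x)\cdot v)\,\Pi_v\gamma_k^i\big), \]
while the $u^L$-component is bounded by some $C_\rho$ and the $z$-component vanishes. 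Using the generators $|k|_\infty=1$ from Assumption \ref{a:lowms}, the identity $\sin^2+\cos^2=1$, and the fact that $\{\gamma_k^i\}_i$ is an orthonormal basis of $k^\perp$, the collection of $(x,v)$-parts spans $T_x\T^d\times T_v\S^{d-1}$ uniformly in $(x,v)$ with a geometric constant independent of $w$ and $z$.
\item If $\|u\|_\Hbf\ge 2\rho$ then $\chi_\rho(\|u\|_\Hbf)=1$, so $H(v,z)$ sits fully in $F_\rho$. Since $\hat e^z_j$ is a pure $z$-direction, $A_L\hat e^z_j=0$ and
\[ \Lambda_L\hat e^z_j \;=\; -\partial_{z_j}F_L(w) \;=\; -\partial_{z_j}H(v,z), \]
which, using $\partial_{z_j}(z_j/\sqrt{1+z_j^2})=(1+z_j^2)^{-3/2}$, is a pure $(x,v)$-vector of magnitude $(1+z_j^2)^{-3/2}\gtrsim(1+|z|)^{-3}$ lying in an orthonormal direction of $T_x\T^d\times T_v\S^{d-1}$.
\end{itemize}

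Since the union of the two regimes is $[0,\infty)$, at each $w$ at least one of these mechanisms applies to yield a family of vectors whose $(x,v)$-projection spans $T_x\T^d\times T_v\S^{d-1}$ with the stated uniform lower bound. To extract the final pointwise inequality \eqref{eq:spanning-cond}, I plan to combine these with the $Q^m$'s via a Schur-complement-type argument: decomposing $h=(h^u,h^x,h^v,h^z)$, the $Q^m$'s control $(h^u,h^z)$ with an $\mathcal{O}(1)$ constant, and after subtracting those known components the residual $(x,v)$-part of the relevant $\Lambda_L Q^m$'s pairs with $(h^x,h^v)$ with magnitude $\gtrsim(1+|z|)^{-3}|(h^x,h^v)|$. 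The main technical difficulty will be verifying uniform spanning of $T_v\S^{d-1}$ in the first regime as $v$ traverses the sphere: the factor $(\nabla e_k(x)\cdot v)\,\Pi_v\gamma_k^i$ can degenerate for particular $(x,v,k,i)$, but by drawing on multiple generators $|k|_\infty=1$ together with multiple indices $i\in\{1,\dots,d-1\}$ one eliminates simultaneous degeneracies, exactly in the spirit of the H\"ormander-type spanning arguments already carried out in Section \ref{sec:galerkin}.
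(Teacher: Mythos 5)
Your overall structure is close to the paper's, but your choice of regimes leaves an uncovered gap. The paper splits on the value of $\chi_\rho(\|u\|_\Hbf)$ relative to an auxiliary small parameter $\delta$, whereas you split directly on $\|u\|_\Hbf$ into $\|u\|_\Hbf \le 5\rho/2$ and $\|u\|_\Hbf \ge 2\rho$. These two regimes cover $[0,\infty)$, but in the transition band $\rho < \|u\|_\Hbf < 2\rho$ neither of your mechanisms is quantitatively controlled, and this is exactly where the subtlety lives.

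Concretely, in your first regime the claimed $(x,v)$-components of $\Lambda_L Q^{(k,i)}$ omit the contribution from the $\chi_\rho H$ coupling. Since $F_\rho = (1-\chi_{3\rho}(\|u\|_\Hbf))\widehat F + \chi_\rho(\|u\|_\Hbf) H(v,z)$, the term $-(DF_L) Q^{(k,i)}$ contains
\[
-\,\chi'_\rho(\|u\|_\Hbf)\,\frac{1}{\rho}\,\frac{\langle u,\,q_k e_k\gamma_k^i\rangle_\Hbf}{\|u\|_\Hbf}\,H(v,z),
\]
which is nonzero precisely on $\rho < \|u\|_\Hbf < 2\rho$ and has magnitude $\lesssim_\rho 1$, i.e., comparable to the Navier--Stokes contribution you are trying to use to span $T_x\T^d\times T_v\S^{d-1}$. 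It points in a fixed direction and could partially cancel the Navier--Stokes vectors; your argument gives no lower bound in its presence. Meanwhile, your second mechanism gives $|\Lambda_L\hat e^z_j| = \chi_\rho(\|u\|_\Hbf)\,(1+z_j^2)^{-3/2}$, which tends to $0$ as $\chi_\rho \to 0$, so it also supplies no uniform lower bound in this band. The paper's fix is to use a cutoff-level parameter $\delta$: when $\chi_\rho(\|u\|_\Hbf) > \delta$ the $z$-mechanism yields $\gtrsim \delta(1+|z|)^{-3}$; when $\chi_\rho(\|u\|_\Hbf) \le \delta$ the $\chi'_\rho H$ error is shown to be $\lesssim_\rho \delta\,|h|$ (which implicitly relies on a cutoff $\chi$ whose derivative is controlled in terms of its value), so after taking $\delta$ small depending on $\rho$ the Navier--Stokes spanning (via Lemma \ref{lem:v-span}) dominates. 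Without a mechanism to suppress or absorb the $\chi'_\rho H$ term in the transitional band, your proof as written does not close.
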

\begin{proof}
To show \eqref{eq:spanning-cond} we must consider the different behaviors of 
\[
\big\langle \Lambda_L Q^m, h\big\rangle_L = \big\langle [F,Q^m]_L,h\big\rangle_L - \big\langle [A,Q^m]_L, h\big\rangle_{L}
\]
for different values of the initial data $w \in \Hbf\times \cM$ due to the presence of the cut-off. We divide the proof into two cases using a parameter $\delta \in (0,1)$, which will be determined later. \\

{\em Case 1:} We first consider the case where $\chi_{\rho}(\|u\|_{\Hbf}) > \delta$. This case is the easiest, since we can use the $z$ process to help span the $(x,v)$ directions. Indeed notice that if we choose a $m\in \K_L$ so that $m = j \in \{1,\ldots,2d\}$, then  $Q^m = \hat{e}_j^z$, then one easily computes for $j=1,\ldots,d$
\[
     \big|\big\langle \Lambda_L Q^m(w),h\big\rangle\big| = \frac{\chi_{\rho}(\|u\|_{\Hbf})}{(1 + \abs{z^j}^2)^{3/2}}|\langle \hat{e}_j,h\rangle_L| \geq \frac{\delta}{( 1+  |z| )^3}|\langle\hat{e}_j,h\rangle_L|,
\]
where $\{\hat{e}_j\}_{j=1}^d$ is the cannonical basis for $\R^d$, taken here to be elements of $T_x\T^d \subseteq \Hbf_L\times T_v \cM$. Similarly for $j=d+1,\ldots 2d$, we have
\[
	\big|\big\langle \Lambda_L Q^k(w),h\big\rangle\big| \geq \frac{\delta}{(1 +  |z| )^3}|\langle \Pi_{v}\hat{e}_{j-d},h\rangle_L|.
\]
and $\{\Pi_{v}\hat{e}_{j}\}_{j=1}^d$ is a spanning set for $T_v\S^{d-1}\subseteq \Hbf_L\times T_v \cM$. Therefore we can easily conclude the lower bound
\[
	\max\Big\{ \big|\big\langle Q^m,h\big\rangle_L\big|, \big|\big\langle \Lambda_L Q^m, h\big\rangle_L\big| : m\in \K_L\Big\} \geqc \delta \frac{|h|}{( 1+  |z| )^3}.
\]

{\em Case 2:} We now consider the case $\chi_{\rho}(\|u\|_{\Hbf}) \leq \delta$. Here, we cannot rely on the regularization introduced by the $z$ process since we are in a region where it's coupling with $x$ and $v$ may be turned off or very small. Here, the drift is fully turned on and if we choose $m \in \K_L$ so that $m = (k,i)$ and $Q^m = q_ke_k\gamma_k^i$, we obtain
\[
	\Lambda_LQ^m(w) = q_k[V_0(w),e_k\gamma_k^i] -q_k[B(u,u),e_k\gamma_k^i]_L - q_k[A,e_k\gamma_k^i]_L -q_k\frac{1}{\rho}\chi^\prime(\|u\|_{\Hbf}/\rho)\frac{u_k}{\|u\|_{\Hbf}}H(v,z).
\]
Using the fact that we are in the region $\|u\|_{\Hbf} \leq 2\rho$, we have that
\begin{equation}\label{eq:B,Qk,h-estimate}
	|\langle [A,e_k\gamma_k^i]_L,h\rangle_L|+ |\langle[B(u,u),e_k\gamma_k^i]_L,h\rangle_{L}| \leqc_{\rho}\sum_{i=1}^{d-1}\sum_{k\in K_L} |\langle e_k\gamma_k^i,h\rangle_{L}|,
\end{equation}
additionally since $\chi_{\rho}(\|u\|_{\Hbf}) \leq \delta$ then
\begin{equation}
 \frac{1}{\rho}\chi^\prime(\|u\|_{\Hbf}/\rho)\frac{|u_k|}{\|u\|_{\Hbf}}|\langle H(v,z),h\rangle_L| \leqc_{\rho} \delta |h|.
\end{equation}
This implies that
\begin{equation}
	\delta|h| + |\langle \Lambda_LQ^m, h\rangle_L| + \sum_{i=1}^{d-1}\sum_{k\in K_L} q_k|\langle e_k\gamma_k^i,h\rangle_{L}|\geqc_\rho |\langle [V,e_k\gamma_k^i] ,h\rangle_L|, 
\end{equation}
which, in turn, implies that
\begin{equation}
\begin{aligned}
&\delta|h| + \max\Big\{ \big|\big\langle Q^m,h\big\rangle_L\big|, \big|\big\langle \Lambda_L Q^m, h\big\rangle_L\big| : m\in \K_L\Big\}\\
 &\hspace{1in}\geqc_\rho \max\Big\{ \big|\langle [V, e_k\gamma_k^i],h\rangle_L\big|, \big|\big\langle e_k\gamma_k^i, h\big\rangle_L\big| : k\in K_L,\, i\in \{1,\ldots,d-1\}\Big\}.
 \end{aligned}
\end{equation}
Finally, an easy modification of Lemma \ref{lem:v-span} gives
\[
	\max\Big\{ \big|\langle [V, e_k\gamma_k^i],h\rangle_L\big|, \big|\big\langle e_k\gamma_k^i, h\big\rangle_L\big| : k\in K_L,\, i\in \{1,\ldots,d-1\}\Big\} \geqc |h|,
\]
so that taking $\delta$ small enough (depending on $\rho$) we obtain the desired lower bound.
\end{proof}

We are now equipped to prove Lemma \ref{lem:pMalNonDeg}.

\begin{proof}[\textbf{Proof of Lemma \ref{lem:pMalNonDeg}}]
Fix initial data $w\in \Hbf\times\cM$ and let $h\in \Hbf_L\times T_v\cM$ with $|h| =1$, fix $t\in (0,1)$. Denote for each $m\in \K_L$
\[
	X_s^m \equiv \big\langle V^L_s Q^m,h\big\rangle_L.
\]
 It is sufficient to show that
\begin{equation}\label{eq:suff-prop-est}
	\P\bigg(\bigcap_{m\in\K_L}\{\|X^m\|_{L^2([0,t])}^2 < \ep\}\bigg) \leqc_{p,\rho} t^{-ap}( 1 +  |z| )^{bp}\ep^p,
\end{equation}
where the constant does not depend on $h$ or the initial data. Using Proposition \ref{prop:commutator-Lemma}, as well as Lemmas \ref{lem:holder-ests}, \ref{lem:commutator-bounds} and \ref{lem:U,V-bounds} we find that we have the almost-sure bound
\begin{equation}\label{eq:Gk-holder-bound}
	[X^m]_{C^{1}([0,1])}^2 \leq C_\rho,
\end{equation}
where $C_\rho\geq 1$ is a determinisitic constant depending only on $\rho$. Applying Lemma \ref{lem:interp-eq} with $f = \int_0^\cdot X_s\ds$ and $\alpha = 1$, and then applying Cauchy-Schwarz we arrive at the inequality
\begin{equation}\label{eq:interp-application-1}
	\|X^m\|_{L^\infty([0,t])} 
	\leq 4 \,t^{-\frac{1}{2}}\|X^m\|_{L^2([0,t])}^{\frac{1}{2}}\cdot\max\Big\{\|X^m\|_{L^2([0,t])}^{\frac{1}{2}},[X^m]_{C^{1}([0,1])}^{\frac{1}{2}}\Big\}.
\end{equation}
Therefore, we can deduce
\[
	\P\bigg(\bigcap_{m\in\K_L}\{\|X^m\|_{L^2([0,t])}^2 <\ep\} \bigg) \leq \P\bigg(\bigcap_{m\in\K_L}\{|X^m\|_{L^\infty([0,t])} < 4 C_\rho\,t^{-\frac{1}{2}}\ep^{\frac{1}{4}}\}\bigg).
\]
Next, using Lemma \ref{prop:commutator-Lemma}, we write
\[
	X_s^m = X_0^m + \int_0^s B_r^m\,\dr
\]
where $B^m_s$ is the $\R$ valued predictable process defined by $B_s^m \equiv \big\langle V^L_s \Lambda_L Q^m(w_s),h\big\rangle_L$. This means that when $\|X^m\|_{L^\infty(0,t)} < 4C_{\rho}\,t^{-\frac{1}{2}}\ep^{\frac{1}{4}}$, then
\[
\left|\int_0^s B^m_r \dr \right| \leq 8C_{\rho}\,t^{-\frac{1}{2}}\ep^{\frac{1}{4}}
\]
Applying Lemma \ref{lem:interp-eq} again with $f = \int_0^\cdot B^m_s\ds$ and $\alpha = \frac{1}{3}$, we find
\begin{equation}\label{eq:interp-application-2}
	\|B^m\|_{L^\infty([0,t])} 
	\leq 4t^{-1}\left\|\int_0^\cdot B_s^m\ds\right\|_{L^\infty([0,t])}^{1/4}\times\max\left\{\left\|\int_0^\cdot B_s^m\ds\right\|_{L^\infty([0,t])}^{3/4},[B^m]_{C^{1/3}([0,1])}^{3/4}\right\},
\end{equation}
and an application of Proposition \ref{prop:commutator-Lemma}, along with Lemmas \ref{lem:holder-ests}, \ref{lem:commutator-bounds} and \ref{lem:U,V-bounds} gives the following H\"{o}lder estimate on $B^k$ for each $p \geq 1$
\begin{equation}\label{eq:regularity-B^k}
	\E[B^m]_{C^{1/3}([0,1])}^p \leqc_{p,\rho} 1.
\end{equation}
Since estimate \eqref{eq:Gk-holder-bound} implies that for each $p \in (1,\infty)$ and every $\ep \in (0,1)$
\[
	\P\Big([B^m]_{C^{1/3}([0,1])} \geq 8C_\rho\,t^{-\frac{1}{2}}\ep^{-\frac{1}{204}}\Big) \leqc_{p,\rho} \ep^{p}
\]
we can with overwhelming probability restrict ourselves to the event $\bigcap_{m\in \K_L}\{[X^m]_{C^{1/3}([0,1])} < 8C_\rho\,t^{-\frac{1}{2}}\ep^{-\frac{1}{204}}\}$.

 The choice of the exact power for $\ep^{-1/204}$ above is somewhat arbitrary and is chosen simply to give rise to the power of $\ep^{1/18}$ in inequality \eqref{eq:smallness-estimate2}. It is certainly possible to use other powers on $\ep$ without changing the essence of the proof.

Using inequality \eqref{eq:interp-application-2} we conclude that for every $p \geq 1$
\begin{equation}\label{eq:smallness-estimate1}
\begin{aligned}
	&\P\bigg(\bigcap_{m\in\K_L}\left\{\|X^m\|_{L^2([0,t])}^2 < \ep\right\}\bigg) \leqc_{p,\rho}\\
	&\hspace{.5in} \P\bigg(\bigcap_{m\in\K_L}\left\{\|X^m\|_{L^\infty([0,t])} < 4 C_\rho\,t^{-\frac{1}{2}}\ep^{\frac{1}{4}}\right\}\cap\left\{\|B^m\|_{L^\infty([0,t])} < 32 C_\rho\,t^{-\frac{3}{2}}\ep^{\frac{1}{17}}\right\}\bigg) + \ep^p.
	\end{aligned}
\end{equation}

By choosing $\ep \leqc t^{a}$ small enough for a large enough constant $a> 1$, we can remove the factor of $t^{-\frac{1}{2}}$ and $t^{-\frac{3}{2}}$ above at the expense of a slightly worse power on $\ep$. To remove this $t$-dependent restriction on $\ep$, we can treat the case $t^{a}\leqc \ep$ by simply using the fact that probabilities are bounded by 1 and that $1\leqc t^{-ap}\ep^p$ to deduce that for all $\ep \in (0,1)$ and $p \geq 1$
\begin{equation}\label{eq:smallness-estimate2}
\begin{aligned}
	&\P\bigg(\bigcap_{m\in\K_L}\left\{\|X^m\|_{L^2([0,t])}^2 < \ep\right\}\bigg) \leqc_{p,\rho}\\
	&\hspace{.5in} \P\bigg(\bigcap_{m\in\K_L}\left\{\|X^m\|_{L^\infty([0,t])} < \ep^{\frac{1}{5}}\right\}\cap\left\{\|B^m\|_{L^\infty([0,t])} < \ep^{\frac{1}{18}}\right\}\bigg) + t^{-ap}\ep^p.
	\end{aligned}
\end{equation}

Next, we show that for small enough $\ep$, and each initial data $w\in \Hbf\times\cM$ 
\begin{equation}\label{eq:emptyset-cond}
	\bigcap_{m\in\K_L}\left\{|X^m_0| \leq \ep \right\}\cap\left\{|B^m_0| < \ep^{r_*}\right\} = \emptyset,
\end{equation}
where $r^*$ is some number less than $1$. That is, at time $t=0$ for small enough $\ep$, it is not possible for all the $\{X^m\}$ and all the $\{B^m\}$ to be small. Indeed, since $X^m_0 = \langle Q^m,h\rangle_L$ and $B^m_0 = \langle \Lambda_L Q^m, h\rangle_L$ this follows from Lemma \ref{lem:brackets-lower-bound} since $|\langle Q^m,h\rangle_L| < \ep$ and $|\langle \Lambda_L Q^m, h\rangle_L| < \ep^{r^*}$ imply by \eqref{eq:spanning-cond} that
\[
	1\leqc_{\rho} (1 +  |z| )^3 \ep^{r_*} 
\]
Therefore choosing $\ep$ small enough so that $\ep \leqc_\rho (1 + |z|)^{-b}$ for a sufficiently large constant $b>0$ we deduce a contradiction and conclude that \eqref{eq:emptyset-cond} must hold. Again, to remove the $z$-dependent restriction on $\ep$ we can replace $\ep$ by $(1 + |z|)^b\ep$ on the right-hand side of estimate \eqref{eq:smallness-estimate2}, giving our desired estimate \eqref{eq:suff-prop-est}.
\end{proof}

\begin{remark} \label{rmk:NorrisMal} 
In order to treat noise as in Assumption \ref{a:Highs}, one needs to adjust the above proof in two ways. 
First, in the definition of the cutoff process \eqref{eq:cut-off-eq}, one needs to add additional Brownian motions to the modes $k$ in $(u_t)$ for which $k \not\in \cK$, in the same manner as was done for the Lagrangian flow, that is $\chi_\rho(\norm{u}_{\Hbf}) e_k\gamma_k^i z_{k,i} / (1 + \abs{z_{k,i}}^2)^{1/2}$ for $k \not\in \cK$. 
Then, in the proof of Lemma \ref{lem:pMalNonDeg}, for $\chi_{\rho}(\norm{u}_{\Hbf})  < \delta$, one needs to use Lie brackets of the Navier-Stokes nonlinearity to fill  the missing degrees of freedom in Navier-Stokes (these brackets are computed for 2D and 3D respectively in \cite{E2001-lg,Romito2004-rc}; see also Section \ref{sec:galerkin}). 
This requires taking one more time derivative in the proof of Lemma \ref{lem:pMalNonDeg} (allowing noise from the high frequencies to propagate to the lower modes), which in turn, requires the use of a version of Norris' Lemma \cite{Norris86} (in addition to Lemma \ref{lem:interp-eq}), as described in e.g. \cite{HairerNotes11}. Analogous to \cite{EH01,RomitoXu11}, one needs to slightly refine the statement found in e.g. \cite{HairerNotes11} to handle the singularity for short-times but this is a straightforward calculation.  
\end{remark}

\subsection{Basic estimates on Jacobians and Malliavin derivatives} \label{sec:FundEstsCutoff} 

The proofs of the following Lemmas are standard and are omitted for brevity (see \cite{DPZ96}).
 
\begin{lemma} \label{lem:CutBds}
The statements of Proposition \ref{prop:WPapp} hold for the $(w_t)$ process. 
We record the quantitative estimates here for the readers' convenience. 
For all $\gamma < \alpha-\frac{d}{2}$, $T \leq 1$, and $p \in [2,\infty)$ there holds 
\begin{align}
\EE \sup_{t \in [0,T]} \norm{w_t}_{H^{\gamma}}^p &\lesssim_{p,\gamma,\rho} 1 + \norm{w_0}^p_{H^\gamma} \\ 
\EE \int_0^T \norm{w_s}_{H^{\gamma + (d-1)}}^2 ds & \lesssim_{\gamma,\rho} 1 + \norm{w_0}^2_{H^\gamma}.  
\end{align}
\end{lemma}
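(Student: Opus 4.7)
Since $x_t \in \T^d$ and $v_t \in \S^{d-1}$ live on compact manifolds and do not enter the norm $\|w_t\|_{H^\gamma} = \|u_t\|_{H^\gamma} + |z_t|$, it suffices to bound the $(u_t, z_t)$ components separately. Because the vector field $H(v,z)$ appearing in $F_\rho$ has vanishing $z$-component, the process $z_t$ is simply a standard Brownian motion on $\R^{2d}$, and the BDG inequality immediately gives $\E \sup_{t \leq T} |z_t|^p \lesssim_p |z_0|^p + T^{p/2}$ for all $p \geq 2$. Similarly, $H(v,z)$ has vanishing $u$-component, so the $u$-equation reads
\[
du_t = -(1-\chi_{3\rho}(\|u_t\|_\Hbf))\, B(u_t,u_t)\,dt - A u_t\,dt + Q\,dW_t^u.
\]

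The plan is to apply It\^{o}'s formula to $(1 + \|u_t\|_{H^\gamma}^2)^p$ and derive an energy-type inequality. The dissipation term $\langle u_t, A u_t \rangle_{H^\gamma}$ produces a coercive contribution comparable to $\|u_t\|_{H^{\gamma+(d-1)}}^2$ (in $d=2$ via $\nu(-\Delta)$, in $d=3$ via the hyperviscosity $\eta\Delta^2$ which dominates), giving the integrated regularity gain. The noise contribution in It\^{o}'s formula equals $\sum_{k \in \Z^d_0} q_k^2 |k|^{2\gamma}$, which is finite precisely when $\gamma < \alpha - d/2$ by the coloring hypothesis $q_k \lesssim |k|^{-\alpha}$.

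For the nonlinearity, the cutoff confines $\|u_t\|_\Hbf \leq 6\rho$ on its support, and $\Hbf = H^\sigma \hookrightarrow L^\infty$ since $\sigma > d/2$. A standard product estimate then yields $\|B(u_t,u_t)\|_{H^\gamma} \lesssim \|u_t\|_\infty \|u_t\|_{H^{\gamma+1}} \lesssim_\rho \|u_t\|_{H^{\gamma+1}}$, so that
\[
|\langle u_t, B(u_t,u_t)\rangle_{H^\gamma}| \lesssim_\rho \|u_t\|_{H^\gamma}\|u_t\|_{H^{\gamma+1}} \leq \epsilon \|u_t\|_{H^{\gamma+(d-1)}}^2 + C_{\rho,\epsilon}\|u_t\|_{H^\gamma}^2,
\]
where the second step uses Young's inequality (and in $d=3$ an interpolation $\|u\|_{H^{\gamma+1}}^2 \leq \|u\|_{H^\gamma}\|u\|_{H^{\gamma+2}}$). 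Choosing $\epsilon$ small, the first term is absorbed into the dissipation on the left-hand side.

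Combining these ingredients, a BDG estimate on the martingale part followed by a Gronwall argument applied to $\E\sup_{s \leq t}(1 + \|u_s\|_{H^\gamma}^2)^p$ closes the $L^p(\Omega)$-supremum bound, while simultaneously producing the integrated $H^{\gamma+(d-1)}$ estimate by relocating the dissipation. The main obstacle is essentially bookkeeping: one must check that the derivative loss of $B$ is compatible with the available dissipation, which works in $d=2$ thanks to the one-derivative gain of $-\nu\Delta$ and in $d=3$ thanks to the two-derivative gain of the hyperviscosity. All constants depend on $\rho$ via the cutoff, but are otherwise uniform in the initial data.
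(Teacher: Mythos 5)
The paper itself omits the proof, stating that it is ``standard'' and citing \cite{DPZ96}; the analogous (non-cutoff) Proposition \ref{prop:WPapp} is handled via the mild formulation and the stochastic convolution estimate of Lemma \ref{lem:StochConv}. Your proof takes the other standard route (energy estimates via It\^o's formula on a polynomial of $\|u_t\|_{H^\gamma}^2$), which is perfectly admissible here and arguably cleaner than the mild approach for this particular lemma: because the cutoff confines $\|u_t\|_\Hbf \leq 6\rho$ whenever the nonlinearity is active, the drift is globally Lipschitz, and the Gronwall step involves no bootstrap. Both methods produce the stated $p$-th moment and integrated dissipation bounds.

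Two small bookkeeping points, neither of which breaks the argument. First, the product estimate you quote drops a term: the standard tame estimate for $\gamma > 0$ reads
$\|u \cdot \nabla u\|_{H^\gamma} \lesssim \|u\|_{L^\infty}\|u\|_{H^{\gamma+1}} + \|u\|_{H^\gamma}\|\nabla u\|_{L^\infty}$,
so you need $\Hbf = H^\sigma \hookrightarrow W^{1,\infty}$, not merely $H^\sigma \hookrightarrow L^\infty$. This is guaranteed since $\sigma > \frac{d}{2}+2$ by \eqref{eqn:sigmaConstraint}, and the second term is still $\lesssim_\rho \|u\|_{H^{\gamma+1}}$ on the support of the cutoff, so the conclusion you draw is correct but the justification should invoke the stronger embedding. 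Second, applying It\^o's formula to $(1 + \|u_t\|_{H^\gamma}^2)^p$ and running Gronwall yields a bound on $\E \sup_t (1 + \|u_t\|_{H^\gamma}^2)^p$, i.e., a $2p$-th moment with right-hand side $1 + \|u_0\|_{H^\gamma}^{2p}$. To match the stated $\E\sup_t \|w_t\|_{H^\gamma}^p \lesssim 1 + \|w_0\|_{H^\gamma}^p$, apply It\^o to $(1 + \|u_t\|_{H^\gamma}^2)^{p/2}$ instead (or relabel $p \mapsto p/2$). Finally, note that Lemma \ref{lem:CutBds} asserts that all of Proposition \ref{prop:WPapp} carries over, including the short-time regularization estimate \eqref{ineq:locRegu} and the stability statement of part (ii), which your sketch does not address; these are likewise standard but should at least be acknowledged.
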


We also need the following improved short-time regularization estimates. 
Specifically, for regularities all the way up to $\gamma < \sigma+(d-1)$. 
This is crucial for dealing with the high frequencies of the control.
\begin{lemma}\label{lem:aboveSigma}
For all $\gamma \in (\sigma, \sigma + (d-1))$,  $p \in [2,\infty)$, and $T \leq 1$ there holds for all $\delta > 0$,  
\begin{align}
\EE \left(\sup_{t \in [0,T]} t^{ \frac{\gamma - \sigma}{2(d-1)}} \norm{w_t}_{H^\gamma} \right)^p  & \lesssim_p 1 + \norm{w_0}_{H^{\sigma}}^p \label{ineq:LpHgCut}\\ 
\EE\int_t^T \norm{w_s}_{H^{\gamma+(d-1)}}^2 \ds & \lesssim_\delta 1 + t^{-2r}\norm{w_0}_{H^{\sigma}}^4, \label{ineq:L2tHgCut}
\end{align}
where
\begin{align}
r = \frac{\sigma - \left(\gamma + 2 - d + 2(d-1)\delta\right)}{2(d-1)} > 0. 
\end{align}
\end{lemma}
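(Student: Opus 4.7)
The plan is to work with the mild (Duhamel) formulation and exploit the parabolic smoothing of the dissipation semigroup. Write $A_0 = \nu(-\Delta)$ in $d = 2$ and $A_0 = \nu(-\Delta) + \eta \Delta^2$ in $d = 3$, and note that the $x$- and $v$-components live on compact manifolds (so their norms are trivially bounded) while the finite-dimensional $z_t$ satisfies a linear SDE bounded by Lemma \ref{lem:CutBds}. Thus the non-trivial content is an estimate on $\|u_t\|_{H^\gamma}$ for the $u$-component, which satisfies
\[
u_t = e^{-tA_0} u_0 + \int_0^t e^{-(t-s)A_0} N_\rho(u_s)\,\ds + Z_t, \qquad N_\rho(u) := -(1-\chi_{3\rho}(\|u\|_{\Hbf}))\,B(u,u),
\]
with $Z_t = \int_0^t e^{-(t-s)A_0} Q\, dW_s$ the stochastic convolution. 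The key structural features I will use are: (a) the semigroup smoothing bound $\|e^{-tA_0} f\|_{H^\gamma} \lesssim t^{-(\gamma-\sigma)/(2(d-1))} \|f\|_{H^\sigma}$ valid for $\gamma \ge \sigma$ on $t \in (0,1]$; (b) the cutoff-driven deterministic bound $\|N_\rho(u)\|_{H^{\sigma-1}} \le C_\rho$, which follows from the product estimate $\|B(u,u)\|_{H^{\sigma-1}} \lesssim \|u\|_\Hbf^2$ together with the indicator $\{\|u\|_\Hbf \le 6\rho\}$ supporting $N_\rho$; (c) the factorization method of Da Prato--Kwapień--Zabczyk, which combined with the coloring assumption $q_k \lesssim |k|^{-\alpha}$ with $\alpha > 5d/2$ yields $\E \sup_{t \leq T} \|Z_t\|_{H^\gamma}^p \lesssim_p 1$ throughout the range $\gamma < \sigma + (d-1) < \alpha - \tfrac{d}{2} + (d-1)$.

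For the pointwise-in-time bound \eqref{ineq:LpHgCut}, I will bound each of the three terms in Duhamel, multiplied by $t^{(\gamma-\sigma)/(2(d-1))}$. The semigroup term on $u_0$ is handled exactly by (a), producing $\|u_0\|_{H^\sigma}$ with no remaining singular factor. For the drift term, applying (a) with source regularity $\sigma - 1$ and (b) gives
\[
\Big\|\int_0^t e^{-(t-s)A_0} N_\rho(u_s)\,\ds\Big\|_{H^\gamma} \lesssim_\rho \int_0^t (t-s)^{-(\gamma - \sigma + 1)/(2(d-1))}\,\ds \;\lesssim_\rho\; t^{1 - (\gamma - \sigma + 1)/(2(d-1))},
\]
and the exponent is positive exactly because $\gamma < \sigma + (d-1)$, so after multiplying by $t^{(\gamma-\sigma)/(2(d-1))}$ we retain a nonnegative power of $t$. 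The stochastic convolution is handled uniformly by (c). Taking $L^p(\Omega)$-norms and combining yields \eqref{ineq:LpHgCut}.

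For the integrated-regularity bound \eqref{ineq:L2tHgCut}, the strategy is to apply the same Duhamel split and use maximal parabolic regularity. The initial-data contribution gives
\[
\int_t^T \|e^{-sA_0} u_0\|_{H^{\gamma+(d-1)}}^2\,\ds \;\lesssim\; \|u_0\|_{H^\sigma}^2 \int_t^T s^{-(\gamma+(d-1)-\sigma)/(d-1)}\,\ds,
\]
which is integrable at the upper endpoint and produces a singular lower-endpoint factor $\sim t^{-2r}$ with $r$ essentially $(\gamma-\sigma)/(2(d-1))$ (up to the cosmetic $\delta$ absorbing endpoint issues). For the drift term I will invoke the standard parabolic maximal regularity $\|\int_0^\cdot e^{-(\cdot - s) A_0} f_s\,\ds\|_{L^2([0,T]; H^{\gamma+(d-1)})} \lesssim \|f\|_{L^2([0,T]; H^{\gamma - (d-1)})}$ together with (b), which gives a bound depending only on $\rho$ once we use $\|N_\rho(u)\|_{H^{\gamma-(d-1)}} \le \|N_\rho(u)\|_{H^{\sigma-1}} \lesssim_\rho 1 + \|u\|_\Hbf^2 \cdot \mathbf{1}_{\{\|u\|_\Hbf \le 6\rho\}}$. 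The stochastic convolution piece is handled by a direct Itô-isometry computation in $H^{\gamma+(d-1)}$ using the coloring assumption and Fubini. Finally, an application of the moment bounds for $\|u_s\|_{H^\sigma}$ coming from Lemma \ref{lem:CutBds} closes the estimate and produces the $\|u_0\|_{H^\sigma}^4$ factor appearing in \eqref{ineq:L2tHgCut}.

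The main technical obstacle I anticipate is the bookkeeping of the exact singular exponent in the integrated estimate, since both the semigroup smoothing and the stochastic convolution contribute power-of-$t$ singularities as $t \downarrow 0$, and one must carefully track the dependencies of these powers on $\gamma - \sigma$, $d$, and the coloring exponent $\alpha$ in order to obtain the stated $t^{-2r}$ with $r > 0$. The cutoff structure removes the usual difficulty of unbounded $\|u\|_\Hbf$, so the estimates on $N_\rho$ are purely deterministic; the stochastic input enters only through controlling the convolution $Z_t$, and this is where the high-mode nondegeneracy Assumption \ref{a:Highs} (through the upper bound $q_k \lesssim |k|^{-\alpha}$) and the relation $\sigma < \alpha - d/2$ from \eqref{eqn:sigmaConstraint} are used precisely to keep the relevant Hilbert--Schmidt sums $\sum_k q_k^2 |k|^{2(\gamma+(d-1))}/\lambda_k$ finite.
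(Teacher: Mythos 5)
The paper omits the proof of this Lemma (it is grouped among the estimates declared ``standard'' and referred to \cite{DPZ96}), so your plan is being judged on its own merits.

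Your framework is the right one: Duhamel formula, parabolic smoothing with exponent $1/(2(d-1))$ coming from $A_0 \sim (-\Delta)^{d-1}$, uniform cutoff control of the nonlinearity, factorization for the stochastic convolution, and maximal parabolic regularity for the drift in the $L^2$-in-time estimate. Your handling of \eqref{ineq:LpHgCut} is complete. The issue is in the drift term of \eqref{ineq:L2tHgCut}.

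You write $\|N_\rho(u)\|_{H^{\gamma - (d-1)}} \le \|N_\rho(u)\|_{H^{\sigma - 1}}$. That monotonicity requires $\gamma - (d-1) \le \sigma - 1$, i.e.\ $\gamma \le \sigma + d - 2$. The Lemma however assumes $\gamma > \sigma$, so for $d = 2$ the inequality fails for \emph{every} admissible $\gamma$, and for $d = 3$ it fails on $\gamma \in (\sigma+1, \sigma + 2)$. You do need a bound on $\|N_\rho(u)\|_{H^s}$ at $s = \gamma - (d-1)$, which sits \emph{above} $\sigma - 1$; the cutoff alone no longer gives a $\rho$-dependent constant. The fix: a tame product estimate at regularity $s \ge 0$ (since the Leray projector is bounded on every $H^s$) gives
\[
\|N_\rho(u)\|_{H^{s}} \lesssim_\rho \|u\|_{H^{s+1}}\,\mathbf{1}_{\{\|u\|_{\Hbf} \le 6\rho\}} ,
\]
so $\|N_\rho(u_\tau)\|_{H^{\gamma - (d-1)}}$ is controlled by $\|u_\tau\|_{H^{\gamma - d + 2}}$. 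When $\gamma - d + 2 > \sigma$ (always for $d=2$, and for $\gamma > \sigma+1$ for $d = 3$), control this via the already-established \eqref{ineq:LpHgCut}, which brings in a weight $\tau^{-(\gamma - d + 2 - \sigma)/(2(d-1))}$. That weight is square-integrable on $(0,T)$ exactly because $\gamma < \sigma + 2d - 3$ (a consequence of $\gamma < \sigma + (d-1)$), so the maximal-regularity step still closes, contributing a polynomial in $\|u_0\|_{H^\sigma}$ with no additional singular power of $t$.

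One more thing you should not wave away as ``cosmetic'': your derived exponent from the initial-data term, $r = (\gamma - \sigma)/(2(d-1))$, is positive and internally consistent, but it does not agree with the formula stated in the Lemma. As printed,
\[
r = \frac{\sigma - (\gamma + 2 - d + 2(d-1)\delta)}{2(d-1)} = \frac{(d-2) - (\gamma - \sigma)}{2(d-1)} - \delta ,
\]
which is \emph{negative} whenever $\gamma - \sigma > d - 2$, in particular for every admissible $\gamma$ when $d = 2$ --- contradicting the Lemma's own assertion $r > 0$. This strongly indicates a sign error in the stated formula (the same expression also appears in Lemma \ref{lem:JstBds}(iii)); your derivation is the one consistent with the asserted positivity, and you should flag the discrepancy rather than silently absorb it into $\delta$.
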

 
\begin{lemma} \label{lem:JstBds}
The following properties are satisfied for $J_{s,t}$ and $U^H_{s,t}$ for $0 < s < t < T \leq 1$, 
\begin{itemize}
\item[(i)] there holds for $\gamma \leq \sigma$, (almosts surely)
\begin{align}
\norm{J_{s,t} h}_{H^{\gamma}\times T_{v_t}\cM} +  \norm{U^H_{s,t} h^H}_{H^{\gamma}} &\lesssim_{\rho} \norm{h}_{H^\gamma\times T_{v_s}\cM},\\
\int_0^T \norm{J_{s,t}h}_{H^{\gamma + (d-1)}\times T_{v_t}\cM}^2 \dt + \int_0^T \norm{U^H_{s,t}h^H}_{H^{\gamma + (d-1)}}^2 \dt & \lesssim_{\rho} \norm{h}^2_{H^\gamma\times T_{v_s}\cM}.  
\end{align}
\item[(ii)] for all $\gamma \in(\sigma, \sigma+ (d-1))$ there holds (almost surely), 
\begin{align}
(t-s)^{\frac{\gamma - \sigma}{2(d-1)}} \norm{J_{s,t}h}_{H^{\gamma}\times T_{v_t}\cM} + (t-s)^{\frac{\gamma - \sigma}{2(d-1)}} \norm{U^H_{s,t}h^H}_{H^{\gamma}}&\lesssim_{\rho,T,\delta} \norm{h}_{H^\sigma\times T_{v_s}\cM}; 
\end{align}
\item[(iii)] for all$\gamma \in(\sigma, \sigma+ (d-1))$ and all $\delta$ sufficiently small
\begin{align}
&\EE \int_{s+s'}^T \norm{J_{s,t}h}_{H^{\gamma+2(d-1)}\times T_{v_t}\cM}^2 dt + \EE \int_{s+s'}^T \norm{U^H_{s,t}h^H}_{H^{\gamma+2(d-1)}}^2 dt\\
&\hspace{.5in} \lesssim_{\delta} (s')^{-2r}(1 + \norm{w_0}_{H^\sigma})^2\norm{h}_{H^{\sigma}\times T_{v_s}\cM}^2, \label{ineq:LtHgammaJst}
\end{align}
where 
\begin{align}
r = \frac{\sigma - \left(\gamma + 2 - d + 2(d-1)\delta\right)}{2(d-1)} > 0. 
\end{align}
\end{itemize}
\end{lemma}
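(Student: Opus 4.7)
The plan is to treat all three parts through the mild (Duhamel) formulation
\[
J_{s,t} h = e^{-(t-s)A} h + \int_s^t e^{-(t-r)A} DF_\rho(w_r)\bigl[J_{s,r} h\bigr]\, dr,
\]
and likewise for $U^H_{s,t}$ after inserting $\Pi_H$. The two essential ingredients are: (1) the parabolic smoothing estimate $\|e^{-\tau A}\|_{H^{\alpha}\to H^{\beta}} \lesssim \tau^{-(\beta-\alpha)/(2(d-1))}$ for $\beta \geq \alpha$, which reflects the order $2(d-1)$ of $A$ (equal to $2$ in 2D and $4$ in 3D); and (2) the fact that on the support of the cutoff $\chi_{3\rho}(\|u\|_{\Hbf})$, one has $\|u_r\|_{\Hbf} \lesssim \rho$, so all prefactors in $DF_\rho(w_r)$ arising from the linearizations of $B(u,u)$, $u(x)$, $\Pi_v\grad u(x) v$, and the $H(v,z)$ contribution are bounded by $\rho$-dependent constants. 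The $\cM$-directional components are everywhere smooth and finite-dimensional, so their contributions are absorbed without difficulty; the core of the argument concerns the $\Hbf$-component.

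For part (i), the bilinear estimate $\|B(u, J)\|_{H^{\gamma-(d-1)}} \lesssim \|u\|_{H^{\sigma}} \|J\|_{H^{\gamma}}$, which holds for $\gamma \leq \sigma$ thanks to $\sigma > d/2+2$, combined with the smoothing factor $(t-r)^{-1/2}$ from $e^{-(t-r)A}$, allows a singular Gronwall argument in the Duhamel formula to yield the almost-sure $L^\infty_t H^\gamma$ bound. The $L^2_t H^{\gamma+(d-1)}$ bound then follows from the standard parabolic energy identity obtained by pairing the equation for $J_{s,t}h$ against $(-\Delta)^{\gamma} J_{s,t}h$ (in 2D) or $(-\Delta)^{\gamma+1} J_{s,t}h$ (in 3D). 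For part (ii), the linear piece $e^{-(t-s)A} h$ already furnishes the claimed singular prefactor $(t-s)^{-(\gamma-\sigma)/(2(d-1))}$, and the Duhamel remainder is estimated by bootstrapping from part (i): the relevant time integral $\int_s^t (t-r)^{-(\gamma-\sigma)/(2(d-1))-1/2}\, dr$ remains integrable because $(\gamma-\sigma)/(2(d-1)) < 1/2$ under the assumption $\gamma < \sigma + (d-1)$.

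Part (iii) is the most delicate step. Once part (ii) is established, we restart the Jacobian equation from time $s + s'$ with initial datum $J_{s, s+s'}h \in H^{\gamma}$, whose norm is controlled by $(s')^{-(\gamma-\sigma)/(2(d-1))}\|h\|_{H^\sigma\times T_{v_s}\cM}$. A parabolic energy identity at level $H^{\gamma+(d-1)}$ then reads, schematically,
\[
\tfrac{d}{dt}\|J_{s,t}h\|_{H^{\gamma+(d-1)}}^2 + \|J_{s,t}h\|_{H^{\gamma+2(d-1)}}^2 \lesssim_\rho \|w_t\|_{H^{\gamma+(d-1)}}^2\,\|J_{s,t}h\|_{H^{\gamma+(d-1)}}^2 + \text{l.o.t.},
\]
where the product estimate $\|B(u,J)\|_{H^\gamma} \lesssim \|u\|_{H^{\gamma+(d-1)-\delta}}\|J\|_{H^{\gamma+(d-1)}}$ is used and the $\delta$ is absorbed by Sobolev slack. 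Integrating from $s + s'$ to $T$ and invoking the $L^2_t H^{\gamma+(d-1)}$ bound on $w_t$ supplied by \eqref{ineq:L2tHgCut} of Lemma \ref{lem:aboveSigma} converts the Gronwall factor into a term of size $(s')^{-2r}(1+\|w_0\|_{H^\sigma})^2$, with $r$ matching the exponent in the statement. Combining with the part (ii) bound on $\|J_{s, s+s'}h\|_{H^{\gamma+(d-1)}}$ closes the estimate.

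The main obstacle is the bookkeeping in Step 3: one must simultaneously accommodate (a) the parabolic smoothing needed to lift the Jacobian from the $H^\sigma$ datum up to $H^{\gamma+2(d-1)}$, which introduces the $(s')^{-r}$ singularity; (b) the fact that $w_t$ only belongs to $H^{\gamma+(d-1)}$ in the integrated sense given by Lemma \ref{lem:aboveSigma}, forcing the use of $L^2_t$ rather than pointwise estimates on the coefficient; and (c) the need for a small $\delta > 0$ of slack in the product estimate to make the time weights integrable and to match the $\sigma > d/2 + 2$ condition from \eqref{eqn:sigmaConstraint}. The treatment of $U^H_{s,t}$ is identical up to inserting $\Pi_H$ throughout; since the projection commutes with $A$ and loses nothing in Sobolev norms, all bounds transfer verbatim.
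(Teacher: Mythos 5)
The paper itself does not prove Lemma \ref{lem:JstBds}: it is one of the lemmas appearing after the sentence ``The proofs of the following Lemmas are standard and are omitted for brevity (see \cite{DPZ96}),'' so there is no reference proof to compare against. Your Duhamel-plus-parabolic-smoothing-plus-Gronwall strategy is indeed the standard way to get such estimates, and the sketches of parts (i) and (ii) are sound (the bilinear bound $\|B(u,J)\|_{H^{\gamma-(d-1)}}\lesssim\|u\|_{H^\sigma}\|J\|_{H^\gamma}$ for $\gamma\leq\sigma$ and the identification of the order of $A$ as $2(d-1)$ are correct, and $\sigma > d/2+2$ from \eqref{eqn:sigmaConstraint} gives the needed algebra property).

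There is, however, a genuine gap in your part (iii). You first say the restart datum $J_{s,s+s'}h$ lies in $H^\gamma$, and then, in order to run the energy identity at level $H^{\gamma+(d-1)}$, you invoke ``the part (ii) bound on $\|J_{s,s+s'}h\|_{H^{\gamma+(d-1)}}$.'' But part (ii) only controls $J_{s,t}h$ in $H^{\gamma'}$ for $\gamma'\in(\sigma,\sigma+(d-1))$; with $\gamma>\sigma$ the exponent $\gamma+(d-1)$ lies in $(\sigma+(d-1),\sigma+2(d-1))$, which is strictly beyond what part (ii) covers. Moreover, part (ii) is an \emph{almost sure} bound, and an a.s.\ bound on $J$ at that regularity is not available: the term $J\cdot\nabla u_t$ at level $H^{\gamma+(d-1)}$ would require $u_t$ to have a.s.\ regularity above $\sigma$. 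This is precisely why part (iii) is stated in expectation with the $(1+\|w_0\|_{H^\sigma})^2$ factor. The correct assembly therefore needs an intermediate bootstrap: after parabolic smoothing to some $H^{\gamma'}$, $\gamma'\in(\sigma,\sigma+(d-1))$, run an energy estimate at level $H^{\gamma'}$ whose dissipation term supplies $J\in L^2_t H^{\gamma'+(d-1)}$, using the $L^2_t$ control on $\|w_t\|_{H^{\gamma'+(d-1)}}$ from \eqref{ineq:L2tHgCut} of Lemma \ref{lem:aboveSigma} and the resulting Gronwall factor, then iterate once more to reach $H^{\gamma+2(d-1)}$. Each smoothing step contributes a singular power of $s'$, and the combination (not the part (ii) bound at $H^{\gamma+(d-1)}$) produces the exponent $r$. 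As written, your sketch invokes a bound that does not exist and also contradicts itself about the regularity of the restart datum.
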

\begin{remark} 
Note that the above estimates all hold almost almost surely and are independent of $w_0$ except for \eqref{ineq:LtHgammaJst}. This is because only \eqref{ineq:LtHgammaJst} requires regularities above $\sigma$ on the (linearization of) the nonlinear term. 
\end{remark}

\begin{lemma}\label{lem:U,V-bounds}
For each $p \geq 1$ an $T \leq 1$, the processes $U_t^L$ and $V_t^L$ satisfy the following bounds,
\[
	\sup_{t\in[0,T]} (|U_t^L| + |V^L_t|) \leqc_{\rho,p} 1
\]
and the constants {\em do not} depend on the initial data for $w_t$.
\end{lemma}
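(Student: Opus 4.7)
The plan is to reduce the statement to a deterministic Grönwall argument on the finite-dimensional space $\Hbf_L \times T_{v_t}\cM$, for which the only non-trivial ingredient is a uniform (in $w$) bound on the Jacobian matrix $D_L F_L(w)$.

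First, I would observe that $U^L_t$ satisfies the linear matrix ODE
\begin{equation}
\partial_t U^L_t = \bigl(-A_L + D_L F_L(w_t)\bigr) U^L_t, \qquad U^L_0 = \Id,
\end{equation}
on the finite-dimensional vector bundle $\Hbf_L \times T_{v_t}\cM$. The operator $A_L = \Pi_L A \Pi_L$ has operator norm depending only on $L$ since $\Hbf_L$ is finite-dimensional. Differentiating $V^L_t U^L_t = \Id$ in $t$, one finds that $V^L_t$ solves the dual equation $\partial_t V^L_t = V^L_t (A_L - D_L F_L(w_t))$ with $V^L_0 = \Id$. Hence, once we have the pathwise bound
\begin{equation}\label{eq:key-bound-DLF}
\sup_{w \in \Hbf \times \cM} | D_L F_L(w) | \leq C(\rho),
\end{equation}
Grönwall's inequality immediately gives, almost surely and for all $t \in [0,T]$ with $T \leq 1$,
\begin{equation}
|U^L_t| \vee |V^L_t| \leq \exp\bigl(T(|A_L| + C(\rho))\bigr) \leq C'(\rho, L),
\end{equation}
which is deterministic and independent of $w_0$. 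Since the bound is almost sure, all $L^p$ moments are trivially controlled.

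The heart of the argument is therefore to establish \eqref{eq:key-bound-DLF}. Recall
\begin{equation}
F_\rho(u, x, v, z) = (1 - \chi_{3\rho}(\|u\|_{\Hbf})) \widehat{F}(u,x,v,z) + \chi_\rho(\|u\|_{\Hbf}) H(v,z).
\end{equation}
I would split into two regimes. In the first, $\|u\|_{\Hbf} \leq 6\rho$, the state $u$ lies in a bounded ball in $\Hbf$; since $\sigma > \tfrac{d}{2} + 2$, the Sobolev embedding $\Hbf \hookrightarrow C^1(\T^d, \R^d)$ yields $\|u\|_{C^1} \lesssim_\rho 1$. All the low-mode derivatives of $\widehat{F}$ in the low variables $(u_L, x, v, z)$ are then polynomial expressions in $\|u\|_{C^1}$ and $|v|=1$, and are bounded by $C(\rho)$; in particular the cross-derivatives $\partial_x(u(x))=\nabla u(x)$ and $\partial_x(\Pi_v \nabla u(x) v)$, which are the only terms that could naively blow up with $\|u\|$, are under control here. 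In the second regime, $\|u\|_{\Hbf} > 6\rho$, we have $\chi_{3\rho}(\|u\|_{\Hbf}) = 1$ so the nonlinear and advective terms vanish identically and $F_\rho = H(v, z)$, which by construction has uniformly bounded derivatives in $(v,z)$. The intermediate annuli $\|u\|_{\Hbf} \in [\rho, 2\rho] \cup [3\rho, 6\rho]$ produce additional contributions of the form $(\chi'_\rho/\rho)\frac{u_L}{\|u\|} H$ and $(\chi'_{3\rho}/3\rho)\frac{u_L}{\|u\|} \widehat{F}$, but $\chi'$ is bounded and $\widehat{F}, H$ are both bounded in these regions by the previous reasoning, so these terms are also controlled by $C(\rho)$.

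The main (and essentially only) obstacle is the bookkeeping in the first regime: one must verify uniformly that every block of the matrix $D_L F_L(w)$ — in particular the mixed $x$-derivatives of the advective $u(x)$ and projective $\Pi_v \nabla u(x) v$ terms — remains bounded in terms of $\|u\|_{C^1}$ and hence in terms of $\rho$. Once this is done, the Grönwall step is immediate and the constants are manifestly independent of $w_0$ since the bound \eqref{eq:key-bound-DLF} is itself uniform in $w$.
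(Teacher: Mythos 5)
Your proposal is correct and follows what is almost certainly the intended argument, since the paper declares the proof ``standard'' and omits it. The reduction is exactly right: $U^L_t$ and $V^L_t$ solve linear ODEs on a finite-dimensional space with coefficient $\pm(A_L - D_L F_L(w_t))$, and the entire content is the deterministic, pathwise bound $\sup_w |D_L F_L(w)| \lesssim_\rho 1$, after which Grönwall on $[0,T]$ with $T \leq 1$ finishes. Your two-regime-plus-annulus bookkeeping is the right way to organize the cutoff: when $\chi_{3\rho}(\|u\|_{\Hbf}) < 1$ we have $\|u\|_{\Hbf} < 6\rho$, so $u$ is bounded; when it equals $1$ the nonlinear block of $\widehat F$ disappears and only the uniformly smooth $H(v,z)$ survives; and the $\chi'$ terms live on annuli where $u$ is again bounded. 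The constant also absorbs $|A_L|$, which depends on the fixed mode cutoff $L$, and is thus a legitimate part of the $\rho$-dependent constant since $L, \sigma, \alpha, d$ are all fixed parameters of the problem.

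One small correction in your justification: to bound the full matrix $D_L F_L$ you need the $x$-derivatives of both $u(x)$ and $\Pi_v \nabla u(x) v$, i.e.\ $\nabla u$ and $\nabla^2 u$ evaluated at $x$, so you actually want the embedding $\Hbf = H^\sigma \hookrightarrow C^2(\T^d,\R^d)$, which holds since the paper's constraint forces $\sigma > \tfrac{d}{2} + 2$. You invoke $\sigma > \tfrac{d}{2}+2$ but then only claim a $C^1$ bound; replace that with $C^2$. You do flag the mixed $x$-derivatives as the main thing to check at the end, so this is a wording slip rather than a gap. Your observation that the $p$-dependence in the lemma statement is vacuous here (the bound being pathwise/deterministic, all $L^p$ moments follow trivially) is also correct.
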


We also require the following estimates on the Jacobian, as in \cite{EH01}, which control the effect of low frequencies on high frequencies and vice-versa. 
\begin{lemma} \label{lem:CrossJacobian}
For each $T \leq 1$ and $h^L\in \Hbf_L\times T_{v}\cM$ and $h^H\in \Hbf_H$ we have the almost sure bounds
\begin{align}
\sup_{0<t<T} \|D_Hw^H_t h^L\|_{\Hbf_H}& \leqc_\rho T^{\frac{1}{2}} |h_L|\label{ineq:DLwHvan}\\ 
\sup_{0<t<T} |D_H w^L_t h^H| & \leqc_\rho T \|h_H\|_{\Hbf_H}, \label{ineq:DLwHvan}
\end{align}
(where the constants do not depend on the initial data $w$).
\end{lemma}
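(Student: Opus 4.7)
The crucial observation is that both cross-derivatives vanish at $t=0$: writing an initial perturbation of the form $h = (h_L, 0) \in \Hbf_L \times T_v\cM$ (respectively $h = (0, h_H) \in \Hbf_H$), we have $D w^H_0 h = 0$ in the first case and $D w^L_0 h = 0$ in the second. The powers of $T$ on the right-hand side therefore reflect the time needed for the coupling between low and high modes (mediated by the nonlinearity) to transfer information across the frequency cutoff.

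Concretely, I would linearize \eqref{eq:cut-off-eq} about $w_t$ and split into high and low modes to obtain the coupled system
\begin{align}
\partial_t Dw^L_t h & = - A_L Dw^L_t h + D_L F_L(w_t)\, Dw^L_t h + D_H F_L(w_t)\, Dw^H_t h,\\
\partial_t Dw^H_t h & = - A_H Dw^H_t h + D_L F_H(w_t)\, Dw^L_t h + D_H F_H(w_t)\, Dw^H_t h.
\end{align}
Applying Duhamel's formula against the approximate Jacobians $U^L_{s,t}$ and $U^H_{s,t}$ (which absorb the diagonal evolutions $-A_L + D_L F_L$ and $-A_H + D_H F_H$ respectively) converts these into the integral identities
\[
Dw^L_t h  = U^L_{t}\, h_L + \int_0^t U^L_{s,t}\, D_H F_L(w_s)\, Dw^H_s h\, \ds,
\qquad
Dw^H_t h  = U^H_{0,t}\, h_H + \int_0^t U^H_{s,t}\, D_L F_H(w_s)\, Dw^L_s h\, \ds.
\]
Setting $h = (h_L, 0)$ kills the first term of the $Dw^H$ equation, and symmetrically for $h = (0, h_H)$ the first term of the $Dw^L$ equation vanishes; the cross-derivatives are then the respective Duhamel integrals.

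The integrals are estimated as follows. Under the cutoff $\chi_\rho$, the nonlinearity $B(u,u)$ is globally Lipschitz in $\Hbf$, so $D_L F_H(w_s)$ and $D_H F_L(w_s)$ are almost-surely bounded by $C_\rho$ as operators between the appropriate Sobolev spaces (with a small gain of regularity for the low component because $\Hbf_L$ is finite-dimensional and spectrally truncated). Lemma \ref{lem:U,V-bounds} provides the almost-sure bound $|U^L_{s,t}| \lesssim_\rho 1$, and Lemma \ref{lem:JstBds}(i) gives the analogous bound for $U^H_{s,t}$ in $\Hbf_H$. Combining these with the uniform bound $\sup_{s\in[0,T]} |Dw^L_s h_L| \lesssim_\rho |h_L|$ (which is immediate since $Dw^L$ solves a finite-dimensional linear ODE with bounded drift), a direct integration yields the desired $T$ factor (or, in the first case, $T^{1/2}$ after applying Cauchy--Schwarz to trade one factor of time for an $L^2_t$ norm of $U^H_{s,t}$ or $D_L w^L_s h_L$ against the square-integrable high-mode smoothing). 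The analogous estimate for $Dw^L$ in the second inequality is obtained identically, with the roles of $L$ and $H$ reversed.

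The only subtlety is in controlling $D_L F_H$ and $D_H F_L$ when the Navier--Stokes nonlinearity is active, since differentiating $B(u,u)$ costs one derivative. Because the cutoff keeps $\|u\|_{\Hbf} \leq 6\rho$, and because the low-mode factor in each bilinear term is spectrally truncated to $|k|_\infty \leq L$ (hence smooth), this loss is absorbed into the constant $C_\rho$ via standard product estimates in $H^\sigma(\T^d)$. With this observation in hand, the bounds follow from the integral identities above without need for a Gronwall argument.
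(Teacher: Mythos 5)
Your decomposition and use of Duhamel over the diagonal propagators $U^L_{s,t}, U^H_{s,t}$ is exactly what the paper does, and the observation that the cross-Jacobians vanish at $t=0$ is the key point. However, the Cauchy--Schwarz step you propose for the $T^{1/2}$ bound does not go through as stated. The operator $D_L F_H(w_s)$ loses one derivative (the term $B(\xi_L,u)$ involves $\nabla u$), so one needs the smoothing estimate $\|U^H_{s,t}\|_{H^{\sigma-1}\to H^\sigma}\lesssim_\rho (t-s)^{-1/2}$ from Lemma~\ref{lem:JstBds}; if you try to place $U^H_{s,t}$ in $L^2_s$ via Cauchy--Schwarz, the square of the kernel is $(t-s)^{-1}$, which is not integrable on $[0,t]$. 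The paper instead bounds $D_L F_H(w_s)\,D_L w^L_s h^L$ in $L^\infty_s H^{\sigma-1}$ by $\lesssim_\rho |h^L|$ (using the almost-sure bound on $J_{0,s}$ from Lemma~\ref{lem:JstBds}(i)) and integrates the singular kernel directly, $\int_0^t (t-s)^{-1/2}\,\ds = 2\sqrt{t}$, which is the cleaner route and the one you should adopt. A smaller point: the uniform bound $\sup_s |D w^L_s h^L|\lesssim_\rho |h^L|$ is not immediate from a finite-dimensional linear ODE with bounded drift, because $D w^L_s$ is fed by $D w^H_s$ through $D_H F_L(w_s)$ -- it is a projection of the full (infinite-dimensional) linearized flow. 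The correct justification is the full-Jacobian bound $\|J_{0,s}\|\lesssim_\rho 1$. Neither remark changes your architecture, but both steps need tightening for the proof to close.
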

\begin{proof} 
Consider the case of $D_L w^H$. In this case we have 
\begin{align}
\partial_t \left(D_L w_t^H h^L\right) = D_HF_H(w_t) D_L w_t^H h^L + D_LF_H(w_t) D_L w_t^L h^L - A_H(D_L w^H_t h^L) 
\end{align}
and $D_L w_0^H h^L = 0$. Therefore
\begin{align}
D_L w_t^H h^L & = \int_0^t U^H_{s,t} D_LF_H(w_s) D_L w_s^L h^L ds. 
\end{align}
By Lemma \ref{lem:JstBds}, 
\begin{align}
\norm{D_L w_t^H h^L}_{\Hbf_L} & \lesssim \int_0^t \frac{1}{(t-s)^{1/2}} \norm{D_L F_H(w_s) D_L w_s^L h^L}_{H^{\sigma-1}} \ds \\ 
& \lesssim_\rho \int_0^t \frac{1}{(t-s)^{1/2}} \ds  \left(\sup_{ 0 < s < t} \norm{J_{0,s} h^L}_{\Hbf\times T_{v_s}\cM} \right) \\ 
& \lesssim \sqrt{t} |h^L|. 
\end{align}

The estimate on \eqref{ineq:DLwHvan} follows similarly (except no smoothing is necessary).  
\end{proof}

Next, we compute and estimate the Malliavin derivatives of the necessary quantities.  
First, we compute  
\[
	\MalD_s w_t f = J_{s,t}Qf 
\]
\[
	\MalD_s \left(U^L_{r,t}h\right) f = \int_r^t U^L_{l,t} \bar{D}^2 F_L(w_l)[U^L_{r,l} h, J_{s,l}Qf]\dee l
\]
\[
	\MalD_s \left(U^H_{r,t} h\right) f = \int_r^t U^H_{l,t}\bar{D}^2 F_H(w_l)[U^H_{r,l} h,J_{s,l}Qf]\dee l, 
\]
where $\bar{D}^2F$ denotes the full second variation of $F$ extended to the linear space $\Hbf_L\times \R^{4d}$. We further have
\begin{align}
\MalD_s Dw_t h f = \int_0^t J_{r,t} \bar{D}^2 F(w_r)[\MalD_s w_r f, J_{0,r}h] dr = \int_s^t J_{r,t} \bar{D}^2 F(w_r)[J_{s,r} Q f, J_{0,r}h] dr. 
\end{align}
Furthermore, one has the following for the derivatives of the inverse Malliavin matrix and $V^L_t$ 
\begin{align}
	\MalD_s (\cC_{T}^L)^{-1}f = - (\cC_{T}^L)^{-1}[\MalD_s\cC_{T}^Lf](\cC_{T}^L)^{-1}\quad \text{and}\quad \MalD_s V^L_tf = - V^L_t[\MalD_sU^L_tf]V^L_t.
\end{align}
\begin{lemma} \label{lem:MalliavinForAll}
The following estimates hold almost surely for $T \leq 1$, (and are independent of $\norm{w_0}_{\Hbf}$), 
\begin{align}
\sup_{0 < r < t < T} \abs{\MalD_s U^L_{r,t}h^L}_{\Wbf \to \Hbf_L\times T_{v_t}\cM} & \lesssim_{\rho} t\|h^L\|_{\Hbf_L\times T_{v_r}\cM} \\
\sup_{0 < r < t < T} \abs{\MalD_s V^L_{r,t}h^L}_{\Wbf\to \Hbf_L\times T_{v_t}\cM} & \lesssim_{\rho} t\|h^L\|_{\Hbf_L\times T_{v_r}\cM} \\
\sup_{0 < r < t < T} \norm{\MalD_s U^H_{r,t}h^H}_{\Wbf \to \Hbf_H} & \lesssim_{\rho} t^{\frac{1}{2}}\|h^H\|_{\Hbf_L\times T_{v_r}\cM} \\ 
\sup_{0 < r < t < T} \norm{\MalD_s J_{r,t}h}_{\Wbf \to \Hbf\times T_{v_t}\cM} & \lesssim_{\rho} t^{\frac{1}{2}}\|h\|_{\Hbf_L\times T_{v_r}\cM}. 
\end{align}
\end{lemma}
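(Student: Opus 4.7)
The plan is to differentiate the displayed formulas for $\MalD_s$ applied to $U^L_{r,t}, V^L_{r,t}, U^H_{r,t}$ and $J_{r,t}$, and then to control each integrand using the a priori bounds collected in Lemma \ref{lem:JstBds} and Lemma \ref{lem:U,V-bounds}. The key uniform ingredient to record first is the quadratic estimate on the second Fr\'echet derivative,
\[
\|\bar D^2 F_\rho(w)[g_1, g_2]\|_{H^{\sigma-1}} \lesssim_\rho \|g_1\|_{\Hbf\times T\cM}\|g_2\|_{\Hbf\times T\cM},
\]
valid for all $w\in \Hbf\times \cM$. This holds since $\sigma > d/2 + 1$ by \eqref{eqn:sigmaConstraint}, so the Navier--Stokes bilinearity $B$ loses only one derivative; derivatives of the cut-off $\chi_\rho$ and of the bounded vector field $H(v,z)$ contribute only lower-order uniformly bounded terms.

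For the low-mode bounds, observe that in the formula
\[
\MalD_s(U^L_{r,t}h^L)f \;=\; \int_r^t U^L_{l,t}\,\bar D^2 F_L(w_l)[U^L_{r,l}h^L,\, J_{s,l}Qf]\,dl,
\]
the projector $\Pi_L$ restricts the output to a fixed finite-dimensional subspace on which $\|\cdot\|_{\Hbf_L}$ and $\|\cdot\|_{H^{\sigma-1}}$ are equivalent, so no parabolic smoothing is needed. Combining Lemma \ref{lem:U,V-bounds} ($|U^L_{r,l}|, |U^L_{l,t}| \lesssim_\rho 1$), Lemma \ref{lem:JstBds}(i) ($\|J_{s,l}Qf\|_\Hbf \lesssim \|Qf\|_\Hbf \lesssim \|f\|_\Wbf$), and integrating $dl$ over $[r,t]$ yields $|\MalD_s U^L_{r,t}| \lesssim_\rho (t-r) \leq t$ in operator norm, which is the first claim. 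The $V^L_{r,t}$ bound then follows by differentiating the identity $V^L_{r,t}U^L_{r,t} = \Id$ to obtain $\MalD_s V^L_{r,t} = -V^L_{r,t}(\MalD_s U^L_{r,t})V^L_{r,t}$ and invoking $|V^L|\lesssim_\rho 1$ from Lemma \ref{lem:U,V-bounds}.

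For the high-mode bounds, the one-derivative loss in $\bar D^2 F$ must be absorbed through parabolic smoothing. The Duhamel representation of $U^H_{l,t}$ against the analytic semigroup $e^{-(t-l)A_H}$, together with the standard short-time estimate $\|e^{-\tau A_H}\|_{H^{\sigma-1}\to H^\sigma} \lesssim \tau^{-1/(2(d-1))}$ (reflecting dissipation order $2$ in 2D Navier-Stokes and $4$ in 3D hyper-viscous Navier-Stokes) and a Gronwall closure against the $D_H F_H(w_s)$ correction (bounded uniformly in $w_0$ below the cut-off), will give
\[
\|U^H_{l,t}\|_{H^{\sigma-1}\to \Hbf_H} \lesssim_\rho (t-l)^{-1/(2(d-1))}.
\]
Substituting into the formula for $\MalD_s U^H_{r,t}$ and integrating then dominates by $(t-r)^{1-1/(2(d-1))} \lesssim t^{1/2}$ since $t\leq 1$ and $d\in\{2,3\}$. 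The estimate for $\MalD_s J_{r,t}$ is identical in structure: first derive $\MalD_s J_{r,t}h\,f = \int_{\max(r,s)}^t J_{l,t}\,\bar D^2 F(w_l)[J_{r,l}h, J_{s,l}Qf]\,dl$ by differentiating the variational equation for $J$ with respect to the noise, then apply the analogous smoothing for $J_{l,t}$.

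The principal technical obstacle is establishing the short-time smoothing estimate $\|U^H_{l,t}\|_{H^{\sigma-1}\to H^\sigma}\lesssim_\rho (t-l)^{-1/(2(d-1))}$ \emph{uniformly in $w_0$}, since Lemma \ref{lem:JstBds}(ii) is stated with input regularity fixed at $H^\sigma$ and variable target $H^\gamma$, not the dual picture required here. The Duhamel-and-Gronwall argument above can be made uniform in $w_0$ because the target regularity $\sigma$ never exceeds the energy-space threshold, so that the $\|w_0\|$-dependent factors appearing in the higher-regularity estimate Lemma \ref{lem:JstBds}(iii) are never invoked.
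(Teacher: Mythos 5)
Your proposal is correct and takes essentially the same route as the paper: differentiate the displayed Malliavin-derivative formulas, bound the integrands with the quadratic estimate on $\bar D^2 F_\rho$ (uniform in $w$ thanks to the cut-off), and absorb the one-derivative loss via the parabolic smoothing of $U^H_{l,t}$ and $J_{l,t}$. Two small comments: the paper's own proof simply writes the weaker singularity $(t-l)^{-1/2}$ valid in both $d=2,3$ (it dominates your sharper $(t-l)^{-1/(2(d-1))}$ when $t-l\le 1$), and your observation that Lemma \ref{lem:JstBds}(ii) only gives $H^\sigma\to H^\gamma$ smoothing and must be shifted to $H^{\sigma-1}\to H^\sigma$ is exactly the (unstated) Duhamel--Gr\"onwall elaboration the paper implicitly relies on; it is indeed uniform in $w_0$ for the cut-off process.
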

\begin{proof} 
Using the formula above, the case of $\MalD_s U^L_{r,t}$ follows immediately from Lemma \ref{lem:U,V-bounds}.  
The case of $U^H$ follows from the following, noting that $\sigma < \alpha -\frac{d}{2}$ and that $Q:\Wbf \to \Hbf\times \cM$ is bounded,   
\begin{align}
\norm{\MalD_s \left(U^H_{r,t} h\right) f}_{\Hbf_H} & \lesssim \int_r^t \frac{1}{(t-l)^{1/2}} \norm{U_{r,l}^H h}_{\Hbf_H} \norm{J_{s,l}Q f}_{\Hbf\times T_{v_l}\cM} \dee l \lesssim \sqrt{t} \norm{h}_{\Hbf\times T_{v}\cM} \norm{f}_{\Wbf}.
\end{align}
Consider next estimating $\MalD_s Dw_t h f$. 
For this we get (almosts surely due to the cutoff), 
\begin{align}
\norm{\MalD_s \left(Dw_t h\right) f}_{\Hbf_H} \lesssim_{\rho} \int_s^t \frac{1}{(t-r)^{1/2}} \norm{J_{s,r} Q f}_{\Hbf\times T_{v_r}\cM} \norm{J_{0,r}h}_{H^\sigma} dr \lesssim t^{1/2}. 
\end{align}
\end{proof}

\begin{lemma} \label{lem:MDbds}
The following holds for all $s < T$ and $1 \leq p < \infty$, (the constants $a,b$ are from Lemma \ref{lem:Malmatrix-bound}), 
\begin{align}
\EE \norm{\MalD_s (\cC_{T}^L)^{-1} }_{\Wbf \to \Hbf_L\times T_v\cM}^p \lesssim_p  \left(T^{-2a+1}(1+\abs{z})^{2b}\right)^{p}.
\end{align}
\end{lemma}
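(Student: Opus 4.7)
The plan is to differentiate the algebraic identity
\begin{equation}
\MalD_s (\cC_T^L)^{-1} f = -(\cC_T^L)^{-1} \bigl[\MalD_s \cC_T^L f\bigr] (\cC_T^L)^{-1},
\end{equation}
already recorded in this section, and bound each factor separately. Since $\cC_T^L$ acts on the finite-dimensional space $\Hbf_L \times T_v\cM$, taking operator norms we obtain the pointwise estimate
\begin{equation}
\bigl\|\MalD_s (\cC_T^L)^{-1}\bigr\|_{\Wbf \to \Hbf_L \times T_v\cM} \;\leq\; \bigl|(\cC_T^L)^{-1}\bigr|^2 \,\bigl\|\MalD_s \cC_T^L\bigr\|_{\Wbf \to \Hbf_L \times T_v\cM},
\end{equation}
where the right-hand norm is interpreted as the natural trilinear operator norm on the $\Hbf_L\times T_v\cM$-valued Malliavin derivative. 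The first factor is precisely what is controlled in Lemma \ref{lem:Malmatrix-bound}, giving $\EE|(\cC_T^L)^{-1}|^{2p} \lesssim_{\rho,p} T^{-2ap}(1+|z|)^{2bp}$.

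The remaining step is a deterministic bound on $\MalD_s \cC_T^L$. Differentiating the defining formula $\cC_T^L = \int_0^T V_r^L Q_L (V_r^L Q_L)^\top \dr$ and using that $V_r^L$ is $\mathcal{F}_r$-measurable (so $\MalD_s V_r^L = 0$ for $r < s$), we obtain
\begin{equation}
\MalD_s \cC_T^L \,f = \int_s^T \Bigl[ (\MalD_s V_r^L f) Q_L (V_r^L Q_L)^\top + V_r^L Q_L \bigl((\MalD_s V_r^L f) Q_L\bigr)^\top \Bigr] \dr.
\end{equation}
The almost-sure bound $|V_r^L| \lesssim_\rho 1$ comes from Lemma \ref{lem:U,V-bounds}, $|Q_L|$ is finite by the finite-dimensionality of $\Hbf_L$, and $\|\MalD_s V_r^L f\|_{\Hbf_L \times T_v\cM} \lesssim_\rho r \|f\|_{\Wbf}$ is Lemma \ref{lem:MalliavinForAll}. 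Integrating, we conclude the almost-sure bound
\begin{equation}
\bigl\|\MalD_s \cC_T^L\bigr\|_{\Wbf \to \Hbf_L \times T_v\cM} \;\lesssim_\rho\; \int_s^T r \,\dr \;\lesssim\; T^2.
\end{equation}

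Combining the two estimates gives
\begin{equation}
\EE \bigl\|\MalD_s (\cC_T^L)^{-1}\bigr\|^p \;\lesssim_\rho\; T^{2p}\, \EE |(\cC_T^L)^{-1}|^{2p} \;\lesssim_{\rho,p}\; T^{(-2a+2)p}(1+|z|)^{2bp},
\end{equation}
which, since $T \leq 1$, implies the claimed bound $T^{(-2a+1)p}(1+|z|)^{2bp}$ (indeed, with a sharper power of $T$). There is no real obstacle here: the argument is the standard "inverse matrix" Malliavin computation, benefiting crucially from the fact that the partial Malliavin matrix $\cC_T^L$ lives on the finite-dimensional space $\Hbf_L \times T_v\cM$, so that operator norm bounds split multiplicatively and all the quantitative inputs (Lemmas \ref{lem:Malmatrix-bound}, \ref{lem:U,V-bounds}, \ref{lem:MalliavinForAll}) apply directly. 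The only point requiring care is tracking the convention that $\|\MalD_s(\cC_T^L)^{-1}\|_{\Wbf \to \Hbf_L \times T_v\cM}$ denotes the operator norm of the induced trilinear map.
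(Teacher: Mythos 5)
Your proof is correct and takes essentially the same approach as the paper, which gives only the one-line justification ``Follows by Lemma~\ref{lem:U,V-bounds} and Lemma~\ref{lem:Malmatrix-bound}.'' You correctly fill in the omitted details via the standard ``inverse matrix'' Malliavin identity, and you rightly note that Lemma~\ref{lem:MalliavinForAll} is also needed to bound $\MalD_s V^L_r$ (an input the paper's terse citation leaves implicit); the resulting power $T^{(-2a+2)p}$ is in fact slightly sharper than the $T^{(-2a+1)p}$ asserted, which is consistent since $T\le 1$.
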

\begin{proof} 
Follows by Lemma \ref{lem:U,V-bounds} and Lemma \ref{lem:Malmatrix-bound}. 
\end{proof}

\section{Weak irreducibility and approximate control} \label{sec:Irr}
First, we prove Proposition \ref{prop:UniErg}, hence deducing the weak irreducibility of the stationary measures for the Markov processes $(u_t,x_t)$, $(u_t,x_t,v_t)$, $(u_t,x_t,\check{v}_t)$.
Combined with the strong Feller property, this yields unique stationary measures for these processes by the Doob-Khasminskii Theorem \cite{Doob1948-rb, khasminskii1960ergodic}. 

\begin{lemma} \label{lem:Ctrluxv}
Recall the control problem \eqref{eq:ctrl} for Systems \ref{sys:NSE}--\ref{sys:3DNSE}. Suppose that $\mathcal{K}$ is symmetric and $(1,0), (0,1) \in \mathcal{K}$ in 2D and $(1,0,0), (0,1,0),(0,0,1) \in \mathcal{K}$ in 3D.

Let $(x,v)$, $(x',v')$ be arbitrary points in $\T^{d} \times \S^{d-1}$. 
Then there exists a smooth control $Q g$ such that 
\begin{align}
(u_0,x_0,v_0) = (0,x,v), \quad (u_1,x_1,v_1) = (0,x',v').
\end{align}
Furthermore, $g$ can be chosen to depend smoothly on $x,x',v,v'$ and supported only in frequencies $\abs{k}_{\infty} \leq 1$. 
All of the above holds also for the $(u_t,x_t,\check{v}_t)$ process. 
\end{lemma}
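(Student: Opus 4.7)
My plan is to build the smooth control $Qg$ by concatenating two time segments, each driving the system along an exact solution of the steady Euler equation that is simultaneously an eigenfunction of $A$. As a preliminary observation, for any such $w \in \Hbf_{\mathcal{K}}$ and any $\alpha \in C_c^\infty((s, t))$, the ansatz $u(\tau, y) = \alpha(\tau) w(y)$ reduces \eqref{eq:ctrl} to $Qg(\tau) = (\dot\alpha(\tau) + \lambda \alpha(\tau)) w$, where $\lambda$ is the eigenvalue of $A$ on $w$. Since the Fourier modes appearing in $w$ have $|k|_\infty = 1$ and the corresponding $q_k$ are nonzero by the hypothesis on $\mathcal{K}$, this uniquely determines a smooth $g$ supported in those modes, depending smoothly on $\alpha$ and on the spatial shifts defining $w$. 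Segments with disjoint temporal supports can then be linearly superposed.

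I carry out the construction in two phases. In phase B, on $[0, 1/2]$, I transport $x_\tau$ from $x$ to $x'$ using the shear flows $(\cos(y_2 - b), 0)$ and $(0, \cos(y_1 - a))$ applied in sequence (with the evident additional shear in 3D). I choose $b = x_2$ for the first shear and, after the particle has reached $(x_1', x_2)$, choose $a = x_1'$ for the second. With these choices, along the particle trajectory we have $\cos(y_i - \mathrm{shift}) \equiv 1$ while $\sin(y_i - \mathrm{shift}) \equiv 0$; consequently the displacement of the moving coordinate equals $\int \alpha_B \, d\tau$ and, crucially, $\nabla u_\tau(x_\tau) \equiv 0$ throughout phase B. Thus $v_\tau$ (and likewise $\check v_\tau$) is \emph{unchanged} during phase B, and the system arrives at $(0, x', v)$ at time $1/2$. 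In phase C, on $[1/2, 1]$, I apply the cellular flow $w_C(y) = (\sin(y_2 - x_2'), -\sin(y_1 - x_1'))$ centered at $x'$ (and the analogue in 3D acting in a coordinate plane and fixed in the third coordinate): one checks that $w_C \in \Hbf_{\mathcal{K}}$, that $w_C$ is a steady Euler solution (its nonlinear term is the pure gradient $\nabla(\cos(y_1-x_1')\cos(y_2-x_2'))$), and that it is an eigenfunction of $A$. Since $w_C(x') = 0$, the particle satisfies $x_\tau \equiv x'$, and $\nabla w_C(x')$ is a nondegenerate skew-symmetric matrix, so $v_\tau$ undergoes rigid rotation at angular rate $\alpha_C(\tau)$. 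In two dimensions, tuning $\int \alpha_C$ sends $v$ to any $v' \in \S^1$. In three dimensions, phase C is split into three sub-phases using cellular flows centered at $x'$ in two distinct coordinate planes; their skew generators span $\mathfrak{so}(3)$, so a composition of the three flows realizes every element of $SO(3)$.

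The main obstacle is verifying smooth controllability on $\S^{d-1}$ in the 3D case, where no single planar rotation subgroup acts transitively. This is handled via an Euler-angle product parametrization $(\theta_1, \theta_2, \theta_3) \mapsto R_1(\theta_1) R_2(\theta_2) R_3(\theta_3)$ whose differential at the identity spans $\mathfrak{so}(3)$; the inverse function theorem then yields smooth dependence of the rotation amplitudes on the target $v'$, and hence of $g$ on $(x, x', v, v')$. For the $(u_t, x_t, \check v_t)$ process the argument is the same: in phase B, $(\nabla u_\tau(x_\tau))^\top \equiv 0$, so $\check v_\tau$ is preserved; in phase C, $-(\nabla w_C(x'))^\top$ is again skew-symmetric and nondegenerate, so $\check v_\tau$ undergoes planar rotation, and the same Euler-angle argument gives full controllability on $\S^{d-1}$ with smooth dependence on endpoints.
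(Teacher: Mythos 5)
Your construction follows the same route as the paper's: drive the system along steady Euler eigenflows of $A$ so that the residual control $Qg = (\dot\alpha + \lambda\alpha)w$ is explicit and supported on $|k|_\infty = 1$; use shear flows to transport the particle, then cellular flows centered at $x'$ (so $x_t$ is pinned) to rotate $v_t$, composing three planar rotations in 3D. Your observation that the shear-flow shifts can be chosen so $\nabla u_\tau(x_\tau)\equiv 0$ throughout the transport phase, hence $v_\tau$ and $\check v_\tau$ are frozen, is a genuine (and clean) sharpening: the paper makes precisely the same shift choices but, rather than noting $\nabla u(x_t) = 0$, just lets $v_{1/2}$ be whatever it turns out to be, since the subsequent cellular flow can reach any target anyway.

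One technical slip: in 3D you write that the Euler-angle product $(\theta_1,\theta_2,\theta_3)\mapsto R_1(\theta_1)R_2(\theta_2)R_3(\theta_3)$ built from cellular flows in two distinct coordinate planes has differential at the identity spanning $\mathfrak{so}(3)$, and you invoke the inverse function theorem there. That premise is false. With only two distinct rotation axes, the differential of the $ZXZ$-type Euler map at $(0,0,0)$ has the form $\dot\theta_1 L_z + \dot\theta_2 L_x + \dot\theta_3 L_z$, which has rank $2$ (gimbal lock). The two skew generators do not span $\mathfrak{so}(3)$; they generate it only as a Lie algebra via the bracket $[L_z, L_x] = L_y$. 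The inverse function theorem therefore fails precisely at the identity. This does not affect the controllability conclusion — the product map is still surjective onto $SO(3)$, and the induced map $SO(3) \to \S^2$ is transitive — but the smooth-dependence claim should be argued differently, e.g. by noting the composite rotation sending $v$ to $v'$ can be chosen to depend smoothly on $(v,v')$ over each chart of $\S^{d-1}\times\S^{d-1}$, or simply that all that is used downstream (Lemma \ref{lem:MarkovTrans}) is uniform boundedness of $g$ and its time derivatives over the compact parameter space, for which a measurable selection with uniform bounds suffices. The paper's latitude/longitude phrasing realizes the same three-step composition without making the rank claim and is therefore not exposed to this issue.
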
 
\begin{remark} 
By choosing arbitrary representatives on $\S^{d-1}$, it is clear that controlling the $(u_t,x_t,v_t)$ and $(u_t,x_t,\check{v}_t)$ processes, regarding $v_t,\check{v}_t$ as elements on  $\S^{d-1}$, implies controllability of the processes when considered on $P^{d-1}$. 
\end{remark}
\begin{proof}
First, let us consider the two dimensional case. 
Let $x = (a_0,b_0)$ and $x' = (a_1,b_1)$. 
For $t \in (0,1/4)$, suppose the velocity field is given by the shear flow
\begin{align}
u_t(y_1,y_2) = f_a(t) \begin{pmatrix} \cos(y_2 - b_0) \\ 0 \end{pmatrix}, 
\end{align}
such that $f_a \in C^\infty_c(0,1/4)$  and $\int_0^{1/4} f_a(t) dt = a_1 - a_0$. 
Similarly, for $t \in (1/4,1/2)$, suppose the velocity field was the shear flow 
\begin{align}
u_t(y_1,y_2) = f_b(t) \begin{pmatrix} 0 \\ \cos(y_1 - a_1) \end{pmatrix}, 
\end{align}
such that  $f_b \in C^\infty_c((1/4,1/2))$ and $\int_{1/4}^{1/2} f_b(t) dt = b_1 - b_0$. 
It follows that the solution to the ODE \eqref{eq:xctrl} satisfies $x_1 = (a_1,b_1)$. 

Next, we explain how to set $g$ in order to produce these flows. 
Notice that the shear flows $(\cos(y-b_0),0)$ and  $(0,\cos(x-a_1))$ are stationary solutions of 2D Euler: the nonlinearity vanishes on these flows. 
Hence, it suffices to control the Stokes flow, which gives the following control: 
\begin{align}
Q g(t) =  
\left(f_a'(t) + f_a(t) \right) \begin{pmatrix} \cos(y_2 - b_0) \\ 0 \end{pmatrix} + \left(f_b'(t) + f_b(t) \right) \begin{pmatrix} 0 \\ \cos(y_1 - a_1) \end{pmatrix}.
\end{align}
By the angle-difference formula and the assumptions on $\cK$, $g$ satisfies the requisite properties. 

Next, we augment the previous control also to deal with $v_t$; the treatment for $\check{v}_t$ is analogous and is omitted for brevity. 
During this time we have moved $v_t$ some amount, let $v_{1/2}$ be the new value. 
Suppose that the velocity field were given by the cellular flow
\begin{align}
u(t,y_1,y_2) = f_v(t) \begin{pmatrix} -\sin(y_2 - b_1) \\ \sin(y_1-a_1) \end{pmatrix}, \label{def:cellctrl}
\end{align}
such that $f_v \in C^\infty_c((1/2,1))$ with $\int_{1/2}^1 f_v(t) dt = \angle v' - \angle v_{1/2}$. 
This induces a rotation of $v_t$ via \eqref{eq:vctrl} into the desired final point without moving $x_t$. 
As above, the cellular flow is both a stationary solution of the 2D Euler equations and an eigenfunction of the Stokes operator. 
Therefore, it suffices to set $g$ on $t \in (1/2,1)$ to be such that 
\begin{align}
Q g(t) & = \left(f_v'(t) + f_v(t) \right) \begin{pmatrix} -\sin(y_2 - b_1) \\ \sin(y_1-a_1) \end{pmatrix}. 
\end{align}
 This completes the proof in 2D. 

Next, consider the 3D argument. It is clear that a similar proof applies to the $(u_t,x_t)$ process by utilizing 2D shear flows aligned with any of the three Cartesian directions. 
For the $(u_t,x_t,v_t)$ process, we consider the problem of controlling the $v_t$ process (as an element of $\S^2$) from one arbitrary position $v\in \S^{2}$ to another $v^\prime \in \S^{2}$ without moving $x_t$ using 2D cellular flows aligned with any of the three Cartesian directions. 
Each of these flows induces rotation along curves of constant `latitude' aligned with one of the three Cartesian directions. 
Note that no flow gives lines of constant longitude in any direction.
Arbitrarily, set the $x,y$ plane to be the equatorial plane relative to which we assign latitude and longitude.  
Using the cellular flow that is constant in $z$, adjust the longitude of $v_t$ so that $v_{1/3}$ lies in the $y,z$ plane. 
Then, using a cellular flow that is constant in $x$, adjust the latitude so that $v_{2/3}$ lies at the latitude of $v^\prime$. 
Finally, by re-applying the cellular flow that is constant in $z$, adjust the longitude so that $v^\prime = v_1$. 
\end{proof}

The controllability provided in Lemma \ref{lem:Ctrluxv} implies the following non-degeneracy of the Markov transition kernels. 
\begin{lemma} \label{lem:MarkovTrans}
For all $t > 0$ and $\eps> 0$, $\exists \eps' > 0$ such that for all $(x,v), (x',v') \in \T^d \times \S^{d-1}$ and all $u \in B_{\eps'}(0)$, 
\begin{align}
& \PP \left( (u_t,x_t) \in B_{\eps}(0) \times B_{\eps}(x') | (u_0,x_0) = (u,x) \right) > 0 \\ 
& \PP \left( (u_t,x_t,v_t) \in B_{\eps}(0) \times B_{\eps}(x') \times B_\eps(v') | (u_0,x_0,v_0) = (u,x,v)\right) > 0 \\ 
& \PP \left( (u_t,x_t,\check{v}_t) \in B_{\eps}(0) \times B_{\eps}(x') \times B_\eps(v') | (u_0,x_0,v_0) = (u,x,v) \right) > 0. 
\end{align} 
\end{lemma}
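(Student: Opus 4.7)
The strategy is to combine the deterministic controllability from Lemma \ref{lem:Ctrluxv} with a Girsanov (Cameron--Martin) shift to transfer the control result into positivity for the stochastic system, together with a stability estimate absorbing the small perturbation of the initial datum away from $0$.

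First, I would extend Lemma \ref{lem:Ctrluxv} from time $1$ to any fixed $t>0$: the construction there uses smooth bump functions $f_a,f_b,f_v$ with prescribed integrals on disjoint subintervals, and rescaling time to $[0,t]$ at the cost of multiplying the amplitude by a factor of order $1/t$ yields a smooth control $g = g_{t,x,v,x',v'} \in C^\infty([0,t];\Wbf)$ that drives \eqref{eq:ctrl} from $(0,x,v)$ to $(0,x',v')$ with frequencies supported in $|k|_\infty \leq 1$. Because the construction depends continuously on $(x,v,x',v')$ and the latter varies in the compact set $(\T^d\times \S^{d-1})^2$, the norm $\|g\|_{L^\infty([0,t];\Wbf)}$ is bounded uniformly in the endpoints. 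Next, let $\bar u$ solve the same controlled PDE but with initial datum $u \in B_{\epsilon'}(0) \subset \Hbf$. A standard continuous-dependence estimate for the regularized Navier--Stokes system (based on energy/enstrophy-type inequalities for the difference $\bar u - u^0$, where $u^0$ is the reference trajectory from $0$) shows that for $\epsilon'$ sufficiently small, $\bar u_s$ stays within $\epsilon/4$ of $u^0_s$ in $\Hbf$ on $[0,t]$, uniformly in the endpoints. Since $\sigma > d/2 + 2$, $\Hbf$ embeds into $C^1$ and this $\Hbf$-closeness propagates to closeness of the Lagrangian trajectory $\bar x_s$ in $\T^d$ and the projective trajectory $\bar v_s$ in $\S^{d-1}$ via Gr\"onwall applied to \eqref{eq:xctrl}--\eqref{eq:vctrl}. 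In particular, $(\bar u_t, \bar x_t, \bar v_t)$ lies in $B_{\epsilon/2}(0) \times B_{\epsilon/2}(x') \times B_{\epsilon/2}(v')$.

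Finally, I would apply the Girsanov theorem with the Cameron--Martin shift $s \mapsto \int_0^s g(r)\,dr$. Since $g$ is bounded, Novikov's condition trivially gives an equivalent probability measure $\tilde{\PP}$ under which $\tilde W_s := W_s - \int_0^s g(r)\,dr$ is a cylindrical Wiener process on $\Wbf$. Under $\tilde{\PP}$, the stochastic process $(u_s)$ (started from $u$) has the same law as the SPDE solution
\[
d\tilde u_s = (-B(\tilde u,\tilde u) - A\tilde u + Qg(s))\,ds + Q\,dW_s, \qquad \tilde u_0 = u,
\]
driven by the original Wiener process $W$. By standard SPDE stability, $\|\tilde u - \bar u\|_{C([0,t];\Hbf)}$ is small with positive $\PP$-probability (e.g.\ it is an a.s.\ continuous random variable vanishing when the noise is switched off). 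Combining with the deterministic estimate above yields $\tilde{\PP}((u_t,x_t,v_t) \in B_\epsilon(0)\times B_\epsilon(x')\times B_\epsilon(v')) > 0$, and equivalence $\PP \sim \tilde{\PP}$ transfers this to $\PP$. The claim for $(u_t,x_t)$ follows by marginalization, and the $(u_t,x_t,\check v_t)$ case is identical since Lemma \ref{lem:Ctrluxv} supplies the same controllability for $\check v_t$.

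The main obstacle I anticipate is the uniformity of the stability estimates: the control $g$ may have $L^\infty$ norm of order $1/t$, so the Gr\"onwall constants in both the deterministic comparison $\bar u$ vs.\ $u^0$ and the stochastic comparison $\tilde u$ vs.\ $\bar u$ grow with $1/t$. One has to verify that $\epsilon'$ can be chosen small enough (depending on $t,\epsilon$ but independent of the endpoints) to absorb this, which is the most delicate bookkeeping step, and likewise one must justify positive probability of the Gaussian-type event for $\tilde u - \bar u$ uniformly in the endpoints.
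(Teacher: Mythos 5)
Your proposal is correct and reaches the same conclusion, but it takes a detour that the paper avoids. The paper's argument is a single ``support theorem'' step: by Lemma~\ref{lem:StochConv} and the full support of Wiener measure, the stochastic convolution $\Gamma_t$ is, with positive probability, $\Hbf$-close (uniformly on $[0,t]$) to the \emph{deterministic} convolution $\int_0^t e^{-(t-s)A}Qg_s\,ds$ driven by the control of Lemma~\ref{lem:Ctrluxv}; a generalized Gr\"onwall estimate in the mild formulation \eqref{eq:Mild}, combined with $\|u_0\|_\Hbf < \eps'$, then bounds $\|u_t - u^c_t\|_{L^\infty(0,t;\Hbf)}$ and, via Sobolev embedding and Gr\"onwall on \eqref{eq:xctrl}--\eqref{eq:vctrl}, the particle and projective trajectories. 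You instead pass through a Girsanov/Cameron--Martin change of measure to convert the problem into one about the \emph{controlled} SPDE, and then argue positive probability for the controlled SPDE to be close to the controlled ODE. But that last step is exactly the support-of-Wiener-measure argument you were trying to avoid (``an a.s.\ continuous random variable vanishing when the noise is switched off''), so the Girsanov step buys nothing: you end up invoking the same ingredient inside the shifted measure and then transferring back. The measure change is not wrong, just extraneous. A further, minor point: you split the stability into two comparisons ($\bar u$ vs.\ $u^0$, then $\tilde u$ vs.\ $\bar u$), whereas the paper absorbs both the small initial datum $u_0 \in B_{\eps'}(0)$ and the small noise perturbation in a single Gr\"onwall step on the difference $u_t - u^c_t$; this matters because the delicate regularity bookkeeping the paper flags (using $\sigma$ close to the top of the admissible range so that the Gr\"onwall constants are controlled) is what makes the argument close, and doing it twice compounds the constants. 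On the positive side, you explicitly address the general $t > 0$ by rescaling the control, which the paper glosses over, and you correctly identify both the uniformity-over-endpoints issue (handled by compactness of $(\T^d \times \S^{d-1})^2$ and smooth dependence of $g$) and the $1/t$ blow-up of the control amplitude (harmless here since $t$ is fixed).
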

\begin{proof} 
Such non-degeneracy properties normally follow from standard perturbation arguments. 
However, one must be somewhat careful with the regularity, as we require $\sigma \in (\alpha-2(d-1),\alpha-\frac{d}{2})$ (i.e. close to the highest available regularity). 
Let us treat the $(u_t,x_t)$ process; the $(u_t,x_t,v_t)$ and $(u_t,x_t,\check{v}_t)$ processes are the same. 
Let $Qg$ be a control given as in Lemma \ref{lem:Ctrluxv} corresponding to the desired endpoints $x,x'$.
Let $u^c_t$ be the controlled solution from Lemma \ref{lem:Ctrluxv}. 
The first step is to prove that for all $\eps$, there holds
\begin{align}
\PP\left(\norm{u_t - u_t^c}_{L^\infty_t (0,1;\Hbf)} \lesssim \epsilon \right) > 0. \label{ineq:utuctctrl}
\end{align}   
Note that
the control is built from only $\Pi_{\leq 1} Q g$.
By the regularity of the stochastic convolution (Lemma \ref{lem:StochConv}) and positivity of the Wiener measure, $\forall \eps > 0$,
\begin{equation}\label{eq:AprxEvent}
\PP\left( \sup_{t \in (0,1)} \norm{\Gamma_t - \int_0^t e^{-(t-s)A} Qg_s \ds}_{L^\infty_t(0,1;\Hbf)} < \eps \right) > 0. 
\end{equation}    

Let $u_t$ be a solution to the stochastic Navier-Stokes with a sample path $\omega$ such that the event in \eqref{eq:AprxEvent} holds.
Then from the mild form 
\begin{align}
u_t -u^c_t = e^{-t A} u_0 + \int_0^t e^{-(t-s)A} \left( B(u_s,u_s) - B(u_s^c, u_s^c) \right) \ds + \Gamma_t - \int_0^t e^{-(t-s)A} Qg_s \ds
\end{align}
(actually by our choice of control $B(u_s^c,u_s^c) = 0$). 
By a generalized Gr{\"o}nwall's inequality [Lemma A.2, \cite{Kruse}] and parabolic smoothing, we have that 
\[
\norm{u_t - u_t^c}_{L^\infty(0,1;\Hbf)} \leq K'\epsilon,
\]
for a universal constant $K'$ depending only on $\sigma$, $\alpha$ (provided that $\norm{u_0}_{\Hbf} \leq \eps$). 
Therefore, we have \eqref{ineq:utuctctrl}. 
For the $x_t$ process, we similarly let $x_t$ and $x_t^c$ be the trajectories associated with the controlled system and that of the sample path $\omega$ (respectively). 
Then, (viewing $x_t,x^c_t$ as elements in $\Real^d$), 
\begin{align}
\frac{d}{dt}(x_t^c -x_t) = u_t^c(x^c_t) - u_t(x_t) = \left(u^c_t(x^c_t) - u^c_t(x_t)\right) + \left(u^c_t(x_t) - u_t(x_t)\right).  
\end{align}
We then obtain by the stability of the $(u_t)$ process (by potentially adjusting $K'$ and using $\sigma > \frac{d}{2}+1$ to apply Sobolev embedding to $\grad u$), 
\begin{align}
\PP\left( \set{\norm{u_1}_{\Hbf} \leq K'\epsilon} \bigcap \set{ d(x_1,x') < K'\eps } \right) > 0. 
\end{align}
The desired non-degeneracy for the Markov transition kernel then follows. 
\end{proof}

\begin{proof}[\textbf{Proof of Proposition \ref{prop:UniErg}}]
We prove this in the case of $(u_t,x_t)$; the processes including $P^{d-1}$ are the same. 
First, we verify irreducibility of stationary measures of the $(u_t)$ process in $H^\sigma$. 
In the case $L^2$ this is well-known; see e.g.  \cite{E2001-lg}. 
This can be proved by observing that there if there were no forcing we have, 
\begin{align}
\frac{d}{dt}\norm{u_t}_{L^2}^2 \leq -\norm{\grad u_t}_{L^2}^2 \lesssim -\norm{u_t}_{L^2}^2.
\end{align}
At the same time, in the absence of forcing, standard energy estimates give the uniform bound with $\delta > 0$, $\norm{u_t}_{H^{\sigma+\delta}} \lesssim_{\delta} \norm{u_0}_{H^{\sigma+\delta}}$ with an implicit constant that is \emph{independent of time}. 
Hence, Sobolev interpolation gives $\norm{u_t}_{H^\sigma} \lesssim \norm{u_0}_{H^{\sigma+\delta}} e^{-ct}$, for some constant $c$ depending only on $\sigma,\delta$.  

Let $\tilde\mu$ be an arbitrary stationary measure supported on $\Hbf \times \T^d$. 
By the parabolic smoothing (see e.g. \eqref{ineq:locRegu}) and stationarity, $\tilde\mu$ is also supported on $H^{\sigma+\delta}$ for $0<\delta < \alpha - \frac{d}{2} - \sigma$. 
Therefore, there exists a $C > 0$ such that 
\begin{align}
\tilde\mu ( \set{\norm{u}_{H^{\sigma+\delta}} \leq C }\times \T^d) > \frac{1}{2}. 
\end{align} 
Denote the set $\mathcal{B} = \set{u \in \Hbf: \norm{u}_{H^{\sigma+\delta}} \leq C }\times \T^d \subset \Hbf \times \T^d$. 
The stability argument applied in Lemma \ref{lem:MarkovTrans} (with $g \equiv 0$) 
gives the desired uniform decay:  for all $\gamma$, there exists a $T_\gamma$ such that for all $(u,x) \in \mathcal{B}$,  
\begin{align}
\PP\left( (u_{T_\gamma},x_{T_\gamma}) \in B_\gamma(0) \times B_\gamma(x^\prime) | (u_0,x_0) = (u,x) \right) > 0.    
\end{align}
Next, it follows from Lemma \ref{lem:MarkovTrans} that for $\gamma'$ sufficiently small, there exists a $\gamma$ (depending only $\gamma'$) such that for any $x' \in \T^d$, and all $(u,x) \in \mathcal{B}$
\begin{align}
\PP\left( (u_{T_\gamma+1},x_{T_\gamma+1}) \in B_{\gamma'}(0) \times B_{\gamma'}(x')  | (u_0,x_0) = (u,x) \right) > 0.  
\end{align}
Since this implies that
\[\tilde\mu(B_{\gamma'}(0) \times B_{\gamma'}(x')) \geq \int_{\mathcal{B}} P_{T_{\gamma}+1}( (u,x), B_{\gamma'}(0) \times B_{\gamma'}(x')) \tilde\mu(\dee u,\dx) > 0,
\] 
it follows that $(0,x')$ is in the support of the stationary measure. 
\end{proof}

Next, in order to complete the proof of Theorem \ref{thm:Lyap} in the case of Systems \ref{sys:NSE}--\ref{sys:3DNSE}, it suffices to prove the following, which shows that arbitrarily large gradient growth can be obtained on the unit time interval. 
\begin{proposition} \label{prop:Actrl}
For all $M > 0$ and $\eps > 0$, 
\begin{align}
\PP\left( (u_1,x_1,A_1) \in B_\eps(0) \times B_\eps(0) \times \set{A \in SL_{d}(\Real) : \abs{A} > M} | (u_0,x_0,A_0) = (0,0,\Id)  \right) > 0.  \label{ref:Q1ctrl}
\end{align}	
Together with Lemma \ref{lem:MarkovTrans}, this implies that Systems \ref{sys:NSE}--\ref{sys:3DNSE} satisfy Definition \ref{defn:condCprimeRDS} and hence Proposition \ref{prop:approxControlRuleOut} applies and the proof of Theorem \ref{thm:Lyap} is completed. 
\end{proposition}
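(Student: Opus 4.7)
The plan is to construct a smooth deterministic control $Qg(t)$ on $[0,1]$ whose associated deterministic solution $u^c_t$ has the origin as a hyperbolic stagnation point for every $t \in [0,1]$. With such a control, the deterministic trajectory $x^c_t$ stays pinned at $0$ while the deterministic Jacobian $A^c_t$ grows exponentially at a rate we can make arbitrarily large. A stability argument in the spirit of Lemma \ref{lem:MarkovTrans} then transfers this behavior to the stochastic system with positive probability.

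The key building block is the cellular flow
\[
\tilde u(y) := (\sin y_2, \sin y_1) \text{ in 2D}, \qquad \tilde u(y) := (\sin y_2, \sin y_1, 0) \text{ in 3D}.
\]
One verifies directly that: (i) $\tilde u(0) = 0$, so a trajectory starting at the origin stays there; (ii) $\nabla \tilde u(0)$ has $+1$ as an eigenvalue (in both dimensions the upper-left $2 \times 2$ block is $\bigl(\begin{smallmatrix} 0 & 1 \\ 1 & 0 \end{smallmatrix}\bigr)$), so the origin is hyperbolic; (iii) $\tilde u \cdot \nabla \tilde u = -\nabla(\cos y_1 \cos y_2)$ is a pure gradient, so $B(\tilde u, \tilde u) = 0$, i.e., $\tilde u$ is a stationary Euler solution modulo pressure; and (iv) every Fourier mode of $\tilde u$ has $|k|_\infty = 1$, so $\tilde u$ is an eigenfunction of the dissipation $A$ with eigenvalue $\mu_\star$ (equal to $\nu$ in 2D, $\nu + \eta$ in 3D), and $\tilde u \in \Hbf_{\mathcal{K}}$ by Assumption \ref{a:lowms}.

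Fix $M, \epsilon > 0$. Choose $\Lambda > \log(2M)$ and a nonnegative $f \in C^\infty_c((0,1))$ with $\int_0^1 f(s)\,\ds = \Lambda$, and set $Qg(t) := (\dot f(t) + \mu_\star f(t))\,\tilde u$, which lies in the range of $Q$ by (iv). Properties (iii)--(iv) imply that the deterministic solution with $u_0 = 0$ under this control is exactly $u^c_t = f(t)\,\tilde u$, so $u^c_0 = u^c_1 = 0$; property (i) gives $x^c_t \equiv 0$; and property (ii) gives
\[
A^c_1 = \exp\!\bigl(\Lambda\, \nabla \tilde u(0)\bigr), \qquad |A^c_1| = e^{\Lambda} > 2M.
\]
With this control in hand, the proof of Lemma \ref{lem:MarkovTrans} applies almost verbatim: by Lemma \ref{lem:StochConv} and positivity of Wiener measure under admissible translation, for any $\epsilon' > 0$ the event that $\sup_{t \in [0,1]} \|\Gamma_t - \int_0^t e^{-(t-s)A}Qg(s)\,\ds\|_{\Hbf} < \epsilon'$ has positive probability, and on this event the generalized Gr\"onwall argument in the mild formulation of Navier--Stokes (using $B(u^c, u^c) = 0$ to eliminate the leading nonlinear term in $u^c$) yields $\sup_{t \in [0,1]} \|u_t - u^c_t\|_{\Hbf} \leq C \epsilon'$ with $C = C(\Lambda, \sigma, \alpha)$. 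Since $\sigma > d/2 + 1$, Sobolev embedding passes this to uniform $C^1$-control on $u_t - u^c_t$, and then Gr\"onwall estimates for the ODEs $\dot x_t = u_t(x_t)$ and $\dot A_t = \nabla u_t(x_t) A_t$ give $|x_1| + |A_1 - A^c_1| \leq C'(\Lambda)\,\epsilon'$. Choosing $\epsilon'$ sufficiently small depending on $\Lambda, M, \epsilon$ yields \eqref{ref:Q1ctrl}.

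The only mildly delicate point is that the stability constant $C'(\Lambda)$ for the $A_t$ equation grows exponentially in $\Lambda$, because $A^c_t$ itself is exponentially large; but once $\Lambda$ is fixed to beat $M$, the constant is finite and we simply pick $\epsilon'$ correspondingly small. No iteration or bootstrap is required, and the argument is identical in dimensions $d = 2, 3$.
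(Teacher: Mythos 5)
Your proof is correct and follows essentially the same strategy as the paper: take the cellular flow with hyperbolic point at the origin, use that it is a steady Euler solution and an eigenfunction of $A$ to reduce the control problem to the linear one, integrate the resulting Jacobian ODE explicitly, and then run the stability argument of Lemma \ref{lem:MarkovTrans}. The paper's own proof is terser (it simply writes the flow $f_+\bigl(\sin(y_2-b),\sin(y_1-a)\bigr)^\top$ with $\int_0^1 f_+ = \log M$ and refers to Lemmas \ref{lem:Ctrluxv}, \ref{lem:MarkovTrans}), but the content — including the 3D variant via a $z$-independent 2D cellular flow — is the same as yours.
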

\begin{proof} 
The control step is proved as in Lemma \ref{lem:Ctrluxv}, except now we apply the cellular flow translated so that the hyperbolic point is at the origin: 
\begin{align}
u(t) = f_{+}
\begin{pmatrix} 
\sin (y_2-b) \\ 
\sin (y_1-a)
\end{pmatrix}
\end{align}
with $\int_0^1 f_+(s) ds = \log M$. Then, set $g$ analogous to the choices in Lemma \ref{lem:Ctrluxv} (the size of $g$ now depends on $M$). 
The stability step proceeds as in Lemma \ref{lem:MarkovTrans}. 
\end{proof}

\begin{remark} \label{rmk:2Dstokes} 
All of the above controllability arguments also apply to the System \ref{sys:2DStokes} in $\T^2$ with only the condition: 
$\mathcal{K}$ symmetric and $(1,0),(0,1) \in \mathcal{K}$. 
This condition is not enough to guarantee that the $(u_t,x_t,A_t)$ process satisfies H\"ormander's condition. We can still verify Definition \ref{defn:condCprimeRDS} in this case, and hence it is sufficient to deduce Theorem \ref{thm:Lyap}. The claim in Remark \ref{rmk:StokesDegn} follows. 
Further, our arguments on Navier-Stokes similarly apply to the System \ref{sys:2DStokes} in $\T^d$ with infinitely many modes forced, under Assumption \ref{a:Highs}.  
\end{remark}

\begin{remark} \label{rmk:HighOnly}
For Systems \ref{sys:NSE}--\ref{sys:3DNSE}, using higher frequency shear flows and cellular flows, one can make all the same arguments in this section if we only take Assumption \ref{a:Highs}. Hence, by also Remark \ref{rmk:NorrisMal}, we can prove Theorem \ref{thm:Lyap} (and all our other results) for Systems \ref{sys:NSE}--\ref{sys:3DNSE} using \emph{only} Assumption \ref{a:Highs}. 
\end{remark}

\section{Applications to scalar turbulence}\label{sec:turbulence}

In this section we prove Theorem \ref{thm:turb}. 
First, we prove the weak anomalous dissipation property \eqref{eq:WAD}, Theorem \ref{thm:turb}, part (i). 
For this, we adapt the compactness-contradiction method of \cite{BCZGH}. 
Hence, it is easiest to begin by defining $f^\kappa = \sqrt{\kappa}g$ as in \eqref{eq:frenorms} and recall the re-scaled balance relation \eqref{eq:BalRenorm}.  
Next, we are interested in studying the limits of stationary measures $\bar{\mu}^\kappa$ to the problem \eqref{eq:frenorms} coupled with any of Systems \ref{sys:2DStokes}--\ref{sys:3DNSE}. 
It is standard that this (one-way) coupled system is well-posed in the sense of Proposition \ref{prop:WP} and defines an $\mathcal{F}_t$-adapted, Feller Markov process; see e.g. \cite{KS}. 
Similarly, the Krylov-Bogoliubov method implies the following: 
\begin{lemma} \label{lem:KryBog}
For all $\kappa > 0$, $\exists$ a stationary probability measure $\bar{\mu}^{\kappa}$ for the Markov process $(u_t,f_t^\kappa)$ supported on $\Hbf \times H^1$. 
Furthermore, the measure satisfies the following for all $p \geq 2$ (with implicit constant independent of $\kappa$),
\begin{align}\label{eq:BalRenorm-restate}
\int_{\Hbf \times H^1} \norm{\grad f}_{L^2}^2 d \bar{\mu}^\kappa(u,f) & = \bar{\eps} \\ 
\int_{\Hbf \times H^1} \norm{f}_{L^2}^p d \bar{\mu}^\kappa(u,f) & \lesssim_p \bar{\eps}^{p/2}. 
\end{align}
\end{lemma}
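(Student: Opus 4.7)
The plan is the standard Krylov-Bogoliubov construction, combined with It\^o-based \emph{a priori} bounds that yield both the existence of $\bar\mu^\kappa$ and the claimed balance and moment identities. Since $(u_t, f_t^\kappa)$ is a Feller Markov process on $\Hbf \times L^2$ (cf.\ the setup following \eqref{eq:Sclkap}), it suffices to show that for the deterministic initial datum $(u_0, 0)$, the time-averaged laws
\[
R_T := \frac{1}{T}\int_0^T \bar P_t^*\,\delta_{(u_0,0)}\,\dt
\]
are tight in $\Hbf \times L^2$. Tightness in the $\Hbf$-factor is immediate from parabolic smoothing and moment bounds for $(u_t)$ already established (Proposition \ref{prop:WP}, together with the compact embedding $H^{\sigma+\delta}\hookrightarrow H^{\sigma}$). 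For the $L^2$-factor, the plan is to use the compact embedding $H^1\hookrightarrow L^2$ combined with a uniform-in-$T$ bound on $T^{-1}\int_0^T \E\|\grad f_s^\kappa\|_{L^2}^2\,\ds$.

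This dissipation bound comes from applying It\^o's formula to $\|f_t^\kappa\|_{L^2}^2$. The transport term vanishes by incompressibility of $u$, leaving
\[
d\|f_t^\kappa\|_{L^2}^2 = -2\kappa\|\grad f_t^\kappa\|_{L^2}^2\,\dt + 2\sqrt{\kappa}\,\langle f_t^\kappa, \widetilde Q\,d\widetilde W_t\rangle + 2\kappa\bar\eps\,\dt,
\]
where the constant appearing in front of $\dt$ is $\|\widetilde Q\|_{HS}^2 = 2\bar\eps$ in the normalization of the paper. Taking expectations, dividing by $T$, and rearranging gives
\[
\frac{1}{T}\int_0^T \E\|\grad f_s^\kappa\|_{L^2}^2 \, \ds = \bar\eps - \frac{1}{2\kappa T}\E\|f_T^\kappa\|_{L^2}^2 \leq \bar\eps,
\]
which supplies the needed tightness. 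A subsequential weak-$\star$ limit $\bar\mu^\kappa$ of $\{R_T\}$ is then stationary, and the identity $\int\|\grad f\|_{L^2}^2\,d\bar\mu^\kappa = \bar\eps$ follows from the same It\^o computation taken under $\bar\mu^\kappa$, using that $\frac{d}{dt}\E_{\bar\mu^\kappa}\|f_t^\kappa\|_{L^2}^2 \equiv 0$ by stationarity.

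For the $L^p$ moment bound, I apply It\^o to $\|f_t^\kappa\|_{L^2}^{2m}$ for integer $m\geq 1$. Using stationarity, this yields
\[
2m\,\E_{\bar\mu^\kappa}\bigl[\|f\|_{L^2}^{2m-2}\|\grad f\|_{L^2}^2\bigr] = 2m\bar\eps\, \E_{\bar\mu^\kappa}\|f\|_{L^2}^{2m-2} + 4m(m-1)\,\E_{\bar\mu^\kappa}\Bigl[\|f\|_{L^2}^{2m-4}\sum_k \tilde q_k^2 \langle f, e_k\rangle^2\Bigr].
\]
The cross term is controlled by $\sum_k \tilde q_k^2 \langle f, e_k\rangle^2 \lesssim \bar\eps \|f\|_{L^2}^2$, which is immediate from Cauchy-Schwarz and the Hilbert-Schmidt assumption on $\widetilde Q$. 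Combined with the Poincar\'e inequality $\|\grad f\|_{L^2}^2 \geq c_P \|f\|_{L^2}^2$ from the mean-zero constraint, one obtains the recursion $\E_{\bar\mu^\kappa}\|f\|_{L^2}^{2m} \leq C_m \bar\eps \, \E_{\bar\mu^\kappa}\|f\|_{L^2}^{2m-2}$. Induction in $m$ starting from $m=1$ gives $\E_{\bar\mu^\kappa}\|f\|_{L^2}^{2m} \lesssim_m \bar\eps^m$, and interpolating via Jensen's inequality yields the claim for all $p\geq 2$.

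The main technical obstacle is the rigorous justification of It\^o's formula for $\|f^\kappa\|_{L^2}^2$ (and its higher powers), since $f^\kappa$ is \emph{a priori} only a mild solution. The plan is to handle this via a spectral Galerkin truncation: prove the balance identities for the truncated processes $f^{\kappa,N}$ (for which It\^o applies in finite dimensions), then pass to $N\to\infty$ using uniform-in-$N$ bounds and stability of mild solutions as in \cite{DPZ96}. A secondary point is that the Krylov-Bogoliubov argument produces $\bar\mu^\kappa$ supported \emph{a priori} only on $\Hbf\times L^2$; the upgraded support statement $\bar\mu^\kappa(\Hbf\times H^1) = 1$ follows \emph{a posteriori} from the finiteness of $\int\|\grad f\|_{L^2}^2\,d\bar\mu^\kappa = \bar\eps$.
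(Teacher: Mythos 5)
Your proposal is correct and follows the standard Krylov--Bogoliubov plus It\^o-balance route that the paper itself invokes for Lemma~\ref{lem:KryBog} (but does not spell out): Ces\`aro-average tightness via the energy estimate, the It\^o identity for $\|f^\kappa\|_{L^2}^2$ giving the exact balance, powers $\|f^\kappa\|_{L^2}^{2m}$ plus Poincar\'e for the moment recursion, and Galerkin regularization to justify It\^o for mild solutions. The one small slip is arithmetic: the quadratic-variation contribution in your It\^o identity for $\|f\|_{L^2}^{2m}$ should carry the coefficient $2m(m-1)$ rather than $4m(m-1)$, but this constant does not affect the recursion or the conclusion.
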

The following lemma is a straightforward adaptation of arguments in \cite{BCZGH,KS,K04}. Unlike in \cite{BCZGH}, the velocity field is not bounded a.s., however, the situation is not significantly different (using Proposition \ref{prop:WP}); indeed the original arguments of Kuksin \cite{K04} were specifically on the Navier-Stokes equations (see also \cite{KS,KS04}). 
\begin{lemma} \label{lem:compact}
Let $\{\bar{\mu}^\kappa\}_{\kappa > 0}$ be a family of stationary probability measure of the problem \eqref{eq:frenorms} as in Lemma \ref{lem:KryBog}, indexed by the diffusivity parameter $\kappa$, and $(u_t)$ given by one of Systems \ref{sys:2DStokes}--\ref{sys:3DNSE}. 
Then, the measures $\{\bar{\mu}^\kappa\}_{\kappa > 0}$ are tight on $\Hbf\times L^2$  as $\kappa \to 0$ and the subsequential weak limit $\bar{\mu}^0$ is a stationary measure of the inviscid problem \eqref{eq:SclNokap} with $\mu(A) = \bar{\mu}^0(A\times H^1)$ and $\bar{\mu}^0$ satisfies 
\begin{align}
\int_{\Hbf \times H^1} \norm{\grad f}^2_{L^2} d\bar{\mu}^0(u,f) & \leq \bar{\eps} \\
\int_{\Hbf \times H^1} \norm{f}_{L^2}^p d \bar{\mu}^0(u,f) & \lesssim_p \bar{\eps}^{p/2}. \label{ineq:Mompbd}
\end{align} 
\end{lemma}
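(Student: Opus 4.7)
The plan is to adapt the compactness-contradiction scheme developed in \cite{BCZGH, K04, KS} to the present one-way coupled setup. I would proceed in three steps.

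\emph{Tightness and extraction.} The balance \eqref{eq:BalRenorm-restate} gives $\int \|\grad f\|_{L^2}^2 \, d\bar{\mu}^\kappa(u,f) = \bar{\eps}$ uniformly in $\kappa > 0$, so by Chebyshev the $f$-marginals of $\bar{\mu}^\kappa$ are concentrated in bounded subsets of $H^1$ modulo arbitrarily small mass, and the compact embedding $H^1 \hookrightarrow L^2$ converts this to tightness on $L^2$. Since the velocity equation is autonomous, the $u$-marginal of every $\bar{\mu}^\kappa$ coincides with the unique stationary measure $\mu$ on $\Hbf$ and is therefore tight on $\Hbf$; this immediately gives the identity $\mu(A) = \bar{\mu}^0(A \times H^1)$ for any subsequential limit. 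Prokhorov then produces a subsequence $\bar{\mu}^{\kappa_n} \rightharpoonup \bar{\mu}^0$ on $\Hbf \times L^2$, and the inequalities $\int \|\grad f\|_{L^2}^2 \, d\bar{\mu}^0 \leq \bar{\eps}$ and $\int \|f\|_{L^2}^p \, d\bar{\mu}^0 \lesssim_p \bar{\eps}^{p/2}$ follow from the uniform bounds of Lemma \ref{lem:KryBog} together with weak lower semicontinuity (Fatou / Portmanteau).

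\emph{Stationarity of the limit.} I would realize $\bar{\mu}^0$ as the time-$t$ marginal of a stationary process solving \eqref{eq:SclNokap}. Running the stationary solutions $(u_t, f_t^\kappa)_{t \in [0,T]}$ with initial law $\bar{\mu}^\kappa$, I would show tightness of their laws on the path space $C([0,T]; \Hbf_{\mathrm{weak}}) \times \bigl( L^2([0,T]; L^2) \cap C([0,T]; H^{-s}) \bigr)$ for some $s > 0$, using the uniform $L^2_t H^1_x$-bound on $f^\kappa$ from \eqref{eq:BalRenorm-restate}, uniform $u$-estimates from Proposition \ref{prop:WP}, and an Aubin-Lions argument applied to $\partial_t f^\kappa = - u_t \cdot \grad f^\kappa + \kappa \Delta f^\kappa + \sqrt{\kappa}\, \widetilde{Q}\, \dot{\widetilde{W}}_t$, whose right-hand side is uniformly controlled in suitable negative-regularity spaces. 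A Skorokhod representation then yields a.s.-convergent versions on an auxiliary probability space, and one passes to the limit in the martingale problem formulation tested against $\phi \in C^\infty(\T^d)$: the stochastic forcing tends to zero in $L^2(\Omega; C([0,T]; H^{-s}))$ since it carries a prefactor $\sqrt{\kappa}$, the dissipation $\kappa \Delta f^\kappa$ vanishes distributionally, and the transport term is handled by combining strong $L^2$-convergence of $f^{\kappa_n}$ with the uniform $H^\sigma$-regularity of $u$ (recall $\sigma > d/2 + 1$). Since each approximating process has time-$t$ marginal $\bar{\mu}^\kappa$, the limit process has time-$t$ marginal $\bar{\mu}^0$, and stationarity is preserved in the limit.

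\emph{Main obstacle.} The principal technical point is executing the passage to the limit in the bilinear transport term $u_t \cdot \grad f_t^\kappa$ in a way consistent with the stationary framework: the measures $\bar{\mu}^\kappa$ control $f^\kappa$ only up to one derivative in space, with no additional diffusive smoothing available as $\kappa \to 0$, so all strong compactness in $f$ must be squeezed out of the scalar equation itself via Aubin-Lions. A secondary, essentially cosmetic nuisance is verifying that the Skorokhod realizations still satisfy the stochastic transport equation in martingale-problem form; this is handled exactly as in \cite{K04, KS} by checking convergence of both the drift and the quadratic variation against smooth test functions. Everything else --- the moment bounds, the marginal identity, and the fact that $\bar{\mu}^0$ is supported on $\Hbf \times H^1$ --- reduces to Fatou and the Portmanteau theorem.
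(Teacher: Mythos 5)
Your argument fills in exactly the outline the paper gives and then defers to \cite{BCZGH,K04,KS}: tightness from the uniform balance \eqref{eq:BalRenorm-restate} plus the fixed $u$-marginal and Prokhorov; the moment bounds by lower semicontinuity/Portmanteau; and stationarity of the limit via pathwise compactness, Skorokhod representation, and passage to the limit in a martingale-problem formulation, with the diffusion term killed by $\kappa\to 0$ and the noise term killed by the $\sqrt{\kappa}$ prefactor. This is the same Kuksin-style argument the paper references, so the routes coincide; the one small stylistic difference is that you invoke uniqueness of $\mu$ to pin the $u$-marginal, whereas the paper's phrasing ("the corresponding balance on $u$") suggests using the energy estimate on $u$ directly for tightness, but both are valid and the marginal identity is forced in either case by the one-way coupling.
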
 
\begin{proof} 
Tightness follows from \eqref{eq:BalRenorm-restate} (and the corresponding balance on $u$) and Prokorov's theorem. 
The estimates follow from \eqref{eq:BalRenorm-restate} and lower semicontinuity. 
Finally, that $\bar{\mu}^0$ is a stationary measure of the inviscid problem \eqref{eq:SclNokap} follows as in the corresponding statements in \cite{BCZGH,K04} and is omitted for the sake of brevity. 
\end{proof}
Analogous to the arguments in \cite{BCZGH}, we deduce that necessarily $\bar{\mu}^0 = \mu \times \delta_0$ via Theorem \ref{thm:ExpGrwth}.
\begin{corollary} \label{cor:ufblow}
The only stationary measure for the process $(u_t,f_t^0)$ is the measure $\mu \times \delta_0$.
\end{corollary}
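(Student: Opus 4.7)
The plan is to argue by contradiction, combining the uniform $H^1$-moment bound from Lemma \ref{lem:compact} with the pointwise exponential blowup of $\|\nabla f_t\|_{L^2}$ supplied by Theorem \ref{thm:ExpGrwth}.

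Let $\bar{\mu}^0$ be any stationary measure for the inviscid process $(u_t, f_t^0)$ obtained as a subsequential weak limit in Lemma \ref{lem:compact}. Three facts from that lemma will drive the argument: (i) the marginal of $\bar{\mu}^0$ on the $u$-factor is the unique stationary measure $\mu$ for $(u_t)$, so $u_0 \in \supp \mu$ for $\bar{\mu}^0$-a.e.\ $(u_0, f_0)$; (ii) $\bar{\mu}^0$ is supported on $\Hbf \times H^1$ and satisfies the balance bound $\int \|\nabla f\|_{L^2}^2 \, d\bar{\mu}^0(u,f) \leq \bar{\eps} < \infty$; and (iii) $\bar{\mu}^0$ is concentrated on mean-zero $f$, since the inviscid advection equation preserves $\int f \, \dx$ and each approximating $\bar{\mu}^\kappa$ is supported on mean-zero functions.

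Set $A := \{(u, f) \in \Hbf \times H^1 : f \neq 0\}$ and suppose for contradiction that $\bar{\mu}^0(A) > 0$. For every $(u_0, f_0)$ in the full-$\bar{\mu}^0$-measure subset $A \cap \bigl(\supp \mu \times \{f \in H^1 : \int f\,\dx = 0\}\bigr)$, Theorem \ref{thm:ExpGrwth} (applied with $p = 2$) produces an almost surely positive random constant $\delta(u_0, f_0, \omega)$ such that $\|\nabla f_t\|_{L^2} \geq \delta \, e^{(\lambda - \eta)t}$ with probability one; in particular $\|\nabla f_t\|_{L^2}^2 \to +\infty$ almost surely for each such fixed $(u_0, f_0)$. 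Writing $\EE^{\bar{\mu}^0}$ for the expectation with respect to the stationary process on path space with initial distribution $\bar{\mu}^0$, Fatou's lemma applied on the product of initial-data space and Wiener path space yields
\[
\infty \cdot \bar{\mu}^0(A) \;=\; \EE^{\bar{\mu}^0}\!\left[ \liminf_{t \to \infty} \|\nabla f_t\|_{L^2}^2 \, \1_A(u_0, f_0) \right] \;\leq\; \liminf_{t \to \infty} \EE^{\bar{\mu}^0}\!\left[ \|\nabla f_t\|_{L^2}^2 \right].
\]
By stationarity of $\bar{\mu}^0$ under the Markov semigroup of $(u_t, f_t^0)$, the right-hand side equals $\int \|\nabla f\|_{L^2}^2 \, d\bar{\mu}^0(u,f) \leq \bar{\eps}$, which is the required contradiction. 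Hence $\bar{\mu}^0(A) = 0$, so $f = 0$ holds $\bar{\mu}^0$-almost surely; combined with (i) this forces $\bar{\mu}^0 = \mu \times \delta_0$.

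The main obstacle is just to make the Fatou step clean, keeping straight the roles of the (deterministic) initial data $(u_0, f_0)$ and the Wiener noise, since Theorem \ref{thm:ExpGrwth} only delivers $\P$-a.s.\ growth pointwise in the initial data. Once Lemma \ref{lem:compact} supplies the finite second moment $\int \|\nabla f\|_{L^2}^2 \, d\bar{\mu}^0 \leq \bar{\eps}$ and Theorem \ref{thm:ExpGrwth} supplies the fiberwise blowup of $\|\nabla f_t\|_{L^2}$, the remainder is a routine almost-sure-versus-moment contradiction and no further input (controllability, Feller regularity, or Malliavin estimates) is required.
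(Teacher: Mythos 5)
Your argument is correct and is essentially the paper's own proof: both exploit that stationarity forces $\EE^{\bar\mu^0}\|\nabla f_t\|_{L^2}^2$ to be constant (and finite, via the balance bound from Lemma~\ref{lem:compact}), while Theorem~\ref{thm:ExpGrwth} forces $\|\nabla f_t\|_{L^2}^2\to\infty$ almost surely on any positive-measure set of nonzero, mean-zero initial scalars, yielding a contradiction by Fatou. You simply make explicit two things the paper's one-paragraph proof leaves implicit — the Fatou step on the product of initial-data and Wiener-path space, and the observation that for a limit measure $\bar\mu^0$ the supports, mean-zero constraint, and finite $H^1$-moment needed to invoke Theorem~\ref{thm:ExpGrwth} all come for free from Lemma~\ref{lem:compact} — so there is no substantive difference in approach.
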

\begin{proof}
Let us use the notation $f_{t,u,f}$ to denote the scalar process $f_t^0$ associated with initial conditions $(u_0,f_0) = (u,f) \in \Hbf\times H^1$. Let $\bar \mu$ be any ergodic stationary measure for the process; by stationarity we have 
\[
\E \int_{\Hbf \times H^1} \bigg( \int_{\T^d} | \nabla f_{t,u,f}|^2 \, dx \bigg) d \bar \mu(u, f) = 
\int_{\Hbf \times H^1} \bigg( \int_{\T^d} | \nabla f|^2 dx \bigg) d \bar \mu(u, f)
\]
at all times $t \geq 0$. On the other hand, if $\bar \mu$ is not of the form $\mu \times \delta_0$
then by Theorem \ref{thm:ExpGrwth} there is a positive $\bar \mu$-measure set $\mathcal A \subset \Hbf \times H^1 \setminus \{ 0 \}$
with the property that for all $(u, f) \in \mathcal A$, we have 
$\E ( \int_{\T^d} | \nabla f_{t,u,f} |^2 \, dx ) \to \infty$ as $t \to \infty$.
This implies a contradiction. 
\end{proof} 

\begin{proof}[\textbf{Theorem \ref{thm:turb}, part (i)}]
 Follows from Lemma \ref{lem:compact} together with Corollary \ref{cor:ufblow} and \eqref{ineq:Mompbd} (with $p > 2$). 
\end{proof} 

Next, a variant of arguments in \cite{BCZPSW18} gives Yaglom's law \eqref{eq:Yaglom}. 

\begin{proof}[\textbf{Proof of Theorem \ref{thm:turb}, part (ii)}]
To adapt the arguments of \cite{BCZPSW18} the first step is to derive the analogue of the K\'arm\'an-Howarth-Monin relation \cite{deKarman1938,MoninYaglom,Frisch1995} for the passive scalar. In what follows $u$ and $g$ denote statistically stationary solutions to \eqref{eq:Sclkap}. Define the scalar two point correlation
\begin{align}
\mathfrak{G}(y) = \EE \fint_{\T^d} g(x) g(x+y) \dx 
\end{align}
and the vector
\begin{align}
\mathfrak{D}(y) = \EE \fint_{\T^d} \abs{\delta_{y} g(x)}^2  \delta_y u dx. 
\end{align}
Similarly, denote the two point covariance of the noise 
\begin{align}
\mathfrak{a}(y) = \frac{1}{2}\sum_{k \in \Z_0^d} \fint_{\T^d} \abs{\tilde{q}_k}^2 e_k(x) \otimes e_k(x+y) \dx,
\end{align}
Note that $\mathfrak{a}(0) = \bar{\eps}$. 
The KHM relation is the manifestation of the $L^2$ balance on the two point correlation $\mathfrak{G}$; it is significantly simpler for scalars than for the 3D Navier-Stokes equations. 
Hence, the proof is omitted for brevity; see \cite{BCZPSW18} for details. 
\begin{proposition}[Scalar KHM relation] \label{prop:KHM}
Let $(u_t,g_t)$ be a statistically stationary solution to \eqref{eq:Sclkap} coupled to one of Systems \ref{sys:2DStokes}--\ref{sys:3DNSE}. 
Then, for any $\eta = \eta(y)$ a smooth, compactly supported test function, there holds  
\begin{align}
\frac{1}{2} \int_{\Real^d} \grad\eta(y) \cdot \mathfrak{D}(y) \dy = 2\kappa \int_{\Real^d} \Delta \eta(y)\mathfrak{G}(y) \dy + 2 \int_{\Real^d} \eta(y) \mathfrak{a}(y) \dy. \label{eq:KHM}
\end{align}
\end{proposition}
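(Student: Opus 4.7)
The goal is to establish the pointwise-in-$y$ distributional identity
\begin{align}
\tfrac{1}{2}\grad_y\cdot \mathfrak{D}(y) + 2\kappa\,\Delta_y \mathfrak{G}(y) + 2\mathfrak{a}(y) = 0,
\end{align}
from which \eqref{eq:KHM} follows by pairing with $\eta$ and integrating by parts in $y$. I will derive this identity in two steps, both carried out on a mollified version of $(u,g)$ and then passed to the limit.

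First, I would apply It\^{o}'s formula to the two-point product $g_t(x)g_t(x+y)$ at fixed $x,y \in \T^d$, take $\EE$ to kill the martingale, average over $x\in\T^d$ against normalized Lebesgue measure, and exploit $\partial_t \mathfrak{G}(y) = 0$ from statistical stationarity. This yields
\begin{align}
0 = -T(y) + 2\kappa\,\Delta_y \mathfrak{G}(y) + 2\mathfrak{a}(y), \quad T(y) := \EE\!\fint_{\T^d} \!\big[g(x+y)u(x)\cdot \grad g(x) + g(x)u(x+y)\cdot\grad g(x+y)\big]\dx.
\end{align}
The diffusion term simplifies to $2\kappa\,\Delta_y \mathfrak{G}(y)$ after one integration by parts in $x$ and the identity $\Delta_x [g(x+y)] = \Delta_y[g(x+y)]$, while the It\^{o} covariation $\sum_{k\in\Z^d_0}|\tilde q_k|^2 e_k(x)e_k(x+y)$ averaged over $x\in\T^d$ yields precisely $2\mathfrak{a}(y)$.

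Second, I would prove the algebraic identity $T(y) = -\tfrac{1}{2}\grad_y\cdot\mathfrak{D}(y)$. Differentiating through $\mathfrak{D}(y) = \EE\fint (g(x+y)-g(x))^2 (u(x+y)-u(x))\dx$ and using $\grad_x\cdot u(x+y) = 0$ gives
\begin{align}
\grad_y\cdot \mathfrak{D}(y) = 2\,\EE\!\fint_{\T^d}\!\big(g(x+y)-g(x)\big)\grad g(x+y)\cdot \big(u(x+y) - u(x)\big)\dx.
\end{align}
Expanding the product yields four terms: two of them vanish after one integration by parts because they are proportional to $\fint \grad(g^2)\cdot u\,\dx$ (with $u$ evaluated at $x$ or $x+y$) together with $\grad\cdot u(x) = \grad\cdot u(x+y) = 0$; the remaining two cross terms, after a single integration by parts in $x$ using the same incompressibility, recombine into $-2T(y)$. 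Substituting into the stationarity identity above yields the desired distributional equation.

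The main technical point is justifying these manipulations at the prevailing regularity: the stationary measure produces $u\in H^\sigma$ and $g\in H^1$ a.s., which is slightly too rough for a pointwise-in-$x$ application of It\^{o}'s formula to $g(x)g(x+y)$. The standard remedy is to mollify $g$ and $u$ in $x$, carry out both steps for the smoothed process, and pass to the limit using the stationary bound $\kappa\EE\|g\|_{H^1}^2 = \bar{\eps}$ together with $\EE\|g\|_{L^2}^p \lesssim_p \bar{\eps}^{p/2}$ from Lemma \ref{lem:KryBog} and the moment bounds on $u$ from Proposition \ref{prop:WP}. Every term is at most quadratic in $g$ and involves at most one derivative, so the limiting argument is routine and closely parallels the treatment of the 3D Navier--Stokes K\'arm\'an--Howarth--Monin relation in \cite{BCZPSW18}.
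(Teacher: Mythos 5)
Your proof is correct, and it follows the same route that the paper explicitly defers to in \cite{BCZPSW18}: apply It\^o's formula to the two-point quantity, use statistical stationarity to kill $\partial_t\mathfrak{G}$, and identify the advective term with $-\tfrac12\nabla_y\cdot\mathfrak{D}$ via incompressibility. I verified the algebra: the two terms of the form $g(x+y)\nabla g(x+y)\cdot u$ (with $u$ at $x+y$ and at $x$ respectively) vanish since they equal $\tfrac12\nabla(g^2)(x+y)\cdot u$ integrated against a divergence-free field, and the cross terms $-g(x)\nabla g(x+y)\cdot u(x+y)$ and $+g(x)\nabla g(x+y)\cdot u(x)$ give $-T_2$ directly and $-T_1$ after one integration by parts (using $\nabla\cdot u=0$ in the product rule), producing $\nabla_y\cdot\mathfrak{D}(y) = -2T(y)$ as you claim. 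Combined with the stationarity identity $T(y) = 2\kappa\Delta_y\mathfrak{G}(y)+2\mathfrak{a}(y)$ and one integration by parts in $y$ against $\eta$ (two for the $\Delta_y\mathfrak{G}$ term), this yields \eqref{eq:KHM}. Your remark about mollifying in $x$ to justify the pointwise It\^o computation at the regularity $u\in H^\sigma$, $g\in H^1$ is the right technical device and matches the treatment in \cite{BCZPSW18}.
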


Define (suppressing the time-dependence as anyway, the time-dependence vanishes after expectations due to stationarity), 
\begin{align}
\bar{\mathfrak{D}}(\ell) = \EE \fint_{\T^d} \fint_{\S^{d-1}} |\delta_{\ell n} g|^2 \delta_{\ell n} u\cdot n \, \dee S(n) \dee x. 
\end{align}
Equipped with Proposition \ref{prop:KHM}, we may proceed as in \cite{BCZPSW18}  by testing \eqref{eq:KHM} with a radially symmetric test function $\eta(h) = \phi(\abs{h})$. 
Hence, we obtain the following ODE for $S$ in the weak form
\begin{align}
\partial_{\ell}\left(\ell^2 \bar{\mathfrak{D}} \right) = -\ell^2 \left( 4\kappa \bar{\mathfrak{G}}'' + 4\kappa \frac{d-1}{\ell} \bar{\mathfrak{G}}' + 4\bar{\mathfrak{a}}\right), \label{eq:ODE}
\end{align}

where we denote the spherically averaged  quantities
\begin{align}
\bar{\mathfrak{G}}(\ell) & = \fint_{\S^{d-1}} \mathfrak{G}(\ell n) \dee S(n)\\ 
\bar{\mathfrak{a}}(\ell) & = \fint_{\S^{d-1}} \mathfrak{a}(\ell n) \dee S(n). 
\end{align}
From here, the proof proceeds as in the proof of the 4/3 law in \cite{BCZPSW18}. Specifically, one first integrates \eqref{eq:ODE}. Then, the weak anomalous dissiption \eqref{eq:WAD} is used to eliminate the contributions involving $\kappa$ as $\kappa \rightarrow 0$ over an appropriate range of scales $[\ell_D,\ell_I]$ with $\lim_{\kappa \rightarrow 0}\ell_D = 0$. 
Finally, regularity of $\bar{\mathfrak{a}}(\ell)$ near $\ell = 0$ is used to deduce that the resulting estimate for $\bar{\mathfrak{D}}(\ell)/\ell$ is asymptotically $-\frac{4}{3} \bar{\eps}$ as $\ell_I \rightarrow 0$. 
\end{proof}

\appendix
\section{Appendix}\label{sec:Appendix}

\subsection{Well-posedness and the RDS framework}\label{subsec:wellPosedApp}

In this section we will confirm that the various processes considered in this paper, e.g., the Eulerian
process $(u_t)$ and the Lagrangian process $(u_t, x_t)$, arise as random dynamical systems
in the framework of Section \ref{sec:RDS}.

To start, without loss of generality, we may regard our probability space $\Omega$
as in Section \ref{sec:Intro} as a countable product of canonical spaces $\big( C([0,\infty), \R) \big)^{\otimes \N}$
 with the product topology; likewise, $\mathcal F$ is the corresponding Borel sigma algebra and $\P$
 the countable product of Weiner measures. 
 
 \medskip
 
For each of Systems \ref{sys:2DStokes}--\ref{sys:3DNSE}, we follow the standard procedure
of defining the $(u_t)$ process to be a solution of the corresponding equation in the mild sense \cite{KS,DPZ96}, i.e., 
\begin{align}
u_t = e^{-tA}u_0 + \Gamma_t + \int_0^t e^{-(t-s)A} B(u_s,u_s) ds \, ,  \label{eq:Mild} 
\end{align}
where $\Gamma_t = \int_0^t e^{-(t-s)A} Q dW(s)$ is the pertinent stochastic convolution for our additive noise.
in System \ref{sys:NSE}. For \eqref{eq:Mild} we have the following well-posedness theorem.
 \begin{proposition}[\cite{KS,DPZ96}] \label{prop:WPapp}
For each of Systems \ref{sys:2DStokes}--\ref{sys:3DNSE}, we have the following. For $\P$-almost every $\omega \in \Omega$;
 all $u_0 \in \hat{\Hbf} \cap H^\gamma$ with $\gamma < \alpha-\frac{d}{2}$; and all $T> 0, p \geq 1$, we have that
there exists a unique solution $(u_t)$ to \eqref{eq:Mild}. Moreover, the process $(u_t)$ is $\mathcal{F}_t$-adapted,
with $u \in L^p(\Omega;C([0,T];\Hbf \cap H^\gamma)) \cap L^2(\Omega;L^2(0,T;H^{\gamma+(d-1)}))$. 

Additionally, 
\begin{itemize}
\item[(i)] For all $p \geq 1$ and $\gamma < \gamma' < \alpha - \frac{d}{2}$, 
\begin{align}
\EE \sup_{t \in [0,T]} \norm{u_t}_{H^\gamma}^p & \lesssim_{T,p,\gamma} 1 + \norm{u_0}_{\Hbf \cap H^\gamma}^p \\
\EE \int_0^T \norm{u_s}_{H^{\gamma + (d-1)}}^2 ds & \lesssim_{T,\delta} 1 + \norm{u_0}^2_{H^\gamma} \\ 
\EE \sup_{t \in [0,T]} \left(t^{\frac{\gamma'-\gamma}{2(d-1)}} \norm{u_t}_{H^{\gamma'}}\right)^p &\lesssim_{p,T,\gamma,\gamma'} 1 + \norm{u_0}^p_{H^\gamma} \label{ineq:locRegu}
\end{align}
\item[(ii)] Suppose for $\gamma,\delta > 0$ arbitrary satisfying $\gamma + \delta < \alpha-\frac{d}{2}$, there holds
\begin{align*}
& \lim_{n \rightarrow \infty}\norm{Q W_{n} - QW}_{L^\infty(0,T;H^{\gamma+\delta})} = 0 \\ 
& \lim_{n \rightarrow \infty}\norm{u_0^{(n)} - u_0}_{H^{\gamma}} = 0. 
\end{align*}
Then, the corresponding solutions $u^{(n)}_t$ satisfy $\lim_{n \rightarrow \infty}\norm{u_t^{(n)} - u_t}_{L^\infty(0,T;H^{\gamma})} = 0$. 
Moreover, this  convergence is uniform over bounded sets, e.g. $\norm{u_0}_{H^\gamma} \leq C$  and $\norm{QW}_{L^\infty(0,T;H^{\gamma + \delta})} \leq C$ for $C < \infty$. 
\end{itemize} 
\end{proposition}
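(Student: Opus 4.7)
The plan is to prove Proposition \ref{prop:WPapp} by the standard Da Prato--Debussche transformation (see \cite{DPZ96,KS}): writing $u_t = v_t + \Gamma_t$, where $\Gamma_t$ is the stochastic convolution and $v_t$ is a $\P$-a.s.\ continuous, $H^\gamma$-valued process solving the pathwise random PDE
\begin{equation}\label{eq:vRandomPDE}
\partial_t v_t + A v_t + B(v_t + \Gamma_t, v_t + \Gamma_t) = 0, \qquad v_0 = u_0.
\end{equation}
The coloring assumption \eqref{eq:coloring-qk} together with standard factorization arguments (Lemma 5.1.3 of \cite{DPZ96}) imply that for every $T, p$ and $\gamma' < \alpha - d/2$, one has $\Gamma \in L^p(\Omega; C([0,T]; H^{\gamma'}))$ along with short-time regularization $\EE \sup_{t\in[0,T]} t^{\delta}\norm{\Gamma_t}_{H^{\gamma' + \delta'}}^p < \infty$ for suitable $\delta, \delta'$. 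This reduces the well-posedness question to a deterministic PDE \eqref{eq:vRandomPDE} driven by an a.s.\ $H^{\gamma'}$-regular forcing.

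For the pathwise PDE, the plan is to apply energy estimates based on the Sobolev algebra structure: since $\gamma > d/2$, the product $B(u,u)$ maps $H^\gamma \times H^\gamma \to H^{\gamma-1}$ continuously. In $d=2$ (System \ref{sys:NSE}), standard enstrophy-type estimates on $v$ at the $H^\gamma$ level close globally, using $L^\infty_t H^\gamma \cap L^2_t H^{\gamma+1}$ control together with Gr\"onwall and the a priori bound on $\Gamma$. In $d=3$ (System \ref{sys:3DNSE}), the hyperviscous term $\eta \Delta^2$ provides the extra smoothing $L^2_t H^{\gamma+2}$ needed to control the cubic-in-energy term from the nonlinearity; again one closes with Gr\"onwall. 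Systems \ref{sys:2DStokes} and \ref{sys:Galerkin} are strictly easier (linear in the first case, finite-dimensional in the second) and handled by direct estimates. The moment bounds in item (i) then follow by taking expectations: the first two estimates are the standard energy/dissipation bounds propagated through $v$ combined with the stochastic convolution bounds, while the short-time regularization estimate uses the corresponding smoothing property $\norm{e^{-tA}}_{H^\gamma \to H^{\gamma'}} \lesssim t^{-(\gamma'-\gamma)/(2(d-1))}$ applied to the mild formulation for $v$.

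For item (ii), the continuous dependence on data and noise, the plan is to subtract the two mild equations for $u^{(n)}_t$ and $u_t$, write the difference as a fixed point, and close a Gr\"onwall argument at the $H^\gamma$ level. Concretely, setting $w^{(n)} = u^{(n)} - u$, one gets
\begin{equation}
w^{(n)}_t = e^{-tA}(u^{(n)}_0 - u_0) + (\Gamma^{(n)}_t - \Gamma_t) - \int_0^t e^{-(t-s)A}\bigl(B(w^{(n)}_s, u^{(n)}_s) + B(u_s, w^{(n)}_s)\bigr)\, ds,
\end{equation}
and one estimates in $L^\infty_t H^\gamma$ using the product bound $\norm{B(f,g)}_{H^{\gamma-1}} \lesssim \norm{f}_{H^\gamma}\norm{g}_{H^\gamma}$, the smoothing of $e^{-tA}$ to absorb the loss of one derivative, and the uniform bounds from item (i). Uniformity on bounded sets of $\norm{u_0}_{H^\gamma}$ and $\norm{QW}_{L^\infty_t H^{\gamma+\delta}}$ is automatic from the a priori bounds, provided the Gr\"onwall constant depends only on these norms. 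The main technical subtlety will be the sharpness of the regularity: we need $\gamma$ to be allowed all the way up to $\alpha - d/2$, while the nonlinearity costs one derivative and the stochastic convolution lives in $H^{\gamma'}$ for $\gamma' < \alpha - d/2$. Handling this requires interpolating between the pathwise bound on $\Gamma$ and the parabolic smoothing in \eqref{ineq:locRegu}; this is the main obstacle, but is overcome by choosing the intermediate regularity $\gamma' \in (\gamma, \alpha-d/2)$ in which to carry out the fixed-point and then bootstrapping. All of these ingredients are essentially present in \cite{DPZ96,KS}, and our contribution is simply to verify they hold at the sharp regularity $\sigma$ specified in \eqref{eqn:sigmaConstraint}.
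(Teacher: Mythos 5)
Your proposal is correct and follows essentially the same route as the paper. The paper's proof is deliberately brief---item (i) is referred to standard arguments in \cite{KS,DPZ96} supplemented by the stochastic convolution bounds of Lemma \ref{lem:StochConv}, and item (ii) is referred to the stability estimate in the proof of Lemma \ref{lem:MarkovTrans}---and your sketch simply unpacks these same ingredients (Da Prato--Debussche decomposition, Sobolev algebra bounds, parabolic smoothing, and Gr\"onwall), with the noted regularity bookkeeping that the paper likewise handles via the intermediate-exponent choice $\gamma' \in (\gamma, \alpha - d/2)$.
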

\begin{proof}
Item (i) is a consequence of standard arguments (see, e.g., \cite{KS}) combined with the following estimates on the stochastic convolution $\Gamma_t$:
\begin{lemma} \label{lem:StochConv}
Let $\Gamma_t = \int_0^t e^{-(t-s)A} Q dW(s)$. Then $\forall T > 0$, $p \in [1,\infty)$, and $\gamma < \alpha + \frac{d}{2} - 1$,
\begin{align}
\EE \sup_{t \in [0,T]} \norm{\Gamma_t}_{H^{\gamma}}^p & \lesssim_{p,T,\gamma} 1 \\ 
\EE \int_0^T \norm{\Gamma_t}_{H^{\gamma+(d-1)}}^2 & \lesssim_{p,T,\gamma} 1. 
\end{align}
\end{lemma}
Lemma \ref{lem:StochConv} follows from the Factorization Lemma, the Burkholder-Davis-Gundy Inequality, and the smoothing properties of the heat semigroup (see, e.g., \cite{DPZ96}). 

Proposition \ref{prop:WPapp}(ii) can be proved by essentially the same stability argument as that in
 the proof of Lemma \ref{lem:MarkovTrans}, to which we refer the reader for details.
 \end{proof}

Let $\mathcal U : [0,\infty) \times \Omega \times \hat \Hbf \to \hat \Hbf, (t, \omega, u) \mapsto \mathcal U^t_\omega(u)$
denote the mapping sending, for a given $t \geq 0$ and $\P$-generic $\omega \in \Omega$, a given $u \in \hat \Hbf$
to the time-$t$ vector field $u_t$ conditioned on $u_0 = u$. We conclude from Proposition \ref{prop:WPapp} that 
$\mathcal U$ is a continuous RDS in the sense of Section \ref{subsubsec:basicSetupRDS} on the space $Z = \hat \Hbf$
satisfying condition (H1). Similarly, the random ODE \eqref{eq:xtintro} defining the auxiliary process $x_t = \phi^t_{\omega, u_0} x_0$
is well-posed, and we conclude as before that the corresponding mapping $\Theta : [0,\infty) \times \Omega \times \hat \Hbf
\times \T^d \to \hat \Hbf \times \T^d$ for the Lagrangian flow process $(u_t, x_t)$ 
is a continuous RDS satisfying (H1) on the space $Z = \hat \Hbf \times \T^d$.
We leave it to the reader to confirm that the same is true for each of the processes $(u_t, x_t, v_t)$ and $(u_t, x_t, \check v_t)$
on $Z = \hat \Hbf \times \T^d \times P^{d-1}$ and $(u_t, x_t, A_t)$ on $Z = \hat \Hbf \times \T^d \times SL_d(\R)$, defined
by the random ODE in \eqref{eq:defineSDE}.

In addition, in this paper we consider the linear cocycles 
$\mathcal A, \check {\mathcal A} : [0, \infty) \times \Omega \times \hat \Hbf \times \T^d \to M_{d \times d}(\R)$ defined
by $\mathcal A^t_{\omega, u, x} = D_x \phi^t_{\omega, u}$ and 
$\check {\mathcal A}^t_{\omega, u, x} = (\mathcal A^t_{\omega, u, x})^{- \top}$.
The integrability condition (H2) in Section \ref{subsubsec:METRDS} for each of these processes follows 
from \eqref{ineq:locRegu} above, while the independent increments condition (H3) is equivalent to
condition (H1) for the $(u_t, x_t, A_t)$ process.

\subsection{H\"older Estimates and Interpolation Inequalities}

The following interpolation Lemma is very useful:
\begin{lemma}\label{lem:interp-eq}
Let $f$ be a $C^1$ function on $[0,1]$ and let $\alpha \in (0,1]$. Then the following inequality holds for all $t \in (0,1)$
\[
	\|\partial_t f\|_{L^\infty([0,t])} \leq \frac{4}{t} \|f\|_{L^\infty([0,t
	])}^{\frac{\alpha}{\alpha+1}}\max\Big\{\|f\|_{L^\infty([0,t])}^{\frac{1}{1+\alpha}}, [\partial_t f]_{C^\alpha([0,t])}^{\frac{1}{1+\alpha}}\Big\},
\]
where $[\,\cdot\,]_{C^\alpha([0,t])}$ denotes the $\alpha$-H\"{o}lder semi norm on $[0,t]$.
\end{lemma}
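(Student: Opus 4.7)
The plan is to derive a pointwise bound at an arbitrary $s_0 \in [0,t]$ via a two-sided interpolation identity, then optimize the length of the integration window. The main observation is that by straddling $s_0$ with points $s_1 < s_0 < s_2$ (rather than integrating only to one side), one can use the full length $H \leq t$ rather than $H \leq t/2$, which is what produces the advertised constant $4/t$.

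First, I would fix an arbitrary $s_0 \in [0,t]$ and $H \in (0,t]$, then split $H = h_1 + h_2$ with $h_1, h_2 \geq 0$ chosen so that $s_1 := s_0 - h_1 \geq 0$ and $s_2 := s_0 + h_2 \leq t$ (always possible for $H \leq t$). Writing
\[
f(s_2) - f(s_1) = \int_{s_1}^{s_2} \partial_t f(r)\,dr = H\,\partial_t f(s_0) + \int_{s_1}^{s_2}\bigl[\partial_t f(r) - \partial_t f(s_0)\bigr] dr,
\]
estimating the left side by $2M$ with $M := \|f\|_{L^\infty([0,t])}$, and bounding the remainder via $N := [\partial_t f]_{C^\alpha([0,t])}$ together with the superadditivity $h_1^{1+\alpha} + h_2^{1+\alpha} \leq H^{1+\alpha}$ (valid for $\alpha \geq 0$), I obtain the key pointwise estimate
\[
H\,|\partial_t f(s_0)| \leq 2M + \frac{N H^{1+\alpha}}{1+\alpha}, \qquad H \in (0,t].
\]

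Next, I optimize over $H$ in two cases. If $H_* := (M/N)^{1/(1+\alpha)} \leq t$, setting $H = H_*$ balances the two terms and yields
\[
|\partial_t f(s_0)| \leq \Bigl(2 + \tfrac{1}{1+\alpha}\Bigr) M^{\alpha/(1+\alpha)} N^{1/(1+\alpha)} \leq 3\, M^{\alpha/(1+\alpha)} N^{1/(1+\alpha)}.
\]
Since $H_* \leq t \leq 1$ forces $N \geq M$, the maximum on the right-hand side of the claim equals $N^{1/(1+\alpha)}$, and $4/t \geq 4 > 3$ gives the desired bound. If instead $H_* > t$, i.e.\ $N < M/t^{1+\alpha}$, then setting $H = t$ produces
\[
|\partial_t f(s_0)| \leq \frac{2M}{t} + \frac{Nt^\alpha}{1+\alpha} \leq \frac{2M}{t} + \frac{M}{t(1+\alpha)} \leq \frac{3M}{t} \leq \frac{4M}{t},
\]
which is dominated by the stated bound using $M^{1/(1+\alpha)}$ in the max. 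Taking the supremum over $s_0 \in [0,t]$ concludes the argument.

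I do not anticipate any substantive obstacles: this is a standard Landau--Kolmogorov-type interpolation. The only genuinely load-bearing choice is the symmetric decomposition $H = h_1 + h_2$, which is what allows $H$ to range all the way up to $t$; a one-sided integration would cap $H$ at $t/2$ and degrade the constant from $4/t$ to (at best) $6/t$.
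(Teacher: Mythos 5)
Your proof is correct. The paper states Lemma \ref{lem:interp-eq} in the appendix without proof, so there is nothing to compare against; for the record, the load-bearing steps all check out: straddling $s_0$ with $s_1 < s_0 < s_2$ so that the window length $H = h_1 + h_2$ can range over all of $(0,t]$, the superadditivity $h_1^{1+\alpha} + h_2^{1+\alpha} \leq (h_1+h_2)^{1+\alpha}$ for $\alpha \geq 0$, the observation that $H_* := (M/N)^{1/(1+\alpha)} \leq t \leq 1$ forces $N \geq M$ (so the max picks out $N^{1/(1+\alpha)}$), and the constants $2 + \tfrac{1}{1+\alpha} < 3 < 4/t$ in the first branch and $\tfrac{2M}{t} + \tfrac{M}{t(1+\alpha)} < \tfrac{3M}{t} \leq \tfrac{4M}{t}$ in the second.
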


The following estimate on the H\"{o}lder norms of a process in a general Hilbert space is also useful for verifying the H\"{o}lder assumption used in the proof of the non-degeneracy of the Malliavin matrix
\begin{lemma}\label{lem:holder-ests}
Let $\cH$ and $\cW$ be separable Hilbert spaces and let $Y_t$, $t\in [0,1]$ be an $\cH$ valued process given by
\[
	Y_t = Y_0 + \int_0^t B_s\ds + \int_0^t Q_s\dee W_s,
\]
where $W_t$ is a cylindrical Wiener process on $\cW$, and $B_t$,$Q_t$ are predictable processes taking values in $\cH$ and $\cL^2(\cW,\cH)$, the space of bounded Hilbert-Schmidt operators from $\cW$ to $\cH$. Assume that $B_t$ and $Q_t$ satisfy, for every $p\geq 1$
\[
	\E\left(\|B\|_{L^\infty([0,1];\cH)}^p + \|Q\|_{L^\infty([0,1];\cL^2(\cW, \cH))}^p\right) < \infty,
\]
then for every $p > 12$, we have the estimate
\[
	\E \|Y\|_{C^{1/3}([0,1];\cH)}^p \leqc_p \E\left(\|B\|_{L^\infty([0,1];\cH)}^p + \|Q\|_{L^\infty([0,1];\cL^2(\cW, \cH))}^p\right).
\]
\end{lemma}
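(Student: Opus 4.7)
The plan is to apply a quantitative form of the Kolmogorov continuity theorem after establishing pointwise moment bounds on the increments $Y_t - Y_s$. Decomposing
\[
Y_t - Y_s = \int_s^t B_r \, \dr + \int_s^t Q_r \, \dee W_r,
\]
I would estimate the two contributions separately. For the drift, a pathwise bound yields $\|\int_s^t B_r \dr\|_{\cH} \leq |t-s| \|B\|_{L^\infty([0,1];\cH)}$, and therefore $\E \|\int_s^t B_r \dr\|_{\cH}^p \leq |t-s|^p \, \E\|B\|_{L^\infty([0,1];\cH)}^p$.

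For the stochastic integral, the key tool is the Burkholder--Davis--Gundy inequality for Hilbert-space-valued martingales driven by cylindrical Wiener processes (see, e.g., \cite{DPZ96}), which gives
\[
\E \Big\| \int_s^t Q_r \, \dee W_r \Big\|_{\cH}^p \leqc_p \E \Big( \int_s^t \|Q_r\|_{\cL^2(\cW,\cH)}^2 \, \dr \Big)^{p/2} \leq |t-s|^{p/2} \, \E \|Q\|_{L^\infty([0,1];\cL^2(\cW,\cH))}^p.
\]
Combining these two bounds, and using $|t-s| \leq 1$ to absorb the drift term into the stochastic term's scaling, I obtain
\[
\E \|Y_t - Y_s\|_{\cH}^p \leqc_p |t-s|^{p/2} \, \E\bigl( \|B\|_{L^\infty([0,1];\cH)}^p + \|Q\|_{L^\infty([0,1];\cL^2(\cW,\cH))}^p \bigr).
\]

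Finally, I would invoke the quantitative Kolmogorov--Chentsov (or Garsia--Rodemich--Rumsey) criterion, which converts a bound of the form $\E \|Y_t - Y_s\|^p \lesssim |t-s|^{1+\beta} K$ into the moment bound $\E [Y]^p_{C^{\alpha}([0,1];\cH)} \leqc_{p,\alpha} K$ for any $\alpha < \beta/p$. Here $\beta = p/2 - 1$, so the admissible range is $\alpha < 1/2 - 1/p$; taking $p > 12$ comfortably ensures $1/3 < 1/2 - 1/p$ (in fact $p > 6$ would suffice, but the slack is convenient and matches the stated hypothesis). Adding the trivial bound on $\|Y\|_{L^\infty([0,1];\cH)}^p$ (which follows from the same drift and BDG estimates with $s=0$) yields the stated control on the full $C^{1/3}$-norm. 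The only mild obstacle is citing or verifying the Hilbert-space-valued versions of BDG and Kolmogorov--Chentsov in the separable setting; both are standard and treated, for instance, in \cite{DPZ96}.
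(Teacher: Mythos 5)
The paper states this lemma without proof, so there is no "paper's own proof" to compare against; the approach you take (BDG for the stochastic integral, a pathwise bound for the drift, then the quantitative Kolmogorov/Garsia--Rodemich--Rumsey criterion) is the standard and almost certainly intended route. Your estimates on the increments are correct, and your computation that $1/3 < 1/2 - 1/p$, hence $p > 6$ suffices to control the $C^{1/3}$ seminorm, is also correct; the hypothesis $p > 12$ in the statement is simply conservative.

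One point to tighten: the left-hand side as written, $\|Y\|_{C^{1/3}}$, includes the sup-norm piece, which cannot be controlled by the stated right-hand side because it depends on $Y_0$ — and there is no hypothesis on $Y_0$ in the lemma. Your parenthetical that the $L^\infty$ bound "follows from the same drift and BDG estimates with $s=0$" is therefore not quite right: taking $s=0$ only gives a bound on $\|Y_t - Y_0\|_{\cH}$, which equals $\|Y_t\|_{\cH}$ only if $Y_0 = 0$. In context the authors clearly intend the H\"older \emph{seminorm} $[Y]_{C^{1/3}}$ (which is exactly how the lemma is invoked in the proof of Lemma \ref{lem:pMalNonDeg}, where they bound $[X^m]_{C^1}$ and $[B^m]_{C^{1/3}}$), and for the seminorm your argument is complete and correct. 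So either read the statement as a seminorm bound, or note explicitly that you are assuming $Y_0 = 0$ (or add $\E\|Y_0\|_{\cH}^p$ to the right-hand side) for the full-norm version.
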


\bibliographystyle{abbrv}
\bibliography{bibliography}

\end{document}